\documentclass[11pt, letterpaper, hidelinks]{article}
\usepackage[margin=1in]{geometry}
\usepackage{authblk}

\usepackage{url}
\usepackage{mathtools}
\usepackage[dvipsnames]{xcolor}
\usepackage{amsmath,amsthm, amsfonts, amssymb}
\usepackage{framed}
\renewcommand{\proofname}{\textit{\textbf{Proof}}}

\makeatletter
\renewenvironment{proof}[1][\proofname]{\par\pushQED{\qed}\normalfont \topsep6\p@\@plus6\p@\relax\trivlist\item[\hskip\labelsep#1\@addpunct{}]\ignorespaces}{\popQED\endtrivlist\@endpefalse}
\makeatother
\newcommand{\proofsketchname}{\textit{\textbf{Proof sketch}}}
\newenvironment{proofsketch}
  {\begin{proof}[\proofsketchname]}
  {\end{proof}}
\usepackage[hypertexnames=false]{hyperref}
\newcommand\myshade{85}
\colorlet{mylinkcolor}{violet}
\colorlet{mycitecolor}{YellowOrange}
\colorlet{myurlcolor}{Aquamarine}
\hypersetup{
  linkcolor  = mylinkcolor!\myshade!black,
  citecolor  = mycitecolor!\myshade!black,
  urlcolor   = myurlcolor!\myshade!black,
  colorlinks = true,
}
\usepackage{enumitem, comment, xifthen}
\usepackage{graphicx}
\usepackage{etoolbox}
\usepackage{tikz}
\usetikzlibrary{math}
\usepackage{mathabx}
\usepackage{subfig}
\graphicspath{{figs/}}
\DeclareMathAlphabet{\mathsf}{OT1}{qhv}{m}{n}
\usepackage{cleveref}
\usepackage{autonum}
\usepackage{xspace}

\DeclarePairedDelimiterX{\klx}[2]{(}{)}{#1\;\delimsize\|\;#2}
\newcommand{\R}{\mathbf R}
\newcommand{\Q}{\mathbf Q}
\newcommand{\cA}{\mathcal{A}}
\newcommand{\cB}{\mathcal{B}}
\newcommand{\cE}{\mathcal{E}}
\newcommand{\cL}{\mathcal{L}}
\newcommand{\cP}{\mathcal{P}}
\newcommand{\cQ}{\mathcal{Q}}
\newcommand{\cV}{\mathcal{V}}
\newcommand{\twomax}[2]{\ensuremath{#1 \lor #2}}
\newcommand{\twomin}[2]{\ensuremath{#1 \land #2}}
\newcommand{\ceil}[1]{\left\lceil #1 \right\rceil}
\newcommand{\floor}[1]{\left\lfloor #1 \right\rfloor}
\newcommand{\e}{\mathrm{e}}
\DeclareMathOperator{\sign}{\bf sign}
\newcommand{\ud}[0]{\mathrm{d}}
\newcommand{\1}{\mathbf 1}
\newcommand{\half}{\frac12}
\newcommand{\eps}{\varepsilon}
\let\tilde\widetilde
\let\subseteq\subset
\let\preceq\preccurlyeq
\renewcommand{\le}{\leqslant}
\renewcommand{\ge}{\geqslant}
\renewcommand{\leq}{\leqslant}
\renewcommand{\geq}{\geqslant}
\DeclareMathOperator{\trace}{\bf Tr}
\DeclareMathOperator{\dom}{\bf dom}
\DeclareMathOperator{\rank}{\bf rank}

\let\Span\spn
\newcommand{\T}{\mathsf{T}}
\newcommand{\kl}{D_{\rm kl}\klx}
\newcommand{\iid}{\textnormal{i.i.d.}}
\newcommand{\simiid}{\stackrel{\iid}{\sim}}
\newcommand{\distto}{\stackrel{\rm d}{\longrightarrow}}
\newcommand{\Var}{\mathrm{Var}}
\makeatletter
\newcommand{\E}{\operatorname*{\mathbf{E}}\ilimits@}
\renewcommand{\P}{\operatorname*{\mathbf{P}}\ilimits@}
\makeatother
\newcommand{\ie}{\textit{i}.\textit{e}., }
\newcommand{\eg}{\textit{e}.\textit{g}., }

\theoremstyle{plain}
\newtheorem{theorem}{Theorem}[section]
\newtheorem{lemma}{Lemma}[section]
\newtheorem{proposition}{Proposition}[section]
\newtheorem{corollary}{Corollary}[section]
\theoremstyle{definition}
\newtheorem{definition}{Definition}[section]

\newtheorem{remark}{Remark}[section]
\newtheorem{fact}{Fact}[section]

\newcommand{\thetastar}{\theta^\star}
\newcommand{\wh}{\widehat}
\let\top\T
\newcommand{\Normal}[2]{\mathsf{N}\left(#1, #2\right)}
\newcommand{\defn}{=}
\newcommand{\Quant}[1]{\mathsf{Q}_{#1}}
\let\hat\widehat
\newcommand{\betastar}{\beta^\star}

\newcommand{\Ber}[1]{\mathsf{Ber}\left(#1\right)}
\newcommand{\KSDist}[2]{d_{\mathsf{KS}}\left(#1, #2\right)}
\newcommand{\FD}{\mathsf{FD}}
\newcommand{\RD}{\mathsf{RD}}
\newcommand{\WorstCaseFixedDesign}[2]{\cE^\FD(#1,#2)}
\newcommand{\WorstCaseRandomDesign}[2]{\cE^\RD(#1,#2)}
\newcommand{\WorstCaseFixedDesignQuant}[3]{\cQ_{#1}^\FD(#2,#3)}
\newcommand{\WorstCaseRandomDesignQuant}[3]{\cQ_{#1}^\RD(#2,#3)}
\newcommand{\ran}{\mathrm{ran}}
\usepackage{bm}

\usepackage[most]{tcolorbox}
\tcbuselibrary{listings,breakable}

\newtcblisting{promptbox}{
  breakable,
  listing only,
  colback=gray!3,
  colframe=black!60,
  boxrule=0.5pt,
  arc=1mm,
  left=6pt,
  right=6pt,
  top=6pt,
  bottom=6pt,
  title={Full prompt for \Cref{thm:d2-logloglog-upper}, GPT-5.6-Sol Pro, July 2026},
  fonttitle=\bfseries\small,
  listing options={
    basicstyle=\ttfamily\tiny,
    breaklines=true,
    columns=fullflexible,
    keepspaces=true,
    showstringspaces=false,
    upquote=true,
    literate={“}{{\textquotedblleft}}1 {”}{{\textquotedblright}}1
  }
}

\title{\Large \bfseries Beyond Modern Asymptotics for Log-Likelihood Ratios in Logistic Regression}

\author[1]{Hugo Chardon}
\author[1,2]{Reese Pathak}
\author[1]{Nikita Zhivotovskiy}
\affil[1]{Department of Statistics, University of California, Berkeley}
\affil[2]{School of Operations Research and Information Engineering (ORIE), Cornell University}

\date{July 31, 2026}
\begin{document}
\maketitle
\begin{abstract}
We characterize the finite sample behavior of the log-likelihood ratio statistic in binary logistic regression, uniformly over both the design and the target parameter. For $n\geq d\geq 3$, we determine, up to universal constants, its worst case $(1-\delta)$ quantile over all fixed collections of design vectors and all target parameters:
\[
d\log\left(\frac{\e n}{d}\right)+\log\left(\frac{1}{\delta}\right).
\]
This is a nonasymptotic analogue of the Wilks $\chi^2_d$ phenomenon and requires no regularity assumptions on the design.  The low dimensional cases exhibit unusual behavior. The worst case quantile in dimension $d=2$ is sharply of order
\[
\log\log\log n+\log\left(\frac{1}{\delta}\right).
\]
The worst case quantile in dimension $d=1$ is of order $\log(1/\delta)$, with no dependence on $n$. Finally, i.i.d.\ Gaussian design vectors recover the classical Wilks scale. In the regime $n\gtrsim d+\log(1/\delta)$, we prove the sharp bound
\[
d+\log\left(\frac{1}{\delta}\right).
\]
Unlike existing asymptotic results, our bounds are uniform over the target parameter, which may depend on $n$, $d$, and $\delta$. 
\end{abstract}

\tableofcontents

\section{Introduction}

Understanding the distribution of log-likelihood ratios is one of the
fundamental problems in statistics, with applications to goodness-of-fit
testing, variable selection, and the construction of confidence regions for the target
parameter. In this paper, we focus on the most natural generalized linear model
for classification, namely logistic regression, where
\(Y_1,\dots,Y_n\in\{-1,1\}\) are independent and satisfy
\[
\P_{\thetastar}(Y_i=1)
=
\sigma(\langle x_i,\thetastar\rangle),
\qquad
\sigma(t)=\frac{1}{1+e^{-t}} \, ,
\]
for deterministic covariates \(x_1,\dots,x_n\in\R^d\) and some unknown target
vector \(\thetastar\in\R^d\). In this case, the log-likelihood ratio statistic is
defined as
\begin{equation}
\label{eq:def-logistic-llr}
\Lambda_n^{\log}(\thetastar)
	\equiv \Lambda_n^{\log}(\thetastar;x_1,\ldots,x_n; Y_1, \ldots, Y_n)
	= \log \frac{\sup_{\theta\in\R^d}\prod_{i=1}^n\sigma\bigl(Y_i\langle x_i,\theta\rangle\bigr)}
		{\prod_{i=1}^n \sigma\bigl(Y_i\langle x_i,\thetastar\rangle\bigr)} \, .
\end{equation}
The first question is to understand the distribution of this
statistic. In the case of independent and identically distributed random
covariates \(X_1,\ldots,X_n\), the classical asymptotic behavior as
\(n\to\infty\) is well understood through Wilks' phenomenon; see, for instance,
the classical papers of Wilks~\cite[pp.~62]{wilks1938large} and
Chernoff~\cite[Theorem~1]{chernoff1954likelihood}. When \(d\) and
\(\thetastar\) are fixed and the usual regularity assumptions hold, this
suggests (in \Cref{sec:wilksregime} we discuss a version of this phenomenon in the broader regime $d^{3/2}/n \to 0$)
\[
\Lambda_n^{\operatorname{log}}(\thetastar;X_1,\ldots,X_n; Y_1, \ldots, Y_n)
\distto
\frac12\chi_d^2.
\]

However, logistic regression is well known to exhibit phenomena that are
invisible from this classical asymptotic viewpoint. In particular, even the
existence of the maximum likelihood estimator is a delicate question, closely
related to separation of the data; see the classical work of Albert and
Anderson~\cite{AlbertAnderson1984LogisticMLE} and, for Gaussian random design,
Candès and Sur~\cite{CandesSur2020PhaseTransition}. These difficulties are not only
technical. They reflect the fact that the likelihood geometry of logistic
regression can be far from locally quadratic, as used in the classical Wilks
theorem.

From the modern point of view, the limitation of the classical asymptotic
result is not only that it is asymptotic. More importantly, it describes only
one particular regime, where \(d\) and \(\thetastar\) are fixed and
\(n\to\infty\). High-dimensional asymptotic results of Sur, Chen and
Candès~\cite{sur2019likelihood}, and further developments in this direction
\cite{sur2019modern,ZhaoSurCandes2022LogisticMLE}, show that even under Gaussian
design and proportional
asymptotics \(d/n\to\kappa\), the usual \(\chi^2\) approximation can fail.
These results work in structured random design asymptotic regimes. In
particular, \cite{sur2019likelihood} studies likelihood ratio tests involving a
fixed number of coordinates, while \cite{sur2019modern} develops a broader
high-dimensional MLE theory for Gaussian logistic regression. Our question is
different: we ask what can be guaranteed in finite samples, for the full
log-likelihood ratio, uniformly over the design and over the target parameter.

Given \(\delta\in(0,1)\), write
$
\Quant{1-\delta}(Z)
\defn
\inf\bigl\{t\in\R:\P(Z\le t)\ge 1-\delta\bigr\}
$
for the \((1-\delta)\)-quantile of a scalar random variable \(Z\). We are
interested in the quantile of the log-likelihood ratio for logistic regression:
\[
\Quant{1-\delta}
\Bigl(
\Lambda_n^{\operatorname{log}}(\thetastar;x_{1:n}; Y_{1:n})
\Bigr).
\]
Here and below, $x_{1:n}$ denotes $x_1, \ldots, x_n$, $Y_{1:n}$ denotes $Y_1, \ldots, Y_n$,
and \(a\asymp b\) means that \(c b\le a\le Cb\) for universal
positive constants \(c,C\). The main result of the paper can be summed up as follows.

\begin{center}
\fbox{
\begin{minipage}{0.92\textwidth}
Simplified worst case fixed design bounds.
Assume that \(n\ge d\ge 1\) and \(\delta\in(0,1/\e]\), and, when \(d=2\),
that \(n \ge 16\) and \(\delta\leq1/(16\e^2)\). Then
\[
\sup_{\substack{x_1,\ldots,x_n\in\R^d\\ \thetastar\in\R^d}}
\Quant{1-\delta}\Bigl(
\Lambda_n^{\operatorname{log}}(\thetastar;x_{1:n};Y_{1:n})
\Bigr)
\asymp
\begin{cases}
\log\left(\dfrac{1}{\delta}\right),
& d=1,\\[2mm]
\log\log\log n+\log\left(\dfrac{1}{\delta}\right),
& d=2,\\[2mm]
d\log\left(\dfrac{\e n}{d}\right)+\log\left(\dfrac{1}{\delta}\right),
& 3\le d\le n.
\end{cases}
\]
See \Cref{thm:main-result} for
\(d\geq3\), and \Cref{cor:dimone} (upper bound) and
\Cref{prop:coordinate-subspace-lower} (lower bound) for \(d=1\); for \(d=2\), see
\Cref{cor:d2-logloglog-quantile-lower} and \Cref{thm:d2-logloglog-upper}.
\end{minipage}
}
\end{center}

This should be compared with the nonasymptotic quantiles suggested by the Wilks
limit. Indeed (see e.g., \Cref{lem:chisquarequantiles} below), for
\(\delta\in(0,1/2]\), it holds that
\[
\Quant{1-\delta}(\chi_d^2) \asymp d+\log\Big(\frac{1}{\delta}\Big).
\]
Thus, in particular, for $d\geq3$, in the worst case over deterministic designs, the dimension term is
inflated by the factor \(\log(en/d)\), while the tail term
\(\log(1/\delta)\) remains of the same order as in the \(\chi_d^2\) benchmark.
In this regime, our result says that this logarithmic factor is the only price for a completely
uniform finite sample guarantee. As an upper bound on the quantile, it does not
assume anything about the existence of the MLE and holds for every
deterministic design and every \(\thetastar\). Moreover, we construct explicit
designs and target parameters showing that the logarithmic factor is
unavoidable.

At the same time, the logarithmic price is tightly connected with the geometry
of the design. Our second message is that it disappears for Gaussian random
design.

\begin{center}
\fbox{
\begin{minipage}{0.92\textwidth}
Simplified consequence of \Cref{thm:gaussian-design-logfree} and \Cref{cor:gaussian-design-origin}: Gaussian design.
Let \(X_1,\ldots,X_n\) be i.i.d.\ \(\Normal{0}{I_d}\). Then, for every
\(\delta\in(0,\tfrac{1}{2}]\) such that \(n \ge d+\log(1/\delta)\),
\[
\sup_{\thetastar\in\R^d}
\Quant{1-\delta}
\Bigl(
\Lambda_n^{\operatorname{log}}(\thetastar;X_{1:n}; Y_{1:n})
\Bigr)
\asymp
d+\log\left(\frac{1}{\delta}\right).
\]
\end{minipage}
}
\end{center}

Therefore, arbitrary deterministic designs and Gaussian random designs lead to
different finite sample pictures. For Gaussian random design one always has the
nonasymptotic \(\chi_d^2\)-type upper bound $d+\log(1/\delta)$, uniformly over
the target parameter. At the origin, the exact scale is
$\min\{n,d+\log(1/\delta)\}$, including its natural saturation at $n$.
This is the sense in which the present results go beyond both the classical
Wilks asymptotics and the modern proportional asymptotic theory: the upper bounds are
finite sample, apply to the full likelihood ratio, allow arbitrary target
parameters, do not require the MLE to exist, and in the main worst case theorem
are uniform over all deterministic designs.

\paragraph{Interpretation: the geometry of likelihood-ratio confidence sets.}

One consequence of our upper bounds is that they give genuinely
nonasymptotic confidence sets for fixed design logistic regression. Namely, for
\(n\geq d\), define

\begin{equation}
\label{eq:logistic-nonasymptotic-confidence-set}
\widehat \Theta_{n,\delta}^{\log}
= \left\{ \theta\in\R^d: \Lambda_n^{\log}(\theta) \leq d\log(\e n/d)+\log(1/\delta) \right\}.
\end{equation}
Then, for every deterministic design \(x_1,\ldots,x_n\in\R^d\), every
\(\thetastar\in\R^d\), and every \(\delta\in(0,1)\),
\begin{equation}
\P_{\thetastar}\left(
\thetastar\in
\widehat \Theta_{n,\delta}^{\operatorname{log}}
\right)
\geq 1-\delta .
\end{equation}
This statement does not assume anything about the conditioning of the design,
does not impose any restriction on \(\|\thetastar\|_2\), and does not require
the maximum likelihood estimator to exist.

The price for this level of generality is that the geometry of
\(\widehat \Theta_{n,\delta}^{\operatorname{log}}\) should not be expected to be
ellipsoidal. In the Wilks regime, this geometry is usually read from the local
quadratic approximation of the likelihood. In logistic regression, the
quadratic expansion is driven by the Hessian:
\[
\nabla_\theta^2 \Lambda_n^{\operatorname{log}}(\theta)
\big|_{\theta=\thetastar}
=
\sum_{i=1}^n
\sigma(\langle x_i,\thetastar\rangle)
\sigma(-\langle x_i,\thetastar\rangle)
x_i x_i^\T .
\]
The weights $\sigma(t)\sigma(-t)$ decay like $e^{-|t|}$ for large
$|t|$. Specifically, for all real $t$, $\sigma(t)\sigma(-t)\leq \e^{-|t|}$, hence observations with (even moderately) large margin
$|\langle x_i,\thetastar\rangle|$ contribute very little to the curvature.
Even for a fixed well-specified deterministic design, the local information
matrix may therefore be nearly singular, and the usual ellipsoidal Wilks
geometry may no longer describe the likelihood-ratio confidence set.

This is not a pathological corner case. Large margins correspond to low-noise
logistic models, where the labels are close to deterministic. In the extreme
realizable limit, if the observed labels are separable, the infimum of the
empirical logistic loss is approached along rays going to infinity, and the
natural geometry is closer to a cone of separating directions than to an
ellipsoid; see, for instance,
\cite{AlbertAnderson1984LogisticMLE,Cover1965}. Our results are aimed at the
intermediate regime between these two pictures. The likelihood need not be
locally quadratic in any useful uniform sense, but the model also need not be
fully realizable. The confidence set
\eqref{eq:logistic-nonasymptotic-confidence-set} remains valid throughout this
regime.

\subsection{Our results in the context of the related literature}

Our analysis draws on several related lines of work. We outline them briefly and
explain how they enter our results.

\paragraph{Classical and modern asymptotics.}

The classical route to likelihood ratio inference is local and quadratic.
Wilks' theorem~\cite{wilks1938large} is one manifestation of this principle.
More generally, the local asymptotic normality theory of Le Cam
\cite{LeCam1960LAN,LeCamYang2000}, developed further for instance by
Ibragimov and Khas'minskii~\cite{IbragimovKhasminskii1981} and presented in
modern form in van der Vaart~\cite{vdV1998asymptotic}, is based on a quadratic
approximation of the log-likelihood ratio in a neighborhood of the true
parameter. Related asymptotic Wilks phenomena for generalized likelihood ratio
statistics, including nonparametric and semiparametric settings, were studied
by Fan, Zhang and Zhang~\cite{FanZhangZhang2001Wilks}.

For logistic regression, this local quadratic viewpoint is correct in the
classical regular regime, but it is not robust to the regimes considered in
this paper. In particular, when the margins are large, the Fisher information
may degenerate, the MLE may fail to exist, and no useful uniform quadratic
expansion is available. Our worst case analysis does not rely on such an
expansion. In the Gaussian random design case, where a quadratic expansion can
be useful, we combine it with a separate argument for the complementary regime
where the expansion no longer controls the likelihood globally.

A different asymptotic perspective was developed by Sur, Chen and
Candès~\cite{sur2019likelihood} and Sur and Candès~\cite{sur2019modern}.
These works show that the classical Wilks calibration can fail even for
Gaussian random design in proportional high-dimensional asymptotics. In
particular, \cite{sur2019likelihood} studies likelihood ratio tests for a fixed
number of coordinates in high-dimensional logistic regression and obtains a
rescaled chi-square limit rather than the classical chi-square limit. This is
quite different from our setting. We study the full log-likelihood ratio
statistic in finite samples, with no assumption on the design, no restriction
on \(\thetastar\), and no requirement that the MLE exists.

There is also a secondary asymptotic message in our results. If one insists on
the actual Wilks approximation under Gaussian random design, then the aspect
ratio \(d/n\) is not the only relevant scale. In the null Gaussian design model,
we show that approximation of \(2\Lambda_n^{\operatorname{log}}(0)\) by
\(\chi_d^2\) in Kolmogorov distance is governed by \(d^{3/2}/n\): it holds when
\(d^{3/2}/n\to0\), and fails when this condition does not hold. Thus the
proportional regime \(d/n\to\kappa>0\) is already beyond the classical Wilks
scale, while our finite sample bounds remain valid across all these regimes.
The sufficient condition \(d^{3/2}/n\to0\) has a relevant precedent in
Portnoy's version of the Wilks theorem~\cite{Portnoy1988}, and also appears
in recent fourth order finite sample expansions~\cite{Spokoiny2025SLS} under
different regularity assumptions.

\paragraph{Finite sample likelihood expansions and logistic MLE.}

A closely related line of work studies likelihood ratio statistics through
finite sample versions of Wilks' theorem. The approach of
Spokoiny~\cite{Spokoiny2012Parametric}, and its extensions
\cite{AndresenSpokoiny2014,SpokoinyZhilova2015}, gives nonasymptotic Fisher
and Wilks expansions under possible model misspecification and growing
dimension. These results are closer in spirit to the finite sample likelihood
inference considered here. At the same time, their mechanism is still local and
quadratic: one proves concentration of the MLE in a suitable neighborhood and
then controls the error of a quadratic expansion of the likelihood process. For
logistic regression, this is precisely the step that becomes unstable when the
margins are large.

There is also a substantial finite sample literature on the MLE and empirical
risk minimization in logistic regression. Bach~\cite{Bach2010SelfConcordant}
introduced a self-concordant analysis for logistic regression, and
Ostrovskii and Bach~\cite{ostrovskii2021finite} developed a general finite
sample theory for \(M\)-estimators based on self-concordance. More recently,
Kuchelmeister and van de Geer~\cite{KuchelmeisterVandeGeer2024} and Hsu and
Mazumdar~\cite{HsuMazumdar2024} studied finite sample estimation under Gaussian
designs, with particular attention to small noise or high signal regimes. The
work of Chardon, Lerasle and Mourtada~\cite{chardon2024logistic} gives sharp
finite sample guarantees for the existence and excess risk of the logistic MLE
under regular random designs, with explicit dependence on \(\|\thetastar\|_2\).

In contrast, we do not study the estimation error of the MLE, and in our main
fixed design results we do not require the MLE to exist at all. The statistic
\(\Lambda_n^{\operatorname{log}}(\thetastar)\) remains well defined even when
the infimum of the empirical logistic loss is approached only at infinity. We
note that, in the Gaussian random design case, we do use the quadratic theory
of~\cite{chardon2024logistic} in the regular regime, but we complement it by a
separate argument for the complementary regime where such a quadratic
analysis no longer controls the likelihood globally.

\paragraph{Shtarkov sums and regret bounds for logistic regression.}

A different line of work comes from the connection between likelihood ratios,
logarithmic loss regret, and Shtarkov sums. In the fully sequential setting,
the covariates are also revealed online.  At round
\(i\), after observing \(x_{1:i}\) and \(y_{1:i-1}\), the strategy assigns
probabilities \(q_i(\cdot\mid x_{1:i},y_{1:i-1})\) to the next label. If the
strategy has deterministic regret \(R_n\) against the logistic class, then
\[
\sum_{i=1}^n
-\log q_i(y_i\mid x_{1:i},y_{1:i-1})
-
\inf_{\theta\in\R^d}
\sum_{i=1}^n
-\log \sigma(y_i\langle x_i,\theta\rangle)
\le R_n
\]
implies
\[
\P_{\thetastar}\Big(
\Lambda_n^{\operatorname{log}}(\thetastar)
>
R_n+\log\Big(\frac{1}{\delta}\Big)
\Big)
\le \delta.
\]
This viewpoint is closely related to the idea of converting online guarantees into confidence sets due to
Abbasi-Yadkori, P{\'a}l and Szepesv{\'a}ri~\cite{AbbasiYadkorietal2012} and to
recent sequential confidence constructions for logistic and generalized linear
models~\cite{lee2024improved,kirschner2025confidence,clerico2025confidence}.

However, standard sequential regret bounds for logistic regression
\cite{kakade2005online} are not directly suited to our regime. In the worst case
for online prediction, proper logistic regret necessarily depends on the comparator
norm, which can make the resulting confidence bounds vacuous for large
\(\|\thetastar\|_2\); see \cite{foster2018logistic}. The more relevant setting
is \emph{transductive}: the whole design \(x_1,\ldots,x_n\) is known in advance,
and only the labels are predicted sequentially. Recent work of Qian, Rakhlin
and Zhivotovskiy~\cite{QianRakhlinZhivotovskiy2026} shows that transductive
priors can yield finite sample logistic regret bounds without dependence on the
design vectors or on the norm of the optimal parameter.

The natural extremal object behind sharp transductive regret is the Shtarkov
sum, introduced in universal coding by Shtarkov~\cite{Shtarkov1987}. Related
regret and redundancy quantities have a long history; see, for example,
\cite{Rissanen1996FisherSC}. For logistic models, early geometric analysis
appears in Dowty~\cite{dowty2014volumes}, while more recent work studies
precise minimax regret in special settings
\cite{jacquet2021precise,ShamirSzpankowski2021,JacquetShamirSzpankowski2022,
DrmotaJacquetWuSzpankowski2024,
drmota2025precise,DrmotaJacquetWuSzpankowski2026}. Many of these results, while
sharp, are either asymptotic or tailored to particular design ensembles or
parameter regimes. We also note that nonasymptotic Bayesian analyses of regret
often rely on local quadratic behavior of the Fisher information
\cite{Barron1999BayesPerformance,kakade2005online}. In contrast, our fixed
design analysis gives sharp finite sample control of the logistic Shtarkov sum
for arbitrary deterministic designs and without any restriction on
\(\|\thetastar\|_2\). This yields the upper bound \(d\log(en/d)\), and our
lower bounds show that this scale is unimprovable in the worst case.

\subsection{Structure of the paper}

The rest of the paper is organized as follows.
\begin{itemize}
\item In \Cref{sec:log-reg} we prove the main worst-case result. We first
reduce the problem to a statement about likelihood ratios over subspaces, then
prove the upper bound through Shtarkov sums and hyperplane arrangements, and
finally give a matching lower bound using Vandermonde subspaces.

\item In \Cref{sec:logfree} we study regimes where the logarithmic
factor is absent. We prove nonasymptotic \(\chi^2\)-type bounds for
univariate generalized linear models and for Gaussian random design. We also
study behavior near the origin for rotationally invariant designs, where the
worst-case logarithmic factor does not appear.

\item In \Cref{sec:sharpness-boundaries} we collect several boundary and
sharpness results. We prove sharp bounds for logistic Shtarkov sums, including
matching leading constants when $d$ divides $n$ and $n/d\to\infty$. We also
treat the special two-dimensional case, showing that uniform \(\chi^2\)-type
behavior is already impossible there; the matching upper bound is stated in
\Cref{thm:d2-logloglog-upper}. Finally, under Gaussian random design at the origin, we show that the validity
of the classical Wilks approximation for the full log-likelihood ratio statistic
is governed by the scale \(d^{3/2}/n\), rather than by the usual aspect ratio
\(d/n\).

\item \Cref{app:ai-disclosure-d2} gives the AI disclosure, the matching upper bound in dimension \(d=2\) with its proof sketch, and the final prompt used in our tests for that result. 
\end{itemize}

\section{Worst-case likelihood ratio in high dimensions}
\label{sec:log-reg}

In this section, we analyze the likelihood-ratio statistic in dimensions
\(d\ge 2\). The behavior depends substantially on the dimension. The
univariate case admits sharper \(\chi^2_1\)-type bounds, in fact for a broader
class of generalized linear models, and is treated separately in
\Cref{sec:univariatemodels}. The main lower bound construction in the present
section requires \(d\ge 3\). The remaining case \(d=2\) exhibits unusual
behavior. In \Cref{sec:bivariatecase} we prove a lower bound of the smaller order
\(\log\log\log n\), and the matching upper bound is given in
\Cref{thm:d2-logloglog-upper} in the appendix.

Let us recall our model. Assume that
\(Y_1,\dots,Y_n\in\{-1,1\}\) are independent and satisfy
\begin{equation}
\label{eqn:logistic-model}
\P_{\thetastar}(Y_i=1) = \sigma(\langle x_i,\thetastar\rangle),
\qquad
\sigma(t)=\frac{1}{1+\e^{-t}},
\end{equation}
for deterministic covariates $x_1,\dots,x_n \in \R^d$. In this case, writing
\[
\ell_n(\theta)
\defn
\sum_{i=1}^n \log\big(1+\exp(-Y_i\langle x_i,\theta\rangle)\big),
\]
the log-likelihood ratio statistic~\eqref{eq:def-logistic-llr} takes the form
\[
\Lambda^{\operatorname{log}}_n(\thetastar)
=
\ell_n(\thetastar)-\inf_{\theta\in\R^d}\ell_n(\theta).
\]

In this work, we are interested in the extreme behavior of this quantity, analyzed either through its expected value or its quantiles. In this section, we study the (upper) quantiles of scalar random variables. For $Z$ a scalar random variable, we define 
\[
\Quant{1-\delta}(Z) \defn \inf\Big\{\, z \in \R \mid \P(Z \leq z) \geq 1-\delta\,\Big\}.
\]

First, in the \emph{fixed design} setup consider the quantities
\[
\begin{aligned}
\WorstCaseFixedDesign{n}{d} 
&\defn 
\sup_{\substack{x_1, \dots, x_n \in \R^d \\ \thetastar \in \R^d}}
\E \Lambda_n^{\operatorname{log}}(\thetastar;x_{1:n}; Y_{1:n}), \\
\WorstCaseFixedDesignQuant{1-\delta}{n}{d}
&\defn 
\sup_{\substack{x_1, \dots, x_n \in \R^d \\ \thetastar \in \R^d}}
\Quant{1-\delta}\Big(\Lambda_n^{\operatorname{log}}(\thetastar;x_{1:n}; Y_{1:n})\Big).
\end{aligned}
\]
Above, the randomness is over the realization of $Y_1, \dots, Y_n$.
Similarly, in the case of \emph{random design}, we consider 
\[
\WorstCaseRandomDesign{n}{d} 
\defn 
\sup_{\thetastar \in \R^d, P}
\E \Lambda_n^{\operatorname{log}}(\thetastar;X_{1:n}; Y_{1:n}), 
\quad \mbox{and} \quad 
\WorstCaseRandomDesignQuant{1-\delta}{n}{d}
\defn 
\sup_{\thetastar \in \R^d, P}
\Quant{1-\delta}\Big(\Lambda_n^{\operatorname{log}}(\thetastar;X_{1:n}; Y_{1:n})\Big).
\]
The randomness is over the realization of $Y_{1:n} \mid X_{1:n}$ according 
to the logistic model and $X_{1:n} \simiid P$.
Here, supremum is over probability measures $P$ on $\R^d$.

Our first main result follows. 

\begin{theorem}[Worst-case behavior of the logistic LLR statistic]
\label{thm:main-result}

For $n \geq d \geq 3$ and any $\delta \in (0, \tfrac{1}{\e}]$, it holds that
\begin{subequations}
\begin{equation}
\label{eqn:main-worst-case-result-quantile}
\WorstCaseFixedDesignQuant{1-\delta}{n}{d}
\asymp
\WorstCaseRandomDesignQuant{1-\delta}{n}{d}
\asymp d \log \Big(\frac{\e n}{d}\Big) + \log\frac{1}{\delta}.
\end{equation}
In particular, it holds that
\begin{equation}
\label{eqn:main-worst-case-result-expected}
\WorstCaseFixedDesign{n}{d}
\asymp
\WorstCaseRandomDesign{n}{d}
\asymp
d \log \Big(\frac{\e n}{d}\Big).
\end{equation}
\end{subequations}
\end{theorem}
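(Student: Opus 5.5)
The proof splits into an upper bound and a matching lower bound. We prove the upper bound for fixed design, uniformly in the design, and the lower bound by an explicit construction. Random design then follows by sandwiching. Conditioning on $X_{1:n}$, the fixed design upper bound applies pointwise, so $\WorstCaseRandomDesignQuant{1-\delta}{n}{d}\le \WorstCaseFixedDesignQuant{1-\delta}{n}{d}$. Conversely, for the lower bound we take $P$ uniform on a bad fixed design $\{z_1,\dots,z_m\}$ and choose $n$ a constant multiple of $m$. Then with constant probability every point appears roughly $n/m$ times, and the fixed design lower bound transfers with constant losses. The expectation statement \eqref{eqn:main-worst-case-result-expected} follows from the quantile statement: integrate the tail bound for the upper estimate, and use Markov-type reasoning at a constant $\delta$ for the lower estimate.

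\emph{Upper bound (Shtarkov sum).} For any fixed design, Markov's inequality gives
\[
\P_{\thetastar}\bigl(\Lambda_n^{\log}(\thetastar)>t\bigr)
=\P_{\thetastar}\Bigl(\sup_\theta \tfrac{p_\theta(Y)}{p_{\thetastar}(Y)}>\e^t\Bigr)
\le \e^{-t}\sum_{y\in\{\pm1\}^n}\sup_{\theta}\prod_{i}\sigma(y_i\langle x_i,\theta\rangle)=\e^{-t}S_n(x_{1:n}),
\]
so $\Quant{1-\delta}\le \log S_n+\log(1/\delta)$. It remains to show $\log S_n\lesssim d\log(\e n/d)$ uniformly in the design. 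The key observation is that $\sup_\theta \prod_i\sigma(y_i\langle x_i,\theta\rangle)$ is at most the indicator-like quantity
\[
\sup_\theta\prod_i \sigma(y_i\langle x_i,\theta\rangle)\le \sup_{\theta}\exp\Bigl(-\sum_i \log(1+\e^{-y_i\langle x_i,\theta\rangle})\Bigr).
\]
To control the sum over $y$, we group label vectors according to the sign pattern $(\sign\langle x_i,\theta\rangle)_i$ of the optimizing direction. Within a fixed cell of the central hyperplane arrangement, with pattern $s$, write $k$ for the number of coordinates where $y$ disagrees with $s$. On those coordinates the margins are nonpositive, so each such term contributes a factor at most $1/2$; the agreeing coordinates contribute at most $1$. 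Summing over $y$ is therefore not directly bounded by the number of cells, and the genuine estimate needs a scale argument. We discretize $\theta$ over rays: $\theta=r u$, with $r$ ranging over a geometric grid and $u$ over cells of the arrangement. For each fixed direction $u$, the one-dimensional family $r\mapsto \prod_i\sigma(y_i r\langle x_i,u\rangle)$ has a Shtarkov sum bounded by a $d=1$ type estimate (cf.\ \Cref{cor:dimone}). The number of distinct relevant directions is controlled by counting cells of the arrangement generated by the $x_i$, together with their finitely many sign and threshold patterns. By the Cover / Sauer–Shelah count, this number is at most $(\e n/d)^{O(d)}$. A union-style bound $S_n\le \sum_{\text{cells}}(\text{1-d Shtarkov sum})$ then gives $\log S_n\lesssim d\log(\e n/d)$. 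I expect this step to be the main obstacle: making the reduction from the sup over all $\theta\in\R^d$ to a polynomial-size family of effectively one-dimensional problems rigorous, without picking up dependence on $\|\theta\|$ or on the design. I would first try to bound the sup over a cell by the product over coordinates of $\max(\sigma(\cdot),\1\{\text{sign agrees}\})$, and count pairs (cell, labels with few disagreements) via the VC dimension $d$ of halfspaces through the origin.

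\emph{Lower bound.} The term $\log(1/\delta)$ comes from the $d=1$ bound \Cref{prop:coordinate-subspace-lower} embedded in $\R^d$. For the term $d\log(\e n/d)$ we use Vandermonde subspaces. Take $x_i=(1,t_i,t_i^2,\dots,t_i^{d-1})$ with $t_i$ in $m\approx d/2$ groups of distinct values, each group repeated $n/m$ times. Set $\thetastar$ to be a large multiple of a polynomial whose sign changes interleave the groups, so that for each block $\langle x_i,\thetastar\rangle$ is moderate. This makes the labels in each block behave like independent Bernoulli variables with nondegenerate means. Because polynomials of degree $d-1$ can realize any prescribed values at $d$ nodes, the MLE over $\R^d$ fits each block's empirical frequency exactly. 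The statistic then decouples into a sum of $\asymp d$ independent one-dimensional LLRs, each with block size $n/d$. Each block's mean can be placed at scale $\sim d/n$ rather than $1/2$, in the regime where the binomial LLR has expectation of order $\log(n/d)$ due to rare-event fluctuations. Summing the blocks gives expected value $\gtrsim d\log(\e n/d)$, and independence across blocks yields concentration, hence the quantile bound at $\delta\le 1/\e$. The requirement $d\ge3$ enters in making the interleaving construction have enough degrees of freedom, consistent with the paper's remark.
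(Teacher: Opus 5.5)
Your framing steps match the paper: Markov's inequality on the Shtarkov sum, conditioning on $X_{1:n}$ for the random-design upper bound, $\E Z\ge\delta\,\Quant{1-\delta}(Z)$ for expectations, and a one-dimensional construction for $\log(1/\delta)$. Both substantive steps, however, have genuine gaps.

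\textbf{Lower bound: the construction fails.} If the nodes take only $m\le d$ distinct values, then $\ran(X)$ is exactly the space of block-constant vectors. As you say, $\Lambda_n^{\log}(\thetastar)$ is then a sum of $m$ independent intercept-only binomial LLRs. Each of these is the LLR of a univariate canonical exponential family, so by \Cref{cor:dimone} it has mean at most $1+\log 2$ and tail at most $2\e^{-t}$, for every block size and every success probability, including $p\asymp d/n$. So your design gives $\E\Lambda_n^{\log}\lesssim d$ and quantiles $\lesssim d+\log(1/\delta)$, with no $\log(n/d)$ factor. Rare-event fluctuations of a binomial do not produce $\log(n/d)$. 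Indeed, at $d=1$ your argument would contradict \Cref{cor:dimone} outright, and nothing in it uses $d\ge 3$.

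The factor $\log(n/d)$ comes from the subspace being able to fit the \emph{locations} of rare errors among many distinct coordinates, and block repetition destroys this. The paper uses $n$ distinct nodes $t_1<\dots<t_n$. For $|I|\le r=\lfloor (d-1)/2\rfloor$, the polynomial $\prod_{j\in I}(t-u_j)(t-v_j)$, with $u_j,v_j$ the midpoints around $t_j$, has degree at most $d-1$ and is negative exactly on $I$. Hence every sign pattern with at most $r$ minus signs has fitted likelihood $1$. With constant margin $\sigma^{-1}(1-s/n)$ and $s\asymp r$, this gives $\Gamma\ge N\log(n/s)$ on the event $\{s\le N\le r\}$, which has probability $>1/\e$. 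This is exactly where $d\ge 3$ enters, since it is equivalent to $r\ge 1$.

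Because this holds for arbitrary distinct nodes, the random-design lower bound follows by conditioning on i.i.d.\ uniform $T_i$, with no multiplicity control needed. Your transfer through $P$ uniform on $m$ atoms would need such control. It would not get it: with $n\asymp m$ draws, a constant fraction of the atoms is never hit.

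\textbf{Upper bound: the key step is missing.} Everything rests on $\log S_n\le d\log(\e n/d)$, and the route you sketch does not give it. The inequality $S_n\le\sum_{\text{cells}}(\text{1-d Shtarkov sum})$ is false as stated. Inside one cell of the central arrangement the direction still ranges over a $d$-dimensional cone. Also, $p_\theta(y)$ depends on the magnitudes $|\langle x_i,\theta\rangle|$, not only on their signs. So $\sup_{\theta\in\text{cell}}p_\theta(y)$ is not the supremum along a single ray, and any net of directions brings back dependence on the design geometry or on $\|\theta\|_2$.

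Two further points undercut the sketch. \Cref{cor:dimone} is a tail bound $2\e^{-t}$ and says nothing about the exponential moment at $\lambda=1$; the one-dimensional Shtarkov sum in fact lies between $\sqrt n$ and $n+1$ by \Cref{thm:shtarkov-logistic}. And your fallback, a factor $1/2$ per disagreement, sums to something exponential in $n$.

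The missing idea is to randomize the thresholds. Take $\delta_1,\dots,\delta_n$ i.i.d.\ with CDF $\sigma$. Then $p_\theta(\eps)=\P(\sign(\langle x_i,\theta\rangle-\delta_i)=\eps_i\ \forall i)$, hence $\sup_\theta p_\theta(\eps)\le\P(\eps\text{ is realized by some }\theta)$. Summing over $\eps$ gives the expected number of regions of the \emph{affine} arrangement $\{\theta:\langle x_i,\theta\rangle=\delta_i\}$. For every realization this number is at most $\sum_{\ell=0}^d\binom n\ell\le(\e n/d)^d$. This handles all directions and all scales at once, uniformly in the design.
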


\noindent In~\Cref{sec:proof-thm-main-result}, we  deduce~\Cref{thm:main-result} as a consequence of~\Cref{lem:reformulation-via-subspaces},~\Cref{prop:upper-bound-on-LLR-fixed-design} and \Cref{prop:strong-lower-bound-vandermonde}, as described below.

\subsection{Reformulation via subspaces and upper bounds of \Cref{thm:main-result}}

The statement in~\Cref{thm:main-result} is perhaps more naturally formulated in terms of subspaces. We define for a subspace $W \subset \R^n$, a vector $v \in \R^n$ and $\eps \in \{-1,1\}^n$, the quantity
\begin{equation}
\label{eq:subspace-reformulation}
\Gamma_W(\eps; v)
	= \sup_{w \in W} \log \frac{p_w(\eps)}{p_v(\eps)},
		\quad \mbox{where} \quad
	p_v(\eps) = \prod_{i=1}^n \sigma(\eps_i v_i).
\end{equation}
Whenever the law of $\eps$ is not displayed explicitly, we understand that $\eps\sim p_v$.
We also define 
\begin{equation}
\label{eqn:definition-of-largest-subspace-LLR}
E^\star(n,k) = 
\sup_{\substack{W \subset \R^n\\ \dim(W) \leq k}} \sup_{v \in W} \E_{\eps \sim p_v} \Gamma_W(\eps; v), 
\quad \mbox{and} \quad 
Q^\star_{1-\delta}(n,k) = 
\sup_{\substack{W \subset \R^n\\ \dim(W) \leq k}} \sup_{v \in W} \Quant{1-\delta}\big(\Gamma_W(\eps; v)\big)
\end{equation}
where the quantile in the definition of $Q^\star_{1-\delta}(n,k)$ is taken under $\eps\sim p_v$.

\begin{lemma}[Reformulation via subspaces]
\label{lem:reformulation-via-subspaces}
Fix integers $n, d \geq 1$.
Let $x_1, \dots, x_n \in \R^d$ and let $\thetastar \in \R^d$. Form a design matrix $X \in \R^{n \times d}$ which has rows $x_1^\T, \dots, x_n^\T$. Set $W = \ran(X)$ and put $v^\star = X\thetastar$. 
Then, the following hold true: 
\begin{enumerate}[label=(\roman*)]
\item 
\label{item:rewrite-expected-LLR-via-subspace}
the LLR statistic satisfies $\Lambda_n^{\operatorname{log}}(\thetastar;x_{1:n}; Y_{1:n}) = \Gamma_W(Y_{1:n}; v^\star)$; and
\item 
\label{item:worst-case-reduction}
in fixed design, the worst-case LLR statistic satisfies
\[
\WorstCaseFixedDesign{n}{d} = \max_{1 \leq k \leq \twomin{n}{d}} E^\star(n,k) 
\quad \mbox{and} 
\quad 
\WorstCaseFixedDesignQuant{1-\delta}{n}{d}
=
\max_{1 \leq k \leq \twomin{n}{d}} Q^\star_{1-\delta}(n,k),
\]
for any $\delta \in (0, 1)$.
\end{enumerate}
\end{lemma}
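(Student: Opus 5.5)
Part (i) is a change of variables. For any $\theta\in\R^d$, the vector $X\theta\in\R^n$ has coordinates $\langle x_i,\theta\rangle$. Hence $\prod_{i}\sigma(Y_i\langle x_i,\theta\rangle)=p_{X\theta}(Y_{1:n})$. As $\theta$ ranges over $\R^d$, the vector $w=X\theta$ ranges over exactly $W=\ran(X)$, so
\[
\sup_{\theta\in\R^d}\prod_{i=1}^n\sigma(Y_i\langle x_i,\theta\rangle)=\sup_{w\in W}p_w(Y_{1:n}).
\]
The denominator in \eqref{eq:def-logistic-llr} is $p_{v^\star}(Y_{1:n})$ by definition. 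Taking logarithms gives $\Lambda_n^{\log}(\thetastar)=\Gamma_W(Y_{1:n};v^\star)$. This identity also holds when the supremum is not attained. Moreover, the law of $Y_{1:n}$ under $\P_{\thetastar}$ is exactly $p_{v^\star}$, since $\P(Y_i=y)=\sigma(y\langle x_i,\thetastar\rangle)=\sigma(y v^\star_i)$. So the two random variables agree in distribution, and their expectations and quantiles coincide.

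For part (ii) I would prove two inequalities; I treat the quantile version, and the expectation version is identical.

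\textbf{Upper bound.} Fix a design and $\thetastar$. By (i), the quantile of $\Lambda_n^{\log}$ equals $\Quant{1-\delta}(\Gamma_W(\eps;v^\star))$ with $\eps\sim p_{v^\star}$. Here $v^\star\in W$ and $\dim W=\rank X=:k\le \twomin{n}{d}$. So the quantile is at most $Q^\star_{1-\delta}(n,k)$, which is at most the maximum over $k$. One detail needs care: $k$ could be $0$ if $X=0$. In that case $\Gamma\equiv0$, and this is bounded by $Q^\star_{1-\delta}(n,1)\ge 0$, which is nonnegative because taking $w=v$ shows $\Gamma_W\ge0$. Also, $Q^\star(n,k)$ is defined with $\dim W\le k$, so it is monotone in $k$.

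\textbf{Lower bound.} Fix $1\le k\le\twomin{n}{d}$, a subspace $W$ with $\dim W=j\le k$, and $v\in W$. Choose a basis $b_1,\dots,b_j$ of $W$ (or take $W=\{0\}$ with a zero column if $j=0$). Pad with $d-j\ge0$ zero columns to form $X\in\R^{n\times d}$ with $\ran X=W$. Write $v=X\thetastar$ for some $\thetastar$, which is possible because $v\in W$. Let $x_i$ be the rows of $X$. By (i), $\Lambda_n^{\log}(\thetastar;x_{1:n};Y_{1:n})$ has the same law as $\Gamma_W(\eps;v)$ under $p_v$. Therefore $\WorstCaseFixedDesignQuant{1-\delta}{n}{d}$ is at least $\Quant{1-\delta}(\Gamma_W(\eps;v))$. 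Taking suprema over $W$ and $v$, and then the maximum over $k$, gives the reverse inequality.

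No step here is a real obstacle. The only points to check are bookkeeping: the realizability of $W$ as $\ran X$ requires $\dim W\le d$, which is guaranteed by $k\le\twomin{n}{d}$; the degenerate cases $\dim W=0$ must be handled; and the suprema may be infinite, in which case the identities still hold in $[0,\infty]$.
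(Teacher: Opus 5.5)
Your proposal is correct and takes essentially the same approach as the paper. Part (i) is the change of variables $w = X\theta$ ranging over $\ran(X)$, and part (ii) identifies the pairs $(\ran X, X\thetastar)$ with pairs $(W,v)$ satisfying $v\in W$ and $\dim W\le d$, with rank-zero designs harmless because $\Gamma_{\{0\}}\equiv 0$. Your two-sided inequality with zero-column padding simply spells out the realizability step that the paper's chain of equalities leaves implicit.
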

\begin{proof}
Claim~\ref{item:rewrite-expected-LLR-via-subspace} follows immediately from the definition of the log-likelihood statistic and the observational model~\eqref{eqn:logistic-model}. Claim~\ref{item:worst-case-reduction}
follows as 
\[
\WorstCaseFixedDesign{n}{d} = 
\sup_{X \in \R^{n \times d}}
\sup_{\thetastar \in \R^d}
\E \Gamma_{\ran(X)}(\eps; X\thetastar) = 
\sup_{\substack{W \subset \R^n \\ \dim(W) \leq d}}
\sup_{v \in W}
\E \Gamma_{W}(\eps; v) = 
\sup_{1 \leq k \leq \min\{n,d\}} 
E^\star(n,k).
\]
The reduction in the last equality holds by letting $E^\star(n,0) = 0$ since a design of rank zero leads to $W=\{0\}$ and $\Gamma_{\{0\}}(\eps;0) = 0$. Thus, the index $k=0$ never attains the maximum.
Lastly, a completely analogous argument holds for the quantile. 
\end{proof}

The next result provides some bounds on the stochastic behavior of $\Gamma_{W}(\eps; v)$. 
\begin{proposition}[Upper bounds on the logistic LLR]
\label{prop:upper-bound-on-LLR-fixed-design}
Fix $1\leq k \leq n$. 
For any $k$-dimensional subspace $W \subset \R^n$ and any $v \in \R^n$, let $\eps\sim p_v$. Then, for any $\delta \in (0, 1)$, it holds that 
\[
\E \Gamma_W(\eps; v) \leq k \log \frac{\e n}{k}, \quad \mbox{and} \quad 
\Quant{1-\delta}\Big(\Gamma_W(\eps; v)\Big) \leq k \log \Big( \frac{\e n}{k} \Big) + \log\Big(\frac{1}{\delta}\Big).
\]
\end{proposition}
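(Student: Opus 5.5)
The argument goes through the Shtarkov sum of the model $\{p_w : w\in W\}$, combined with a count of sign patterns from hyperplane arrangements. Define
\[
S_W \defn \sum_{\eps\in\{-1,1\}^n} \sup_{w\in W} p_w(\eps).
\]
For every $v$, Markov's inequality gives
\[
\E_{\eps\sim p_v} \e^{\Gamma_W(\eps;v)} = \sum_{\eps} p_v(\eps)\,\frac{\sup_{w\in W}p_w(\eps)}{p_v(\eps)} = S_W .
\]
This yields $\P(\Gamma_W(\eps;v) > \log S_W + \log(1/\delta)) \le \delta$, which is the quantile bound. Jensen's inequality gives $\E\Gamma_W \le \log S_W$. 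Both claims therefore reduce to the deterministic bound
\[
S_W \le (\e n/k)^k
\]
for every $k$-dimensional subspace $W\subset\R^n$. The statement allows arbitrary $v\in\R^n$, and nothing in this reduction uses $v\in W$.

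To bound $S_W$, I would stratify the sum by the sign pattern of the maximizer. For fixed $\eps$, note that $\sup_{w}\prod_i \sigma(\eps_i w_i) \le \sup_w \prod_i \1\{\eps_i w_i \ge 0\}$, since $\sigma\le 1$. This crude bound is only useful in a counting form. The cleaner route is a mixture, or covering, argument.

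Consider the set of sign vectors $\{\sign(w) : w\in W\}$. By the classical Cover/Sauer-type count for central hyperplane arrangements, a $k$-dimensional subspace meets at most $2\sum_{j<k}\binom{n-1}{j}$ open orthants of $\R^n$. This is at most $(\e n/k)^k$ up to constants; more carefully, $\sum_{j\le k}\binom{n}{j}\le(\e n/k)^k$.

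The point is to show that $\sup_w p_w(\eps)$ behaves like the indicator that $\eps$ is a sign pattern realizable by $W$, spread over neighbouring patterns. Concretely, I would write $\sup_w p_w(\eps) = \sup_{w} \lim_{t\to\infty}$-type bounds. Alternatively, and more concretely, I would use the monotonicity
\[
\sup_{w\in W} p_w(\eps) \le \sum_{s\in \mathcal{S}(W)} \sup_{w\in W,\ \sign(w)=s} p_w(\eps),
\]
and exchange the sums. Then
\[
S_W \le \sum_{s} \sum_{\eps} \sup_{w:\sign w = s} \prod_i \sigma(\eps_i w_i).
\]
Without a dependence of the maximizer on $\eps$, the inner sum would equal $1$.

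The main obstacle is exactly this exchange: for fixed $s$ the supremum still depends on $\eps$, so the inner sum is not $1$. I would handle it by induction on $n$, in the style of Shtarkov-sum recursions. I would split on the last coordinate and use that $\sigma(t)+\sigma(-t)=1$. Summing over $\eps_n$ then gives
\[
\sum_{\eps_n}\sup_w p_w(\eps) \le \sup_w p_w(\eps_{1:n-1}) + \sup_{w\in W\cap\{w_n \text{ constrained}\}}(\cdots).
\]
This should yield a recursion of the form $S(n,k)\le S(n-1,k)+S(n-1,k-1)$, mirroring Pascal's rule for $\sum_{j\le k}\binom{n}{j}$. The justification would be: either the optimizer for $(\eps_{1:n-1},\pm1)$ can be taken the same for both signs, costing $\sigma(w_n)+\sigma(-w_n)=1$; or the difference is controlled by restricting to the codimension-one subspace $W\cap\{w_n=0\}$, or to its projection. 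The base cases are $S(n,0)=1$ and $S(k,k)=2^k$.

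Unrolling the recursion gives $S_W\le\sum_{j=0}^k\binom{n}{j}\le(\e n/k)^k$, which completes the proof. Making the recursion step rigorous, possibly by bounding $\max(a,b)\le a+b-\min(a,b)$ and identifying the $\min$ term with the lower-dimensional subspace, is the step I expect to require the most care.
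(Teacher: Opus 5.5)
Your reduction is correct and is the same as the paper's. The equality $\E_{\eps\sim p_v}\exp(\Gamma_W(\eps;v))=S_W$ is an identity (no Markov is needed there), and Jensen and Markov then give both claims; you are also right that $v\in W$ is never used. However, all of the substance lies in the bound $S_W\le\sum_{\ell\le k}\binom{n}{\ell}$, and there your proposal has a genuine gap. The recursion $S(n,k)\le S(n-1,k)+S(n-1,k-1)$ is asserted, not proved, and the mechanism you sketch for it fails.

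To see the failure, write $f_\pm(\eps_{1:n-1})=\sup_{w\in W}\prod_{i<n}\sigma(\eps_iw_i)\,\sigma(\pm w_n)$. The bound $\max(f_+,f_-)\le\sup_{w\in W}\prod_{i<n}\sigma(\eps_iw_i)$ is fine. The $\min$ term, however, cannot be charged to $W\cap\{w_n=0\}$, even after summing over $\eps_{1:n-1}$. Take $n=3$, $k=1$, $W=\mathrm{span}\{(M,M,1)\}$.
\begin{itemize}
\item For $\eps_{1:2}=(1,1)$ you have $f_+=1$, and evaluating at $t=M^{-1/2}$ gives $f_-\ge\sigma(\sqrt M)^2\sigma(-1/\sqrt M)\to\frac12$.
\item The pattern $(-1,-1)$ is symmetric.
\item For the two mixed patterns, $f_\pm\ge\sigma(0)^3=\frac18$.
\end{itemize}
Hence $\liminf_{M\to\infty}\sum_{\eps_{1:2}}\min(f_+,f_-)\ge\frac54$. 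But $W\cap\{w_3=0\}=\{0\}$ has Shtarkov sum $1=S(2,0)$.

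For hard sign patterns, this charging works by convexity: a convex cell meeting both $\{w_n>0\}$ and $\{w_n<0\}$ must meet $\{w_n=0\}$. That argument does not survive smoothing by $\sigma$. More generally, no argument based on the orthants met by $W$ itself can succeed. For $k=1$ a line meets at most two open orthants, yet $S_W\to n+1$ for $W=\mathrm{span}\{(1,R,\dots,R^{n-1})\}$ as $R\to\infty$.

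The missing idea is to make the ``spreading over neighbouring patterns'' exact using latent logistic thresholds. Let $\delta_1,\dots,\delta_n$ be i.i.d.\ with distribution function $\sigma$. Then $\sigma(\eps_iw_i)=\P(\eps_i(w_i-\delta_i)>0)$, so $p_w(\eps)=\P(\sign(w-\delta)=\eps)$. Consequently
\[
\sup_{w\in W}p_w(\eps)\le\P\bigl(\exists\, w\in W:\ \sign(w-\delta)=\eps\bigr).
\]
Summing over $\eps$ gives $S_W\le\E N(\delta)$. Here $N(\delta)$ is the number of open orthants met by the affine subspace $W-\delta$. Equivalently, it is the number of regions of an arrangement of at most $n$ affine hyperplanes in $\R^k$, which is deterministically at most $\sum_{\ell\le k}\binom{n}{\ell}$. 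This is the paper's proof.

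The random offsets do two things. They turn soft suprema into hard counts. They also make the arrangement affine rather than central, which is why the correct count is $\sum_{\ell\le k}\binom{n}{\ell}$ rather than Cover's $2\sum_{j<k}\binom{n-1}{j}$. Your Pascal recursion does hold, but pathwise for this region count (by deletion--restriction), not for the Shtarkov sums themselves.
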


\begin{remark}[Equivalent statements to \Cref{prop:upper-bound-on-LLR-fixed-design}]
By way of the equivalence in~\Cref{lem:reformulation-via-subspaces}\ref{item:rewrite-expected-LLR-via-subspace}, 
we can equivalently formulate~\Cref{prop:upper-bound-on-LLR-fixed-design} as follows. 
Suppose $x_1, \dots, x_n \in \R^d$, 
fix $\thetastar \in \R^d$ and $\delta \in (0, 1)$. 
Let $r = \rank(X) \leq \min\{n,d\}$, where $X \in \R^{n \times d}$ is the design matrix formed with rows $x_i^\T$.
If $r=0$, then $\Lambda_n^{\operatorname{log}}(\thetastar;x_{1:n};Y_{1:n})=0$ identically. If $r\geq1$, then the log-likelihood ratio statistic satisfies
\[
\E \Lambda_n^{\operatorname{log}}(\thetastar;x_{1:n}; Y_{1:n}) \leq r \log \frac{\e n}{r} \quad \mbox{and} \quad 
\P\Big(\Lambda_n^{\operatorname{log}}(\thetastar;x_{1:n}; Y_{1:n}) \leq r \log \frac{\e n}{r} + \log\Big(\frac{1}{\delta}\Big)\Big) \geq 1-\delta, 
\]
and these bounds hold uniformly over all deterministic 
choices of $x_1,\dots,x_n$ and all $\thetastar \in \R^d$.
\end{remark}

The proof of the upper bound is based on the Shtarkov sum associated with the
logistic model, introduced by Shtarkov~\cite{Shtarkov1987} in the context of
universal coding. For the sake of presentation, we discuss this object in
detail in \Cref{sec:shtarkov}; here we only use the following immediate
consequence of \Cref{thm:shtarkov-logistic}. For a subspace \(W\subset \R^n\),
define
\[
\mathcal S^{\operatorname{log}}(W)
\defn
\sum_{\eps\in\{-1,1\}^n}\sup_{w\in W}p_w(\eps).
\]
This quantity is equivalently the exponential moment of the likelihood-ratio
statistic at \(\lambda=1\). Indeed, for every \(v\in\R^n\), since
\(p_v(\eps)>0\) for all \(\eps\in\{-1,1\}^n\), we have
\[
\E_{\eps\sim p_v}\exp\bigl(\Gamma_W(\eps;v)\bigr)
=
\sum_{\eps\in\{-1,1\}^n}
p_v(\eps)
\sup_{w\in W}\frac{p_w(\eps)}{p_v(\eps)}
=
\sum_{\eps\in\{-1,1\}^n}\sup_{w\in W}p_w(\eps)
=
\mathcal S^{\operatorname{log}}(W).
\]
In particular, this exponential moment is independent of the center \(v\).

The next lemma is the key input for the upper bound. Its proof is deferred, in
essence, to \Cref{sec:shtarkov}, where we also discuss the sharpness of this
estimate and give a more detailed analysis of the moment generating function at
\(\lambda=1\).

\begin{lemma}[Shtarkov sum upper bound for subspaces]
\label{lem:shtarkov-moment-bound-subspaces}
Let \(1\le k\le n\), and let \(W\subset\R^n\) be a \(k\)-dimensional subspace.
Then, for every \(v\in\R^n\),
\[
\E_{\eps\sim p_v}\exp\bigl(\Gamma_W(\eps;v)\bigr)
=
\mathcal S^{\operatorname{log}}(W)
\le
\sum_{\ell=0}^k \binom{n}{\ell}
\le
\left(\frac{\e n}{k}\right)^k .
\]
\end{lemma}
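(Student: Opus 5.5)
The plan is to represent the logistic likelihood as the probability of a sign pattern under additive logistic noise. This turns the Shtarkov sum into the expected number of orthants met by a randomly shifted copy of $W$, which is then bounded by a hyperplane-arrangement count. The identity $\E_{\eps\sim p_v}\exp(\Gamma_W(\eps;v))=\mathcal S^{\operatorname{log}}(W)$ was already verified above, so only the two inequalities need proof.

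\textbf{Step 1 (latent-variable representation).} Let $\xi_1,\dots,\xi_n$ be i.i.d.\ standard logistic random variables, so $\P(\xi_i>-t)=\sigma(t)$ by symmetry. For $w\in\R^n$ and $\eps\in\{-1,1\}^n$,
\[
p_w(\eps)=\prod_{i=1}^n\sigma(\eps_i w_i)=\P\bigl(\eps_i(w_i+\xi_i)>0 \text{ for all } i\bigr).
\]
Thus $p_w(\eps)$ is the probability that the random point $w+\xi$ lies in the open orthant $O_\eps=\{z:\eps_i z_i>0\ \forall i\}$.

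\textbf{Step 2 (exchange sup and probability).} For each fixed $w\in W$, the event $\{w+\xi\in O_\eps\}$ is contained in the event $\{(W+\xi)\cap O_\eps\neq\emptyset\}$. Hence $\sup_{w\in W}p_w(\eps)\le \P\bigl((W+\xi)\cap O_\eps\ne\emptyset\bigr)$. Summing over $\eps$ gives
\[
\mathcal S^{\operatorname{log}}(W)\le \E\, N(\xi),\qquad N(\xi)=\#\{\eps:\ (W+\xi)\cap O_\eps\neq\emptyset\}.
\]
It therefore suffices to show $N(\xi)\le\sum_{\ell=0}^k\binom nk$ deterministically, that is, for every shift $\xi$.

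\textbf{Step 3 (arrangement count).} This is the main step, though it is classical. Fix an isomorphism $\R^k\to W$, $u\mapsto Bu$. The affine space $W+\xi$ is parametrized by $u\mapsto Bu+\xi$, and the coordinate functions become affine functions $u\mapsto \langle b_i,u\rangle+\xi_i$ on $\R^k$. Each orthant met by $W+\xi$ corresponds to a nonempty set of the form $\{u:\eps_i(\langle b_i,u\rangle+\xi_i)>0\ \forall i\}$. These sets are pairwise disjoint and open, and each is a union of cells (open regions) of the arrangement of the hyperplanes $H_i=\{u:\langle b_i,u\rangle+\xi_i=0\}$. Each cell carries a constant sign vector, so distinct orthants use distinct cells, and $N(\xi)$ is at most the number of cells.

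Indices with $b_i=0$ cause no difficulty. Such a coordinate is constant on $W+\xi$: if $\xi_i\ne0$ it fixes $\eps_i$, and if $\xi_i=0$ no open orthant is met in that coordinate. So we may discard these indices, which only lowers the count. The remaining $m\le n$ genuine affine hyperplanes in $\R^k$ produce at most $\sum_{\ell=0}^k\binom m\ell\le\sum_{\ell=0}^k\binom n\ell$ regions, by the standard count (Buck/Zaslavsky, or induction on $m$, where adding one hyperplane increases the number of regions by at most the number of regions of the induced arrangement in $\R^{k-1}$). This yields the first inequality.

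\textbf{Step 4.} The final bound $\sum_{\ell=0}^k\binom n\ell\le(\e n/k)^k$ is the usual estimate. For $1\le k\le n$, write $\sum_{\ell\le k}\binom n\ell(k/n)^k\le\sum_{\ell}\binom n\ell(k/n)^\ell=(1+k/n)^n\le\e^k$.

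The only point requiring care is Step 3's reduction to a region count, in particular the handling of degenerate coordinates and the fact that distinct orthants correspond to distinct cells. The rest of the argument is direct.
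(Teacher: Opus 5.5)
Your proposal is correct and takes essentially the same approach as the paper. The paper applies part (i) of its Shtarkov-sum theorem to the rows of a basis matrix $U$ with $\ran(U)=W$. That part is proved exactly as you do: the latent logistic-noise representation of $p_w(\eps)$, bounding the Shtarkov sum by the expected number of realized sign patterns, discarding zero rows, and counting regions of an affine hyperplane arrangement.

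Two small fixes. In Step 2 the target bound should read $\sum_{\ell=0}^k\binom{n}{\ell}$; you wrote $\binom{n}{k}$. Also, you use $\P\bigl((W+\xi)\cap O_\eps\neq\emptyset\bigr)$ without checking that this event is measurable; it is, because the set of such $\xi$ is $O_\eps - W$, which is open. The paper checks this measurability explicitly.
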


\begin{proof}
 Writing \(W=\ran(U)\) for some matrix
\(U\in\R^{n\times k}\), the quantity \(\mathcal S^{\operatorname{log}}(W)\) is
exactly the Shtarkov sum of a \(k\)-dimensional logistic model with design
vectors given by the rows of \(U\). Therefore \Cref{thm:shtarkov-logistic}
gives
\[
\mathcal S^{\operatorname{log}}(W)
\le
\sum_{\ell=0}^k \binom{n}{\ell}
\le
\left(\frac{\e n}{k}\right)^k .
\]
The claim follows.
\end{proof}

\begin{proof}(\emph{of \Cref{prop:upper-bound-on-LLR-fixed-design}})
By \Cref{lem:shtarkov-moment-bound-subspaces},
\[
\E_{\eps\sim p_v}\exp\bigl(\Gamma_W(\eps;v)\bigr)
\le
\left(\frac{\e n}{k}\right)^k.
\]
Jensen's inequality gives
\[
\E_{\eps\sim p_v}\Gamma_W(\eps;v)
\le
\log\E_{\eps\sim p_v}\exp\bigl(\Gamma_W(\eps;v)\bigr)
\le
k\log\frac{\e n}{k}.
\]
Similarly, Markov's inequality gives, for every \(t>0\),
\[
\P_{\eps\sim p_v}\left( \Gamma_W(\eps;v)>k\log\Big(\frac{\e n}{k}\Big)+t\right)
\leq \e^{-t}.
\]
Taking \(t=\log(1/\delta)\) yields the claimed quantile bound.
\end{proof}

\subsection{Lower bounds on the logistic LLR and Vandermonde subspaces}

The main difficulty in proving~\Cref{thm:main-result} is obtaining lower bounds that match~\Cref{prop:upper-bound-on-LLR-fixed-design}. It requires us to construct worst-case pairs $(W,v)$, where $W \subset \R^n$ and $v \in W$. In this section, we present multiple lower bounds on the logistic LLR. First, a simple lower bound at the origin, in fixed design, in \Cref{prop:fixed-design-lower}.
Second, in~\Cref{prop:strong-lower-bound-vandermonde}, we present a lower bound that contains the needed multiplicative logarithmic inflation required to establish~\Cref{thm:main-result}.

\begin{proposition}[Lower bounds on the logistic LLR in fixed design]
\label{prop:fixed-design-lower}
Fix $1\leq k \leq n$. 
For any $k$-dimensional subspace $W \subset \R^n$, it holds that 
\[
\Gamma_W(\eps; 0) \geq \half \|P_W \eps\|_2^2.
\]
Above, $P_W$ denotes the orthogonal projection onto the subspace $W$. Consequently, $ \E \Gamma_W(\eps; 0) \geq \frac{k}{2}$. In particular if $k \geq 3$ and $\delta \in (0, 1/2]$, then 
$\Quant{1-\delta} \big(\Gamma_W(\eps; 0)\big) \gtrsim k$. 
\end{proposition}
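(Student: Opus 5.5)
At the origin $p_0$ is the uniform distribution on $\{-1,1\}^n$, so $\eps$ has i.i.d.\ Rademacher coordinates. Writing $f(w)=\log p_w(\eps)-\log p_0(\eps)=\sum_i\bigl(\log 2-\log(1+\e^{-\eps_i w_i})\bigr)$, we have $\Gamma_W(\eps;0)=\sup_{w\in W} f(w)$. The plan is to lower bound this supremum by a second order expansion of the univariate function $g(t)=\log 2-\log(1+\e^{-t})$, and then optimize over a ray in $W$.

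\textbf{Pointwise inequality.} We have $g(0)=0$, $g'(0)=1/2$, and $g''(t)=-\sigma(t)\sigma(-t)\ge -1/4$. Hence $g(t)\ge t/2-t^2/8$ for all real $t$. Applying this with $t=\eps_i w_i$ and using $\eps_i^2=1$ gives, for every $w\in\R^n$,
\[
f(w)\ge \tfrac12\langle \eps,w\rangle-\tfrac18\|w\|_2^2 .
\]
For $w\in W$ we have $\langle\eps,w\rangle=\langle P_W\eps,w\rangle$. Choosing $w=2P_W\eps\in W$ yields $f(w)\ge \|P_W\eps\|_2^2-\tfrac12\|P_W\eps\|_2^2=\tfrac12\|P_W\eps\|_2^2$. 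This is exactly the claimed inequality.

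\textbf{Expectation.} Since $\E\eps\eps^\T=I_n$, we get $\E\|P_W\eps\|_2^2=\trace(P_W)=k$, and therefore $\E\Gamma_W(\eps;0)\ge k/2$.

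\textbf{Quantile bound.} This is the only step needing care, namely a lower-tail (anti-concentration) estimate for $Z=\|P_W\eps\|_2^2$. We have $\E Z=k$, so it suffices to show $\P(Z\ge k/2)>1/2$ for $k\ge3$; then $\Quant{1-\delta}(\Gamma_W)\ge k/4$ for $\delta\le 1/2$.

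For this we compute the variance of the Rademacher chaos $Z=\sum_{i,j}P_{ij}\eps_i\eps_j$, where $P=P_W$:
\[
\Var(Z)=2\sum_{i\ne j}P_{ij}^2\le 2\|P\|_F^2=2k .
\]
Chebyshev's inequality then gives $\P(Z<k/2)\le \Var(Z)/(k/2)^2\le 8/k$. This is below $1/2$ only once $k\ge 17$.

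For $3\le k\le 16$ we need another route. The first option is to use the refinement $\Var(Z)=2\bigl(k-\sum_i P_{ii}^2\bigr)\le 2k(1-k/n)$. The alternative is to lower the threshold to $Z\ge c k$ and apply Paley--Zygmund: $\P(Z\ge ck)\ge(1-c)^2(\E Z)^2/\E Z^2\ge (1-c)^2k^2/(k^2+2k)$. For $k\ge3$ and small $c$, say $c=0.02$, this gives $\P(Z\ge ck)\ge 0.96\cdot 0.6>1/2$. So $\P(\Gamma_W\ge ck/2)>1/2$, which gives $\Quant{1-\delta}(\Gamma_W(\eps;0))\ge ck/2\gtrsim k$ for $\delta\le1/2$. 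The restriction $k\ge3$ enters precisely because $k^2/(k^2+2k)>1/2$ requires $k>2$.
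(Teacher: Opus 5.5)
Your proof is correct. The quantile step follows the paper, but you get the key pointwise inequality by a different, primal route.

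\textbf{Pointwise inequality.} The paper argues on the dual side. It invokes the variational characterization of \Cref{lem:var-char}, which rests on Fenchel duality, to write $\Gamma_W(\eps;0)$ as an infimum of $\sum_i\kl{\Ber{\frac{1+\delta_i}{2}}}{\Ber{\frac12}}$ over $\delta\in[-1,1]^n$ with $P_W\delta=P_W\eps$. It then bounds each summand below by $\delta_i^2/2$ and uses $\|\delta\|_2\ge\|P_W\delta\|_2=\|P_W\eps\|_2$. You instead plug the explicit candidate $w=2P_W\eps$ into the supremum, after the Taylor minorant $\log(2\sigma(t))\ge t/2-t^2/8$, i.e.\ $\log\cosh(t/2)\le t^2/8$. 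These two quadratic bounds are convex conjugates of one another, which is why both arguments land on exactly the same constant $\half$.

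\textbf{What each route buys.} Your version is more elementary and self-contained: it needs no duality theorem and no KL computation. For a lower bound it is also the natural choice, since exhibiting one good $w$ suffices. The dual form pays off elsewhere in the paper, near the origin and in the $d^{3/2}/n$ analysis, where exhibiting a feasible $\delta$ yields upper bounds instead.

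\textbf{Quantile step.} Your Paley--Zygmund argument and the paper's Cantelli argument use the same second-moment input, $\E Z=k$ and $\Var(Z)\le 2k$, and $k\ge3$ enters in the same way. Cantelli is always at least as sharp here. It lets the paper use the threshold $k/10$ and conclude $\Quant{1-\delta}(\Gamma_W(\eps;0))\ge k/20$, whereas you need a smaller threshold such as $c=0.02$. You can drop both the Chebyshev attempt and the refinement $\Var(Z)\le 2k(1-k/n)$. The refinement gives nothing when $n\gg k$, and Paley--Zygmund already covers every $k\ge 3$.
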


\begin{proof}
We can write $W = \ran(U)$ for some $U \in \R^{n \times k}$ with $U^\T U = I_k$. Set $h(\delta) = (1+\delta) \log(1+\delta) + (1-\delta) \log(1-\delta)$. Note that $h(\delta) \geq \delta^2$ for $|\delta| \leq 1$. Applying the variational characterization stated and proved later in~\Cref{lem:var-char} with $v=0$ (hence $p_i=1/2$ and
$\kl{\Ber{\frac{1+\delta}{2}}}{\Ber{\tfrac{1}{2}}}=h(\delta)/2$), we obtain

\[
\Gamma_W(\eps; 0) = \half \inf_{\delta \in [-1,1]^n, U^\T \delta = U^\T \eps} 
\sum_{i=1}^n h(\delta_i) 
\geq  \half \|UU^\T \eps\|_2^2 = 
\half \|P_W \eps\|_2^2
\]
which furnishes the lower bound. Put $Z=\|P_W\eps\|_2^2$. Since
\[
\E Z=\trace(P_W)=k,
\qquad
\Var(Z)=4\sum_{i<j}(P_W)_{ij}^2\leq 2k,
\]
the expectation bound follows. Moreover, Cantelli's inequality gives, for
$k\geq3$,
\[
\P\left(Z\leq\frac{k}{10}\right)
\leq
\frac{2k}{2k+(9k/10)^2}
\leq \frac{200}{443}<\frac12.
\]
Thus, for every $t<k/20$,
$\P(\Gamma_W(\eps;0)\leq t)<1/2\leq1-\delta$, and hence
$\Quant{1-\delta}(\Gamma_W(\eps;0))\geq k/20$.
\end{proof}

\begin{proposition}[Lower bound for coordinate subspaces]
\label{prop:coordinate-subspace-lower}
There exists a constant $c > 0$, such that for any $1 \leq k \leq n$ 
and any $\delta \in (0, 1/2]$, it holds that 
\[
\sup_{\lambda \in \R} 
\Quant{1-\delta}\Big(
	\Gamma_{\Span\{e_1, \dots, e_k\}}
	\big(\eps; \lambda (e_1 +\cdots +e_k)\big)
	\Big)
\geq 
c \, \Big(k + \log \frac{1}{\delta}\Big).
\]
\end{proposition}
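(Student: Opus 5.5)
The proof rests on the fact that for a coordinate subspace the statistic $\Gamma$ decouples into independent one-dimensional terms with an explicit form. Write $W=\Span\{e_1,\dots,e_k\}$ and $v=\lambda(e_1+\cdots+e_k)$.

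\emph{Explicit formula.} For $i>k$ we have $w_i=v_i=0$, so these factors of $p_w(\eps)/p_v(\eps)$ cancel. For $i\le k$ each coordinate $w_i$ is free. Since $\sup_{t\in\R}\log\sigma(\eps_i t)=0$ (approached as $\eps_i t\to\infty$), we get
\[
\Gamma_W(\eps;v)=\sum_{i=1}^k \log\bigl(1+\e^{-\eps_i\lambda}\bigr),
\]
where $\eps_1,\dots,\eps_k$ are i.i.d.\ with $\P(\eps_i=-1)=\sigma(-\lambda)$. Both lower bounds follow from this formula by choosing $\lambda$.

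\emph{The $k$ term.} Take $\lambda=0$. Then $\Gamma_W(\eps;0)=k\log 2$ deterministically, so every quantile equals $k\log 2$.

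\emph{The $\log(1/\delta)$ term.} Take $\lambda>0$ with $p:=\sigma(-\lambda)=2\delta$; this is allowed when $\delta<1/4$, and then $\lambda=\log\frac{1-2\delta}{2\delta}$. All summands are nonnegative. On the event $\{\eps_1=-1\}$ the first summand is $\log(1+\e^{\lambda})\ge\lambda$, so
\[
\P\bigl(\Gamma_W(\eps;v)\ge\lambda\bigr)\ge 2\delta>\delta .
\]
Hence $\P(\Gamma_W<\lambda)<1-\delta$, which forces $\Quant{1-\delta}(\Gamma_W)\ge\lambda$. For $\delta\le 1/8$ we have $\lambda\ge\log\frac{3}{8\delta}\ge c'\log(1/\delta)$ for a universal $c'>0$, since $\log(1/\delta)\ge\log 8$ absorbs the constant shift.

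\emph{The remaining range.} For $\delta\in(1/8,1/2]$ we have $\log(1/\delta)\le\log 8$. This is at most $3k\log2$, so the $\lambda=0$ choice already gives a bound $\gtrsim k+\log(1/\delta)$.

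\emph{Combining.} The supremum over $\lambda$ is at least the maximum of the two choices. That maximum is at least half the sum $k\log2+c'\log(1/\delta)$ in the small-$\delta$ range, which yields the claim with a universal $c$.

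There is no real obstacle here. The only care needed is the strict inequality in the quantile definition, handled by taking $p=2\delta$ rather than $p=\delta$, and the bookkeeping of constants across the $\delta$ ranges.
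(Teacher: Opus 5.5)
Your proof is correct and follows the paper's route: the same decoupled formula $\Gamma_W(\eps;\lambda\1_k)=-\sum_{i\le k}\log\sigma(\eps_i\lambda)$, the choice $\lambda=0$ for the $k\log 2$ term, and a large-margin rare event for the $\log(1/\delta)$ term. The one difference is the rare event. The paper uses the event that all $k$ labels are flipped, with $\sigma(-\lambda)=\gamma^{1/k}$ and $\gamma\downarrow\delta$, which gives exactly $\log(1/\delta)$ and avoids your case split on $\delta$; your single-flip event with probability $2\delta$ costs only constants.
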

\begin{proof}
	We use the shorthand notation
	\[
	W_k \defn \Span\{e_1,\dots,e_k\},
	\quad \mbox{and} \quad
	\1_k \defn e_1+\cdots+e_k.
	\]
	Fix $\lambda \in \R$. Then, by construction, 
	for every $\eps \in \{-1,1\}^n$, 
	\[
	\Gamma_{W_k}(\eps;\lambda \1_k)
	=
	\sup_{w \in W_k}\log \frac{p_w(\eps)}{p_{\lambda \1_k}(\eps)}
	=
	\sup_{w \in W_k}\sum_{i=1}^k
	\log \frac{\sigma(\eps_i w_i)}{\sigma(\eps_i \lambda)}
	=
	-\sum_{i=1}^k \log \sigma(\eps_i \lambda).
	\]
	First consider $\lambda = 0$. Then, $\Gamma_{W_k}(\eps;0) = 
	k \log 2$, for all $\eps \in \{-1,1\}^n$, so
	\begin{equation}
	\label{ineq:lower-from-zero-for-coordinate-subspace}
	\Quant{1-\delta}\big(\Gamma_{W_k}(\eps;0)\big)
	=k\log 2.
	\end{equation}
	Next, fix $\gamma\in(\delta,1)$ and consider
	$v = \log(\tfrac{1-\gamma^{1/k}}{\gamma^{1/k}})\1_k$. 
	On the event $A = \{\eps_1 = \cdots = \eps_k = -1\}$, 
	we have,
	\[
	\Gamma_{W_k}\Big(\eps; \log(\tfrac{1-\gamma^{1/k}}{\gamma^{1/k}})\1_k\Big)
	=
	-\sum_{i=1}^k \log \sigma\Big(-\log(\tfrac{1-\gamma^{1/k}}{\gamma^{1/k}})\Big)
	= \log \frac{1}{\gamma}.
	\]
	Additionally, with $\lambda = \log(\tfrac{1-\gamma^{1/k}}{\gamma^{1/k}})$, we have $\P_{p_{\lambda \1_k}}(A) = \gamma>\delta$. Therefore,
	\[
	\P_{p_{\lambda \1_k}}\Big(
	\Gamma_{W_k}(\eps; \lambda\1_k)
	\geq \log\frac{1}{\gamma}
	\Big)\geq\gamma>\delta.
	\]
	By the definition of the $(1-\delta)$-quantile and taking the supremum
	over $\gamma\in(\delta,1)$,
	\begin{equation}
	\label{ineq:lower-from-v-for-coordinate-subspace}
	\sup_{\lambda\in\R}\Quant{1-\delta}\Big(
	\Gamma_{W_k}(\eps; \lambda\1_k)
	\Big)
	\geq \log\frac{1}{\delta}.
	\end{equation}
	Combining~\cref{ineq:lower-from-zero-for-coordinate-subspace} and \cref{ineq:lower-from-v-for-coordinate-subspace}, we obtain
	\[
	\sup_{\lambda\in\R}
	\Quant{1-\delta}\Big(
	\Gamma_{W_k}(\eps;\lambda \1_k)
	\Big)
	\geq
	\twomax{k \log 2}{\log \frac{1}{\delta}} 
	\geq \frac{\log 2}{1 + \log 2}\Big( k + \log \frac{1}{\delta}\Big),
	\]
	as required.
\end{proof}

Next, we study a family of subspaces, which we refer to as \emph{Vandermonde subspaces}, $V_{n,k} \subset \R^n$. These subspaces are at the core of the argument for the lower bound of Theorem~\ref{thm:main-result} showing the necessity of the logarithmic factor.
Given $t_1,\dots,t_n\in[0,1]$, these subspaces are defined as
\[
V_{n, k}(t_1, \dots, t_n) = \Big\{\,\big(p(t_1), \dots, p(t_n)\big) \mid p \in \R[t],~ \deg p \leq k - 1 \,\Big\}.
\]
Above, $\R[t]$ denotes the set of polynomials in the indeterminate $t$ with real coefficients. We refer to $V_{n,k}$ as a Vandermonde subspace because 
it can be realized as the range of a suitably constructed Vandermonde matrix. Indeed, if
$p \in \R[t]$ with $\deg p \leq k - 1$, then there exists 
$a = (a_0, \dots, a_{k-1}) \in \R^k$ such that 
$
p(t) = \sum_{j=0}^{k-1} a_j t^j$. 
Consequently, we have 
\[
\begin{pmatrix}
p(t_1) \\ 
\vdots \\	
p(t_n)
\end{pmatrix}
=
\begin{pmatrix}
1 & t_1 & \cdots & t_1^{k-1} \\
\vdots & \vdots & \ddots & \vdots \\
1 & t_n & \cdots & t_n^{k-1}
\end{pmatrix} \, 
a.
\]
Hence $V_{n,k}(t_1, \dots, t_n)$ is exactly the range of the $n \times k$ Vandermonde matrix above. In particular, if
$t_1, \dots, t_n$ are pairwise distinct and $k \leq n$, then $\dim(V_{n,k}(t_1, \dots, t_n)) = k$, as the first $k$ rows form a square
Vandermonde matrix with determinant $\prod_{1 \leq i < j \leq k} (t_j - t_i) \neq 0$. 
For $3 \leq k \leq n$, we also consider the vector 
\[
v_{n, k} = \sigma^{-1}\bigg(1-\frac{1}{n} \floor{\frac{k-1}{2}}\bigg) \1_n.
\]

\begin{proposition}
[Lower bounds for Vandermonde subspaces]
\label{prop:strong-lower-bound-vandermonde}
There exists a constant $c > 0$ such that 
for any $3 \leq k \leq n$, for any $\delta \in (0, \tfrac{1}{\e}]$, it holds that: 
\begin{enumerate}[label=(\roman*)]
\item \label{item:lower-bound-quantile-vandermonde-fixed}
in fixed design, we have 
\[
\sup_{\lambda \in \R} \inf_{0 \leq t_1 < \cdots < t_n \leq 1} \Quant{1-\delta}\Big(\Gamma_{V_{n,k}(t_1, \dots, t_n)}(\eps; \lambda \1_n)\Big)
\geq c \,\Big(k \log \frac{\e n}{k} + \log \frac{1}{\delta}\Big).
\]
\item \label{item:lower-bound-quantile-vandermonde-random}
for any probability measure $P$ on $[0,1]$ such that
$T_1, \dots, T_n \simiid P$ are almost surely distinct, in random design we have
\[
\sup_{\lambda \in \R} \Quant{1-\delta}\Big(\Gamma_{V_{n,k}(T_1, \dots, T_n)}(\eps; \lambda \1_n)\Big)
\geq c \,\Big(k \log \frac{\e n}{k} + \log \frac{1}{\delta}\Big),
\] 
where the randomness above is over both $T_1,\dots, T_n$ and $\eps_{1:n} \mid T_{1:n}$; 
\item \label{item:lower-bound-expected-vandermonde}
for any probability measure $P$ on $[0,1]$ such that
$T_1, \dots, T_n \simiid P$ are almost surely distinct, we have
\[
\E_{T_1,\dots,T_n \simiid P} \E_\eps \Gamma_{ V_{n,k}(T_1, \dots, T_n)}(\eps; v_{n,k})
\geq 
\inf_{0 \leq t_1 < \cdots < t_n \leq 1} 
\E_\eps \Gamma_{V_{n,k}(t_1, \dots, t_n)}(\eps; v_{n,k})
\geq c \, k \log \frac{\e n}{k}.
\]
\end{enumerate} 
\end{proposition}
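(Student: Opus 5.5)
The idea is to exploit the fact that a Vandermonde subspace can realize every sign pattern with few negative coordinates. When it can, the supremum in $\Gamma$ equals $1$, so $\Gamma$ reduces to $-\log p_v(\eps)$. This quantity is large when $v$ is very confident and the negative labels are rare.

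\textbf{Step 1 (realizability).} Let $m=\lfloor (k-1)/2\rfloor\ge 1$, and fix distinct $t_1<\dots<t_n$ in $[0,1]$. Take any set $S\subset\{1,\dots,n\}$ with $|S|\le m$ and any sign vector $\eps$ with $\{i:\eps_i=-1\}=S$. I claim there is a polynomial $p$ with $\deg p\le 2m\le k-1$ such that $\eps_i p(t_i)>0$ for all $i$. To build it, put two roots of $p$ around each $t_i$ with $i\in S$, strictly between neighbouring design points, and take $p=-\prod_{i\in S}(t-a_i)(t-b_i)$ with $a_i<t_i<b_i$.

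Then $w_R=R\,(p(t_1),\dots,p(t_n))$ lies in $V_{n,k}$ and $p_{w_R}(\eps)\to 1$ as $R\to\infty$. Hence, on the event $\{N\le m\}$, where $N=\#\{i:\eps_i=-1\}$,
\[
\Gamma_{V_{n,k}}(\eps;\lambda\1_n)=-\log p_{\lambda\1_n}(\eps)= N\log\frac{1}{1-\sigma(\lambda)}+(n-N)\log\frac{1}{\sigma(\lambda)}\ge N\log\frac{1}{1-\sigma(\lambda)}.
\]
We also always have $\Gamma\ge 0$, by taking $w=v$. None of this depends on the $t_i$, so every bound below holds with the infimum over designs. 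Consequently the random-design statements (ii) and the first inequality of (iii) follow by conditioning on $T_{1:n}$.

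\textbf{Step 2 (the $k\log(\e n/k)$ term).} Under $p_{\lambda\1_n}$ we have $N\sim\mathrm{Bin}(n,q)$ with $q=1-\sigma(\lambda)$.

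For (iii), take $v_{n,k}$, i.e.\ $q=m/n$. Then
\[
\E\Gamma\ge \log(n/m)\,\E[N\1\{N\le m\}].
\]
Since the binomial has mean $m$ and median within $1$ of $m$, the expectation $\E[N\1\{N\le m\}]$ is $\gtrsim m$. This gives $\E\Gamma\gtrsim m\log(n/m)\asymp k\log(\e n/k)$, using $k\ge3$ so that $m\asymp k$.

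For the quantile with $\delta\le 1/\e$, I need an event of probability strictly larger than $1/\e$. This is where constants are delicate, and I expect it to be the main obstacle.

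For large $m$, I will take $q=m/(2n)$ and the event $\{m/8\le N\le m\}$. By Chebyshev or a Poisson approximation this event has probability close to $1$, and on it $\Gamma\ge (m/8)\log(2n/m)$.

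For small $m$ (bounded $k$), I will take $q=1/(2n)$ and the event $\{N=1\}$ or $\{1\le N\le m\}$. On it $\Gamma\ge\log(2n)$, and its probability is about $1-\e^{-1/2}\approx0.39>1/\e$. A careful version compares $\mathrm{Bin}(n,1/(2n))$ with the Poisson law, and may need a slightly larger $q$ to obtain margin. Either way $\Quant{1-\delta}(\Gamma)\gtrsim k\log(\e n/k)$.

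\textbf{Step 3 (the $\log(1/\delta)$ term).} Since $\1_n\in V_{n,k}$, monotonicity of $\Gamma_W$ in $W$ gives $\Gamma_{V_{n,k}}(\eps;\lambda\1_n)\ge\Gamma_{\Span\{\1_n\}}(\eps;\lambda\1_n)$. I then mimic \Cref{prop:coordinate-subspace-lower}, adapted to a single direction. Choose $\lambda$ so that $\P(\eps=-\1_n)=\gamma>\delta$. On that event the pattern $-\1_n$ is realized by $w=-R\1_n$, so $\Gamma=\log(1/\gamma)$. Letting $\gamma\downarrow\delta$ gives $\sup_\lambda\Quant{1-\delta}\ge\log(1/\delta)$.

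Finally, taking the maximum of the bounds from Steps 2 and 3 gives $c\,(k\log(\e n/k)+\log(1/\delta))$.
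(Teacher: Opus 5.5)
Your route is the paper's: products of quadratics realize every pattern with at most $\lfloor (k-1)/2\rfloor$ minus signs, you use constant targets $\lambda\1_n$ with a binomial error count, the constant sign pattern gives the $\log(1/\delta)$ term, and you condition on the nodes for random design. However, the quantile bound for $k\in\{3,4\}$ (that is, $m=1$) fails as you set it up, and your proposed repair cannot work.

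When $m=1$, the only event on which your Step~1 bound is available is $\{1\le N\le m\}=\{N=1\}$. With $q=1/(2n)$, its probability is $\tfrac12(1-\tfrac1{2n})^{n-1}\le\tfrac12(5/6)^2<1/\e$ for every $n\ge3$. The number $1-\e^{-1/2}$ you quote is $\P(N\ge1)$, but patterns with two or more minus signs are not covered by your realizability argument when $m=1$.

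A Poisson comparison or a different $q$ cannot create a margin either. The probability $\P(\mathsf{Bin}(n,q)=1)=nq(1-q)^{n-1}$ is maximized at $q=1/n$, where it equals $(1-\tfrac1n)^{n-1}$, and this decreases to $1/\e$. In the Poisson limit, $c\e^{-c}\le1/\e$ for every $c$. The case closes only with exactly $q=1/n$ and the strict finite-$n$ inequality $(1-\tfrac1n)^{n-1}>\tfrac1\e$. This is what the paper does: it takes $s=1$ whenever $r\le3$. For $2\le m\le5$ your choice $q=1/(2n)$ does work, since $\P(1\le N\le2)$ stays above $\tfrac58\e^{-1/2}\approx0.379>1/\e$. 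Your Chebyshev bound handles $m\ge6$.

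There are three smaller slips.
\begin{itemize}
\item As written, $p=-\prod_{i\in S}(t-a_i)(t-b_i)$ realizes $-\eps$. The product is already negative exactly on $S$, so drop the minus sign; for $S=\varnothing$ you need $p\equiv1$.
\item In (iii), knowing that $m$ is the median of $\mathsf{Bin}(n,m/n)$ only gives $\P(N\le m)\ge\tfrac12$. To conclude $\E[N\1\{N\le m\}]\gtrsim m$ you also need a lower-tail estimate. One option is $\P(N<m/2)\le\e^{-m/8}$ (the paper uses Okamoto's bound). Another is the identity $\E(m-N)_+=\tfrac12\E|N-m|\le\tfrac12\sqrt m$, which gives $\E[N\1\{N\le m\}]\ge\tfrac m2-\tfrac{\sqrt m}2$. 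Either way, handle $m=1$ directly using $\E[N\1\{N\le1\}]=(1-\tfrac1n)^{n-1}$.
\item The first inequality in (iii) compares unsorted $T$ with the infimum over increasing tuples. It therefore uses exchangeability of $p_{v_{n,k}}$ (permuting the nodes permutes coordinates), not merely the fact that your bounds do not depend on the design.
\end{itemize}
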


The proof is deferred to \Cref{sec:proof-strong-lower-bound-vandermonde}.

\paragraph{Comparison of quantiles: Wilks' phenomenon versus nonasymptotic bounds.}

The limiting behavior of the LLR statistic in classical asymptotic theory,  known as Wilks' phenomenon~\cite{wilks1938large,chernoff1954likelihood},
predicts that $2\Lambda_n^{\log}(\thetastar) \to \chi^2_d$ in distribution as $n \to \infty$, with the dimension $d$ and parameter $\thetastar \in \R^{d}$ being fixed. Consequently, for any fixed $\delta \in (0,1/2]$, $\Quant{1-\delta}(2\Lambda_n^{\log}(\thetastar)) \to \Quant{1-\delta}(\chi^2_d)$, and \Cref{lem:chisquarequantiles} below shows that this quantity is exactly of order $d + \log(1/\delta)$.
By contrast, \Cref{prop:strong-lower-bound-vandermonde}\ref{item:lower-bound-quantile-vandermonde-fixed} shows that for the worst-case (Vandermonde) fixed design, $\Quant{1-\delta}(\Lambda_n^{\log}(\thetastar)) \gtrsim d\log(\e n/d) + \log(1/\delta)$ whenever $n\geq d\geq3$ and $\delta\in(0,1/\e]$.

\begin{lemma}
\label{lem:chisquarequantiles}
There is a constant $C > 0$ such that for any $d \geq 1$ and any $\delta \in (0, 1/2]$, it holds that 
\[
	\frac{1}{C}\, 
	\bigg(d + \log\Big(\frac{1}{\delta}\Big)\bigg) 
	\leq \Quant{1-\delta}(\chi^2_d) \leq 
	C\, \bigg(d + \log\Big(\frac{1}{\delta}\Big)\bigg) 
\]
\end{lemma}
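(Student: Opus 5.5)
The plan is to prove the upper and lower bounds separately, using only the definition of $\chi^2_d$ as $\|G\|_2^2$ with $G\sim\Normal{0}{I_d}$ and elementary Gaussian estimates. Write $q=\Quant{1-\delta}(\chi^2_d)$. Since $\delta\le 1/2$, $\log(1/\delta)\ge\log 2$, so the constants can absorb $\log 2$ wherever needed.

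\emph{Upper bound.} We bound the moment generating function at $\lambda=1/4$:
\[
\E \exp\bigl(\|G\|_2^2/4\bigr)=(1-1/2)^{-d/2}=2^{d/2}.
\]
Markov's inequality then gives
\[
\P\bigl(\|G\|_2^2>t\bigr)\le 2^{d/2}\e^{-t/4}.
\]
Choosing $t=2d\log 2+4\log(1/\delta)$ makes the right-hand side equal to $\delta$. Hence
\[
q\le 2d\log 2+4\log(1/\delta)\le 4\bigl(d+\log(1/\delta)\bigr).
\]

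\emph{Lower bound.} Since $\max\{a,b\}\ge (a+b)/2$, it suffices to show $q\gtrsim d$ and $q\gtrsim\log(1/\delta)$ separately. Because $q$ is nondecreasing as $\delta$ decreases, both can be handled using $\P(\chi^2_d>t)>\delta$ for a suitable $t$.

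\begin{itemize}
\item For the bound $q\gtrsim d$, use the Cantelli argument already used in \Cref{prop:fixed-design-lower}. We have $\E\chi^2_d=d$ and $\Var(\chi^2_d)=2d$. For $d\ge3$, this gives $\P(\chi^2_d\le d/10)<1/2\le 1-\delta$, so $q\ge d/10$. For $d\in\{1,2\}$, we simply note that $q\ge\Quant{1/2}(\chi^2_d)>0$, which is a fixed positive constant, hence $\gtrsim d$.
\item For the bound $q\gtrsim\log(1/\delta)$, use $\chi^2_d\ge G_1^2$ together with the standard Gaussian tail lower bound
\[
\P(|G_1|>s)\ge c\,\e^{-s^2}\quad\text{for all } s\ge0,
\]
which follows, for example, from the Mills-ratio inequality $\P(G_1>s)\ge \tfrac{s}{1+s^2}\phi(s)$ combined with $\e^{-s^2/2}\ge \e^{-s^2}\cdot(\text{poly})$. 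Take $s^2=\tfrac12\log(1/\delta)$. For $\delta$ below a small absolute constant, this gives $c\,\delta^{1/2}>\delta$, so $q\ge\tfrac12\log(1/\delta)$. For $\delta$ between that constant and $1/2$, $\log(1/\delta)$ is bounded, and the bound follows from the previous item, since $q$ is bounded below by a positive constant.
\end{itemize}

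The only mildly delicate step is the uniform Gaussian tail lower bound in the small-$s$ regime. There we instead use $\P(|G_1|>s)\ge\P(|G_1|>1)$ for $s\le1$, which keeps all constants absolute.
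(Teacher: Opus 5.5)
Your proof is correct. It has the same overall structure as the paper's: a Chernoff-type upper tail bound, and a lower bound taken as the maximum of separate bounds $q\gtrsim d$ and $q\gtrsim\log(1/\delta)$. The individual ingredients differ, as follows.

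\emph{Upper bound.} You apply Markov's inequality directly to $\exp(\chi^2_d/4)$, which is self-contained and gives $C=4$. The paper instead cites the Laurent--Massart inequality $\P\bigl(\chi^2_d\le d+2\sqrt{d\log(1/\delta)}+2\log(1/\delta)\bigr)\ge1-\delta$ and optimizes an AM--GM split, obtaining the sharper constant $(3+\sqrt5)/2$.

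\emph{The bound $q\gtrsim d$.} You reuse the Cantelli argument of \Cref{prop:fixed-design-lower} for $d\ge3$ and handle $d\in\{1,2\}$ through their medians. The paper uses the lower-tail Chernoff bound $\P(\chi^2_d\le d/12)\le2^{-d}$, derived from $\E\e^{-t\chi^2_d}=(1+2t)^{-d/2}$, which covers every $d\ge1$ at once.

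\emph{The bound $q\gtrsim\log(1/\delta)$.} This is where the routes differ most. You dominate $\chi^2_d$ by a single $G_1^2$ for every $d$ and accept the lossy tail bound $\P(|G_1|>s)\ge c\,\e^{-s^2}$. This gives $q\ge\tfrac12\log(1/\delta)$ for small $\delta$, and you fall back on the median bound for moderate $\delta$. Your one-line justification of the tail bound is fine once written as $\tfrac{s}{1+s^2}\e^{-s^2/2}\ge c\,\e^{-s^2}$ for $s\ge1$. The paper instead splits by dimension:
\begin{itemize}
\item for $d\ge2$, it uses that $\chi^2_d$ stochastically dominates $\chi^2_2$, whose $(1-\delta)$-quantile is exactly $2\log(1/\delta)$;
\item for $d=1$, it needs the sharper estimate of \Cref{lem:mills-ratio-inequality}, which rests on the numerical check $g(u_0)\approx1.004$, plus a separate numerical check on $\delta\in(1/4,1/2]$.
\end{itemize}

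Your route avoids the case split in $d$ and the delicate numerics, at the cost of worse, unspecified constants. The paper's route yields the explicit constant $C=27/2$.
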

The proof of \Cref{lem:chisquarequantiles} is deferred to Section~\ref{sec:remaining-and-aux}.

\subsection{The lower-bound factor $\log(n/d)$ comes from genuinely non-extreme values of $\theta^\star$.}

In this section we present complementary results showing that, for the Vandermonde design used in the deterministic lower bound construction, neither the pure noise regime nor the very large signal regime is responsible for the logarithmic factor. We begin with the case $\thetastar = 0$. For simplicity of the proof, we focus on the illustrative case $d = 3$, where the logarithmic factor already appears for the worst-case target $\thetastar$. Thus, at least for this design, the lower bound from \Cref{prop:strong-lower-bound-vandermonde} must be generated by genuinely intermediate values of the signal. The proof will essentially reduce the problem to a one-dimensional optimization, where the supremum over three-dimensional vectors is replaced by a supremum over their norm.

\begin{proposition}
\label{prop:vandermonde-zero-signal}
Assume $d = 3$, $n \geq 3$, and consider the Vandermonde design as used in the proof of the lower bound  \Cref{prop:strong-lower-bound-vandermonde}, namely
\[
x_i = (1, i, i^2),
\qquad i \in [n].
\]
Then, for every $\delta \in (0,1)$,
\[
\P\Bigl(\Lambda_n^{\operatorname{log}}(0) \leq 216\log(6/\delta)\Bigr)\geq 1-\delta.
\]
\end{proposition}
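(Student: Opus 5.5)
The plan is to work directly with the centered log-likelihood at the origin. Since $\thetastar=0$, the labels $\eps_i$ are i.i.d.\ Rademacher, and with $g(t)\defn\log(2\sigma(t))=\log 2-\log(1+\e^{-t})$ we have
\[
\Lambda_n^{\operatorname{log}}(0)=\sup_{p}\sum_{i=1}^n g\bigl(\eps_i\,p(i)\bigr),
\]
where the supremum runs over real polynomials $p$ of degree at most $2$. This is the subspace form of \Cref{lem:reformulation-via-subspaces} for the Vandermonde subspace $V_{n,3}(1,\dots,n)$. The first structural step uses only that $p$ is quadratic. It has at most two real roots and one critical point, so $[n]$ splits into at most four consecutive integer intervals on which $p$ has constant sign and is monotone. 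Refining once more at the points where $|p|$ crosses a fixed threshold, I would aim for at most six intervals $I_1,\dots,I_6$. On each $I_j$ the sequence $s_i=|p(i)|$ is monotone, $p$ has a fixed sign $\tau_j$, and the summand is $g(\tau_j\eps_i s_i)$. Since $g(0)=0$, it suffices to prove, for each fixed interval type, a bound of the form
\[
\P\Bigl(\sup_{I,\ s}\ \sum_{i\in I} g(\tau\eps_i s_i) > 36\log(6/\delta)\Bigr)\le \delta/6,
\]
where the supremum is over integer intervals $I$ and monotone nonnegative sequences $s$ on $I$. A union bound over the six pieces then gives $216\log(6/\delta)$.

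For a single piece I would reduce to a one-dimensional problem, in line with the remark that the supremum over three-dimensional vectors becomes a supremum over a scalar. The function $g$ is concave and increasing with $g(0)=0$, and $s$ is monotone, so by a layer-cake decomposition $s_i=\int_0^\infty\1\{s_i>u\}\,\ud u$ the level sets $\{i\in I: s_i>u\}$ are prefixes or suffixes of $I$. The aim is to show that, up to a constant factor, the supremum over monotone $s$ is dominated by a supremum over a scalar $\lambda\ge0$ and a prefix or suffix $J$ of a one-dimensional statistic $\sum_{i\in J} g(\tau\lambda\eps_i)$. To make this rigorous, I would first try to approximate $s$ from below by a step function on dyadic levels. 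Concavity of $g$ lets a monotone profile be traded for its steps at a loss of at most a constant factor, and on the negative side one uses $g(t)\le t/2$ together with $g(t)\le\log 2$.

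The probabilistic step is then clean. For each fixed $\lambda$, the process $M_m(\lambda)=\exp\bigl(\sum_{i\le m} g(\tau\lambda\eps_i)\bigr)$ is a nonnegative martingale, because $\E\,\e^{g(\lambda\eps)}=\tfrac12\bigl(2\sigma(\lambda)+2\sigma(-\lambda)\bigr)=1$. Mixing over $\lambda$ against a suitable prior, for instance a discretization on a geometric grid, and applying Ville's maximal inequality controls the supremum over all prefix lengths $m$ simultaneously at cost $\log(1/\delta)$ plus a constant. Running the martingale from the other end handles suffixes, and running it from an arbitrary start point handles both endpoints of a general interval. Because the time supremum comes free from Ville's inequality, no dependence on $n$ enters, which is exactly the point of the proposition.

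The main obstacle is the reduction from monotone profiles $s$ to step profiles with a single level $\lambda$. Large-magnitude misclassified points contribute about $-|p(i)|$, correctly classified ones saturate at $\log 2$, and near the roots the quadratic regime $g(t)\approx t/2-t^2/8$ dominates. A naive bound $g(t)\le t/2-c\min(t^2,|t|)$ combined with Cauchy--Schwarz against $\|P_{V}\eps\|_2$ fails, because the linear regime requires $c>\sqrt3/2>1/2$. So the constant-sign, monotone interval structure must be exploited genuinely, via Abel summation or the layer-cake argument above, and getting the constants to land at $36$ per piece, hence $216=6\cdot36$ overall, is where I expect most of the work.
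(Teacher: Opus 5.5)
Your plan has a genuine gap, and it is not a matter of constants: the per-piece inequality you set as the target is false for large $n$. Your supremum runs over all integer intervals $I$ and all monotone nonnegative $s$ on $I$, so it includes the choice $I=[1,m]$, $s\equiv\lambda$. Take $\tau=1$ and $S_m=\eps_1+\cdots+\eps_m$. Since $g(t)=t/2-\log\cosh(t/2)$ and $\log\cosh$ is even, you get
\[
\sup_{m\le n,\ \lambda\ge 0}\ \sum_{i\le m} g(\lambda\eps_i)
=\sup_{m\le n}\ m\,h\Bigl(\frac{S_m^+}{m}\Bigr)
\ \ge\ \sup_{m\le n}\frac{(S_m^+)^2}{2m},
\]
where $h(y)=\sup_x\{xy-\log\cosh x\}\ge y^2/2$. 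By the law of the iterated logarithm, the right-hand side tends to $+\infty$ almost surely, at rate $\log\log n$. So for every fixed $\delta$, the probability in your displayed per-piece bound tends to one.

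The same computation shows where the Ville step breaks. Ville's inequality controls $\sup_m$ of a \emph{fixed} mixture $\sum_j\pi_jM_m(\lambda_j)$. However, the maximizing level $\lambda\approx 2S_m/m$ drifts to zero as $m$ grows. Recovering $\sup_\lambda M_m(\lambda)$ from the mixture therefore costs $\log(1/\pi_{j(m)})$, which is unbounded for any summable weights. The time supremum is free only for a fixed $\lambda$. Relaxing to all monotone profiles, which gives a one-sided isotonic likelihood-ratio statistic, only makes the quantity larger. In short, replacing the quadratic $p$ by its sign and monotonicity pattern throws away the rigidity that makes the bound $n$-free. A step profile can look like $\lambda\1_{[1,m]}$ with $m\ll n$; a quadratic on $[n]$ cannot.

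The missing idea is a property of the specific Vandermonde subspace $V$ that rules out such localized profiles. The paper uses its small leverage: $M_n^2=\max_i(\Pi_V)_{ii}\le 9/n$, so $|v_i|\le M_n\|v\|_2$ for every $v\in V$. This is also what removes the Cauchy--Schwarz obstruction you describe.

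Write $\Lambda_n^{\operatorname{log}}(0)=\sup_{v\in V}\{\langle Y,v\rangle-\sum_i\log\cosh v_i\}$, with $v_i=\langle\theta,x_i\rangle/2$. Since $x\mapsto\log\cosh(x)/x^2$ is decreasing, $\sum_i\log\cosh v_i\ge\log\cosh(M_n\|v\|)/M_n^2$. This penalty is quadratic for $\|v\|\lesssim\sqrt n$ and then linear with slope $1/M_n\ge\sqrt n/3$, rather than $O(1)$. The problem then collapses to a scalar optimization over $r=\|v\|$, giving $\Lambda_n^{\operatorname{log}}(0)\le\|\Pi_VY\|^2$ on the event $\{M_n\|\Pi_VY\|\le 1/2\}$.

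A sub-Gaussian bound gives $\|\Pi_VY\|^2\le 6\log(6/\delta)$ with probability at least $1-\delta$. This implies the event once $n\ge 216\log(6/\delta)$. For smaller $n$, the trivial bound $\Lambda_n^{\operatorname{log}}(0)\le n\log 2$ suffices. The one-dimensional reduction the paper has in mind is over $\|v\|$, not over the level of a step profile.
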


\begin{remark}
The approach used in the proof is quite general and goes beyond the special case of the Vandermonde design. We will see that the key quantity is the maximal diagonal entry of the corresponding orthogonal projector. We note, however, that the computation is still quite specific to $\thetastar = 0$, and in general it will not scale well in high dimensions: it might lead to suboptimal dependence on $d$ in higher dimension, but still allows us to remove any dependence on $n$ in our case.
\end{remark}

\begin{proof}
Since $\thetastar = 0$, we have
$
\prod_{i = 1}^n \sigma(\langle x_i, \thetastar\rangle) = 2^{-n}.$
Moreover, $Y = (Y_1, \ldots, Y_n)$ consists of independent Rademacher random variables. Using that for all real $t$,
\begin{equation}
	\sigma(t) = \frac{1}{1+\e^{-t}} = \frac 1 2 \cdot \frac{\e^{t/2}}{\cosh(t/2)} \, ,
\end{equation}
hence $\log(2\sigma(t)) = t/2 - \log\cosh(t/2)$. Define the subspace $V = \{(a + bi + ci^2)_{i = 1}^n : a, b, c \in \R\}$ and, for every $i \in [n]$ and $\theta \in \R^{d}$, let $v_i = \langle \theta, x_i \rangle/2$. Since $\sigma(0)=1/2$, we deduce the following identity
\begin{align}
\Lambda_n^{\log}(0)
	= \sup_{\theta \in \R^{d}} \sum_{i=1}^n
		\log\Big(\frac{\sigma(Y_i\langle x_i,\theta\rangle)}{\sigma(0)} \Big)
	= \sup\limits_{v \in V}\left\{\langle Y, v\rangle - \sum\limits_{i = 1}^n \log(\cosh(v_i))\right\} \, .
\end{align}
 Let $\Pi_{V} \in \R^{n \times n}$ denote the orthogonal projector onto $V$. For every $v \in V$, we have
\[
\langle Y, v\rangle = \langle \Pi_{V} Y, v\rangle \leq \|\Pi_{V} Y\| \|v\|,
\]
and by the same computation
\begin{equation}
\label{eq:boundonvi}
|v_i| \leq |\langle \Pi_{V} e_i, v\rangle| \leq \|\Pi_{V} e_i\| \|v\| = \sqrt{e_i^{\T}\Pi_{V}^2 e_i}\,\|v\| = \sqrt{(\Pi_{V})_{ii}}\,\|v\|.
\end{equation}
We can easily check that an orthogonal basis of $V$ is given by $(u_0)_i = 1$, $(u_1)_i = i - \frac{n+1}{2}$, $(u_2)_i = ((u_1)_i)^2 - \frac{n^2 - 1}{12}$. An elementary computation gives
\begin{equation}
\label{eq:projdiagonal}
(\Pi_{V})_{ii}
= \sum_{k = 0}^2 \frac{(u_k)_i^2}{\|u_k\|_2^2}
=
\frac1n
+\frac{12\left(i-\frac{n+1}{2}\right)^2}{n(n^2-1)}
+\frac{180\left(\left(i-\frac{n+1}{2}\right)^2-\frac{n^2-1}{12}\right)^2}{n(n^2-1)(n^2-4)}
\leq \frac9n.
\end{equation}
Now consider
$
\psi(x) = \frac{\log(\cosh(x))}{x^2},
$
for $x > 0$,
and set $M_n^2 = \max_i (\Pi_{V})_{ii}$. An elementary analysis shows that $\psi$ is decreasing on $(0,+\infty)$. Thus, by \eqref{eq:boundonvi}, we have
\[
\sum\limits_{i = 1}^n \log(\cosh(v_i))
\ge
\sum\limits_{i = 1}^n v_i^2 \psi\left(M_n \|v\|\right)
=
\|v\|^2 \psi\left(M_n \|v\|\right)
=
\frac{\log(\cosh(M_n \|v\|))}{M_n^2},
\]
and therefore, combining the above inequalities, we reduce the upper bound to a one-dimensional optimization by taking the supremum with respect to the norm of $v$:
\[
\Lambda_n^{\operatorname{log}}(0)
\le
\sup_{r \geq 0}\left(r \|\Pi_{V} Y\| - \frac{\log(\cosh(M_n r))}{M_n^2}\right)
=
\frac{1}{M_n^2} \sup\limits_{r \geq 0}\left(r M_n \|\Pi_{V} Y\| - \log(\cosh(r))\right).
\]
It is straightforward to check that for any $w \in [0,1]$, it holds that
$
\sup\limits_{r \geq 0}(w r - \log(\cosh(r))) \leq w^2.
$
Therefore, whenever $M_n \|\Pi_{V} Y\| \leq \frac{1}{2}$, it holds that
\[
\Lambda_n^{\operatorname{log}}(0) \leq \frac{M_n^2 \|\Pi_{V} Y\|^2}{M_n^2} = \|\Pi_{V} Y\|^2.
\]
Note that
$
\|\Pi_{V} Y\|^2 = \sum_{k = 0}^2 \left\langle Y, \frac{u_k}{\|u_k\|} \right\rangle^2.
$
Since for any $k \in \{0,1,2\}$ and $\lambda \in \R$ it holds that
$
\E\exp\left(\lambda \left\langle Y, \frac{u_k}{\|u_k\|} \right\rangle\right) \leq \exp(\lambda^2/2),
$
we have for any $t > 0$,
\[
\P(\|\Pi_{V} Y\|^2 \geq 3 t^2)
\le
\P\left(\max_{k \in \{0,1,2\}} \left|\left\langle Y, \frac{u_k}{\|u_k\|} \right\rangle\right| \geq t \right)
\le
6\exp(-t^2/2),
\]
which implies that, on the intersection of the event $\left\{M_n \|\Pi_{V} Y\| \leq \frac{1}{2}\right\}$ and an event of probability at least $1-\delta$,
\[
\|\Pi_{V} Y\|^2 \leq 6\log(6/\delta).
\]
Note that when $n \geq 216\log(6/\delta)$, on the event $\|\Pi_{V} Y\|^2 \leq 6\log(6/\delta)$ we have, using \eqref{eq:projdiagonal},
\[
\|\Pi_{V} Y\|^2 \leq 6\log(6/\delta) \leq \frac{n}{36} \leq \frac{1}{4M_n^2},
\]
and therefore the event $\left\{M_n \|\Pi_{V} Y\| \leq \frac{1}{2}\right\}$ also holds. This proves the statement for $n \geq 216\log(6/\delta)$. Otherwise, if $n < 216\log(6/\delta)$, we have
\[
\Lambda_n^{\operatorname{log}}(0) \leq \log\left(\frac{1}{2^{-n}}\right) \leq 216\log(2)\log(6/\delta) \leq 216\log(6/\delta).
\]
The claim follows.
\end{proof}

The next elementary result shows that the extreme regime where
$\min_{i \in [n]}|\langle x_i, \thetastar\rangle|$ is large corresponds
essentially to the one-dimensional case. This also confirms that in our lower
bound the worst-case example does not correspond to this extreme regime. The point is that the proof does {not} go through the Shtarkov sum at $\lambda=1$. Indeed, that route necessarily produces a logarithmic factor at the level of the moment generating function. Instead, we work at the scale $\lambda=1/2$. In this way, the square root appearing at $\lambda=1/2$ removes the $\log n$ factor that is unavoidable at $\lambda=1$.

\begin{proposition}
\label{prop:large-margin-fixed-design}
Assume that $n\geq 2, d \geq 1$ and
$
\min_{i\in[n]} |\langle x_i,\thetastar\rangle|
\ge
3\log(n).
$
Then, for every $\delta\in (0, 1)$,
\[
\P_{\thetastar}\left(
\Lambda_n^{\operatorname{log}}(\thetastar)
\le
2\log(2/\delta)
\right)
\ge
1-\delta.
\]
\end{proposition}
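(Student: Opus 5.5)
The plan is to control the exponential moment of the statistic at $\lambda=1/2$ rather than at $\lambda=1$, and then apply Markov's inequality. Write $v=X\thetastar\in\R^n$ and $W=\ran(X)$. By \Cref{lem:reformulation-via-subspaces}\ref{item:rewrite-expected-LLR-via-subspace}, $\Lambda_n^{\operatorname{log}}(\thetastar)=\Gamma_W(Y;v)$. Instead of invoking the Shtarkov sum bound of \Cref{lem:shtarkov-moment-bound-subspaces}, I would compute directly:
\[
\E_{\eps\sim p_v}\exp\Bigl(\tfrac12\Gamma_W(\eps;v)\Bigr)
=\sum_{\eps\in\{-1,1\}^n}\sqrt{p_v(\eps)\,\sup_{w\in W}p_w(\eps)}
\le\sum_{\eps\in\{-1,1\}^n}\sqrt{p_v(\eps)} .
\]
The inequality uses only the trivial bound $p_w(\eps)\le1$. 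This is the crucial simplification: the square root makes the supremum over $W$ irrelevant, so there is no $\log n$ loss from the geometry of $W$.

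The right-hand side factorizes over coordinates:
\[
\sum_{\eps}\sqrt{p_v(\eps)}=\prod_{i=1}^n\Bigl(\sqrt{\sigma(v_i)}+\sqrt{\sigma(-v_i)}\Bigr).
\]
Under the margin assumption $|v_i|\ge 3\log n$, the smaller of the two probabilities satisfies $\sigma(-|v_i|)\le \e^{-|v_i|}\le n^{-3}$. The larger one is at most $1$, so each factor is at most $1+n^{-3/2}$. Hence the exponential moment is at most
\[
(1+n^{-3/2})^n\le \exp(n^{-1/2}).
\]

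It remains to check that this quantity is at most $2$ for all $n\ge2$. For $n\ge3$ we have $\exp(1/\sqrt3)\approx1.78\le 2$. For $n=2$ the exponential bound gives $\e^{1/\sqrt2}\approx2.03$, which is slightly too large, so I would compute the product directly: $(1+2^{-3/2})^2\approx1.83\le2$. A sharper per-factor bound such as $\sqrt{1-q}\le 1-q/2$ is also available if needed. This constant check is the only delicate point; the rest is routine.

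Finally, Markov's inequality gives
\[
\P\bigl(\Lambda_n^{\operatorname{log}}(\thetastar)>2\log(2/\delta)\bigr)
=\P\bigl(\e^{\Lambda_n^{\operatorname{log}}(\thetastar)/2}>2/\delta\bigr)
\le \frac{\delta}{2}\cdot 2=\delta ,
\]
which is the claim.
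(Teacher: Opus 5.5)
Your proposal is correct and is essentially the paper's proof: you bound $\E\exp\bigl(\Lambda_n^{\operatorname{log}}(\thetastar)/2\bigr)$ by $\sum_{\eps}\sqrt{p_{\thetastar}(\eps)}$ using $\sup_w p_w(\eps)\le 1$, show this is at most $(1+n^{-3/2})^n\le 2$, and conclude with Markov's inequality. The only difference is cosmetic: the paper groups sign patterns by Hamming distance to the modal pattern and sums a binomial series, whereas you factorize coordinatewise (the same computation), and your explicit check of the case $n=2$ supplies a numerical step the paper leaves implicit.
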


\begin{proof}
Set
$
m \defn \min_{i\in[n]} |\langle x_i,\thetastar\rangle|.
$
For $\theta \in \R^d$ and $\eps \in \{-1,1\}^n$, let
$
p_\theta(\eps)
\defn
\prod_{i=1}^n \sigma(\eps_i \langle x_i,\theta\rangle).
$
We have
\[
\E\exp\left(\frac{1}{2}\Lambda_n^{\operatorname{log}}(\thetastar)\right)
=
\sum_{\varepsilon \in \{-1, 1\}^n}\sqrt{p_{\thetastar}(\eps)}
\sup_{\theta \in \R^d}\sqrt{p_\theta(\eps)}
\le
\sum_{\varepsilon \in \{-1, 1\}^n}\sqrt{p_{\thetastar}(\eps)}.
\]
Now, given $\thetastar$, there is just one assignment of signs $\varepsilon^\prime$
such that
$
\varepsilon^\prime_i = \sign(\langle x_i, \thetastar\rangle)$ for all $i \in [n]$.
For $\varepsilon \in \{-1, 1\}^n$, let
$
d_{\operatorname{H}}(\varepsilon, \varepsilon^\prime)
$
denote the Hamming distance between $\varepsilon$ and $\varepsilon^\prime$.
Since $\sigma(\cdot)\leq 1$, and on every coordinate where
$\varepsilon_i\neq \varepsilon_i^\prime$ we have
\[
\sigma(\varepsilon_i\langle x_i,\thetastar\rangle)\leq \sigma(-m)\leq e^{-m},
\]
it follows that
\[
p_{\thetastar}(\varepsilon)
\le
\exp\bigl(-m\,d_{\operatorname{H}}(\varepsilon,\varepsilon^\prime)\bigr).
\]
Therefore, it holds that
\[
\sum_{\varepsilon \in \{-1, 1\}^n}\sqrt{p_{\thetastar}(\eps)}
\le
\sum_{k=0}^n \binom{n}{k} e^{-mk/2}
=
\bigl(1+e^{-m/2}\bigr)^n
\le
\left(1 + \frac{1}{n^{3/2}}\right)^n
\leq 2.
\]
Then, by Markov's inequality, for any $t>0$,
\begin{equation}
\P\big(\Lambda_n^{\log}(\thetastar) > t\big)
	\leq \E\big[ \exp\big(\Lambda_n^{\log}(\thetastar)/2 \big)\big] \cdot \e^{-t/2}
	\leq 2 \e^{-t/2} \, ,
\end{equation}
and the conclusion follows by plugging $t=2\log(2/\delta)$.
\end{proof}

\subsection{Proofs}
\label{sec:main-remaining-proofs}

\subsubsection{Proof of~\Cref{prop:strong-lower-bound-vandermonde}}
\label{sec:proof-strong-lower-bound-vandermonde}

We start with Claim~\ref{item:lower-bound-quantile-vandermonde-fixed}.
Fix $0\le t_1<\cdots<t_n\le 1$. We 
use the shorthand notation
\[
W \defn V_{n,k}(t_1,\dots,t_n),
\quad \mbox{and} \quad
r \defn \floor{\frac{k-1}{2}}.
\]
Choose auxiliary points $t_0 < t_1$ and $t_{n+1} > t_n$.
Define 
\[
u_i \defn \frac{t_{i-1}+t_i}{2},\quad 
v_i \defn \frac{t_i+t_{i+1}}{2}, \quad 
q_i(t)\defn (t-u_i)(t-v_i), \quad 
\mbox{for}~i\in[n],
\]
Then $q_i(t_i)<0$, while $q_i(t_\ell)>0$ for every $\ell\neq i$. Hence, for every
$I\subset[n]$ with $|I|\le r$, the polynomial 
$p_I(t)\defn \prod_{j\in I} q_j(t)$ 
has degree at most $2|I|\le 2r\le k-1$. 
Hence 
\[
w^{(I)}\defn \bigl(p_I(t_1),\dots,p_I(t_n)\bigr)\in W,
\]
and its sign pattern is
\begin{equation}
\sign(w^{(I)}_j)=
\begin{cases}
-1,& j\in I,\\
 1,& j\notin I.
\end{cases}
\end{equation}
Therefore, for every $\eps\in\{-1,1\}^n$ with at most $r$ negative coordinates, there exists
$w\in W$ such that
\begin{equation}
\label{eq:achievable-sign-pattern}
\sign(w_i)=\eps_i \quad	\text{for all} \quad i \, ,
\end{equation}
and hence
$\sup_{a>0} p_{aw}(\eps)=1$. Take $s \defn \twomax{1}{\floor{\frac{r}{4}}}$.
Also set $\lambda_0 = \sigma^{-1}(1-\tfrac{s}{n})$.
If $\eps\sim p_{\lambda_0\1_n}$, then by construction, 
\[
N(\eps) = \left|\Big\{i\in[n] \mid \eps_i=-1\Big\}\right| \sim 
\mathsf{Bin}\Big(n,\frac{s}{n}\Big).
\]
On the event $\{s\leq N(\eps)\leq r\}$, we have 
\[
\Gamma_W(\eps;\lambda_0\1_n)
\ge -\log p_{\lambda_0\1_n}(\eps)
= N(\eps)\log\frac ns + \bigl(n-N(\eps)\bigr)\log\frac{1}{1-s/n}
\ge s\log\frac ns.
\]
If $r \leq 3$, then $s=1$. Consequently, 
\begin{equation}
\label{eq:bin-event-small-r}
\P( N(\eps) \in [s, r]) 
	\geq \P( N(\eps) = 1) 
	= \Big(1-\frac{1}{n}\Big)^{n-1}
	> \frac{1}{\e}.
\end{equation}
If $r \geq 4$, then $s = \floor{\tfrac{r}{4}}$, and hence $r \geq 4s$.
Since $s$ is a median of $\mathsf{Bin}(n, \tfrac{s}{n})$,
we have by the multiplicative Chernoff bound that  
\begin{equation}
\label{eq:bin-event-large-r}
\P(N(\eps)\in [s, r])
	= \P(N(\eps) \geq s) - \P(N(\eps) > r)
	\geq \P(N(\eps) \geq s) - \P(N(\eps)\geq 4s)
	\geq \frac{1}{2} - \Bigl(\frac{\e^3}{4^4}\Bigr)^s > \frac{1}{\e}. 
\end{equation}
Combining the previous three displays, we obtain 
\[
\Quant{1-\delta}\Big(\Gamma_W(\eps;\lambda_0\1_n)\Big)
\geq
\Quant{1-\tfrac{1}{\e}}\Big(\Gamma_W(\eps;\lambda_0\1_n)\Big)
\geq 
s\log\frac{n}{s},
\]
as $\delta \in (0, \tfrac{1}{\e}]$.
If $r\leq 3$, then $k \leq 8$ and $s=1$. In this case, 
\[
s\log\frac{n}{s} =
\log n \geq  \frac{1}{8}\, k\log\frac{\e n}{k}.
\]
On the other hand, if $r\geq 4$, then $s \geq \tfrac{r}{8} \geq \tfrac{k}{32}$, 
as $k \geq 3$ by assumption.
Additionally, $s \leq \tfrac{k}{2}$, so $\tfrac{n}{s} \geq \tfrac{2n}{k}$. 
Therefore
\[
s\log\frac{n}{s} \geq \frac{k}{32} \log\frac{2n}{k} \geq \tfrac{\log 2}{32} \cdot k\log\frac{\e n}{k}.
\]
Combining the previous three displays, we obtain 
\begin{equation}
\label{eq:vander-q-klog}
\Quant{1-\delta}\Big(\Gamma_W(\eps;\lambda_0\1_n)\Big)
\geq \frac{\log 2}{32}\,k\log\frac{\e n}{k}.
\end{equation}
Now set $\lambda_\gamma \defn \sigma^{-1}(\gamma^{1/n})$, 
for $\gamma > \delta$. Note $\P_{p_{\lambda_\gamma\1_n}}(\eps=\1_n)= \gamma$. 
Since $\1_n\in W$ (take the constant polynomial), we have
$\sup_{a>0} p_{a\1_n}(\1_n)=1$.
Hence, on the event $\{\eps=\1_n\}$,
\[
\Gamma_W(\eps;\lambda_\gamma\1_n)
\ge -\log p_{\lambda_\gamma\1_n}(\1_n)
= \log\frac{1}{\gamma}.
\]
Since $\P(\eps=\1_n)=\gamma>\delta$, the event $\{\Gamma_W(\eps;\lambda_\gamma\1_n) \geq \log\frac{1}{\gamma}\}$ has probability at least $\gamma > \delta$, so $\Quant{1-\delta}\bigl(\Gamma_W(\eps;\lambda_\gamma\1_n)\bigr) \geq \log\frac{1}{\gamma}$. As $\lambda_\gamma$ is a valid choice of $\lambda$ and $\gamma>\delta$ may be chosen arbitrarily close to $\delta$,
\begin{equation}
\label{eq:vander-q-logdelta}
\sup_{\lambda \in \R}\Quant{1-\delta}\bigl(\Gamma_W(\eps;\lambda\1_n)\bigr)
\geq \log\frac{1}{\delta}.
\end{equation}
Combining \eqref{eq:vander-q-klog} and \eqref{eq:vander-q-logdelta},
\[
\sup_{\lambda\in\R}
\inf_{0\leq t_1<\cdots<t_n\leq1}
\Quant{1-\delta}\Big(\Gamma_{V_{n,k}(t_1,\dots,t_n)}(\eps;\lambda\1_n)\Big)
\geq
\max\Big\{
\frac{\log 2}{32}\,k\log\frac{\e n}{k}, \log\frac{1}{\delta}
\Big\}
\geq
c
\Big(k\log\frac{\e n}{k}+\log\frac{1}{\delta}\Big),
\]
as claimed. We can take $c = \tfrac{\log 2}{\log 2 + 32}$. This completes the proof of Claim~\ref{item:lower-bound-quantile-vandermonde-fixed}.

\paragraph{}
We now prove Claim~\ref{item:lower-bound-quantile-vandermonde-random}. Let $P$ denote a distribution on $[0,1]$ such that if $T_1, \dots, T_n \simiid P$, they are almost surely distinct. Write for short $T=(T_1,\ldots,T_n) \sim P^{\otimes n}$ and $W_T=V_{n,k}(T_1,\ldots,T_n)$. For each realization with distinct coordinates, let $\pi$ order the coordinates and let $\Pi_\pi$ be the associated permutation matrix. Then
\[
V_{n,k}(T_{\pi(1)},\ldots,T_{\pi(n)})=\Pi_\pi W_T,
\qquad
\Gamma_{\Pi_\pi W_T}(\Pi_\pi\eps;\lambda\1_n)
=\Gamma_{W_T}(\eps;\lambda\1_n).
\]
Since $\Pi_\pi\eps\sim p_{\lambda\1_n}$ whenever $\eps\sim p_{\lambda\1_n}$, the bounds for fixed nodes apply conditionally on every such realization. In particular, \eqref{eq:bin-event-small-r} and \eqref{eq:bin-event-large-r} give
\begin{equation}
	\P\big(\Gamma_{W_T}(\eps; \lambda_0 \1_n) \geq s \log(n/s) \big| T \big) > \frac 1 \e \, .
\end{equation}
Integrating with respect to $P^{\otimes n}$ we deduce that
\begin{equation}
\P\big(\Gamma_{W_T}(\eps; \lambda_0 \1_n) \geq s \log(n/s) \big)
	=\E \big[ \P\big(\Gamma_{W_T}(\eps; \lambda_0 \1_n) \geq s \log(n/s) \big| T \big) \big]
	> \frac 1 \e \, .
\end{equation}
The same conditioning argument yields the random-design analogue of~\eqref{eq:vander-q-logdelta}, and in turn, the claim.

\paragraph{}
Finally, we prove Claim~\ref{item:lower-bound-expected-vandermonde} and start with the first inequality. Since $v_{n,k}$ has coordinates all equal (to $\sigma^{-1}(1-r/n)$), the resulting distribution $p_{v_{n,k}}$ has exchangeable marginals. In particular, the function
\begin{equation}
\phi: t \in [0,1]^n \mapsto \E\big[\Gamma_{V_{n,k}(t_1,\dots,t_n)}(\eps; v_{n,k}) \big| T=t \big]
\end{equation}
is invariant under permutation of the entries of $t$. Hence, with $T \sim P^{\otimes n}$, we have
\begin{equation}
	\E \phi(T) \geq \inf_{0 \leq t_1 < \dots < t_n \leq 1} \phi(t) \, ,
\end{equation}
which is the first inequality.

For the second one, in light of the previous inequality, we fix an increasing sequence $t_1 < \cdots < t_n$ in $[0,1]$ and let $W$ denote the Vandermonde subspace $V_{n,k}(t_1, \dots, t_n)$. By the same argument that led to~\eqref{eq:achievable-sign-pattern} in the proof of Claim~\ref{item:lower-bound-quantile-vandermonde-fixed}, any sign pattern $\eps \in \{-1,1\}^n$ with at most $r$ negative entries is achievable by $W$ (in the sense of~\eqref{eq:achievable-sign-pattern}), hence, similar computations show that 
\begin{equation}
\label{eq:binom-lower-bound}
	\Gamma_W(\eps; v_{n,k}) \geq N(\eps) \log\Big(\frac n r\Big) \cdot \1(N(\eps) \leq r) \, .
\end{equation}
It remains to show that
\begin{equation}
\label{eq:trunc-mom-1}
\E[N(\eps) \cdot \1(N(\eps) \leq r) ] \geq c r	\, ,
\end{equation}
for some absolute constant $c$, which amounts to showing that $N(\eps)$ has enough mass in the lower tail. We do so in the following lines. We split into two cases: for small values of $r$, it is enough to bound from below by an absolute constant, while for large values of $r$, a more refined bound is necessary. Fix some threshold $r_0$, to be chosen later.

A naive bound is the following:
\begin{align}
\E[N(\eps) \cdot \1(1 \leq N(\eps) \leq r) ]
	&\geq \P\big(1 \leq N(\eps) \leq r\big)
	= \P(N(\eps) \leq r) - \P(N(\eps) = 0)  \\
	&\geq \frac{1}{2} - \Big(1 - \frac{r}{n}\Big)^n
	\geq \frac{1}{2} - \e^{-r} \geq \frac{1}{2} - \frac{1}{\e} > \frac{1}{8}. \label{eq:small-r}
\end{align}
This is sufficient for small values of $r$. Now let $r\geq r_0$ and let $\alpha \in (0,1)$, to be chosen later. This time, we condition on $N(\eps)$ being of order $r$, that is, we bound
\begin{equation}
\E[N(\eps) \cdot \1(\alpha r \leq N(\eps) \leq r)]
	\geq \alpha r \big( \P( N(\eps) \leq r ) - \P( N(\eps) < \alpha r) \big) \, .
\end{equation}
As before, we use the fact that $r$ is a median of $N(\eps)$, so that $\P( N(\eps) \leq r ) \geq 1/2$, and for the other term, Okamoto's lower-tail bound~\cite[Theorem~2]{okamoto1958binom}, applied with parameter $p=r/n$ and deviation $u=(1-\alpha)r/n$, gives
\begin{equation}
	\P(N(\eps) < \alpha r ) \leq \exp\big(- (1-\alpha)^2 r/2\big) \, .
\end{equation}
Now, set $r_0=8$ and $\alpha = 1/2$, so that the upper bound above is $\e^{-r/8} \leq 1/\e$ for $r\geq 8 = r_0$. With these choices, we obtain that for $r\geq 8$,
\begin{equation}
\label{eq:large-r}
\E[N(\eps) \cdot \1(r/2 \leq N(\eps) \leq r)]
	\geq \frac r 2 \cdot \frac 1 8 = \frac r {16} \, .
\end{equation}
To conclude, we relate $r$ and $k$ in both cases. First, it always holds that $2r \leq (k-1) <k$, hence $n/r > 2n/k$, and
\begin{equation}
\log\Big(\frac n r \Big) \geq \log(2) \cdot \log\Big(\frac{\e n}{k} \Big)\, .
\end{equation}
In addition, if $r \leq 7$, then $k \leq 2r + 2 \leq 16$, hence, combining~\eqref{eq:binom-lower-bound} and~\eqref{eq:small-r}, we conclude that 
\begin{equation}
\E_\eps \Gamma_W(\eps; v_{n,k}) \geq \frac{\log 2}{128} \,  k \log\Big(\frac{\e n}{k}\Big) \, .
\end{equation}
On the other hand, if $r\geq 8$, using again that $k \leq 2r + 2$, we deduce that $r \geq 4k/9$ and, combining~\eqref{eq:binom-lower-bound} and~\eqref{eq:large-r},
\begin{equation}
\E_\eps\, \Gamma_W(\eps; v_{n,k})
	\geq \frac{r}{16} \log\frac{n}{r}
	\geq \frac{k}{36}\cdot\log 2\cdot\log\frac{\e n}{k} = \frac{\log 2}{36}\, k\log\Big(\frac{\e n}{k}\Big) \, .
\end{equation}
This concludes the proof of Claim~\ref{item:lower-bound-expected-vandermonde} and~\Cref{prop:strong-lower-bound-vandermonde}.

\paragraph{}
In the next section we provide the proof of the main result, Theorem~\ref{thm:main-result}. The proof combines the upper and lower bounds, respectively from Proposition~\ref{prop:upper-bound-on-LLR-fixed-design} and Proposition~\ref{prop:strong-lower-bound-vandermonde}.

\subsubsection{Proof of~\Cref{thm:main-result}}
\label{sec:proof-thm-main-result}

By \Cref{prop:upper-bound-on-LLR-fixed-design} and in view 
of~\cref{eqn:definition-of-largest-subspace-LLR}, we have 
\begin{equation}
\label{ineq:upper-bound-on-E-star-and-Q-star}
E^\star(n, k) \leq k \log \frac{\e n}{k}, 
\quad \mbox{and} \quad 
Q^\star_{1-\delta}(n,k) \leq 
k \log \frac{\e n}{k} + \log\frac{1}{\delta},
\end{equation}
for any $1 \leq k \leq \twomin{n}{d}$ and any $\delta \in (0, 1)$. 
For the random-design lower bound, let $T_1,\ldots,T_n\simiid\mathsf{Unif}(0,1)$ and set
$X_i=(1,T_i,\ldots,T_i^{d-1})^\T$. Then the resulting design matrix satisfies
$\ran(X)=V_{n,d}(T_1,\ldots,T_n)$, while $X\thetastar=\lambda\1_n$ for $\thetastar=\lambda e_1$.
Thus, by 
\Cref{prop:strong-lower-bound-vandermonde}\ref{item:lower-bound-quantile-vandermonde-random}, 
taking $k=d$, we have 
\begin{equation}
\label{ineq:final-lower-bound-on-worst-case-random-design-quantile}
\WorstCaseRandomDesignQuant{1-\delta}{n}{d} \geq 
c'' \, \bigg(\Big(\twomin{d \log \frac{\e n}{d}}{n}\Big) + \log \frac{1}{\delta}\bigg).
\end{equation}
for all $\delta \in (0, \tfrac{1}{\e}]$ and $n\geq d\geq3$. 

We now deduce the result. The upper comparisons between random and fixed design below follow by conditioning on $X_{1:n}$, applying the uniform fixed-design bounds, and integrating over $X_{1:n}$. For $\delta\in(0,1/\e]$, we note that 
\begin{multline}
c'' \, \bigg(\Big(\twomin{d \log \frac{\e n}{d}}{n}\Big) + \log \frac{1}{\delta}\bigg)
\stackrel{{\rm (i)}}{\leq}
\WorstCaseRandomDesignQuant{1-\delta}{n}{d} 
\leq
\WorstCaseFixedDesignQuant{1-\delta}{n}{d} 
\\
\stackrel{{\rm (ii)}}{\leq}
\max_{1 \leq k \leq \twomin{n}{d}} Q^\star_{1-\delta}(n,k)
\stackrel{{\rm (iii)}}{\leq}
\Big(\twomin{d \log \frac{\e n}{d}}{n}\Big) + \log \frac{1}{\delta}.
\end{multline}
Above, inequality (i) follows 
from~\eqref{ineq:final-lower-bound-on-worst-case-random-design-quantile}, 
inequality (ii) follows 
from~\Cref{lem:reformulation-via-subspaces}\ref{item:worst-case-reduction}, and
inequality (iii) follows from~\cref{ineq:upper-bound-on-E-star-and-Q-star}. 
The display above establishes~\cref{eqn:main-worst-case-result-quantile}.
For any nonnegative random variable $Z$, the definition of the quantile gives
$\P(Z\geq\Quant{1-\delta}(Z))\geq\delta$, and hence
$\E Z\geq\delta\Quant{1-\delta}(Z)$. Thus,
\begin{multline}
	\frac{c''}{\e} \, \Big(\twomin{d \log \frac{\e n}{d}}{n}\Big)
	\stackrel{{\rm (i)}}{\leq}
	\frac{1}{\e}\WorstCaseRandomDesignQuant{1-\tfrac{1}{\e}}{n}{d}
	\\\stackrel{{\rm (ii)}}{\leq}
	\WorstCaseRandomDesign{n}{d}
	\leq
	\WorstCaseFixedDesign{n}{d}
	\stackrel{{\rm (iii)}}{\leq} 
	\max_{1 \leq k \leq \twomin{n}{d}} E^\star(n,k)
	\stackrel{{\rm (iv)}}{\leq}
	\twomin{d \log \frac{\e n}{d}}{n}.
\end{multline}
Above, inequality (i) follows from~\eqref{ineq:final-lower-bound-on-worst-case-random-design-quantile}, while inequality (ii) follows from the preceding bound with $\delta=1/\e$, inequality 
(iii) follows from~\Cref{lem:reformulation-via-subspaces}\ref{item:worst-case-reduction}, 
and inequality (iv) follows from~\cref{ineq:upper-bound-on-E-star-and-Q-star}. 
The display above establishes~\cref{eqn:main-worst-case-result-expected}.

\subsection{Remaining proofs and auxiliary results}
\label{sec:remaining-and-aux}
	
	\subsubsection{Proof of~Lemma~\ref{lem:chisquarequantiles}}

	Let $Z_d \sim \chi^2_d$. 
	It is well known (\eg see~\cite[eqn.~(4.3)]{LauMas2000})
	that $\P(Z_d \leq z_\delta) \geq 1-\delta$ holds with $z_\delta \defn d + 2\sqrt{d \log(1/\delta)} + 2 \log(1/\delta)$. In particular,
	\[
	\Quant{1-\delta}(Z_d) \leq z_\delta
	\leq \inf_{\lambda>0}\left\{(1+\lambda)\,d + (2+\lambda^{-1})\log\tfrac{1}{\delta}\right\}
	\leq \frac{3+\sqrt{5}}{2}\Big(d + \log\tfrac{1}{\delta}\Big) \, ,
	\]
where the last inequality follows by choosing $\lambda$ so that $1+\lambda = 2 +1/\lambda$. This establishes the upper bound.

	For the lower bound, we use the moment generating function of $Z_d$, $\E \e^{-t Z_d} = (1+2t)^{-d/2}$ to obtain:
	\[
	\P(Z_d \leq \tfrac{d}{12})
	\leq
	\inf_{t>0} \Big\{\e^{\tfrac{td}{12}}\E[\e^{-tZ_d}]\Big\} 
	=
	\inf_{t>0} \Big\{\e^{td/12}(1+2t)^{-d/2}\Big\}
	\leq \frac{1}{2^d}.
	\]
	The final inequality arose by taking $t=\tfrac{11}{2}$, and $\log 12 - \tfrac{11}{12} > 2 \log 2$. 
	Hence, $\P(Z_d\geq \tfrac{d}{12})\geq \tfrac{1}{2}$ for all $d \geq 1$. 
	Therefore, because $1-\delta\geq \tfrac{1}{2}$, we have
	\begin{subequations}
	\begin{equation}
		\label{eqn:bound-for-quantile-chi2}
		\Quant{1-\delta}(Z_d)\geq \Quant{1/2}(Z_d)\geq \frac{d}{12}.
	\end{equation}
	For $d \geq 2$, we have $Z_d = Z_2 + Z_{d-2}$. Therefore, using the fact that 
	$\P(\chi^2_2 \leq t) = 1 - \e^{-t/2}$, we have 
	\begin{equation}
		\label{eqn:bound-for-quantile-chi2-2}
	\Quant{1-\delta}(Z_d) \geq \Quant{1-\delta}(Z_2) = 2 \log \Big(\frac{1}{\delta}\Big). 
	\end{equation}
	\end{subequations}
	Combining~\cref{eqn:bound-for-quantile-chi2} and \cref{eqn:bound-for-quantile-chi2-2}, we have
	$\Quant{1-\delta}(Z_d) \geq \tfrac{2}{25}(d + \log\tfrac{1}{\delta})$ for $d \geq 2$.
	For $d = 1$, we have $Z_1 = X^2$ where $X \sim \Normal{0}{1}$ which gives
	$\Quant{1-\delta}(Z_1) = \Phi^{-1}(1- \delta/2)^2$ where
	$\Phi(u) = \P(X \leq u)$.
For $\delta \in (0,1/4]$: \Cref{lem:mills-ratio-inequality} gives $\Quant{1-\delta}(Z_1) =\Phi^{-1}(1- \delta/2)^2 \geq \tfrac{2}{3}\log\tfrac{1}{\delta}$. Averaging with~\cref{eqn:bound-for-quantile-chi2} yields $\Quant{1-\delta}(Z_1) \geq \tfrac{2}{27}(1+\log\tfrac{1}{\delta})$.
For $\delta \in (1/4,1/2]$: the quantile satisfies $\Phi^{-1}(1-\delta/2)^2 \geq \Phi^{-1}(3/4)^2 \approx 0.455$, since $\Quant{1-\delta}(Z_1)$ is decreasing in $\delta$. As $\tfrac{2}{27}(1+\log\tfrac{1}{\delta}) \leq \tfrac{2}{27}(1+\log 4) \approx 0.177$, the bound holds.
Combining the two cases for $d=1$ with the bound for $d \geq 2$ gives the claimed inequality with $C = 27/2$. This concludes the proof of Lemma~\ref{lem:chisquarequantiles}.

\begin{lemma}
\label{lem:mills-ratio-inequality}
It holds that $\Phi^{-1}(1 - \tfrac{\delta}{2}) \geq \sqrt{\tfrac{2}{3} \log \tfrac{1}{\delta}}$ for all $\delta \in (0, 1/4]$.
\end{lemma}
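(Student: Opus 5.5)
Set $x \defn \Phi^{-1}(1-\tfrac{\delta}{2})$, so that $\bar\Phi(x) \defn 1-\Phi(x) = \tfrac{\delta}{2}$. Because $\delta \le 1/4$, we have $\bar\Phi(x)\le 1/8$, and hence $x \ge \Phi^{-1}(7/8) \approx 1.15 > 1$. Since $\Phi^{-1}$ is increasing, the claim is equivalent to
\[
\bar\Phi\Big(\sqrt{\tfrac23\log\tfrac1\delta}\Big) \;\ge\; \tfrac{\delta}{2}.
\]
Write $y \defn \sqrt{\tfrac23\log\tfrac1\delta}$, so that $\delta = \e^{-3y^2/2}$. The inequality to prove becomes
\[
\bar\Phi(y) \;\ge\; \tfrac12\,\e^{-3y^2/2}, \qquad y \ge y_0 \defn \sqrt{\tfrac23\log 4}\approx 0.961 .
\]
The plan is to lower bound $\bar\Phi(y)$ by a Gaussian-type expression and compare the exponents, since the target decays like $\e^{-3y^2/2}$, strictly faster than $\e^{-y^2/2}$.

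We use the standard Mills-ratio lower bound
\[
\bar\Phi(y)\ge \frac{y}{1+y^2}\,\frac{\e^{-y^2/2}}{\sqrt{2\pi}}, \qquad y>0 .
\]
It then suffices to show
\[
\frac{2y}{(1+y^2)\sqrt{2\pi}} \;\ge\; \e^{-y^2} \qquad \text{for } y\ge y_0,
\]
or equivalently $g(y)\defn 2y\,\e^{y^2} - \sqrt{2\pi}\,(1+y^2) \ge 0$.

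Both sides are handled by elementary analysis. At $y_0\approx 0.961$ we have $y_0^2\approx 0.924$ and $\e^{y_0^2}\approx 2.52$, so $2y_0\e^{y_0^2}\approx 4.84$, while $\sqrt{2\pi}(1+y_0^2)\approx 2.5066\times 1.924\approx 4.82$. This margin is uncomfortably thin, so we check monotonicity carefully. The derivative is
\[
g'(y) = (2+4y^2)\,\e^{y^2} - 2\sqrt{2\pi}\, y ,
\]
and for $y\ge1$ it satisfies $g'(y) \ge 6\e^{y^2}-5.02\,y >0$, with a similar check on $[y_0,1]$. Hence $g$ is increasing, which would finish the proof.

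The main obstacle is the near-tightness at the endpoint $y_0$. If the Mills bound turns out to be too lossy there, the fallback is to split the range. For $\delta\in[\delta_1,1/4]$, with say $\delta_1=1/20$, we use monotonicity and direct numeric values: the left side satisfies $\Phi^{-1}(1-\delta/2)\ge\Phi^{-1}(7/8)\approx1.150$, while the right side is at most $\sqrt{\tfrac23\log 20}\approx1.414$. That crude comparison fails, so instead we subdivide $[\delta_1,1/4]$ into a few intervals, comparing $\Phi^{-1}(1-a/2)$ against $\sqrt{\tfrac23\log(1/a)}$ at interval endpoints $a$, using monotonicity of both functions in $\delta$. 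For small $\delta$, the exponent gap $\e^{-y^2}$ versus $1/y$ gives ample room in the Mills-ratio argument.
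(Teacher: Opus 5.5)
Your proof is correct and is essentially the paper's argument: the Mills-ratio lower bound reduces the claim to $2y\,\e^{y^2}\ge\sqrt{2\pi}\,(1+y^2)$ for $y\ge y_0=\sqrt{\tfrac43\log 2}$, and this follows from monotonicity together with the endpoint check. At the endpoint the ratio is about $1.004$, and $\e^{y_0^2}=2^{4/3}$ exactly, which makes that check clean. The fallback subdivision in your last paragraph is unnecessary, since your endpoint computation already closes the argument.
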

\begin{proof}

We will show that $2 \e^{3u^2/2}(1 - \Phi(u)) \geq 1$ holds for all $u \geq u_* := \sqrt{\tfrac{4\log 2}{3}}$. Since $\delta \leq 1/4$ implies $u_\delta = \sqrt{\tfrac{2}{3}\log\tfrac{1}{\delta}} \geq u_*$, this gives $2\e^{3u_\delta^2/2}(1-\Phi(u_\delta)) \geq 1$, and by monotonicity of $\Phi$ we get $\Phi^{-1}(1 - \tfrac{\delta}{2}) \geq u_\delta$ as required. By the Mills-ratio lower bound $1-\Phi(u) \geq \frac{u}{1+u^2}\varphi(u)$,
\[
2\e^{3u^2/2}(1-\Phi(u)) \geq \frac{2}{\sqrt{2\pi}} \cdot \frac{u\,\e^{u^2}}{1+u^2} =: g(u).
\]
An elementary analysis shows that the function $g$ is increasing. Hence $g(u) \geq g(u_0)$ where $u_0 = \sqrt{\tfrac{4\log 2}{3}}$ corresponds to $\delta = 1/4$. A direct computation gives
\[
g(u_0) = \frac{2\cdot 2^{4/3}}{\sqrt{2\pi}}\cdot \frac{u_0}{1+\tfrac{4\log 2}{3}} \approx 1.004 > 1,
\]
completing the proof.
\end{proof}

\section{Situations when the logarithmic factor can be removed}
\label{sec:logfree}

In this section, we develop several results, each showing a situation in which the additional logarithmic overhead of the Vandermonde design is not needed.

\subsection{Univariate models and cumulant generating functions}
\label{sec:univariatemodels}

Although our primary motivation in this work is logistic regression, the results of this section (specifically Section~\ref{sec:log-likelihood-stat}) are developed for general canonical exponential families and generalized linear models. The variational characterization of the log-likelihood ratio statistic via the convex conjugate of the cumulant generating function (\Cref{prop:variational-characterization-of-log-likelihood-stat}) holds in this generality at no extra cost and, along with the resulting nonasymptotic Wilks-type bounds, may be of independent interest for other generalized linear models.

In this section, we consider the cumulant generating function of a random vector $Z \in \R^m$:
\[
\psi_Z(\lambda) = \log \E \e^{\lambda^\T Z}, \quad \mbox{for}~ \lambda \in \R^m.
\]
We also make repeated use of the \emph{convex conjugate} of the cumulant generating function, which is defined as~\cite[pp.~104]{rockafellar1970convex}:
\[
\psi_Z^\ast(z) = 
\sup_{\lambda \in \R^m} \Big\{\, 
\lambda^\T z - \psi_Z(\lambda)
\,\Big\}.
\]
In~\Cref{sec:scalar-random-variables} we study the scalar case, \ie when $m = 1$. For exponential families and generalized linear models, an explicit variational characterization which relates the convex conjugate of the cumulant generating function of the sufficient statistic with the log-likelihood statistic is developed in~\Cref{sec:log-likelihood-stat}. In the univariate case, we are able to prove nonasymptotic variants of Wilks' theorem by combining these results. We defer most proofs to~\Cref{sec:proofs-cgf-univariate}.

\subsubsection{Properties of the cumulant generating function of scalar random variables}
\label{sec:scalar-random-variables}

The main result in this section regards the convex conjugate of the cumulant generating functions of scalar random variables.
The next result, proved in~\Cref{sec:proof-univariate-CGF}, establishes upper bounds on the stochastic behavior of the convex conjugate of cumulant generating function of a scalar random variable applied to itself. 
\begin{proposition}
\label{prop:conjugate-scalar-case}
Let $Z$ be a scalar random variable with cumulant generating function $\psi_Z$
(not necessarily finite on all of $\R$).
Then,
\begin{enumerate}[label=(\roman*)]
\item 
\label{item:deviation-for-ccgf}
for every $t > 0$, it holds that
	$\P(\psi_Z^*(Z) > t) \leq \twomin{2 \e^{-t}}{1}$; and
\item 
\label{item:mean-of-ccgf}
it holds that $\E \psi_Z^\ast(Z) \leq 1 + \log 2$. 
\end{enumerate}
\end{proposition}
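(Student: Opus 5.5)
The plan is to split the convex conjugate into two one-sided pieces, each monotone in $z$, and to control each piece by a Chernoff bound. Throughout, $\psi_Z$ takes values in $(-\infty,+\infty]$ and $\psi_Z(0)=0$.

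\emph{Splitting.} I would write
\[
\psi_Z^+(z)=\sup_{\lambda\ge 0}\{\lambda z-\psi_Z(\lambda)\},
\qquad
\psi_Z^-(z)=\sup_{\lambda\le 0}\{\lambda z-\psi_Z(\lambda)\},
\]
so that $\psi_Z^\ast=\max\{\psi_Z^+,\psi_Z^-\}$. Taking $\lambda=0$ shows both pieces are nonnegative. Moreover $\psi_Z^+$ is nondecreasing in $z$ and $\psi_Z^-$ is nonincreasing. This decomposition needs no finiteness of $\psi_Z$ or existence of $\E Z$, which is why I prefer it to reasoning about the mean.

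\emph{Part (i).} Fix $t>0$. By monotonicity, $A_t=\{z:\psi_Z^+(z)>t\}$ is an up-set of $\R$, so it has the form $(b,\infty)$ or $[b,\infty)$, where $b\in[-\infty,+\infty]$.
\begin{itemize}
\item For every $z\in A_t$, the Chernoff bound $\P(Z\ge z)\le \e^{-\lambda z+\psi_Z(\lambda)}$, valid for every $\lambda\ge0$, gives $\P(Z\ge z)\le \e^{-\psi_Z^+(z)}<\e^{-t}$.
\item If $A_t=[b,\infty)$, then $\P(Z\in A_t)=\P(Z\ge b)\le \e^{-t}$ directly.
\item If $A_t=(b,\infty)$, then $\P(Z>b)=\lim_{z\downarrow b}\P(Z\ge z)\le \e^{-t}$ by continuity of measure.
\end{itemize}
The symmetric argument with $\lambda\le0$ gives $\P(\psi_Z^-(Z)>t)\le \e^{-t}$. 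A union bound then yields $\P(\psi_Z^\ast(Z)>t)\le 2\e^{-t}$, and the trivial bound $1$ completes (i).

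\emph{Part (ii).} Since $\psi_Z^\ast(Z)\ge0$, the layer-cake formula and (i) give
\[
\E\psi_Z^\ast(Z)=\int_0^\infty \P(\psi_Z^\ast(Z)>t)\,\ud t
\le \int_0^{\log 2}1\,\ud t+\int_{\log 2}^\infty 2\e^{-t}\,\ud t
= \log 2+1.
\]

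\emph{Main obstacle.} The delicate step is the endpoint of the half-line $A_t$. There one cannot apply Chernoff at a point where $\psi_Z^\ast$ might equal exactly $t$, or where $\psi_Z$ is infinite on one side. I handle this with the monotone one-sided conjugates together with continuity of probability along $z\downarrow b$. I should also check that measurability of $\psi_Z^\ast(Z)$ is harmless: it holds because $\psi_Z^\pm$ are monotone, so $\psi_Z^\ast$ is Borel.
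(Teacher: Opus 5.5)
Your proof is correct and takes essentially the same route as the paper. The paper also reduces $\{\psi_Z^\ast(Z)>t\}$ to an upper and a lower half-line, with thresholds $z_\pm(t)$ defined as infima of $(t+\psi_Z(\pm\lambda))/\lambda$ over $\lambda>0$; it bounds each half-line by $\e^{-t}$ via Chernoff's inequality, handling non-attained infima by approximation (your continuity-of-measure step), then takes a union bound and integrates the tail for part (ii), while your monotone one-sided conjugates $\psi_Z^\pm$ are a slightly tidier packaging of the same argument that also makes measurability explicit.
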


Note that results similar, but weaker, than~\Cref{prop:conjugate-scalar-case} exist in the large deviations literature. For instance, results in the monograph by Dembo-Zeitouni~\cite[Eqn.~(5.1.16)]{dembo2010large} imply 
\[
\P\Big(\psi_Z^\ast(Z) > t\Big) \leq \twomin{1}{\frac{2}{1-\gamma}\e^{-\gamma t}}, \quad \mbox{for all}~t > 0,
\]
for any $\gamma \in [0, 1)$. Notably, the result in~\Cref{prop:conjugate-scalar-case} is unimprovable. 

\begin{remark}[\Cref{prop:conjugate-scalar-case} is sharp] 
\label{rem:sharpness-of-1d-cgf-trick}
For each $\alpha \in (0, 1)$, let $Z_\alpha = \eps B_\alpha$, where $\eps$ and $B_\alpha$ are independent, with $\eps \sim \mathsf{Unif}(\{-1, 1\})$ and $B_\alpha \sim \mathsf{Beta}(1, \alpha)$. As $|Z_\alpha| \leq 1$ almost surely, the cumulant generating function $\psi_\alpha \equiv \psi_{Z_\alpha}$ is finite on $\R$.
By taking $\alpha$ sufficiently small, this example makes both bounds in~\Cref{prop:conjugate-scalar-case} arbitrarily sharp: for every fixed $t>0$, the probability $\P(\psi_\alpha^\ast(Z_\alpha)>t)$ can be made arbitrarily close to $\twomin{2\e^{-t}}{1}$, while $\E\psi_\alpha^\ast(Z_\alpha)$ can be made arbitrarily close to $1+\log 2$.
See~\Cref{sec:proof-of-remark-1d-cgf} for the details of this calculation.
In view of this example, we see that~\Cref{prop:conjugate-scalar-case} cannot be improved.
\end{remark}

\subsubsection{LLR statistics for  exponential families and GLMs}
\label{sec:log-likelihood-stat}

The next result describes how the log-likelihood statistic can be written in terms of the convex conjugate for certain exponential families. We recall two definitions in this direction. 

\begin{definition}[Canonical exponential family]
A family of probability measures $\cP = \{P_\theta\}_{\theta \in \Theta}$ is called a \emph{canonical exponential family} if it is dominated by a $\sigma$-finite measure $\mu$ on $\R^n$, with the canonical versions of its densities fixed by
\[
p_\theta(z) \equiv \frac{\ud P_\theta}{\ud \mu}(z) = \exp\big\{\theta^\T T(z) - A(\theta) \big\} h(z) \quad \mbox{for all}~z \in \R^n,~\theta\in\Theta.
\]
Above, the measurable map $T \colon \R^n \to \R^k$ is referred to as the \emph{sufficient statistic}, $h \colon \R^n \to \R_+$ is measurable, and
\[
A(\theta)\defn\log\int_{\R^n}\exp\{\theta^\T T(z)\}h(z)\,\ud\mu(z),
\qquad
\Theta\defn\dom(A)=\{\theta\in\R^k:A(\theta)<\infty\}.
\]
\end{definition}

One important case of canonical exponential families is the case of generalized linear models.

\begin{definition}[Canonical generalized linear model (GLM)]
A family of probability measures $\cP = \{P_\beta\}_{\beta \in \cB}$ is called a \emph{canonical generalized linear model (GLM)} if there exist a $\sigma$-finite measure $\mu$ on $\R^n$, covariates $x_1, \dots, x_n \in \R^d$, and measurable functions $h\colon\R\to\R_+$ and $A\colon\R\to\R\cup\{+\infty\}$ satisfying the log-partition normalization stated below, with the density versions fixed by
\[
p_\beta(y) = \frac{\ud P_\beta}{\ud \mu}(y) = \prod_{i=1}^n h(y_i) \prod_{i=1}^n \exp(y_i x_i^\T \beta - A(x_i^\T \beta)), \quad \mbox{for all}~y \in \R^n,~\beta\in\cB.
\]
Here, the natural parameter space is
$\cB=\{\beta\in\R^d:A(x_i^\T\beta)<\infty\text{ for all }i\in[n]\}$.
For logistic regression with the convention $Y_i\in\{-1,1\}$ used in this
paper, set $B_i=(1+Y_i)/2\in\{0,1\}$. In the display above, take $y=B$. Then $\mu$ is counting measure on
$\{0,1\}^n$, $h\equiv1$, and $A(t)=\log(1+\e^t)$.
\end{definition}

Note that the correspondence between canonical GLMs and exponential families arises through the following representation of the density $p_\beta$. Set
\begin{subequations}
\begin{equation}
\label{eqn:exp-family-parameters-GLM}
\tilde A(\beta)\defn \sum_{i=1}^n A(x_i^\T \beta), \quad 
\tilde h(y) = \prod_{i=1}^n h(y_i), \quad \mbox{and} \quad \tilde T(y) = X^\T y. 
\end{equation}
Then, we have 
\begin{equation}
\label{eqn:exp-family-representation}
p_\beta(y)
= \tilde h(y) \exp\Big\{\beta^\T \tilde T(y) - \tilde A(\beta)\Big\},
\end{equation}
\end{subequations}
As required in the definition, the following identities hold on $\R^d$:
\[
\tilde A(\beta)=\log\int_{\R^n}
\exp\{\beta^\T\tilde T(y)\}\tilde h(y)\,\ud\mu(y),
\qquad
\cB=\dom(\tilde A).
\]
This puts the canonical GLM $\{P_\beta\}_{\beta \in \cB}$ in the form of 
a canonical exponential family. We also note that in a canonical GLM, the function $\tilde A$ and the natural parameter space $\cB$ are convex, as inherited from the more general case of canonical exponential families~\cite[Theorem~1.6.3]{bickel2007mathematical}.

For exponential families, the log-likelihood statistic has a variational characterization in terms of the convex conjugate of the cumulant generating function of the sufficient statistic.
The proof is presented in~\Cref{sec:proof-prop-var-car-log-lik-stat}.

\begin{proposition}
\label{prop:variational-characterization-of-log-likelihood-stat}
The following characterizations for the log-likelihood statistic hold: 
\begin{enumerate}[label=(\roman*)]
\item 
\label{item:var-char-log-stat-exp-family}
If $\cP = \{P_\theta\}_{\theta \in \Theta}$ denotes a canonical exponential family and $Z \sim P_{\thetastar}$ for some $\thetastar \in \Theta$, then the log-likelihood statistic satisfies
\[
\log \frac{\sup_{\theta \in \Theta} p_\theta(z)}{p_{\thetastar}(z)} = \psi_{T(Z)}^\ast(T(z)), \quad \mbox{for $P_{\thetastar}$-almost every}~z.
\] 
\item 
\label{item:var-char-log-stat-GLM}
If $\cP = \{P_\beta\}_{\beta \in \cB}$ denotes a canonical GLM and $Y \sim P_{\betastar}$ for some $\betastar\in\cB$, then
\[
\log \frac{\sup_{\beta \in \cB} p_{\beta}(y)}{p_{\betastar}(y)} = \psi^\ast_{X^\T Y}(X^\T y), \quad \mbox{for $P_{\betastar}$-almost every}~y,
\]
where $X \in \R^{n \times d}$ denotes the matrix with rows $x_i^\T$ corresponding to the covariates in the GLM.
\end{enumerate}
\end{proposition}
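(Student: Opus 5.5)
The plan is to prove part (i) directly from the exponential family form and then deduce part (ii) by specializing to the representation \eqref{eqn:exp-family-representation}.

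\emph{Step 1: the cumulant generating function of $T(Z)$.} Fix $\thetastar\in\Theta$ and $Z\sim P_{\thetastar}$. For every $\lambda\in\R^k$,
\[
\E\, \e^{\lambda^\T T(Z)}=\int \exp\{(\lambda+\thetastar)^\T T(z)-A(\thetastar)\}h(z)\,\ud\mu(z)=\exp\{A(\lambda+\thetastar)-A(\thetastar)\},
\]
with the value $+\infty$ exactly when $\lambda+\thetastar\notin\Theta$. Hence
\[
\psi_{T(Z)}(\lambda)=A(\thetastar+\lambda)-A(\thetastar),
\]
understood in $\R\cup\{+\infty\}$.

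\emph{Step 2: the conjugate.} Substitute $\theta=\thetastar+\lambda$ in the definition of $\psi^\ast_{T(Z)}$. Terms with $\lambda+\thetastar\notin\Theta$ contribute $-\infty$, so they drop out, and
\[
\psi^\ast_{T(Z)}(T(z))=\sup_{\theta\in\Theta}\Big\{(\theta-\thetastar)^\T T(z)-A(\theta)+A(\thetastar)\Big\}.
\]

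\emph{Step 3: identification with the log-likelihood ratio.} On the set $\{h(z)>0\}$, which has full $P_{\thetastar}$-measure since $p_{\thetastar}$ vanishes where $h$ does, $h(z)$ cancels in the ratio $p_\theta(z)/p_{\thetastar}(z)$. What remains is
\[
\exp\{(\theta-\thetastar)^\T T(z)-A(\theta)+A(\thetastar)\}.
\]
Taking the supremum over $\theta\in\Theta$ and then the logarithm gives exactly the expression from Step 2. The logarithm commutes with the supremum because it is increasing, with the convention $\log(+\infty)=+\infty$. This "almost every" restriction is the only place where a null set enters, and it is the main point needing care. The other delicate point is allowing $A=+\infty$ outside $\Theta$ in the supremum; the substitution in Step 2 handles it.

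\emph{Step 4: part (ii).} By \eqref{eqn:exp-family-parameters-GLM}--\eqref{eqn:exp-family-representation}, a canonical GLM is a canonical exponential family with sufficient statistic $\tilde T(y)=X^\T y$, carrier $\tilde h$, log-partition $\tilde A$, and natural parameter space $\cB=\dom(\tilde A)$. Applying part (i) to this family yields the formula with $\psi^\ast_{X^\T Y}(X^\T y)$, valid for $P_{\betastar}$-almost every $y$.
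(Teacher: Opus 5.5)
Your proposal is correct and follows essentially the same route as the paper: you identify $\psi_{T(Z)}(\lambda)=A(\thetastar+\lambda)-A(\thetastar)$ (equal to $+\infty$ off $\Theta$), substitute into the conjugate, cancel $h(z)$ on the full-measure set $\{h>0\}$, and then apply part (i) to the exponential-family representation of the GLM. The only difference is that you derive the cumulant generating function identity directly from the definition of $A$, whereas the paper cites it as a standard fact.
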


Combining~\Cref{prop:conjugate-scalar-case} and \Cref{prop:variational-characterization-of-log-likelihood-stat} immediately yields the following results for log-likelihood statistics in univariate models. 

\begin{corollary}
\label{cor:dimone}
The following hold: 
\begin{enumerate}[label=(\roman*)]
\item If $\cP = \{P_\theta\}_{\theta \in \Theta}$ denotes a univariate canonical exponential family with $Z \sim P_{\thetastar}$ for some $\thetastar \in \Theta \subset \R$, then the log-likelihood statistic satisfies
\[
\E \log \frac{\sup_{\theta \in \Theta} p_\theta(Z)}{p_{\thetastar}(Z)} \leq 1 + \log 2 \quad \mbox{and} \quad \P\Big(\log \frac{\sup_{\theta \in \Theta} p_\theta(Z)}{p_{\thetastar}(Z)} \leq \log \frac{2}{\delta}\Big) \geq 1 - \delta 
\] 
for all $\delta \in (0, 1)$; and 
\item If $\cP = \{P_\beta\}_{\beta \in \cB}$ denotes a univariate canonical GLM with $\cB\subset\R$ and $Y \sim P_{\betastar}$ for some $\betastar \in \cB$, then the log-likelihood statistic satisfies
\[
\E \log \frac{\sup_{\beta \in \cB} p_{\beta}(Y)}{p_{\betastar}(Y)} \leq 1 + \log 2 \quad \mbox{and} \quad \P\Big(\log \frac{\sup_{\beta \in \cB} p_{\beta}(Y)}{p_{\betastar}(Y)} \leq \log \frac{2}{\delta}\Big) \geq 1 - \delta 
\] 
for all $\delta \in (0, 1)$.
\end{enumerate}
\end{corollary}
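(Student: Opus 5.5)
The corollary follows by composing \Cref{prop:variational-characterization-of-log-likelihood-stat} with \Cref{prop:conjugate-scalar-case}. The only thing to check is that the random variable fed into the scalar result has exactly the cumulant generating function whose conjugate appears in the variational identity.

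For part (i), let $Z\sim P_{\thetastar}$ with $\thetastar\in\Theta\subset\R$. By \Cref{prop:variational-characterization-of-log-likelihood-stat}\ref{item:var-char-log-stat-exp-family}, for $P_{\thetastar}$-almost every $z$ the log-likelihood statistic equals $\psi^\ast_{T(Z)}(T(z))$. Here $T(Z)$ is a scalar random variable, since the family is univariate and hence $k=1$. Consequently the statistic evaluated at the random observation $Z$ equals $\psi^\ast_{S}(S)$ almost surely, where $S\defn T(Z)$. Because the identity holds almost surely, the law and the expectation of the statistic coincide with those of $\psi_S^\ast(S)$.

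We now apply \Cref{prop:conjugate-scalar-case} to $S$. That result requires no finiteness of $\psi_S$ on all of $\R$, which matters because $\Theta$ may be a proper interval and $\psi_S(\lambda)=A(\thetastar+\lambda)-A(\thetastar)$ may be infinite off $\Theta-\thetastar$. Item~\ref{item:mean-of-ccgf} gives $\E\psi_S^\ast(S)\le 1+\log 2$. Item~\ref{item:deviation-for-ccgf} with $t=\log(2/\delta)>0$, valid since $\delta<1<2$, gives $\P(\psi^\ast_S(S)>\log(2/\delta))\le 2\e^{-t}=\delta$. Taking complements yields the stated probability bound.

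Part (ii) is identical. Use \Cref{prop:variational-characterization-of-log-likelihood-stat}\ref{item:var-char-log-stat-GLM} with $S\defn X^\T Y$, which is scalar because $d=1$ when $\cB\subset\R$. Then apply \Cref{prop:conjugate-scalar-case} to $S$ in the same way.

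There is no real obstacle; the only point needing care is measure-theoretic. The variational identity holds for almost every realization, so it transfers to equality in distribution of the statistic and $\psi_S^\ast(S)$. That is all the tail and expectation bounds require, since $\psi_S^\ast(S)$ is nonnegative and measurable, being a supremum of countably many affine functions of $S$ over rational $\lambda$ in the domain by lower semicontinuity.
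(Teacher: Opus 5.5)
Your proposal is correct and is exactly the paper's argument. The paper obtains \Cref{cor:dimone} by composing the almost-sure identity of \Cref{prop:variational-characterization-of-log-likelihood-stat} (with $k=1$, respectively $d=1$) with both items of \Cref{prop:conjugate-scalar-case}, using $t=\log(2/\delta)$ for the tail bound. Your remarks on the possibly non-finite cumulant generating function, and on the nonnegativity and measurability of $\psi_S^\ast(S)$, only make explicit what the paper leaves implicit.
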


In particular, the logit model is the canonical GLM for the Bernoulli exponential family. The following result provides a variational characterization of the quantity $\Gamma_W$ defined in~\eqref{eq:subspace-reformulation} appearing in the subspace reformulation of the LLR statistic in~Lemma~\ref{lem:reformulation-via-subspaces}.

\begin{lemma}[Variational characterization of $\Gamma_W$]
\label{lem:var-char}
Let $W \subset \R^n$ be a subspace and let $\Pi_W$ denote the orthogonal projection onto $W$. Then for all $v \in W$ and $\eps \in \{-1,1\}^n$, one has
\[
  \Gamma_W(\eps;v)
  =
  \inf_{\substack{q\in[0,1]^n\\ \Pi_W q = \Pi_W B}}
  \sum_{i=1}^n \kl{\Ber{q_i}}{\Ber{p_i}}
  \;=\;
  \inf_{\substack{\delta\in[-1,1]^n\\ \Pi_W\delta = \Pi_W\eps}}
  \sum_{i=1}^n
  \kl{\Ber{\tfrac{1+\delta_i}{2}}}{\Ber{\sigma(v_i)}},
\]
where $B_i \defn \tfrac{1+\eps_i}{2}$ and $p_i \defn \sigma(v_i)$.
\end{lemma}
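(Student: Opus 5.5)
Our plan is to derive the statement from \Cref{prop:variational-characterization-of-log-likelihood-stat}\ref{item:var-char-log-stat-GLM} combined with convex (Fenchel) duality, applied to the logistic model restricted to $W$. Pick a matrix $U\in\R^{n\times k}$ with orthonormal columns spanning $W$, so $W=\{U\beta:\beta\in\R^k\}$ and $\Pi_W=UU^\T$. With $B_i=(1+\eps_i)/2$, we have $\log\sigma(\eps_i w_i)=B_iw_i-\log(1+\e^{w_i})$. Hence $\Gamma_W(\eps;v)$ is the log-likelihood ratio of the canonical GLM with covariates given by the rows of $U$, response $B$, and true parameter $\betastar$ satisfying $U\betastar=v$. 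By \Cref{prop:variational-characterization-of-log-likelihood-stat}\ref{item:var-char-log-stat-GLM}, and since counting measure charges every point so that ``almost every'' means every $\eps$, we get
\[
\Gamma_W(\eps;v)=\sup_{\beta\in\R^k}\Big\{\beta^\T U^\T B-\sum_{i=1}^n\bigl(A(v_i+(U\beta)_i)-A(v_i)\bigr)\Big\},\qquad A(t)=\log(1+\e^t).
\]
This identity can also be checked directly by the substitution $w=v+U\beta$, which sweeps out $W$ because $v\in W$. The linear term $\beta^\T U^\T v$ cancels against the centering of the cumulant generating function $\psi_{U^\T B}(\beta)=\sum_i [A(v_i+(U\beta)_i)-A(v_i)]$.

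The main step is to identify this supremum with a constrained minimization of a sum of Bernoulli divergences. Let $f(u)=\sum_i \phi_i(u_i)$ with $\phi_i(s)=A(v_i+s)-A(v_i)$. The conjugate of $\phi_i$ is
\[
\phi_i^\ast(q)=\kl{\Ber{q}}{\Ber{p_i}}\quad\text{for }q\in[0,1],\qquad \phi_i^\ast(q)=+\infty\text{ otherwise}.
\]
This follows from the standard computation $A^\ast(q)=q\log q+(1-q)\log(1-q)$ on $[0,1]$, together with the shift $s\mapsto v_i+s$, which subtracts $qv_i$ and adds $A(v_i)$. We then write
\[
\Gamma_W(\eps;v)=\sup_{u\in W}\{\langle u,B\rangle-f(u)\}=(f+\iota_W)^\ast(B),
\]
where $\iota_W$ is the convex indicator of $W$. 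By the conjugate-of-sum rule (Rockafellar, Theorem~16.4 / 20.1), $(f+\iota_W)^\ast$ is the infimal convolution $f^\ast\,\square\,\iota_{W^\perp}$, and the infimum is attained. This gives
\[
\Gamma_W(\eps;v)=\inf_{q:\,q-B\in W^\perp}\sum_i \phi_i^\ast(q_i)=\inf_{q\in[0,1]^n,\ \Pi_Wq=\Pi_WB}\sum_i\kl{\Ber{q_i}}{\Ber{p_i}}.
\]
The qualification condition holds because $f$ is finite everywhere, so $\mathrm{ri}(\dom f)\cap W=W\neq\emptyset$. This is the step needing care: we must confirm that no duality gap arises even when the supremum is not attained (non-existence of the MLE). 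Finiteness of $f$ on all of $\R^n$ settles this. We also note that the conjugate-of-sum rule yields the infimal convolution with closure automatically, since $f^\ast$ has compact domain and is therefore closed and coercive, so the infimum is attained.

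The second expression follows from the change of variables $q=(1+\delta)/2$. This affine map sends $[0,1]^n$ onto $[-1,1]^n$ and $B$ to $(1+\eps)/2$. The constraint $\Pi_W q=\Pi_W B$ becomes $\Pi_W(\1_n+\delta)/2=\Pi_W(\1_n+\eps)/2$, which is equivalent to $\Pi_W\delta=\Pi_W\eps$ because the $\Pi_W\1_n$ terms cancel on both sides, whether or not $\1_n\in W$.
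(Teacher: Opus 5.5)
Your proof is correct and follows essentially the same route as the paper. You write $\Gamma_W(\eps;v)$ as a supremum over $W$ of a linear term minus a separable log-partition function, apply Fenchel duality (finiteness of that function everywhere rules out a duality gap), identify the scalar conjugates as Bernoulli Kullback--Leibler divergences, and finish with the affine change of variables $\delta=2q-1$. The differences are cosmetic. The paper centers everything, working with $B-p$ and the centered Bernoulli log-Laplace transform $h_{p_i}$, invoking Rockafellar's Theorem~31.1, and then shifting $q=p+\alpha$. You instead keep $B$ uncentered and apply the conjugate-of-sum rule to $(f+\iota_W)^\ast$ directly.
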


\subsection{Proofs}
\label{sec:proofs-cgf-univariate}
\subsubsection{Proof of~\Cref{prop:conjugate-scalar-case}}
\label{sec:proof-univariate-CGF}
To lighten notation, we abbreviate $\psi \equiv \psi_Z$ and $\psi^\ast \equiv \psi_Z^\ast$ throughout.

\paragraph{Claim~\ref{item:deviation-for-ccgf}:}
Fix $t>0$. We define
\begin{equation}
	z_+(t) = \inf_{\substack{\lambda>0\\ \psi(\lambda)<\infty}} \frac{t+\psi(\lambda)}{\lambda}\,  \quad \mbox{and} \quad 
	z_-(t) = \inf_{\substack{\lambda>0\\ \psi(-\lambda)<\infty}} \frac{t+\psi(-\lambda)}{\lambda},
\end{equation}
with the convention $\inf\varnothing=+\infty$. These thresholds cannot equal $-\infty$ (apply the exponential moment bound at any level $a$ with $\P(Z\geq a)>\e^{-t}$, and likewise to $-Z$).
If there exists $z\in \R$ such that $\psi^*(z) >t$, then there exists $\lambda \in \R$ such that $\lambda z - \psi(\lambda) >t$. Therefore, we have the following inclusion of events,

\begin{equation}
\label{eqn:inclusion-of-events}
\{\psi^\ast(Z) > t\} \subset \{Z > z_+(t)\} \cup \{Z < -z_-(t)\}.
\end{equation}

Chernoff's inequality gives
\begin{equation}\label{ineq:main-tail-bound}
	\twomax{\P\big(Z>z_+(t)\big)}{\P\big(Z< -z_-(t)\big)} \leq \e^{-t} \, .
\end{equation}
(If an infimum is not attained, use an arbitrarily close candidate, as usual; if the corresponding domain is empty, the event is empty.)
The claim follows by combining the inclusion~\eqref{eqn:inclusion-of-events}, the inequalities~\eqref{ineq:main-tail-bound}, and a union bound.

\paragraph{Claim~\ref{item:mean-of-ccgf}:}

By Claim~\ref{item:deviation-for-ccgf},
\[
\E\psi_Z^\ast(Z) = \int_0^\infty \P(\psi_Z^\ast(Z) > t)\,dt
\leq \int_0^{\log 2} 1\,dt + \int_{\log 2}^\infty 2\e^{-t}\,dt
= \log 2 + 1.
\]

\subsubsection{Proof of~\Cref{prop:variational-characterization-of-log-likelihood-stat}}
\label{sec:proof-prop-var-car-log-lik-stat}

\paragraph{Claim~\ref{item:var-char-log-stat-exp-family}:} 
Let us denote by $\psi$ the cumulant generating function of $T$ under $P_{\thetastar}$. It is a well-known identity for canonical exponential families that for $\thetastar \in \Theta$, we have~\cite[pp.~31] {keener2010theoretical}
   \begin{equation}
   \label{eq:mgf-partition}
   \psi(\theta - \thetastar) = \begin{cases} 
    A(\theta) - A(\thetastar), & \theta \in \Theta \\ 
    +\infty, & \text{else} \, .
    \end{cases}
   \end{equation}
    Hence, by definition of the convex conjugate it holds that
    \begin{equation}
    \psi^\ast(T(z)) = 
	    \sup_{\theta \in \R^k} 
    \Big\{\, 
    (\theta - \thetastar)^\T T(z) - \psi(\theta - \thetastar) \,\Big\} 
    =
    \sup_{\theta \in \Theta} 
    \Big\{(\theta - \thetastar)^\T T(z)- \big(A(\theta)- A(\thetastar)\big)\Big\}.
      \label{eqn:convex-conjugate-formulation}
    \end{equation}
    On the set where $h(z)>0$, which has full $P_{\thetastar}$-measure, the fixed canonical versions give, simultaneously for every $\theta\in\Theta$,
    $\log p_\theta(z) = \theta^\T T(z) -  A(\theta) + \log h(z)$. Therefore,
    \begin{equation}
    \label{eqn:re-write-of-log-likelihood-stat}
    \log \frac{\sup_{\theta \in \Theta} p_\theta(z)}{p_{\thetastar}(z)}= \sup_{\theta \in \Theta} \Big\{\, \log p_\theta(z) - \log p_{\thetastar}(z) \,\Big\} = 
    \sup_{\theta \in \Theta}
    \Big\{\, 
    (\theta - \thetastar)^\T T(z) - \big(A(\theta) - A(\thetastar)\big)
    \,\Big\}.
    \end{equation}
    Combining~\cref{eqn:convex-conjugate-formulation,eqn:re-write-of-log-likelihood-stat} yields the claim.

\paragraph{Claim~\ref{item:var-char-log-stat-GLM}:}

Applying Claim~\ref{item:var-char-log-stat-exp-family} to the representation
in~\cref{eqn:exp-family-parameters-GLM,eqn:exp-family-representation} gives, for
$P_{\betastar}$-almost every $y$,
\[
\log\frac{\sup_{\beta\in\cB}p_{\beta}(y)}{p_{\betastar}(y)}
=\psi_{\tilde T(Y)}^\ast\big(\tilde T(y)\big)
=\psi_{X^\T Y}^\ast(X^\T y).
\]

\subsubsection{Proof of~Lemma~\ref{lem:var-char}}

We work with the alternative parameterization where the outcomes are $0$ or $1$ to put the problem in the canonical form of the Bernoulli exponential family and GLM. Let $v \in W$ and $\eps \in \{-1,1\}^n$. Recall that
\[
\Gamma_W(\eps; v)
	= \sup_{w\in W} \log\Big( \frac{p_w(\eps)}{p_v(\eps)}\Big) \, .
\]
For every $i \in [n]$, let $B_i = \1(\eps_i = 1) = (1+\eps_i)/2$ and let $B = (B_1, \dots, B_n) \in \R^n$. Then
\begin{align}
\Gamma_W(\eps; v)
	&= \sup_{w\in W} \sum_{i=1}^{n}
	\big\{\log(\sigma(w_i\eps_i))-\log(\sigma(v_i\eps_i))\big\} \\
	&= \sup_{w\in W} \big\{ \langle w - v, B \rangle - \tilde A(w) + \tilde A(v) \big\} \, , \label{eq:bernoulli-cumulant}
\end{align}
where we use the notation~\eqref{eqn:exp-family-parameters-GLM} 
\[
\tilde A(w) =  \sum_{i=1}^{n} A(w_i) =  \sum_{i=1}^{n} \log(1+\e^{w_i}) \, 
\]
and the argument from~\eqref{eqn:re-write-of-log-likelihood-stat}. One can also deduce~\eqref{eq:bernoulli-cumulant} from the simpler observation that for any $y\in\{-1,1\}$ and $u\in\R$, $\log(\sigma(yu)) = u \1\{y=1\} - \log(1+\e^u)$. Using~\eqref{eq:mgf-partition}, we deduce that
\[
\Gamma_W(\eps; v)
	= \sup_{w\in W}\big\{ \langle w-v, B-p \rangle - \psi_v(w-v)\big\}
	= \sup_{u \in W}\big\{ \langle u, B-p \rangle - \psi_v(u)\big\}
\]
where $p = \sigma(v)$, applied coordinate-wise  and
\[
\psi_v(u) = \sum_{i=1}^n h_{p_i}(u_i),
\] 
where for a one-dimensional parameter $p\in [0,1]$, $h_p$ denotes the logarithmic Laplace transform of a (centered) Bernoulli variable with parameter $p$, namely, for all $s \in \R$,
\[
h_p(s)
	= \log\big( \E \e^{s (B - p)} \big)
	= \log(1-p + p\e^{s}) - sp \, , \quad B \sim \Ber{p} \, .
\]

Now, we parameterize the subspace $W$ as $W=\ran(U)$, with the columns of $U$ forming an orthonormal basis of $W$. Since $\psi_v$ is finite everywhere, the Fenchel duality theorem~\cite[Theorem~31.1]{rockafellar1970convex}, applied directly to the preceding constrained supremum, gives
\[
\Gamma_W(\eps; v)
	= \inf_{\alpha\in\R^n:\,U^\T\!\alpha\,=\,U^\T(B-p)}
    \sum_{i=1}^n h_{p_i}^\ast(\alpha_i).
\]

Finally, we compute the scalar conjugates $h_{p_i}^*$ to prove the claim. Since $p_i=\sigma(v_i)\in(0,1)$, it is enough to compute the scalar conjugate for $p\in(0,1)$. We show that
\begin{equation}
h_p^*(\alpha)= 
\begin{cases}
    \kl{\Ber{p+\alpha}}{\Ber{p}} & \mbox{if}~\alpha \in(-p, 1-p) \\ 
    -\log(1-p) & \alpha = -p\\
    -\log(p) & \alpha = 1-p \\
    +\infty & \mbox{if}~ \alpha < -p~\mbox{or}~\alpha > 1 -p \, .
\end{cases}
\end{equation}

Indeed, letting $q = p + \alpha$, we write 
    \[
    h_p^*(\alpha) = 
    \sup_{u \in \R} \Big\{\underbrace{
    u (\alpha + p) - \log(1 - p + p\e^{u})}_{\eqcolon H_{p, \alpha}(u)} 
    \Big\} = 
    \sup_{u \in \R} \Big\{
    u q - \log(1 - p + p\e^{u}) 
    \Big\}
    \]
    We observe that 
    \[
    \frac{\ud}{\ud u}
    H_{p,\alpha}(u) = q - \frac{\e^u}{1 - p + p\e^u} p
    \]
    Thus, if $q \in (0, 1)$, then the optimal $u^\star(\alpha, p)$ satisfies $\tfrac{(1-p)q}{p(1 - q)}=\e^{u^\star(\alpha, p)}$.
    Hence, we then have  
    \[
    h_p^*(\alpha) = 
    (1-q)\log \frac{1-q}{1-p} +
    q \log \frac{q}{p} 
    =
    \kl{\Ber{q}}{\Ber{p}}
    \]
    By a simple computation (we omit this here), we have, in the case $q > 1$ that $\lim_{u \to \infty} H_{p, \alpha}(u) = \infty$. 
    Similarly, if $q < 0$, then $\lim_{u \to -\infty} H_{p, \alpha}(u) = \infty$. If $q = 0$, then we have by continuity that $h_p^*(\alpha) = -\log(1-p)$. If $q = 1$, similarly $h_p^*(\alpha) = -\log(p)$. Setting $q_i = p_i+\alpha_i$, the constraint
$U^\T\alpha=U^\T(B-p)$ rewrites $\Pi_W q = \Pi_W B$, which establishes the first equality in the claim. The second follows by going back to the $\{-1,1\}$ convention, that is, taking $\delta = 2 q-1$.

\subsubsection{Proof of~\Cref{rem:sharpness-of-1d-cgf-trick}}
\label{sec:proof-of-remark-1d-cgf}
Let us reformulate the remark as the following lemma. 
Recall the construction: $\eps, B_\alpha$ are independent random variables, with $\eps \sim \mathsf{Unif}(\{-1, 1\})$ and $B_\alpha \sim \mathsf{Beta}(1, \alpha)$. We define 
\[
Z_\alpha = \eps B_\alpha, \quad \mbox{and} \quad 
\psi_\alpha(\lambda) = \log \E \e^{\lambda Z_\alpha}. 
\]
\begin{lemma}
\label{lem:restated-remark-on-sharpness}
The variables $Z_\alpha$, $\alpha\in(0,1)$, may be coupled with a random variable
$W\sim\mathsf{Exp}(1)+\log 2$ so that
$\psi_\alpha^\ast(Z_\alpha)$ converges almost surely to $W$ as $\alpha$ tends to zero.
\end{lemma}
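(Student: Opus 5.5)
The plan is to build the coupling from a single uniform variable together with an independent random sign, and then show that $\psi_\alpha^\ast$, evaluated at the point it is fed, concentrates around an explicit function as $\alpha\to0$.

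\emph{Coupling.} Take $U\sim\mathsf{Unif}(0,1)$ and $\eps\sim\mathsf{Unif}(\{-1,1\})$, independent. The $\mathsf{Beta}(1,\alpha)$ law has tail $\P(B_\alpha>b)=(1-b)^\alpha$. Quantile inversion therefore gives the coupling
\[
B_\alpha=1-U^{1/\alpha},\qquad Z_\alpha=\eps B_\alpha .
\]
Write $1-B_\alpha=U^{1/\alpha}=\e^{-E/\alpha}$ with $E=\log(1/U)\sim\mathsf{Exp}(1)$. The target is
\[
\psi_\alpha^\ast(Z_\alpha)\to E+\log 2=:W \quad\text{almost surely.}
\]

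\emph{Asymptotics of $\psi_\alpha$.} Since $Z_\alpha$ is symmetric, $\psi_\alpha$ is even and $\psi_\alpha^\ast(z)=\psi_\alpha^\ast(|z|)$. So it suffices to evaluate $\psi_\alpha^\ast(1-\e^{-E/\alpha})$. By symmetry,
\[
\E\e^{\lambda Z_\alpha}=\E\cosh(\lambda B_\alpha)=\tfrac12\E\e^{\lambda B_\alpha}\,(1+o(1))\quad\text{for large }\lambda .
\]
Substituting $s=1-b$ gives
\[
\E\e^{\lambda B_\alpha}=\alpha\,\e^{\lambda}\int_0^1 \e^{-\lambda s}s^{\alpha-1}\,ds .
\]
For $\lambda\ge1$ this equals $\alpha\e^\lambda\lambda^{-\alpha}\Gamma(\alpha)(1+O(\e^{-\lambda}))$. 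Using $\alpha\Gamma(\alpha)=\Gamma(1+\alpha)\to1$, we get
\[
\psi_\alpha(\lambda)=\lambda-\log 2-\alpha\log\lambda+\log\Gamma(1+\alpha)+r(\lambda),
\]
with a remainder $r$ that is exponentially small in $\lambda$.

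\emph{Conjugate at $z=1-\eta$, with $\eta=\e^{-E/\alpha}$.} We need
\[
\sup_\lambda\{\lambda z-\psi_\alpha(\lambda)\}=\sup_\lambda\{-\lambda\eta+\alpha\log\lambda\}+\log2-\log\Gamma(1+\alpha)+\text{error}.
\]
The inner supremum is attained at $\lambda^\star=\alpha/\eta$, and its value is $\alpha\log(\alpha/\eta)-\alpha=E+\alpha\log\alpha-\alpha$. Since $\alpha\log\alpha-\alpha\to0$ and $\log\Gamma(1+\alpha)\to0$, this yields the limit $E+\log2$.

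The remaining work is to justify that the approximation is uniform near the optimizer. The maximizer $\lambda^\star=\alpha\e^{E/\alpha}$ tends to $+\infty$ almost surely, because $E>0$ almost surely, so the remainder $r$ is negligible there. This gives the lower bound $\liminf_{\alpha\to0}\psi_\alpha^\ast(Z_\alpha)\ge W$ by plugging in $\lambda^\star$.

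\emph{Main obstacle.} The step I expect to be hardest is the matching upper bound, which requires controlling the supremum over all $\lambda$, not just near $\lambda^\star$. I would use exact inequalities instead of the asymptotic expansion. First, $\cosh\ge\tfrac12\exp$ gives $\psi_\alpha(\lambda)\ge\log\tfrac12+\log\E\e^{\lambda B_\alpha}$ for $\lambda\ge0$. Second, $\int_0^1\e^{-\lambda s}s^{\alpha-1}ds\ge \e^{-\lambda}/\alpha$ handles small $\lambda$, and the bound $\ge \Gamma(\alpha)\lambda^{-\alpha}-\int_1^\infty \e^{-\lambda s}s^{\alpha-1}ds\ge \Gamma(\alpha)\lambda^{-\alpha}-\e^{-\lambda}/\lambda$ handles large $\lambda$. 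Together these give
\[
\lambda z-\psi_\alpha(\lambda)\le \log2-\lambda\eta+\alpha\log\lambda-\log\Gamma(1+\alpha)+o(1)
\]
uniformly over $\lambda$ in the relevant range. On the complementary range, where $\lambda$ is bounded, the objective is at most $\log2+\lambda+o(1)$, hence bounded, while $W$ is also bounded. So the two-range split suffices to get $\limsup_{\alpha\to0}\psi_\alpha^\ast(Z_\alpha)\le W$ pointwise almost surely. Negative $\lambda$ is dispensed with by evenness, since $z>0$.
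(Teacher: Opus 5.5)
Your $\liminf$ half matches the paper's argument. The choice $\lambda^\star=\alpha/\eta$ is exactly the paper's $\lambda=\alpha/(1-b)$, and the remainder is uniformly small in $\alpha$ there because $\E\e^{\lambda B_\alpha}\to\infty$ uniformly as $\lambda\to\infty$.

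The problem is the last step of the $\limsup$ half. On the bounded range, your estimates ($\cosh\ge\tfrac12\exp$ and $\E\e^{\lambda B_\alpha}\ge1$) only give $\psi_\alpha\ge-\log2$. Hence $\lambda z-\psi_\alpha(\lambda)\le\log2+\lambda$ there. The observation ``bounded, while $W$ is also bounded'' does not give $\limsup_{\alpha\to0}\psi_\alpha^\ast(Z_\alpha)\le W$; you need this range to contribute at most $W+o(1)$. Take a fixed split point $\lambda_0$, for example $\lambda_0=1$, where your expansion was stated to start. Then the bound $\log2+\lambda_0$ exceeds $W=\log2+E$ on the positive-probability event $\{E<\lambda_0\}$, so the argument as written fails there.

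It is repairable. Your lower bound $\E\e^{\lambda B_\alpha}\ge \e^\lambda\lambda^{-\alpha}\Gamma(1+\alpha)-\alpha/\lambda$ has relative error at most $C\alpha/\min\{\lambda_0,1\}$ on $[\lambda_0,\infty)$, for any fixed $\lambda_0>0$. You can then either let $\lambda_0\downarrow0$ (or take $\lambda_0<E$ pathwise), or use $\psi_\alpha\ge0$ (Jensen, since $\E Z_\alpha=0$), so that $[0,\log2]$ contributes at most $\log2\le W$.

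The step you flagged as the main obstacle in fact needs no asymptotics, and this is how the paper handles it. For $b\in[0,1)$ and every $\lambda>0$, Markov's inequality gives $\lambda b-\psi_\alpha(\lambda)\le-\log\P(Z_\alpha\ge b)$; for $\lambda\le0$ the objective is $\le0$. Therefore $\psi_\alpha^\ast(b)\le-\log\P(\eps=1,\,B_\alpha\ge b)=\log2-\alpha\log(1-b)$. At $b=B_\alpha=1-U^{1/\alpha}$ this equals $\log2-\log U=W$ exactly. So $\psi_\alpha^\ast(Z_\alpha)\le W$ holds for every $\alpha$, not only in the limit, and only your (correct) lower bound requires an asymptotic computation.
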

Since $W$ has a continuous distribution, the lemma gives the tail assertion in the remark. For the mean, nonnegativity and Fatou's lemma give the required lower bound, while~\Cref{prop:conjugate-scalar-case} gives the matching upper bound. (Formally, apply Fatou's lemma along any sequence of positive values of $\alpha$ tending to zero.)
\begin{proof}
Let $U \sim \mathsf{Unif}(0,1)$ and $\eps\sim \mathsf{Unif}(\{-1,1\})$ be independent. Note that $B_\alpha = 1-U^{1/\alpha}$, and moreover $-\log U = \mathsf{Exp}(1)$ in distribution. We define $W = \log 2-\log U$. 
By symmetry, it holds that 
$\psi_\alpha(\lambda)= \log \E \cosh(\lambda B_\alpha)$, which is even. Therefore, $\psi_\alpha^\ast$ is also even, and hence 
$\psi_\alpha^\ast(Z_\alpha)=\psi_\alpha^\ast(B_\alpha)$. Fix $b\in[0,1)$. For $\lambda>0$, Markov's inequality yields $\P(Z_\alpha\geq b )\leq \e^{\psi_\alpha(\lambda)-\lambda b}$. Hence, 
\begin{equation}
\label{ineq:upper-bound-for-ccgf-remark}
\psi_\alpha^\ast(b)\leq -\log \P(Z_\alpha\geq b) 
= -\log \P(\eps = 1, B_\alpha \geq b) = \log 2 - \alpha \log(1-b).
\end{equation}
On the other hand, we have $m_\alpha(\lambda) \defn \E \e^{\lambda B_\alpha} \leq \tfrac{\e^\lambda}{\lambda^\alpha} \Gamma(1+\alpha)$. Therefore, 
\[
\psi_\alpha(\lambda)=\log\E\cosh(\lambda B_\alpha)
\stackrel{{\rm(i)}}{\leq} \log\Big(\frac{m_\alpha(\lambda)}{2}\Big)+\log\Big(1+\frac{1}{m_\alpha(\lambda)}\Big)
\stackrel{{\rm(ii)}}{\leq} \log\Big(\frac{m_\alpha(\lambda)}{2}\Big)+\frac{1}{m_\alpha(\lambda)}.
\]
Above, inequality (i) used $\cosh(x) \leq \tfrac{\e^x + 1}{2}$, while inequality (ii) used $\log(1+x) \leq x$. Using \mbox{$m_\alpha(\lambda) \geq\e^{\lambda/2}\,\P(B_\alpha\ge\tfrac12)
=\tfrac{\e^{\lambda/2}}{2^{\alpha}}$}, we obtain 
\begin{align}
\psi_\alpha^\ast(b) 
\geq \lambda b - \psi_\alpha(\lambda) 
&\geq \lambda b-\log m_\alpha(\lambda)
+\log 2-\frac{1}{m_\alpha(\lambda)} \\ 
&\geq \lambda (b-1) + \alpha \log \lambda - \log \Gamma(1+\alpha) +\log 2-\frac{1}{m_\alpha(\lambda)} \\ 
&\geq 
\lambda (b-1) + \alpha \log \lambda - \log \Gamma(1+\alpha) +\log 2- 2^\alpha \e^{-\lambda/2},
\label{ineq:final-ineq-remark}
\end{align}
for any $b \in (0, 1)$ and any $\lambda > 0$. 
Taking $\lambda = \tfrac{\alpha}{1-b}$  in~\cref{ineq:final-ineq-remark} and combining with~\cref{ineq:upper-bound-for-ccgf-remark}, we find
\[
0 \geq \psi_\alpha^\ast(b)-[
\log 2-\alpha\log(1-b)] \geq \alpha\log\alpha-\alpha-\log\Gamma(1+\alpha)
- 2^\alpha \exp\Big(-\half \frac{\alpha}{1-b}\Big)
\]
Thus, for every fixed $u\in(0,1)$, $\psi_\alpha^\ast(1-u^{1/\alpha})$ converges to $\log 2-\log u$ as $\alpha$ tends to zero. Here we used that the last exponential term vanishes for every such $u$. Therefore, since $U\in(0,1)$ almost surely, evenness and the identity $B_\alpha=1-U^{1/\alpha}$ show that $\psi_\alpha^\ast(Z_\alpha)=\psi_\alpha^\ast(B_\alpha)$ converges almost surely to $\log 2-\log U=W$, as needed.
\end{proof}

\subsection{Gaussian random design}

Our next result shows that for Gaussian random design the logarithmic factor in
\Cref{thm:main-result} disappears, which can be seen as a nonasymptotic analogue of Wilks' theorem for logistic regression with Gaussian design.
The only limitation of this finite sample result is that our numerical constant is large. This proof uses the regularity of the design vectors and a local quadratic expansion around $\thetastar$, following the recent nonasymptotic analysis of the logistic Hessian in the Gaussian case due to Chardon, Lerasle and Mourtada \cite{chardon2024logistic}. Our key observation is that the complementary regime, where $\|\thetastar\|_2$ is large, can be analyzed directly and does not even require, as expected, that the MLE exists. This contrasts with \cite{chardon2024logistic}, where the existence of the MLE (and thus additional assumptions on the interplay between $\thetastar, n, d$) is central since the focus there is on the excess risk with respect to the population version of the loss.

\begin{theorem}
\label{thm:gaussian-design-logfree}
Assume that $X_1,\dots,X_n \simiid \Normal{0}{I_d}$ and that the logistic model is
well specified with parameter $\thetastar \in \R^d$. Then, for every $\delta \in (0,1)$,
\[
\P_{\thetastar}\left(
\Lambda_n^{\operatorname{log}}(\thetastar)
\le
4\cdot 10^{13}\bigl(d+\log(5/\delta)\bigr)
\right)
\ge
1-\delta.
\]
\end{theorem}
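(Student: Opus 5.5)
The plan is to split according to the size of $\|\thetastar\|_2$, following the strategy announced before the statement. Fix a large absolute constant $R_0$. In the \emph{moderate-signal regime} $\|\thetastar\|_2\le R_0$ we use a local quadratic expansion, in the spirit of the Hessian analysis of \cite{chardon2024logistic}. In the \emph{large-signal regime} $\|\thetastar\|_2> R_0$ we avoid quadratic expansions altogether and argue directly, without assuming the MLE exists. Throughout we also assume $n\ge C(d+\log(1/\delta))$. In the complementary case the trivial bound $\Lambda_n^{\log}(\thetastar)\le \ell_n(\thetastar)$ suffices: since $\ell_n(\thetastar)=\sum_i -\log\sigma(Y_i\langle X_i,\thetastar\rangle)$ has mean $n\cdot O(1)$ and sub-exponential tails, it is $\lesssim n\lesssim d+\log(1/\delta)$ with probability $1-\delta/5$.

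\textbf{Moderate signal.} Let $H=\E[\sigma'(\langle X,\thetastar\rangle)XX^\T]$. For $\|\thetastar\|\le R_0$ this is comparable to $I_d$ up to absolute constants. First, a matrix concentration step shows that, for $n\gtrsim d+\log(1/\delta)$, the empirical Hessian $H_n(\theta)$ satisfies $H_n(\theta)\succeq c H$ uniformly on the ball $\{\|\theta-\thetastar\|_2\le r_0\}$. This is the lower-isometry argument of \cite{chardon2024logistic}: a small-ball/Paley--Zygmund bound plus a union bound, using that $\sigma'$ is bounded below on bounded margins. Second, the gradient $g=\nabla\ell_n(\thetastar)=\sum_i(\sigma(\langle X_i,\thetastar\rangle)-B_i)X_i$ is a sum of i.i.d.\ centered vectors with sub-exponential norm. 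Hence $\|H^{-1/2}g\|_2^2\lesssim n(d+\log(1/\delta))$ with probability $1-\delta/5$. Third, convexity of $\ell_n$ gives the key reduction: if $\ell_n(\theta)\ge\ell_n(\thetastar)$ on the sphere $\|\theta-\thetastar\|=r_0$, the infimum is attained in the ball. Inside the ball the lower curvature bound yields
\[
\ell_n(\thetastar)-\ell_n(\theta)\le \langle g,\thetastar-\theta\rangle-\tfrac{cn}{2}\|\theta-\thetastar\|_H^2\le \tfrac{\|H^{-1/2}g\|^2}{2cn}\lesssim d+\log(1/\delta).
\]
The sphere condition holds by the same inequality once $\|H^{-1/2}g\|/n\ll r_0$, which is exactly the sample-size assumption.

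\textbf{Large signal.} This is the step I expect to be the main obstacle. Write $\thetastar=R u$ with $R>R_0$. We exploit the fact that $\sum_i\log\sigma(Y_i\langle X_i,\theta\rangle)\le 0$, so $\Lambda_n^{\log}(\thetastar)\le \ell_n(\thetastar)$. Each term $-\log\sigma(Y_i\langle X_i,\thetastar\rangle)$ is nonnegligible only when $|\langle X_i,u\rangle|\lesssim 1/R$ or the label is flipped. Flips occur with probability $\E\sigma(-R|\langle X,u\rangle|)\asymp 1/R$, and the loss on a flip is about $R|\langle X_i,u\rangle|$, which is $O(1)$ in expectation given a flip. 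So $\E\ell_n(\thetastar)\asymp n/R$. This alone does not suffice when $R\lesssim n/d$. There I would refine the argument by restricting to the informative set $I=\{i:|\langle X_i,u\rangle|\le K/R\}$, of size about $n/R$. On $I$ we redo the quadratic argument, now in rescaled coordinates: the direction $u$ gets curvature of order $|I|/R^2\cdot R^2$, and the orthogonal directions get curvature $\asymp|I|$. Points outside $I$ contribute to $\Lambda$ only through their flips, at total cost $O(\sum_{i\notin I}e^{-K})$ plus a Chernoff term, and the sup over $\theta$ of their log-likelihood gain is at most their current loss.

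The delicate point is controlling the supremum over $\theta$ far from $\thetastar$, where points outside $I$ could become misclassified or correctly classified at no cost. I would handle this by a peeling/covering argument over $\theta$ using the $\lambda=1/2$ Hellinger-type bound from \Cref{prop:large-margin-fixed-design}, conditionally on $X$, applied to the points outside $I$. That bound shows these points contribute $O(\log(1/\delta))$ because their margins are at least $K$. Combining the two regimes through a union bound over five events of probability $\delta/5$ each, and tracking the absolute constants, gives the stated bound.
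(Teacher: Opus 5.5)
Your moderate-signal argument is sound. So is your observation that $\E\ell_n(\thetastar)\asymp n/R$ settles the large-signal case when $n\lesssim R(d+\log(1/\delta))$. The gap is the remaining case $R=\|\thetastar\|_2>R_0$ with $n\gg R(d+\log(1/\delta))$, and the mechanism you propose there fails quantitatively.

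\Cref{prop:large-margin-fixed-design} only uses $\sup_\theta p_\theta\le 1$, and it requires margins at least $3\log n$. Write $\Lambda_{I^c}$ for the likelihood-ratio gain of the points outside $I$, whose margins $m_i=R|\langle X_i,u\rangle|$ exceed only the constant $K$. Conditionally on $X$, the proposition's argument gives
\[
\log\E\exp\bigl(\tfrac12\Lambda_{I^c}\bigr)\le\sum_{i\notin I}\log\bigl(1+e^{-m_i/2}\bigr),
\]
and for Gaussian design the right-hand side is typically of order $ne^{-K/2}/R$. Your flip-cost bound $O(\sum_{i\notin I}e^{-K})$ is likewise of order $ne^{-K}$. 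For fixed $K$ both are of order $n/R$ or larger, which is exactly the quantity that is too big in this regime. In effect, bounding the gain of the $I^c$ points by their loss at $\thetastar$ just reproduces the trivial bound. The real difficulty is to show that the joint maximizer over all $n$ points stays in a neighbourhood of $\thetastar$ where curvature controls the gain. Your proposal gives no argument for that.

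The missing idea is that the local quadratic argument does not need $\|\thetastar\|_2$ to be bounded. It needs $n\gtrsim B(d+t)$, with $B=\max\{\e,R\}$ and $t=\log(5/\delta)$. This works provided everything is measured in the anisotropic metric $H=B^{-3}uu^\T+B^{-1}(I_d-uu^\T)$, which matches the scale of the population Hessian along $u$ and $u^\perp$.

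This is what the paper imports from \cite{chardon2024logistic}. With probability $1-5e^{-t}$, one has $\|H^{-1/2}\nabla\ell_n(\thetastar)\|_2\lesssim\sqrt{n(d+t)}$. On the same event the MLE $\widehat\theta_n$ exists and satisfies $\|H^{1/2}(\widehat\theta_n-\thetastar)\|_2\lesssim\sqrt{(d+t)/n}$; this localization is where $n\gtrsim B(d+t)$ is used. Convexity then gives $\Lambda_n^{\log}(\thetastar)\le\langle\nabla\ell_n(\thetastar),\thetastar-\widehat\theta_n\rangle\lesssim d+t$.

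When instead $n\lesssim B(d+t)$, your own computation closes the proof. Set $Z_i=-\log\sigma(Y_i\langle X_i,\thetastar\rangle)$. For $R\ge\e$,
\[
\E e^{Z_i/2}\le 1+\E e^{-|\langle X_i,\thetastar\rangle|/2}\le 1+2/R,
\]
so $\Lambda_n^{\log}(\thetastar)\le\ell_n(\thetastar)\le 4n/R+2t\lesssim d+t$ with probability $1-e^{-t}$. For $R<\e$, the bound $\E e^{Z_i/2}\le\sqrt2$ suffices in the same way.

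So the case split should be on $n$ versus $\max\{\e,\|\thetastar\|_2\}(d+t)$, not on $\|\thetastar\|_2$ versus a constant. With that split, neither the informative-set decomposition nor the Hellinger step is needed.
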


\begin{proof}
The case $d=1$ is already covered by \Cref{cor:dimone}, so we assume $d \geq 2$. Set
\[
\widehat L_n(\theta)
\defn
\frac{1}{n}\sum_{i=1}^n \log\bigl(1+\exp(-Y_i\langle X_i,\theta\rangle)\bigr).
\]
Then
\[
\Lambda_n^{\operatorname{log}}(\thetastar)
=
n\Bigl(\widehat L_n(\thetastar)-\inf_{\theta \in \R^d}\widehat L_n(\theta)\Bigr).
\]
Let $b \defn \|\thetastar\|_2$, $B \defn \max\{e,b\}$, and $t \defn \log(5/\delta)$.
If $\thetastar \neq 0$, set $u_\star \defn \thetastar/\|\thetastar\|_2$; otherwise fix any
$u_\star \in S^{d-1}$. Define
\[
H
\defn
\frac{1}{B^3}u_\star u_\star^\T
+
\frac{1}{B}(I_d-u_\star u_\star^\T).
\]
Throughout the proof, we use the notation $\|x\|_S = \langle S x, x \rangle^{1/2}$ for any positive symmetric matrix $S$ and vector $x$.

We split the proof into two regimes. Assume first that $n > (2800000)^2 B(d+t)$.
In particular,
$n \geq 16B(d+t)$
and therefore, by \cite[Proposition~5, Theorem~6, Lemma~3, and Section~8.2]{chardon2024logistic}, with probability at least
$1-5\exp(-t)$, both of the following hold:
\[
\|\nabla \widehat L_n(\thetastar)\|_{H^{-1}}
\le
14\sqrt{\frac{d+t}{n}},
\]
and, on the same event, $\widehat L_n$ has a unique global minimizer
$\wh \theta_{n}$ which satisfies
\[
\|\wh \theta_{n}-\thetastar\|_H
\le
28000\sqrt{\frac{d+t}{n}}.
\]
By convexity of $\widehat L_n$, it holds that
\[
\widehat L_n(\thetastar)-\widehat L_n(\wh \theta_{n})
\le
\langle \nabla \widehat L_n(\thetastar), \thetastar-\wh \theta_{n}\rangle
\le
\|\nabla \widehat L_n(\thetastar)\|_{H^{-1}}
\|\wh \theta_{n}-\thetastar\|_H,
\]
and therefore
\[
\Lambda_n^{\operatorname{log}}(\thetastar)
=
n\bigl(\widehat L_n(\thetastar)-\widehat L_n(\wh \theta_{n})\bigr)
\le
392000(d+t).
\]

We now turn to the complementary regime
$n \leq (2800000)^2 B(d+t)$. Set 
$
Z_i =
-\log\bigl(\sigma(Y_i\langle X_i,\thetastar\rangle)\bigr)
$ for $i \in [n]$.
Since $\inf_{\theta \in \R^d}\widehat L_n(\theta) \geq 0$, we have
\[
\Lambda_n^{\operatorname{log}}(\thetastar)
\le
n\widehat L_n(\thetastar)
=
\sum_{i=1}^n Z_i.
\]
Assume first that $b \geq e$. Let $U_i \defn \langle X_i,\thetastar\rangle$, so
$U_i \sim \Normal{0}{b^2}$. Conditionally on $U_i$,
\[
\E_{\thetastar}\bigl[\exp(Z_i/2)\mid U_i\bigr]
=
\sigma(U_i)^{1/2}+\sigma(-U_i)^{1/2}
\le
1+\exp(-|U_i|/2),
\]
where the last elementary inequality is straightforward to check. 
Hence, we have
\[
\E_{\thetastar}\exp(Z_i/2)
\le
1+\E \exp(-|U_i|/2)
\le
1+\frac{2}{\sqrt{2\pi}\,b}\int_0^\infty \exp(-u/2)\,du
\le
1+\frac{2}{b}
\le
\exp(2/b).
\]
By independence and Markov's inequality,
\[
\P_{\thetastar}\left(
\sum_{i=1}^n Z_i \geq \frac{4n}{b}+2t
\right)
\le
\exp(-t).
\]
Since now $B=b$, on this event
\[
\Lambda_n^{\operatorname{log}}(\thetastar)
\le
\frac{4n}{b}+2t
\le
\bigl(4(2800000)^2+2\bigr)(d+t).
\]
Assume next that $b<e$. Then, for every $u \in \R$,
 it holds that $
\sigma(u)^{1/2}+\sigma(-u)^{1/2}
\le
\sqrt 2.
$
Therefore, it holds that $\E_{\thetastar}\exp(Z_i/2) \leq \sqrt 2$, and again by independence and
Markov's inequality,
\[
\P_{\thetastar}\left(
\sum_{i=1}^n Z_i \geq n\log 2 + 2t
\right)
\le
\exp(-t).
\]
Since now $B=e$, on this event
\[
\Lambda_n^{\operatorname{log}}(\thetastar)
\le
n\log 2 + 2t
\le
\bigl((2800000)^2 e\log 2 + 2\bigr)(d+t).
\]
In both cases the right-hand side is bounded by
$4\cdot 10^{13}(d+t)$. Recalling that $t=\log(5/\delta)$, we prove the theorem.
\end{proof}

Integrating the tail bound gives, uniformly over $\thetastar\in\R^d$,
\[
\E_{\thetastar}\Lambda_n^{\operatorname{log}}(\thetastar)
\leq 4\cdot10^{13}\bigl(d+\log 5+1\bigr).
\]

We next give the corresponding lower bound for subspaces distributed according to Haar measure.

\begin{proposition}[Lower bounds on the logistic LLR in rotationally invariant random design]
\label{prop:lower-bound-rand-rot-invariant-logistic-llr}
Fix $1 \leq k < n$. Suppose that $W$ is chosen uniformly from the Grassmann manifold of $k$-dimensional subspaces of $\R^n$, independently of the Rademacher vector $\eps$. Then,
for any $\delta \in (0, 1/2]$ it holds that
\[
\Quant{1-\delta}\Big(\Gamma_W(\eps; 0)\Big) \geq \half \Quant{1-\delta}\Big(n \cdot \mathsf{Beta}\Big(\frac{k}{2}, \frac{n-k}{2}\Big)\Big)
\asymp \twomin{n}{\Big(k + \log \frac{1}{\delta}\Big)}.
\]
Here, the quantile on the left is with respect to the joint randomness of $W$ and $\eps$.
\end{proposition}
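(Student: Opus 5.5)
The first inequality follows from \Cref{prop:fixed-design-lower}, which gives the deterministic bound $\Gamma_W(\eps;0)\geq \half\|P_W\eps\|_2^2$ for every $k$-dimensional subspace $W$. It therefore suffices to identify the law of $\|P_W\eps\|_2^2$ when $W$ is Haar-distributed and independent of $\eps$. Condition on $\eps$. Since $\|\eps\|_2^2=n$, we can write $\eps=\sqrt n\,u$ with $u\in S^{n-1}$. By rotation invariance of the Haar measure on the Grassmannian, $\|P_W u\|_2^2$ has the same law as $\|P_{E_k}\xi\|_2^2$, where $E_k=\Span\{e_1,\dots,e_k\}$ and $\xi$ is uniform on $S^{n-1}$. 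Writing $\xi=g/\|g\|_2$ with $g\sim\Normal{0}{I_n}$ gives
\[
\|P_{E_k}\xi\|_2^2=\frac{\sum_{i\le k}g_i^2}{\sum_{i\le n}g_i^2}\sim\mathsf{Beta}\Big(\frac k2,\frac{n-k}{2}\Big).
\]
This law does not depend on $\eps$, so unconditionally $\|P_W\eps\|_2^2\sim n\cdot\mathsf{Beta}(k/2,(n-k)/2)$. Because quantiles are monotone under pointwise domination, $\Quant{1-\delta}(\Gamma_W(\eps;0))\geq\Quant{1-\delta}(\half\|P_W\eps\|_2^2)$, and this is the claimed inequality.

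For the two-sided estimate $\Quant{1-\delta}(n\cdot\mathsf{Beta}(k/2,(n-k)/2))\asymp \twomin{n}{(k+\log(1/\delta))}$, set $Z=nA/(A+B)$ with $A\sim\chi^2_k$ and $B\sim\chi^2_{n-k}$ independent. The upper bound $Z\le n$ is trivial. For the other half of the upper bound, use $\P(B\le (n-k)/4)\le \e^{-c(n-k)}$, or more simply Laurent--Massart applied to both $A$ and $A+B$. Combining $A\lesssim k+\log(1/\delta)$ with $A+B\gtrsim n$ on an event of probability at least $1-\delta$ should give $Z\lesssim k+\log(1/\delta)$ whenever $k+\log(1/\delta)\lesssim n$. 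In the complementary regime the bound $Z\le n$ suffices. A lower-tail bound for $A+B$ is needed only when $n\gtrsim\log(1/\delta)$, where $\e^{-cn}\le\delta/2$ can be arranged, so the union bound closes after adjusting constants.

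For the lower bound, the main step is to show $\P(Z\geq c(\twomin{n}{(k+\log(1/\delta))}))>\delta$. Two mechanisms handle the two terms. For the $k$ term: $\P(A\ge k/12)\ge 1/2$ (as in \Cref{lem:chisquarequantiles}), and $A+B\le 2n$ with probability close to one, so $Z\ge k/24$ with probability above $1/2-$small. This needs care only when $k$ is tiny; alternatively, use the concentration of $Z$ around $k$ through its variance, which is of order $k$. For the $\log(1/\delta)$ term, I would work with the density directly rather than with chi-squares. With $s=\twomin{\half}{(c\log(1/\delta)/n)}$,
\[
\P\big(\mathsf{Beta}(k/2,(n-k)/2)\ge s\big)\geq \int_s^{2s}\frac{x^{k/2-1}(1-x)^{(n-k)/2-1}}{B(k/2,(n-k)/2)}dx .
\]
This should be of order $(1-2s)^{n/2}$ times polynomial factors, i.e.\ at least $\e^{-Cns}$ up to lower-order terms, which exceeds $\delta$ for a small enough $c$. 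Equivalently, $A\ge a$ has probability at least $\e^{-a}$ roughly, even for $k=1$, and $B\le 2n$ holds with constant probability independently. Taking $a=c\log(1/\delta)$, capped at $cn$ when $\log(1/\delta)\gtrsim n$, gives $Z\gtrsim \twomin{\log(1/\delta)}{n}$ with probability greater than $\delta$.

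I expect this last step to be the main obstacle: the saturation regime $\log(1/\delta)\gtrsim n$ with $k$ small. There the event $A\ge cn$ must be compared with $A+B\ge$ something, since one needs $A/(A+B)$ to be a constant fraction. I would handle it with the Beta density near $x=1/2$, lower bounding $\P(\mathsf{Beta}\ge 1/4)\ge (3/4)^{n/2}\cdot$(poly), which is at least $\e^{-Cn}$ and in turn at least $\delta$ when $n\le c\log(1/\delta)$ with $c$ small. If $n$ lies between these scales, monotonicity of the quantile in $\delta$ interpolates the two regimes.
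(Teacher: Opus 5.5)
Your proposal is correct and follows the paper's route: the deterministic bound $\Gamma_W(\eps;0)\geq\half\|P_W\eps\|_2^2$, Haar invariance giving $\|P_W\eps\|_2^2\sim n\cdot\mathsf{Beta}(k/2,(n-k)/2)=nA/(A+B)$, Laurent--Massart for the upper bound, and for the lower bound an upper-tail estimate for $A$ combined with the independent event $\{B\leq 2n\}$. The paper also treats $\delta\in(1/4,1/2]$ separately, via the independent events $\{A\geq k/100\}$ and $\{B\leq 4(n-k)\}$; this is exactly the care you flag for tiny $k$.

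The saturation regime you expect to be the main obstacle does not need a Beta-density argument. On $\{A\geq u/C\}\cap\{B\leq 2n\}$, with $u=k+\log(1/(2\delta))$, one has
$nA/(A+B)\geq nu/(u+2Cn)\geq \min\{u,n\}/(1+2C)$, which covers $\log(1/\delta)\gtrsim n$ automatically.
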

\begin{proof}[Proof of~\Cref{prop:lower-bound-rand-rot-invariant-logistic-llr}]
    Since $\|\eps\|_2^2=n$ and $W$ is Haar-distributed,
    \[
    \|P_W \eps\|_2^2 \stackrel{\rm d}{=} n\frac{A}{A+B},
    \qquad
    \frac{A}{A+B}\sim\mathsf{Beta}\left(\frac{k}{2},\frac{n-k}{2}\right),
    \]
    for independent random variables $A \sim \chi^2_k$ and $B \sim \chi^2_{n-k}$. The claim follows from~\Cref{prop:fixed-design-lower} and \Cref{lem:beta-quantile}.
\end{proof}

\begin{lemma}
\label{lem:beta-quantile}
For $1 \leq k < n$ and $\delta \in (0, 1/2]$, we have 
\[
\Quant{1-\delta}\bigg(n \cdot \mathsf{Beta}\Big(\frac{k}{2}, \frac{n-k}{2}\Big)\bigg) 
\asymp 
\twomin{n}{\Big(k + \log\Big(\frac{1}{\delta}\Big)\Big)}.
\]
\end{lemma}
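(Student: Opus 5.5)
We work with the representation $n\,\mathsf{Beta}(\tfrac k2,\tfrac{n-k}2)\stackrel{\rm d}{=} nA/(A+B)$, where $A\sim\chi^2_k$ and $B\sim\chi^2_{n-k}$ are independent. We write $Q$ for the $(1-\delta)$-quantile of this variable. The trivial bound $nA/(A+B)\le n$ gives $Q\le n$. It therefore suffices to prove two things: an upper bound $Q\lesssim k+\log(1/\delta)$, and a lower bound $Q\gtrsim \min\{n,k+\log(1/\delta)\}$. We treat each by comparison with the $\chi^2$ quantiles from \Cref{lem:chisquarequantiles}.

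\textbf{Upper bound.} Write $m=n-k\ge1$. If $m$ is small, say $m< C_0$ for an absolute constant $C_0$, then $n\le k+C_0$, and the trivial bound $Q\le n\lesssim k$ suffices. Otherwise we use
\[
\frac{nA}{A+B}\le \frac{nA}{B}.
\]
On the event $\{B\ge m/4\}$ this gives $nA/(A+B)\le (4n/m)A$. This is good enough when $m\ge n/2$, since then $4n/m\le 8$. The event $\{B< m/4\}$ has probability at most $e^{-cm}$ by the lower-tail Chernoff bound used in the proof of \Cref{lem:chisquarequantiles}. When $e^{-cm}\le\delta/2$, a union bound gives
\[
Q\le 8\,\Quant{1-\delta/2}(\chi^2_k)\lesssim k+\log(1/\delta).
\]
When instead $m\lesssim\log(1/\delta)$, we have $n\lesssim k+\log(1/\delta)$, and the trivial bound $Q\le n$ again suffices.

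It remains to handle $m< n/2$, that is, $k>n/2$. Here $Q\le n<2k$ trivially. This completes the upper bound in all cases.

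\textbf{Lower bound.} We use $A+B\le 2\max\{A,B\}$ together with an upper-tail bound on $B$. On the event $\{B\le 2m+C\log(1/\delta)\}$, whose failure probability is small by the Laurent--Massart bound cited in the proof of \Cref{lem:chisquarequantiles}, we have
\[
\frac{nA}{A+B}\ge \frac{nA}{A+2n+C'\log(1/\delta)}.
\]
The map $a\mapsto na/(a+c)$ is increasing. We can therefore transfer the quantile: with probability at least $2\delta$ we have $A\ge q:=\Quant{1-2\delta}(\chi^2_k)\gtrsim k+\log(1/\delta)$, and we choose constants so that the bad event for $B$ has probability at most $\delta$. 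Then with probability greater than $\delta$,
\[
\frac{nA}{A+B}\ge \frac{nq}{q+2n+C'\log(1/\delta)}\gtrsim \min\{n,\,q\}.
\]
This gives $Q\gtrsim \min\{n,k+\log(1/\delta)\}$.

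Two edge issues remain. First, for $\delta$ near $1/2$ the level $2\delta$ exceeds $1/2$. In that case we use a fixed constant probability: $\P(A\ge k/12)\ge1/2$, as in the proof of \Cref{lem:chisquarequantiles}, and $\log(1/\delta)\asymp1$. Second, we need $\log(1/\delta)$ and $\log(1/(2\delta))$ to be comparable, which holds for $\delta\le1/4$.

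The main obstacle I expect is bookkeeping of the regimes. The cases $n-k$ small versus $\log(1/\delta)$ large versus $n-k\ge n/2$ all need the trivial bound $Q\le n$ at the right moment. The probability budget between the events for $A$ and $B$ must also be split without degrading the quantile level beyond a constant factor.
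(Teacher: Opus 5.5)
Your overall strategy is the paper's. You write $n\,\mathsf{Beta}(\frac k2,\frac{n-k}2)$ as $nA/(A+B)$, bound from above via $nA/(A+B)\le nA/B$ plus a lower tail for $B$ and the trivial bound $n$, and bound from below via \Cref{lem:chisquarequantiles} at level $2\delta$ together with upper control of $B$, handling $\delta\in(1/4,1/2]$ separately.

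\textbf{What is correct.} Your upper bound is correct. It uses a different case split from the paper, which applies Laurent--Massart to $A$ and $B$ simultaneously and splits on whether $n>4(k+t)$. Your lower bound for $\delta\le 1/4$ is also correct: $\P(A\ge q)\,\P\bigl(B\le 2m+3\log(1/\delta)\bigr)\ge 2\delta(1-\delta)>\delta$. The extra $\log(1/\delta)$ in the denominator is absorbed because $q\gtrsim\log(1/(2\delta))\ge\frac12\log(1/\delta)$. The paper instead uses Markov, $\P(B\le 2n)>1/2$, there; either works.

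\textbf{The gap: the edge case $\delta\in(1/4,1/2]$.} Since $\Quant{1-\delta}$ is nonincreasing in $\delta$, a lower bound proved at $\delta=1/4$ does not transfer to larger $\delta$. For $\delta$ near $1/2$ you need an event of probability exceeding $\delta$, essentially exceeding $1/2$, on which $nA/(A+B)\gtrsim k$.

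The fact you invoke, $\P(A\ge k/12)\ge 1/2$, sits exactly at that borderline. The Chernoff bound gives $1-2^{-k}$, which is only $1/2$ when $k=1$. You must also intersect it with an event bounding $B$ from above, which has probability strictly less than one. The resulting lower bound on the joint probability then drops below $1/2$; with your Laurent--Massart event it is only $\ge 1/4$. As written, this case does not close.

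\textbf{The fix.} Lower the threshold for $A$ so that its lower tail is bounded away from $1/2$. For example, the same Chernoff argument gives $\P(A\le k/100)\le(10^{-2}\e^{0.99})^{k/2}<1/4$. Pair this with $\P(B\le 4(n-k))\ge 3/4$, which follows from Markov. By independence the joint probability exceeds $9/16>1/2\ge\delta$. On that event $nA/(A+B)\ge k/400$, and this is $\gtrsim k+\log(1/\delta)$ because $\log(1/\delta)\le\log 4$. This is exactly how the paper handles the range.
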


\begin{proof}[Proof of~\Cref{lem:beta-quantile}]

Let $A$ and $B$ be independent variables, with $A\sim\chi^2_k$ and $B \sim \chi^2_{n-k}$, and let $Z= A/(A+B)$. It is then a standard fact that $Z \sim \mathsf{Beta}(k/2, (n-k)/2)$. For the upper bound, set $t=2\log(2/\delta)$. By the classical chi-square bound~\cite[Lemma~1]{LauMas2000}, it holds with probability at least $1-\delta$ that
\begin{equation}
	A \leq k + 2\sqrt{kt/2} + t  \quad \text{and} \quad B \geq n-k - 2\sqrt{(n-k)t/2} \, ,
\end{equation}
which implies in particular that
\begin{equation}
	A \leq 2 k + 3t/2 \leq 2(k+t)  \quad \text{and} \quad B \geq \frac{n-k}{2} - t \, .
\end{equation}
If $n>4(k+t)$, the last display gives $B\geq n/4$, and hence
$nZ\leq nA/B\leq8(k+t)$. If $n\leq4(k+t)$, we instead use the deterministic
bound $nZ\leq n$. Since $t\lesssim1+\log(1/\delta)$, these two cases establish
the desired upper bound, including the minimum with $n$.

For the lower bound, we first deal with the case $\delta \leq 1/4$.
Applying~\Cref{lem:chisquarequantiles} with level $\eta = 2\delta \leq 1/2$ yields
$\P\bigl(A \geq (k+\log(1/\eta))/C\bigr) \geq \eta$.
By Markov's inequality,
\[
\P(B\leq2n)\geq1-\frac{n-k}{2n}=\frac{n+k}{2n}>\frac12.
\]
Hence, by independence,
\[
\P\Big(A \geq \frac{k+\log(1/\eta)}{C} ;\, B \leq 2n\Big) > \frac{\eta}{2} = \delta.
\]
Setting $u = k + \log(1/\eta)$, on this event we have
\[
nZ = \frac{nA}{A+B} \geq \frac{nu/C}{u/C+2n} = \frac{nu}{u+2Cn} \geq \frac{\min\{u,n\}}{1+2C},
\]
where the last inequality uses $u + 2Cn \leq (1+2C)\max\{u,n\}$.
Since $\delta \leq 1/4$ implies $\log(1/\eta) = \log(1/(2\delta)) \geq \frac{1}{2}\log(1/\delta)$, we have $u \geq (k+\log(1/\delta))/2$, so $\min\{u, n\} \geq \min\{k+\log(1/\delta), n\}/2$, and therefore $\Quant{1-\delta}(nZ) \geq c \min\{n, k+\log(1/\delta)\}$.

For $\delta \in (1/4, 1/2]$, the standard lower-tail Chernoff bound for a chi-square variable gives
\[
\P(A\leq k/100)
\leq \bigl(10^{-2}\e^{0.99}\bigr)^{k/2}<\frac14.
\]
In addition, by Markov's inequality, $\P(B \leq 4(n-k)) \geq 3/4$, and by independence of $A$ and $B$,
\[
\P\bigl(A \geq k/100 ;\, B \leq 4(n-k)\bigr) > 9/16 > 1/2 \geq \delta.
\]
On this event,
\[
nZ \geq \frac{n \cdot k/100}{k/100 + 4(n-k)} \geq \frac{k}{400}.
\]
Since $\log(1/\delta) \leq \log 4$ and $k \geq 1$, one has $k + \log(1/\delta) \leq k(1+\log 4)$, so $\Quant{1-\delta}(nZ) \geq k/400 \geq c(k+\log(1/\delta))$ with a smaller constant $c$.
\end{proof}

The following corollary combines \Cref{thm:gaussian-design-logfree} and \Cref{prop:lower-bound-rand-rot-invariant-logistic-llr} with the elementary bound $\Lambda_n^{\operatorname{log}}(0)\leq n\log 2$ and handles the endpoint $d=n$ separately.

\begin{corollary}[Gaussian design bounds at the origin]
\label{cor:gaussian-design-origin}
\leavevmode\\
Let \(1\le d\le n\), and let \(X_1,\ldots,X_n\simiid \Normal{0}{I_d}\). Assume that the logistic model is well specified with \(\thetastar=0\). Then, for every \(\delta\in(0,1/2]\),
\[
\Quant{1-\delta} \Bigl( \Lambda_n^{\operatorname{log}}(0;X_{1:n};Y_{1:n}) \Bigr) \asymp \min\left\{ n,\, d+\log\left(\frac1\delta\right) \right\},
\]
where the quantile is with respect to both the Gaussian design and the labels.
\end{corollary}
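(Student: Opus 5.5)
Both directions reduce to results already in hand; the only genuinely new step is the endpoint \(d=n\).

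\emph{Upper bound.} At \(\thetastar=0\) we have \(\Lambda_n^{\operatorname{log}}(0)\le\ell_n(0)=n\log 2\) deterministically, since \(\inf_\theta\ell_n(\theta)\ge0\). Separately, \Cref{thm:gaussian-design-logfree} applied with \(\thetastar=0\) gives
\[
\Quant{1-\delta}\bigl(\Lambda_n^{\operatorname{log}}(0)\bigr)\le 4\cdot10^{13}\bigl(d+\log(5/\delta)\bigr).
\]
For \(\delta\le1/2\) this is at most \(C(d+\log(1/\delta))\), because \(\log(5/\delta)\le(1+\log 5/\log 2)\log(1/\delta)\). Taking the minimum of the two bounds gives \(\lesssim\min\{n,d+\log(1/\delta)\}\).

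\emph{Lower bound when \(d<n\).} By \Cref{lem:reformulation-via-subspaces}, \(\Lambda_n^{\operatorname{log}}(0)=\Gamma_W(Y;0)\) with \(W=\ran(X)\). For an i.i.d.\ Gaussian \(n\times d\) matrix with \(d\le n\), \(X\) has rank \(d\) almost surely. Its range is Haar-distributed on the Grassmannian of \(d\)-planes, because \(OX\stackrel{\rm d}{=}X\) for every orthogonal \(O\). Under \(\thetastar=0\) the labels \(Y\) are Rademacher and independent of \(X\). \Cref{prop:lower-bound-rand-rot-invariant-logistic-llr} with \(k=d\) then gives
\[
\Quant{1-\delta}\bigl(\Lambda_n^{\operatorname{log}}(0)\bigr)\gtrsim\min\{n,d+\log(1/\delta)\}.
\]

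\emph{Endpoint \(d=n\).} Here \(X\) is invertible almost surely, so \(W=\R^n\) and \(\Gamma_{\R^n}(\eps;0)=n\log 2\) deterministically: every sign pattern is realizable, as in the coordinate-subspace computation of \Cref{prop:coordinate-subspace-lower}. The quantile therefore equals \(n\log 2\asymp n\), which is \(\ge c\min\{n,d+\log(1/\delta)\}\).

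The only points needing care are the endpoint and the Haar-distribution and independence claims, which are routine; no step is a real obstacle.
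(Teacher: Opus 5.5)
Your proposal is correct and follows the paper's proof essentially step for step: the deterministic bound $n\log 2$ combined with \Cref{thm:gaussian-design-logfree} for the upper bound, and Haar invariance of $\ran(X)$ plus \Cref{prop:lower-bound-rand-rot-invariant-logistic-llr} with $k=d$ for the lower bound when $d<n$. The endpoint $d=n$ is likewise handled by the direct computation $\Gamma_{\R^n}(Y;0)=n\log 2$, and your explicit inequality $\log(5/\delta)\le(1+\log 5/\log 2)\log(1/\delta)$ for $\delta\le 1/2$ simply spells out the constant absorption that the paper leaves implicit.
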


\begin{proof}
For every realization of the design and the labels, the likelihood under the
origin is $2^{-n}$, while the supremum likelihood is at most $1$. Hence,
\[
\Lambda_n^{\operatorname{log}}(0;X_{1:n};Y_{1:n})\leq n\log 2.
\]
Combining this deterministic bound with
\Cref{thm:gaussian-design-logfree}, and absorbing numerical constants, gives
\[
\Quant{1-\delta}
\Bigl(\Lambda_n^{\operatorname{log}}(0;X_{1:n};Y_{1:n})\Bigr)
\lesssim
\min\left\{n,\,d+\log\left(\frac1\delta\right)\right\}.
\]
It remains to prove the matching lower bound.

Let \(X\in\R^{n\times d}\) be the design matrix with rows \(X_i^\T\), and set
\(W=\ran(X)\). Since \(\thetastar=0\), the labels
\(Y=(Y_1,\ldots,Y_n)\) are independent Rademacher random variables, independent
of \(X\). Moreover, by
\Cref{lem:reformulation-via-subspaces}\ref{item:rewrite-expected-LLR-via-subspace},
\[
\Lambda_n^{\operatorname{log}}(0;X_{1:n};Y_{1:n})
=
\Gamma_W(Y;0).
\]

Assume first that \(d<n\). Then \(X\) has rank \(d\) almost surely, and by
rotational invariance of the Gaussian design, \(W\) is uniformly distributed on
the Grassmann manifold of \(d\)-dimensional subspaces of \(\R^n\). Therefore,
\Cref{prop:lower-bound-rand-rot-invariant-logistic-llr}, applied with
\(k=d\), gives
\[
\Quant{1-\delta}
\Bigl(
\Lambda_n^{\operatorname{log}}(0;X_{1:n};Y_{1:n})
\Bigr)
=
\Quant{1-\delta}\bigl(\Gamma_W(Y;0)\bigr)
\gtrsim
\min\left\{
n,\,
d+\log\left(\frac1\delta\right)
\right\}.
\]

It remains only to handle the endpoint \(d=n\). In this case
\(W=\R^n\) almost surely. Hence, for every \(Y\in\{-1,1\}^n\),
\[
\Gamma_{\R^n}(Y;0)
=
\sup_{w\in\R^n}\log\frac{p_w(Y)}{p_0(Y)}
=
n\log 2,
\]
because \(p_0(Y)=2^{-n}\) and the supremum of \(p_w(Y)\) over \(w\in\R^n\) is
equal to \(1\). Thus the \((1-\delta)\)-quantile is \(n\log 2\). This completes the
proof.
\end{proof}

\subsection{Near the origin regimes}
\label{sec:near-origin-regime}

In this subsection we provide a logarithm-free bound in expectation when the signal vector is in a
neighborhood of the origin. The result holds for certain rotationally invariant designs, in particular when the design matrix $X\in \R^{n\times d}$ has \iid\ rows $X_1^\top, \dots, X_n^\top$ with Gaussian distribution. The argument is different from the proof of the worst-case upper bound based on the Shtarkov sum: the core of the argument is \Cref{lem:critical-radius-mgf-bound} below, which relies on the geometry of a random nullspace and the use of an approximate kinematic formula from conic geometry~\cite{amelunxen2014living} that controls the probability of intersection of certain cones with random subspaces depending on a quantity called statistical dimension.

Throughout, we assume that conditionally on $X$, the labels $Y_1,\dots,Y_n\in\{-1,1\}$ follow the well-specified logistic model where $\P(Y_i=1\mid X)=\sigma(\langle \thetastar, X_i \rangle)$. For a subspace \(V\subset\R^n\), define its critical radius by
\begin{equation}
\label{eq:crit-radius-and-cones}
r_\star(V) \defn \inf\left\{ 0<r<\sqrt n: V\cap \mathcal C_r\neq\{0\} \right\},
\qquad\text{where}\quad
\mathcal C_r
\defn
\operatorname{cone}\left((B_\infty^n\cap rB_2^n)+\1_n\right),
\end{equation}
with the convention \(r_\star(V)=\sqrt n\) if the set above is empty.
For $0<r<\sqrt n$, the generating set $(B_\infty^n\cap rB_2^n)+\1_n$ is compact and convex, and its distance from the origin is at least $\sqrt n-r>0$; hence $\mathcal C_r$ is a closed convex cone. For $r=\sqrt n$, the generating set is $[0,2]^n$ and $\mathcal C_r=\R_+^n$, so the same conclusion holds.

\begin{lemma}[Exponential moment of the critical radius]
\label{lem:critical-radius-mgf-bound}
Let \(1\le d\le n\), and let \(W\) be uniformly distributed over the
Grassmann manifold of \((n-d)\)-dimensional subspaces of \(\R^n\). Then there
exist universal constants \(C>0\) and \(\lambda_0\in(0,1)\) such that
\[
\log \E_W \exp\left(\lambda_0 r_\star(W)^2\right)
\leq C d .
\]
\end{lemma}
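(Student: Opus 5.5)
Write $D_r \defn \mathcal C_r$, so that $\{r_\star(W) < r\}\subset\{W\cap D_r\neq\{0\}\}$ for $0<r<\sqrt n$. The plan has three steps: bound the statistical dimension of $D_r$, feed it into the approximate kinematic formula of \cite{amelunxen2014living} to get a tail bound for $r_\star(W)$, and then integrate that tail.

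\textbf{Step 1: statistical dimension.} I would show that
\[
\delta(D_r)\;\lesssim\; r^2 \;+\; (\text{lower order}).
\]
Since $\E\exp(\lambda_0 r_\star^2)$ is dominated by the upper tail of $r_\star$, I only need a lower bound on $r_\star$ holding with high probability. A Haar subspace of dimension $n-d$ intersects a cone $D$ nontrivially with probability at most $\exp(-c(\delta(D^\circ)-d)^2/n)$-type bounds whenever $\delta(D)+(n-d)<n$. Equivalently, the intersection is unlikely once $\delta(D)\le d-\Theta(\sqrt{n\cdot\text{stuff}})$. This is the wrong direction for controlling the upper tail, so I would instead use the complementary statement: when $\delta(D_r)\ge d+\lambda$, the intersection occurs with probability at least $1-4\exp(-\lambda^2/(8(\omega\wedge n)))$. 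So the real Step 1 is a \emph{lower} bound $\delta(D_r)\gtrsim r^2$ for $r$ up to order $\sqrt n$. I would prove it via $\delta(D)=\E\|\Pi_D g\|^2$, restricted to a $k$-dimensional coordinate piece. Take $k\asymp r^2$ coordinates. The cone $D_r$ contains $\1_n+\{u:\ \mathrm{supp}\,u\subset S,\ \|u\|_\infty\le1,\ \|u\|_2\le r\}$, which is a full $k$-dimensional cube around $\1_n$. Projecting a Gaussian onto the cone generated by it therefore captures a constant fraction of $\|g_S\|^2$, giving $\delta(D_r)\ge c\,r^2$ for $r^2\le n$.

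\textbf{Step 2: tail of $r_\star$.} Set $r^2 = C_1(d+s)$ with $s\ge0$ and $r<\sqrt n$. Then $\delta(D_r)\ge d+c's$ with $c'\asymp 1$. The kinematic lower bound gives
\[
\P\bigl(W\cap D_r=\{0\}\bigr)\le 4\exp\bigl(-c\,s^2/(d+s)\bigr)\le 4\exp(-c''s)\quad\text{for } s\ge d,
\]
and on the complement $r_\star(W)\le r$. Hence $\P(r_\star(W)^2> C_1(d+s))\le 4e^{-c''s}$ for $s\ge d$. Deterministically $r_\star\le\sqrt n$, which covers all $r$ with $r^2\ge n$.

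\textbf{Step 3: integration.} Integrating the tail,
\[
\E\exp(\lambda_0 r_\star^2)\le e^{2C_1\lambda_0 d}+\int \lambda_0 e^{\lambda_0 u}\,\P(r_\star^2>u)\,du .
\]
With $u=C_1(d+s)$ the integrand is at most $4\lambda_0 e^{\lambda_0C_1 d}\,e^{(\lambda_0C_1-c'')s}$. Choosing $\lambda_0< c''/(2C_1)$ makes this summable and yields the bound $e^{Cd}$.

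The main obstacle is Step 1: getting a dimension-free constant in $\delta(D_r)\gtrsim r^2$ for the cone generated by a shifted intersection of the $\ell_\infty$ and $\ell_2$ balls. If the cube argument loses too much near $r^2\approx n$, I would use the Gaussian width or polar-distance formula $\delta(D)=\E\,\mathrm{dist}(g,D^\circ)^2$ and check its behavior directly there.
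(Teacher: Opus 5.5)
Your overall architecture (two-sided control of the statistical dimension $\Delta(\mathcal C_r)$, then the approximate kinematic formula of \cite{amelunxen2014living} for the codimension-$d$ Haar subspace $W$, then integrating the tail of $r_\star(W)$ with $r_\star(W)\le\sqrt n$) is the same as the paper's. However, Step~1 fails as written. The subcone you use is generated by $\1_n+u$ with $u$ supported on a set $S$ of size $k\asymp r^2$ and $\|u\|_\infty\le 1$, so it equals $\operatorname{cone}(\1_{S^c}+[0,2]^S)$. Any unit vector $t(\1_{S^c}+w)$ in it has $t\le (n-k)^{-1/2}$. Hence, for $k\le n/2$, its Gaussian supremum is at most $(\langle g,\1_{S^c}\rangle+2\|g_S\|_1)_+/\sqrt{n-k}$, and its statistical dimension is only $O(1+r^4/n)$. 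For $r^2\ll n$ this is far below $r^2$; for example, it is $O(1)$ when $r^2=\sqrt n$. The cone is much too narrow to capture a constant fraction of $\|g_S\|_2^2$. The loss occurs precisely when $r^2\ll n$, not near $r^2\approx n$ as you anticipated. The remedy, which is what the paper does, is to spread the $\ell_2$ budget over all $n$ coordinates. Take $Z=(r/\sqrt n)\sign(g)$, which lies in $B_\infty^n\cap rB_2^n$ and satisfies $\|\1_n+Z\|_2\le 2\sqrt n$. Testing the unit vector $(\1_n+Z)/\|\1_n+Z\|_2\in\mathcal C_r$, together with $0\in\mathcal C_r$, gives $\E\sup_{v\in\mathcal C_r\cap B_2^n}\langle g,v\rangle\ge \E\bigl[\langle g,\1_n\rangle+(r/\sqrt n)\|g\|_1\bigr]/(2\sqrt n)=r/\sqrt{2\pi}$. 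Jensen then gives $\Delta(\mathcal C_r)\ge r^2/(2\pi)$.

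Second, Step~2 silently uses the upper bound that you set aside as ``the wrong direction.'' The kinematic estimate is $\P(W\cap\mathcal C=\{0\})\le 4\exp\bigl(-\lambda^2/(8(\omega+\lambda))\bigr)$ with $\omega=\min\{\Delta(\mathcal C),\,n-\Delta(\mathcal C)\}$. Writing the exponent as $c\,s^2/(d+s)$ therefore requires $\Delta(\mathcal C_r)\lesssim r^2\asymp d+s$. With only $\omega\le n/2$, the best tail you get is $\exp(-c(d+s)^2/n)$. Integrating that against $\exp(\lambda_0C_1(d+s))$ gives a bound exponential in $\lambda_0^2 n$, not $\exp(Cd)$, when $n\gg d$. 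So you need both sides of $c_1r^2\le\Delta(\mathcal C_r)\le c_2r^2$. The upper side follows from $\mathcal C_r\subset\operatorname{cone}(\1_n+rB_2^n)$:
\begin{itemize}
\item for $r\le\sqrt n/2$, a unit vector $t(\1_n+z)$ with $\|z\|_2\le r$ has $t\le 1/(\sqrt n-r)$, so the Gaussian supremum is at most $(\langle g,\1_n\rangle_++r\|g\|_2)/(\sqrt n-r)$ and $\Delta(\mathcal C_r)\le 4+8r^2$;
\item for $r>\sqrt n/2$, use $\Delta(\mathcal C_r)\le n<4r^2$.
\end{itemize}
With both bounds in place, your Steps~2 and~3 go through essentially as written.
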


For a closed convex cone $\mathcal C\subset\R^n$, write
$\Delta(\mathcal C)\defn\E\|\Pi_{\mathcal C}g\|_2^2$, where
$g\sim\Normal{0}{I_n}$, for its statistical dimension. The proof uses the
following control for the cones $\mathcal C_r$.
\begin{lemma}
\label{lem:quadratic-stat-dim}
There exist universal constants \(c_1,c_2 > 0\) such that, for every integer
\(n\geq 1\) and every $r \in [1,\sqrt n]$,
\[
c_1 r^2 \leq \Delta(\mathcal C_r) \leq c_2 r^2.
\]
\end{lemma}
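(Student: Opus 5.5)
We need to show that the statistical dimension of $\mathcal C_r$ is of order $r^2$ for $r\in[1,\sqrt n]$. Recall that for a closed convex cone one has $\Delta(\mathcal C)=\E\,\mathrm{dist}^2(g,\mathcal C^\circ)=\E\|\Pi_{\mathcal C}g\|_2^2$. We also use the polar bound $\Delta(\mathcal C)\le \E\inf_{y\in\mathcal C^\circ}\|g-y\|^2$, together with $\sup_{x\in\mathcal C\cap B_2}\langle g,x\rangle^2\le \|\Pi_{\mathcal C} g\|^2$. The latter holds with equality when the sup is nonnegative, since $\|\Pi_{\mathcal C}g\|=\sup_{x\in\mathcal C\cap B_2}\langle g,x\rangle$. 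So both bounds reduce to controlling the Gaussian width-type quantity $S\defn\sup_{x\in\mathcal C_r,\ \|x\|_2\le 1}\langle g,x\rangle$, and we get $\Delta(\mathcal C_r)\asymp \E S^2$ up to the standard additive $O(1)$.

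\emph{Upper bound.} A generic point of $\mathcal C_r$ is $x=t(\1_n+z)$ with $t\ge0$, $\|z\|_\infty\le1$, $\|z\|_2\le r$. Write $z=a\1_n/\sqrt n\cdot$(component along $\1_n$) $+z^\perp$ and decompose $g=\langle g,\1_n\rangle\1_n/n+g^\perp$. Then
\[
\langle g,x\rangle=t\langle g,\1_n\rangle(1+\langle z,\1_n\rangle/n)+t\langle g^\perp,z^\perp\rangle .
\]
Since $|\langle z,\1_n\rangle|/n\le r/\sqrt n\le1$, we have $1+\langle z,\1_n\rangle/n\in[0,2]$. Also $\|x\|_2\ge t\|\1_n+z\|_2\ge t\cdot\mathrm{dist}$, and when $r\le\sqrt n/2$ this is at least $t(\sqrt n-r)\ge t\sqrt n/2$. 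Hence $t\le 2/\sqrt n$, and
\[
S\le \tfrac{4}{\sqrt n}|\langle g,\1_n\rangle|+\tfrac{2}{\sqrt n}\sup_{\|z\|_\infty\le1,\|z\|_2\le r}\langle g,z\rangle .
\]
The first term is $O(1)$ in $L^2$. For the second, the sup over $B_\infty\cap rB_2$ is at most $r\|g\|_{(r^2)}$-type quantities. Concretely, $\sup\langle g,z\rangle\le\sum_{i\le m}|g|_{(i)}+r\big(\sum_{i>m}|g|_{(i)}^2\big)^{1/2}$ for any $m$. Taking the trivial bound $\sup_{\|z\|_\infty\le1}\langle g,z\rangle=\|g\|_1\approx n\sqrt{2/\pi}$ is too weak. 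Instead I would use the key estimate
\[
\E\sup_{z\in B_\infty\cap rB_2}\langle g,z\rangle\asymp r\sqrt n,
\]
which is immediate from $\sup\le r\|g\|_2$. Thus $\E S^2\lesssim 1+\tfrac{4}{n}\,\E r^2\|g\|_2^2\lesssim r^2$. The case $r\in(\sqrt n/2,\sqrt n]$ is handled trivially by $\Delta\le n\le 4r^2$.

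\emph{Lower bound.} Here the point is that the $\ell_\infty$ constraint is harmless for a typical direction, so no refined computation is needed. Take $z=r\,g^\perp_{\rm trunc}/\|\cdot\|$, where $g^\perp_{\rm trunc}$ clips coordinates of $g^\perp$ to $[-1,1]$ and then rescales to $\ell_2$-norm $r$. Since $r\le\sqrt n$ and clipped Gaussian vectors have $\|\cdot\|_\infty\le1$ and $\ell_2$-norm $\asymp\sqrt n$ with high probability, the rescaled vector still lies in $B_\infty$ when the rescaling factor $r/\|\cdot\|\lesssim1$. We may need to shrink $r$ by a constant factor, which is where $r\ge1$ and the constants $c_1$ enter. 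Then, with $x=(\1_n+z)/\|\1_n+z\|_2$ and $\|\1_n+z\|_2\le 2\sqrt n$, we get
\[
\langle g,x\rangle\ge\frac{\langle g,\1_n\rangle+\langle g^\perp,z\rangle}{2\sqrt n}.
\]
Moreover, $\langle g^\perp,z\rangle\gtrsim r\cdot\E|g_i|\min(|g_i|,1)\sqrt n/\!\sqrt{\E\min(g_i^2,1)}\asymp r\sqrt n$ with constant probability, while $\langle g,\1_n\rangle\ge0$ with probability $1/2$. Combining these, $S\gtrsim r$ with constant probability, so $\Delta(\mathcal C_r)\ge \E (S_+)^2\gtrsim r^2$.

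The main obstacle I expect is the bookkeeping at the boundary of the cone when $r$ is comparable to $\sqrt n$. There the lower bound $\|\1_n+z\|\ge \sqrt n - r$ degenerates, so the upper bound must switch to the trivial $\Delta\le n$ regime, and the clipping in the lower bound must be checked to keep $\|z\|_\infty\le1$ while retaining $\|z\|_2\gtrsim r$.
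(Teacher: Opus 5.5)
Your proposal is correct in outline and follows the paper's strategy. In both, $\Delta(\mathcal C_r)=\E S^2$ holds exactly, since $0\in\mathcal C_r\cap B_2^n$ forces $S\ge 0$. The upper bound uses $t\le 2/\sqrt n$, Cauchy--Schwarz, and $\Delta\le n\le 4r^2$ for $r>\sqrt n/2$. The lower bound plugs in one test direction $\1_n+z$ with $z$ aligned with $g$.

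Your upper bound is the paper's argument in substance. The paper discards the $\ell_\infty$ constraint by passing to $\operatorname{cone}(\1_n+rB_2^n)$ and writes $\langle g,\1_n+z\rangle\le\langle g,\1_n\rangle+r\|g\|_2$ directly. Your decomposition along $\1_n$ and the detour through truncated suprema are therefore unnecessary.

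The real difference is the lower-bound test vector. The paper takes $z=(r/\sqrt n)\sign(g)$, which has $\|z\|_\infty=r/\sqrt n\le1$ and $\|z\|_2=r$ deterministically. It then notes $S\ge\max\{0,\langle g,U\rangle\}\ge\xi/(2\sqrt n)$ with $\xi=\langle g,\1_n\rangle+(r/\sqrt n)\|g\|_1$, whatever the sign of $\xi$. This gives $\E S\ge r/\sqrt{2\pi}$, and Jensen yields $\Delta(\mathcal C_r)\ge r^2/(2\pi)$ with no probability estimates.

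Your normalized truncation of $g^\perp$ can be made to work, but it needs repairs you only partly flagged:
\begin{itemize}
\item Normalizing to $\ell_2$-norm exactly $r$ breaks $\|z\|_\infty\le1$ when $r$ is near $\sqrt n$. Take instead $z=(r/\sqrt n)\,\mathrm{clip}(g^\perp)$, which lies in $B_\infty^n\cap rB_2^n$ deterministically and satisfies $\langle g^\perp,z\rangle\ge(r/\sqrt n)\|\mathrm{clip}(g^\perp)\|_2^2$.
\item Your display omits the factor $1+\langle z,\1_n\rangle/n\in[0,2]$ in front of $\langle g,\1_n\rangle$. This is harmless on the event $\{\langle g,\1_n\rangle\ge0\}$ you intersect with, since there the numerator is at least $\langle g^\perp,z\rangle\ge0$.
\item That intersection has constant probability only because $\langle g,\1_n\rangle$ is independent of $g^\perp$; you should say so.
\item ``With high probability'' must become a constant-probability bound that is uniform in $n\ge2$.
\item For $n=1$ you have $g^\perp=0$, so that case must be handled separately (there $\mathcal C_1=\R_+$ and $\Delta=1/2$).
\end{itemize}
Working with $g$ itself, as the paper does, sidesteps all of this.
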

The proof of \Cref{lem:quadratic-stat-dim} is deferred to
\Cref{sec:near-origin-remaining-proofs}.

\begin{proof}[Proof of~\Cref{lem:critical-radius-mgf-bound}]
We use the following conic kinematic estimate
from~\cite[Theorem~7.1 and Eq.~(6.1)]{amelunxen2014living}: if \(W\) is a uniformly distributed
\((n-d)\)-dimensional subspace and \(s>0\) satisfies
\(d\leq \Delta(\mathcal C)-s\), then
\[
\P(W\cap \mathcal C=\{0\})
\leq
4\exp\left(
-\frac{s^2}{
8\bigl(\min\{n-\Delta(\mathcal C),\Delta(\mathcal C)\}+s\bigr)}
\right).
\]
We first derive a tail bound for the critical radius \(r_\star(W)\). Choose a sufficiently large
universal constant \(A>0\), and put
\[
a=A\sqrt d,
\qquad
b=\sqrt{\frac{n}{2c_2}}.
\]
The constant \(A\) is chosen so that \(a\ge 1\) and
\(c_1a^2/2\ge d\) for every \(d\ge1\). If \(a\le b\), then for every
\(r\in[a,b]\) we have
\[
d\leq \frac{c_1r^2}{2}
\leq
\Delta(\mathcal C_r)-\frac{c_1r^2}{2},
\qquad
\Delta(\mathcal C_r)\le c_2r^2\le \frac n2.
\]
Applying the conic kinematic estimate with \(s=c_1r^2/2\), and increasing
$A$ if needed to absorb the leading factor $4$, gives
\[
\P(W\cap \mathcal C_r=\{0\})
\leq
\exp(-c r^2)
\]
for a universal constant \(c>0\). By the definition of \(r_\star\),
\[
\{r_\star(W)>r\}
\subseteq
\{W\cap\mathcal C_r=\{0\}\},
\qquad 0<r<\sqrt n.
\]
Thus, when \(a\le b\),
\[
\P(r_\star(W)>r)
\leq
\begin{cases}
1, & 0\leq r<a,\\
\exp(-c r^2), & a\leq r\leq b,\\
\exp(-c b^2), & b\leq r\leq \sqrt n,\\
0, & r\geq \sqrt n.
\end{cases}
\]
The third line follows by monotonicity of \(r\mapsto\P(r_\star(W)>r)\), and
the last line follows from the deterministic bound \(r_\star(W)\le\sqrt n\).

Choose
\[
\lambda_0
\leq
\min\left\{\frac12,\frac c2,\frac{c}{4c_2}\right\}.
\]
If \(a>b\), then \(n\lesssim d\), and the deterministic bound
\(r_\star(W)\le \sqrt n\) gives
\[
\log \E_W \exp\left(\lambda_0 r_\star(W)^2\right)
\leq
\lambda_0 n
\lesssim d.
\]
It remains to consider the case \(a\le b\). Using
\[
\E_W \exp\left(\lambda_0 r_\star(W)^2\right)
=
1+
\int_0^\infty
2\lambda_0 t\exp(\lambda_0 t^2)
\P(r_\star(W)>t)\,dt,
\]
and splitting the integral over \([0,a]\), \([a,b]\), and
\([b,\sqrt n]\), we get
\begin{align}
\E_W \exp\left(\lambda_0 r_\star(W)^2\right)
&\leq
1+
\int_0^a
2\lambda_0 t e^{\lambda_0 t^2}\,dt
+
\int_a^b
2\lambda_0 t e^{-(c-\lambda_0)t^2}\,dt  \\
&\qquad
+
\int_b^{\sqrt n}
2\lambda_0 t e^{\lambda_0 t^2-cb^2}\,dt .
\end{align}
The first integral is at most
\[
\exp(\lambda_0 a^2)-1
\leq
\exp(Cd)-1.
\]
The second integral is bounded by a universal constant, since
\(\lambda_0\le c/2\). For the third integral, using
\(b^2=n/(2c_2)\) and \(\lambda_0\le c/(4c_2)\), we have
\[
\int_b^{\sqrt n}
2\lambda_0 t e^{\lambda_0 t^2-cb^2}\,dt
\leq
\exp(\lambda_0 n-cb^2)
\leq 1 .
\]
Therefore
\[
\E_W \exp\left(\lambda_0 r_\star(W)^2\right)
\leq
C_0\exp(C_1d)
\]
for universal constants \(C_0,C_1>0\), which proves the claim.
\end{proof}

Next, we present our main result in this section.

\begin{proposition}[Expected LLR near the origin]
\label{prop:near-origin-expected-llr}
Let \(n\geq d\). Let \(X\in\R^{n\times d}\) be a random design matrix with rows
\(X_i^\T\). Assume that 

\begin{enumerate}[label=\textup{(\roman*)}]
\item \(W=\ker(X^\T)\) is uniformly distributed over the Grassmann manifold of \((n-d)\)-dimensional subspaces of \(\R^n\);
\item there exists $L\geq 1$ such that $\E X_iX_i^\T \preceq L \cdot I_d$ for all $ i\in[n]$.
\end{enumerate}

Then, for every \(\thetastar\in\R^d\),
\[
\E_{\thetastar} \Lambda_n^{\operatorname{log}}(\thetastar;X_{1:n};Y_{1:n})
	\lesssim d+n \cdot \min\{L\|\thetastar\|_2^2,1\}.
\]
In particular, for Gaussian random design \(X_i\simiid \Normal{0}{I_d}\), if
\(\|\thetastar\|_2\lesssim \sqrt{d/n}\), then
\[
\E_{\thetastar}
\Lambda_n^{\operatorname{log}}(\thetastar;X_{1:n};Y_{1:n})
\lesssim d .
\]
\end{proposition}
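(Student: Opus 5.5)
The plan is to reduce everything to the origin. We transfer to the true parameter by a change of measure, and control the origin through the critical radius of \Cref{lem:critical-radius-mgf-bound}. The starting point is the exact decomposition
\[
\Lambda_n^{\operatorname{log}}(\thetastar)
=
\Lambda_n^{\operatorname{log}}(0)
+
\log\frac{p_0(Y)}{p_{\thetastar}(Y)},
\qquad
p_\theta(Y)=\prod_{i=1}^n\sigma(Y_i\langle X_i,\theta\rangle).
\]
Conditionally on \(X\), the second term has \(\E_{\thetastar}\)-expectation \(-\kl{P_{\thetastar}}{P_0}\le 0\). Hence it suffices to bound \(\E_{\thetastar}\Lambda_n^{\operatorname{log}}(0)\), where the statistic is computed at the origin but the labels are drawn from \(\thetastar\).

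\emph{Step 1: exponential moment at the origin.} Under \(\thetastar=0\) the labels are Rademacher and independent of \(X\). Put \(V=\ran(X)\) and \(W=V^\perp=\ker(X^\T)\). By \Cref{lem:var-char} with \(v=0\), \(\Gamma_V(Y;0)=\tfrac12\inf\{\sum_i h(\delta_i):\delta\in[-1,1]^n,\ \delta-Y\in W\}\), with \(h\) as in the proof of \Cref{prop:fixed-design-lower}. Since \(h\) is convex with \(h(0)=0\) and \(h(\pm1)=2\log 2\), we have \(h(\delta)\le 2\log2\,\delta^2\).

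Let \(D_Y=\mathrm{diag}(Y)\). Suppose \(D_YW\) contains a nonzero element of \(\mathcal C_r\), namely \(t(\1_n+z)\) with \(t>0\), \(\|z\|_\infty\le1\) and \(\|z\|_2\le r\). Rescaling gives \(\1_n+z\in D_YW\). Set \(\delta=Y-D_Y(\1_n+z)=-D_Yz\). Then \(\delta\in[-1,1]^n\), \(\delta-Y\in W\), and \(\|\delta\|_2\le r\). Optimizing over \(r\) yields
\[
\Lambda_n^{\operatorname{log}}(0)\le \log 2\cdot r_\star(D_YW)^2 .
\]
Because \(D_Y\) is orthogonal and independent of the Haar-distributed \(W\), the subspace \(D_YW\) is again Haar distributed. \Cref{lem:critical-radius-mgf-bound} then gives \(\log\E_0\exp(\lambda_1\Lambda_n^{\operatorname{log}}(0))\le Cd\) with \(\lambda_1=\lambda_0/\log2\), where the expectation is joint over \(X\) and \(Y\).

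\emph{Step 2: change of measure.} Let \(Q\) be the joint law of \((X,Y)\) under \(\thetastar\) and \(P\) the joint law under \(0\). The two laws share the \(X\)-marginal. The Donsker--Varadhan inequality gives
\[
\E_Q\Lambda_n^{\operatorname{log}}(0)\le \frac1{\lambda_1}\Bigl(\kl{Q}{P}+\log\E_P e^{\lambda_1\Lambda_n^{\operatorname{log}}(0)}\Bigr).
\]
The second term is at most \(Cd/\lambda_1\) by Step 1. For the first, \(\kl{Q}{P}=\E_X\sum_i\kl{\Ber{\sigma(u_i)}}{\Ber{1/2}}\) with \(u_i=\langle X_i,\thetastar\rangle\). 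Each summand is at most \(\min\{u_i^2/8,\log2\}\lesssim\min\{u_i^2,1\}\). By concavity of \(x\mapsto\min\{x,1\}\) and Jensen, \(\E\min\{u_i^2,1\}\le\min\{\E u_i^2,1\}\le\min\{L\|\thetastar\|_2^2,1\}\) by assumption (ii). Summing over \(i\) and combining with the decomposition gives \(\E_{\thetastar}\Lambda_n^{\operatorname{log}}(\thetastar)\lesssim d+n\min\{L\|\thetastar\|_2^2,1\}\).

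\emph{The Gaussian case.} The Gaussian design satisfies (i) and (ii) with \(L=1\). Hence \(\|\thetastar\|_2\lesssim\sqrt{d/n}\) makes the second term \(O(d)\).

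The main obstacle is Step 1. One must check carefully that the cone condition in the definition of \(r_\star\) exactly produces a feasible \(\delta\) in the variational formula; the sign flip \(D_Y\) is what reconciles \(\1_n\) with \(Y\). One must also check that \(r_\star\) is attained or approximated, handling the edge case \(r_\star=\sqrt n\) through the trivial bound \(\Lambda_n^{\operatorname{log}}(0)\le n\log2\).
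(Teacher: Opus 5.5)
Your proposal is correct and follows the paper's proof essentially step for step. It uses the same decomposition $\Lambda_n^{\log}(\thetastar)=\Lambda_n^{\log}(0)+\log\bigl(p_0(Y)/p_{\thetastar}(Y)\bigr)$, the same $D_Y$-twisted reduction to $r_\star(D_YW)$ and \Cref{lem:critical-radius-mgf-bound}, and the same change-of-measure and KL estimates; discarding the $-\mathrm{KL}$ term and applying Donsker--Varadhan to the joint law, instead of conditionally followed by Jensen, only affects constants.

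One justification needs fixing. Convexity of $h$ with $h(0)=0$ only gives $h(\delta)\le 2\log 2\,|\delta|$, not a quadratic bound. The bound $h(\delta)\le 2\log 2\,\delta^2$ does hold, because $h(\delta)=\sum_{k\ge1}\delta^{2k}/(k(2k-1))$, so $h(\delta)/\delta^2$ is nondecreasing in $|\delta|$. Alternatively, use the $\chi^2$ comparison $h(\delta)\le 2\delta^2$ as the paper does.
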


Some discussion is in order before we prove this result. Assumption (i) holds in particular when the vectors $X_i$ are \iid\ Gaussian in $\R^{d}$ with arbitrary covariance $\Sigma \succ 0$. Indeed, one can write $X = Z \Sigma^{1/2}$ with $Z\in \R^{n\times d}$ having standard Gaussian entries. Then, $\ran(X) = \ran(Z)$ is Haar-distributed.

Assumption (i) is not restricted to Gaussian designs. Indeed, any design matrix $X$ of rank $d$ almost surely whose distribution is invariant under the left action of the orthogonal group $O(n)$ satisfies the condition. Namely, if for every matrix $Q \in O(n)$, $X\stackrel{(d)}{=} QX$, then the random subspace $\ker(X^\top)$ is Haar-distributed. A specific situation where this holds is if the \emph{columns} $X^{(1)}, \dots, X^{(d)}$ of $X$ are \iid\ with a rotationally invariant distribution in $\R^n$ and $X$ has rank $d$ almost surely. In addition, in this case, one has $\E X_i X_i^\top = \E[\|X^{(1)}\|^2]/n \cdot I_d$.

\begin{proof}
For a fixed design \(X\), let \(\P_{X,\theta}\) denote the product law on
\(\{-1,1\}^n\) under the logistic model with parameter \(\theta\). For
\(\delta\in[-1,1]^n\), let \(\Q_\delta\) be the product law under which the
\(i\)-th coordinate has mean \(\delta_i\), that is
\[
\Q_\delta\{Y_i=1\}=\frac{1+\delta_i}{2}.
\]
Define
\[
I_n(X,Y)
\defn
\inf_{\substack{\delta\in[-1,1]^n\\ X^\T\delta=X^\T Y}}
\sum_{i=1}^n
\kl{\Ber{\frac{1+\delta_i}{2}}}{\Ber{\frac12}} .
\]
We will then use the following: for any $\delta \in [-1,1]$ and $u \in \R$,
\[
\kl{\Ber{\frac{1+\delta}{2}} \! }{\Ber{\sigma(u)}}
=
\kl{\Ber{\frac{1+\delta}{2}} \! }{\Ber{\frac12}}
-\frac{u\delta}2
+\log\cosh\left(\frac u2\right),
\; |\delta|\leq 1, \, u\in\R.
\]
Applying this identity coordinatewise and using the constraint \(X^\T\delta=X^\T Y\), the usual variational representation of the logistic
likelihood ratio from Lemma~\ref{lem:var-char} (applied to the subspace $\ran(X)$ and vector $v^\star = X \thetastar$) gives
\begin{equation}
\label{eq:ll-var-kl}
\Lambda_n^{\operatorname{log}}(\thetastar;X_{1:n};Y_{1:n})
	= I_n(X,Y) - \log\frac{d\P_{X,\thetastar}}{d\P_{X,0}}(Y) \, ,
\end{equation}
since, using the elementary identity $\log(\sigma(\eps u)) = \eps u/2 - \log(2 \cosh(u/2))$ (valid for any $\eps \in	\{-1,1\}$ and $u\in \R$), one has
\[
\log\bigg(\frac{d\P_{X,\thetastar}}{d\P_{X,0}}(Y)\bigg)
	=  \sum_{i=1}^{n} \Big\{ \log \sigma(Y_i v^\star_i) - \log(1/2) \Big\}
	=\frac 1 2 \langle X \thetastar, Y \rangle -   \sum_{i=1}^{n} \log(\cosh(v^\star_i /2)) \, .
\]

We first compare $I_n(X,Y)$ to the critical radius. Let
$W=\ker(X^\T)$ and $D_Y=\operatorname{diag}(Y_1,\ldots,Y_n)$.
First, by a standard comparison, 
\[
\kl{\Ber{\frac{1+\delta}{2}}}{\Ber{\frac12}} \leq \delta^2
\qquad\textrm{for all } |\delta| \leq 1 \, .
\]
Next, we prove that
\begin{equation}
\label{eq:var-rep-crit-radius}
I_n(X,Y)
	\leq \inf_{\substack{\delta\in[-1,1]^n\\ \delta\in Y+W}} \|\delta\|_2^2
	= r_\star(D_YW)^2.
\end{equation}
To prove~\eqref{eq:var-rep-crit-radius}, observe that the constraint $X^\top \delta = X^\top Y$ is equivalent to $\delta \in Y + W$ since $W = \ker(X^\top)$. By construction $D_Y Y = \1_n$, so this is also equivalent to $D_Y \delta \in \1_n + D_Y W$. Since $D_Y$ is an isometry, for $0<r<\sqrt n$, the existence of
$\delta\in B_\infty^n\cap rB_2^n\cap(Y+W)$ is equivalent, on setting
$v=-D_Y\delta$, to the existence of $v\in B_\infty^n\cap rB_2^n$ such that
$\1_n+v\in D_YW$. This is exactly
the condition $D_YW\cap\mathcal C_r\neq\{0\}$, with the convention
$r_\star(D_YW)=\sqrt n$ covering the boundary case. This establishes~\eqref{eq:var-rep-crit-radius}.

Fix $\lambda\in(0,1)$. Conditioning on $X$ and taking expectation with respect to $Y$ in~\eqref{eq:ll-var-kl} yields
\begin{equation}
\E_{Y\sim\P_{X,\thetastar}}\left[\Lambda_n^{\log}(\thetastar;X_{1:n};Y_{1:n})\right]
	= \E_{Y\sim\P_{X,\thetastar}} I_n(X,Y) - \kl{\P_{X,\thetastar}}{\P_{X,0}} \, .
\end{equation}
Now, by the change-of-measure inequality~\cite[Corollary~4.14]{boucheron2013concentration},
\[
\E_{Y\sim\P_{X,\thetastar}}[I_n(X,Y)]
	\leq \frac 1 \lambda \log\E_{Y\sim\P_{X,0}}\e^{\lambda I_n(X,Y)}
		+ \frac1\lambda\kl{\P_{X,\thetastar}}{\P_{X,0}} \, .
\]
Combining with the previous display we find,
\[
\E_{Y\sim\P_{X,\thetastar}}\!
\left[
\Lambda_n^{\log}(\thetastar;X_{1:n};Y_{1:n})
\right]
\leq
\frac1\lambda
\log
\E_{Y\sim\P_{X,0}} 
\exp\bigl(\lambda I_n(X,Y)\bigr)
+
\left(\frac1\lambda-1\right)
\kl{\P_{X,\thetastar}}{\P_{X,0}}.
\]
Taking expectation in $X$, applying Jensen's inequality to the logarithm, and
using $I_n(X,Y)\le r_\star(D_YW)^2$, we obtain
\[
\E_{\thetastar} \Lambda_n^{\operatorname{log}}(\thetastar;X_{1:n};Y_{1:n})
	\leq \frac1\lambda \log \E_X\E_{Y\sim\P_{X,0}} \exp\bigl(\lambda r_\star(D_YW)^2\bigr)
		+ \left(\frac1\lambda-1\right) \E_X \kl{\P_{X,\thetastar}}{\P_{X,0}}.
\]
Under $\P_{X,0}$, the labels are independent Rademacher variables and are
independent of $X$. Since $W$ is Haar-distributed and multiplication by
$D_Y$ is an orthogonal transformation, $D_YW$ is again Haar-distributed.
Choosing $\lambda=\lambda_0$ in
\Cref{lem:critical-radius-mgf-bound}, the first term is bounded by $Cd$.

It remains to bound the change-of-measure term. For $U_i = \langle X_i, \thetastar \rangle$,
\[
\kl{\P_{X,\thetastar}}{\P_{X,0}}
	= \sum_{i=1}^n \kl{\Ber{\sigma(U_i)}}{\Ber{\frac12}}
	\lesssim \sum_{i=1}^n \left(\sigma(U_i)-\frac12\right)^2 \, .
\]
Moreover,
\[
\left(\sigma(u)-\frac12\right)^2 \lesssim \min\{u^2,1\}, \qquad u\in\R,
\]
using that $t\mapsto\sigma(t)$ is $1/4$-Lipschitz and that the left-hand
side is bounded by $1/4$. Therefore,
\[
\E_X \kl{\P_{X,\thetastar}}{\P_{X,0}}
\lesssim
\sum_{i=1}^n
\E\min\{\langle X_i,\thetastar\rangle^2,1\}.
\]
Since $x\mapsto \min\{x,1\}$ is concave on $\R_+$, Jensen's inequality and
the covariance assumption give
\[
\E\min\{\langle X_i,\thetastar\rangle^2,1\}
\leq
\min\left\{
\E\langle X_i,\thetastar\rangle^2,1
\right\}
\leq
\min\left\{
L\|\thetastar\|_2^2,1
\right\}.
\]
Combining the previous displays gives
\[
\E_X\kl{\P_{X,\thetastar}}{\P_{X,0}}
\lesssim
n\min\{L\|\thetastar\|_2^2,1\}.
\]
Together with the critical-radius bound, this proves
\[
\E_{\thetastar}
\Lambda_n^{\operatorname{log}}(\thetastar;X_{1:n};Y_{1:n})
\lesssim
d+n \cdot \min\{L\|\thetastar\|_2^2,1\}.
\]
For Gaussian random design, $W=\ker(X^\T)$ is Haar-distributed and
$\E X_iX_i^\T=I_d$. Hence $L=1$, and
$\|\thetastar\|_2\lesssim \sqrt{d/n}$ gives the claimed $O(d)$ bound.
\end{proof}

\subsubsection{Remaining proofs}
\label{sec:near-origin-remaining-proofs}
\begin{proof}(\emph{of Lemma~\ref{lem:quadratic-stat-dim}})
Fix $r>0$. It is clear from the definition of $\mathcal{C}_r$ that 
\[
\mathcal C_r
\subset
\operatorname{cone}\left(\1_n+rB_2^n\right),
\]
that we will denote by $\mathcal{C}'_r$.
For any cone $\mathcal{C}\subset \R^n$, write $H_{\mathcal{C}} = \sup_{v \in B_2^n \cap \mathcal{C}} \langle g, v\rangle$, with $g$ a standard Gaussian vector. Then, by~\cite[Prop.~3.1(5)]{amelunxen2014living}, $\Delta(\mathcal{C}'_r) = \E [H_{\mathcal{C}'_r}^2]$. Thus, by monotonicity, $\Delta(\mathcal{C}_r) \leq \E[ H_{\mathcal{C}'_r}^2]$ as well, so we proceed by bounding the latter from above.
Suppose first that $r\in[1,\sqrt n/2]$. Let $v \in \mathcal{C}'_r \cap B_2^n$ and write it as $v = \lambda (\1_n + z)$ for some $\lambda \geq 0$ and $z \in \R^n$ with $\|z\| \leq r$. The Cauchy--Schwarz inequality gives $\langle g,\1_n + z \rangle \leq \langle g,\1_n \rangle + r\|g\|$, and the reverse triangle inequality gives $\|\1_n+z\|\geq\sqrt{n} - r$, hence $\lambda \leq 1/(\sqrt{n} - r)$. It follows that
\[
\langle g, v\rangle
	= \lambda\langle g,\1_n + z\rangle
	\leq \lambda\big(\langle g,\1_n\rangle + r\|g\|\big)_+
	\leq \frac{\langle g,\1_n\rangle_+ + r\|g\|}{\sqrt{n} - r} \, .
\]
The last inequality holds for all $\lambda \geq 0$ and $z \in rB_2^n$, so, taking the supremum, we deduce that 
\begin{equation}
	H_{\mathcal{C}'_r} \leq \frac{\langle g,\1_n\rangle_+ + r\|g\|}{\sqrt{n} - r} \, .
\end{equation}
Since $\sqrt{n} - r \geq \sqrt{n} /2$, taking the square gives
\[
H_{\mathcal{C}'_r}^2
	\leq \frac{2\langle g, \1_n \rangle_+^2 + 2r^2 \|g\|^2}{(\sqrt n-r)^2}
	\leq \frac{8\langle g, \1_n \rangle_+^2 + 8r^2 \|g\|^2}{n}.
\]
Since $\langle g,\1_n\rangle\sim\Normal{0}{n}$, we have $\E[\langle g,\1_n\rangle_+^2]=n/2$, and $\E\|g\|_2^2=n$, so taking expectations gives $\Delta(\mathcal C_r)\leq4+8r^2\leq12r^2$, the last step using $r\geq 1$.
If instead $r\in(\sqrt n/2,\sqrt n]$, then $n < 4r^2$ and we simply use the trivial bound $\Delta(\mathcal{C}_r) \leq n \leq 4 r^2$. In either case, $\Delta(\mathcal C_r)\leq c_2 r^2$ with $c_2=12$.

We now turn to the lower bound. First suppose that $r\in(0,\sqrt n)$. As before, $g$ denotes a standard Gaussian vector. Let $Z \defn(r/\sqrt n)\sign(g)$; then $\|Z\|_\infty=r/\sqrt n\leq1$ and $\|Z\|=r$, so $Z \in B_\infty^n\cap rB_2^n$ and $\1_n + Z$ lies in the generating set of $\mathcal C_r$. Thus, letting
\[
U = \frac{\1_n + Z}{\|\1_n + Z\|} \in \mathcal C_r \cap B_2^n \, ,
\]
 and since $\|\1_n + Z\|_2\leq\sqrt n+r\leq2\sqrt n$,
\[
\langle g,U\rangle=\frac{\xi}{\|\1_n+Z\|_2}, \qquad \xi\defn\langle g,\1_n\rangle+\frac{r}{\sqrt n}\|g\|_1.
\]
Since $0\in\mathcal C_r\cap B_2^n$ as well, $H_{\mathcal{C}_r} \geq \max\{0,\langle g,U\rangle\}$, and this is at least $\xi/(2\sqrt n)$ regardless of the sign of $\xi$. Indeed, when $\xi\geq0$, this follows from $\langle g,U\rangle\geq \xi/(2\sqrt n)$ since $\|\1_n + Z\| \leq 2\sqrt n$, and when $\xi < 0$, the quantity $\xi/(2\sqrt n)$ is negative while $\max\{0,\langle g,U\rangle\} \geq 0$. Hence $H_{\mathcal{C}_r}\geq \xi/(2\sqrt n)$. Taking expectations, using $\E\langle g,\1_n\rangle=0$ and $\E\|g\|_1 = n\sqrt{2/\pi}$, we deduce
\[
\E[H_{\mathcal C_r}(g)]
	\geq \frac{\E[\xi]}{2\sqrt n}
	= \frac{r}{\sqrt{2\pi}} \, .
\]
	By Jensen's inequality, $\Delta(\mathcal C_r) = \E[H_{\mathcal C_r}^2] \geq \E[H_{\mathcal C_r}]^2 \geq r^2/(2\pi)$ for $0<r<\sqrt n$. At the endpoint $r=\sqrt n$, we have $\mathcal C_r=\R_+^n$ and hence $\Delta(\mathcal C_r)=n/2\geq r^2/(2\pi)$. Thus, the claim holds with $c_1=1/(2\pi)$.

\end{proof}

\section{Sharpness and asymptotic bounds}
\label{sec:sharpness-boundaries}

In this section, we provide several results showing the sharpness of previous results. Theorem~\ref{thm:shtarkov-logistic} in Section~\ref{sec:shtarkov} below provides precise upper and lower bounds on Shtarkov sums, a central object in our analysis of the logistic LLR. In particular, when $d$ divides $n$, its bounds have matching leading constants in the regime $n/d\to\infty$.

Next, Section~\ref{sec:bivariatecase} analyzes separately the two-dimensional case. Indeed, Proposition~\ref{prop:strong-lower-bound-vandermonde} addresses the case where $d\geq 3$ and shows that the logarithmic factor is unavoidable, which implies that the behavior of the LLR is structurally different from the $\chi^2$ behavior that one would expect from Wilks' phenomenon. On the other hand, Corollary~\ref{cor:dimone} shows that the logarithmic overhead can be removed in the univariate case. Proposition~\ref{prop:d2-logloglog-lower} shows that this property collapses even for $d=2$. Although quantitatively much smaller than the lower bound from Proposition~\ref{prop:strong-lower-bound-vandermonde}, this result already shows that one cannot structurally expect a $\chi^2$-type behavior in dimension two. The matching worst-case quantile bounds are given in \Cref{cor:d2-logloglog-quantile-lower} and \Cref{thm:d2-logloglog-upper}.

Finally, in Section~\ref{sec:wilksregime}, we show that in random design, the standard high-dimensional asymptotic regime where the relevant parameter is the (asymptotic) aspect ratio $\kappa = \lim d/n$ does not suffice to characterize the behavior of the LLR. Instead, Proposition~\ref{prop:chi-limit-behavior} shows that whether or not the $\chi^2$ distribution is a valid approximation (assessed in Kolmogorov-Smirnov distance) for the distribution of the LLR depends on the limiting behavior of $d^{3/2}/n$, not merely $d/n$.

\subsection{The Laplace transform of the logistic LLR and sharp bounds for Shtarkov sums}
\label{sec:shtarkov}
Before proceeding, we recall the notion of the Shtarkov sum, introduced in~\cite{Shtarkov1987} in the context of universal coding; see also the textbook~\cite{CesaBianchiLugosi2006} for a detailed exposition. In the fixed-design logistic model, the logarithm of this quantity coincides with the optimal regret for sequential prediction in the {transductive} setting, where the covariates \(x_1,\dots,x_n\) are known in advance. In our notation, we define
\begin{equation}
\label{eq:shtarkovsum}
\mathcal{S}_d^{\operatorname{log}}(x_1,\dots,x_n)
\defn
\sum_{\varepsilon \in \{-1,1\}^n}
\sup_{\theta \in \R^d}
\prod_{i=1}^n \sigma(\varepsilon_i\langle x_i,\theta\rangle).
\end{equation}

In our context, the role of the Shtarkov sum is somewhat different: it coincides exactly with the moment generating function of the log-likelihood ratio statistic at \(\lambda = 1\). Indeed, by definition,
\[
\E_{\thetastar}\exp\bigl(\Lambda_n^{\operatorname{log}}(\thetastar)\bigr)
=
\mathcal{S}_d^{\operatorname{log}}(x_1,\dots,x_n).
\]
In particular, the right-hand side does not depend on \(\thetastar\). Thus, the Shtarkov sum serves two purposes in our analysis. First, it is the main tool for proving the general upper tail bound in the fixed-design setting. Second, it captures an important aspect of the aggregated moment behavior of the log-likelihood ratio statistic \(\Lambda_n^{\operatorname{log}}(\thetastar)\). Remarkably, even for finite \(n\), this already reveals a subtle departure from the classical \(\chi_d^2\) picture: at \(\lambda = 1\), the moment generating function of \(\frac12 \chi_d^2\) is not finite, whereas in our setting the corresponding quantity is finite and admits nontrivial structural bounds.

The next theorem summarizes the behavior of the Shtarkov sum in the regimes relevant for our analysis. 

\begin{theorem}[Shtarkov sum for logistic regression]
\label{thm:shtarkov-logistic}
The following hold.
\begin{enumerate}[label=(\roman*)]
\item
\label{item:shtarkov-upper}
If $n \geq d$, then for every collection $x_1,\dots,x_n \in \R^d$,
\[
\log\bigl(\mathcal{S}_d^{\operatorname{log}}(x_1,\dots,x_n)\bigr)
\le
\log\Bigl(\sum_{\ell=0}^{d}\binom{n}{\ell}\Bigr)
\le
d\log\Bigl(\frac{en}{d}\Bigr).
\]

\item
If $d=1$, $n \geq 2$, and $x_1,\dots,x_n \in \R$ are nonzero, then
\[
\log\bigl(\mathcal{S}_1^{\operatorname{log}}(x_1,\dots,x_n)\bigr)
\ge
\frac12 \log n.
\]
More generally, for arbitrary $d \geq 1$, assume that $d$ divides $n$ and that the design has a balanced $d$-block parallel structure: the vectors $x_1,\dots,x_n$ can be partitioned into $d$ groups of size $n/d$, and for each group all vectors in that group are nonzero multiples of a common vector $u_j$, where $u_1,\dots,u_d \in \R^d$ are linearly independent. Then
\[
\log\bigl(\mathcal{S}_d^{\operatorname{log}}(x_1,\dots,x_n)\bigr)
\ge
\frac{d}{2}\log\Bigl(\frac{n}{d}\Bigr).
\]

\item
For any $d \geq 1$, if $d$ divides $n$, then
\[
\sup_{x'_1,\dots,x'_n \in \R^d}
\log\bigl(\mathcal{S}_d^{\operatorname{log}}(x'_1,\dots,x'_n)\bigr)
\ge
d\log\Bigl(\frac{n}{d}+1\Bigr).
\]

\item
For any $n \geq d \geq 1$, if $x_1,\dots,x_n$ are in general position, then
\[
\log\bigl(\mathcal{S}_d^{\operatorname{log}}(x_1,\dots,x_n)\bigr)
\ge
\log\Bigl(2\sum_{\ell=0}^{d-1}\binom{n-1}{\ell}\Bigr).
\]
In particular, under the general position assumption, if $d \geq 2$, then
\[
\log\bigl(\mathcal{S}_d^{\operatorname{log}}(x_1,\dots,x_n)\bigr)
\ge
(d-1)\log\Bigl(\frac{n-1}{d-1}\Bigr).
\]
\end{enumerate}
\end{theorem}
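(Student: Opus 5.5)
The plan treats the four items separately. Each rests on the identity $\mathcal S_d^{\log}=\sum_{\eps}\sup_\theta p_\theta(\eps)$ and on comparing $\sup_\theta p_\theta(\eps)$ with the indicator that the sign pattern $\eps$ is realizable by a linear functional.

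\textbf{Item (i): counting sign patterns.} For the upper bound, I would avoid bounding each supremum by $1$; that would give $2^n$. Instead I would use a ``peeling'' induction that charges each $\eps$ to a realizable pattern of a smaller configuration. The natural route is induction on $n$ via the splitting
\[
\sup_\theta \prod_{i\le n}\sigma(\eps_i\langle x_i,\theta\rangle)
\le \sup_\theta\prod_{i<n}\sigma(\eps_i\langle x_i,\theta\rangle),
\]
applied for one of the two values of $\eps_n$. For the other value I need a refined bound. Pairing $\eps$ with $\eps'$ (the two patterns differ only in coordinate $n$), I claim
\[
\sup_\theta p_\theta(\eps)+\sup_\theta p_\theta(\eps')\le \sup_\theta p^{(n-1)}_\theta(\eps_{<n})+\sup_{\theta:\langle x_n,\theta\rangle=0}p^{(n-1)}_\theta(\eps_{<n}).
\]
This would follow since $\sigma(t)+\sigma(-t)=1$ together with a convexity/monotonicity argument along the segment joining the two maximizers: by the intermediate value theorem some point $\theta_0$ on that segment satisfies $\langle x_n,\theta_0\rangle=0$, and log-concavity of the likelihood of the first $n-1$ coordinates lets me bound the sum by a max plus a value on the hyperplane. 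The second term is a Shtarkov sum in dimension $d-1$ with $n-1$ points (project onto $x_n^\perp$). This yields the recursion
\[
S(n,d)\le S(n-1,d)+S(n-1,d-1),
\]
with $S(n,0)=1$ and $S(0,d)=1$. That is exactly the Sauer--Shelah/Cover recursion, so $S(n,d)\le\sum_{\ell\le d}\binom n\ell\le(\e n/d)^d$. \emph{The main obstacle} is making the pairing inequality rigorous, especially when maximizers sit at infinity. I would handle this by working with suprema over compact balls and passing to the limit. If the segment argument fails, I would fall back on the variational representation in \Cref{lem:var-char}.

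\textbf{Item (ii): lower bound for parallel designs.} For a parallel block design the likelihood factorizes across the $d$ linearly independent directions, giving $\mathcal S=\prod_j \mathcal S_1(\text{block }j)$. In one dimension with nonzero $x_i$, I would restrict the sum to patterns whose signs agree with $\sign(x_i)$ up to a flip of a set with $k$ ``errors''. Each such pattern is realized by some multiple $\theta$ with likelihood $\ge \binom{m}{k}^{-1}$-type values (near $\sigma$ values equal to $k/m$). The argument reduces to the Krichevsky--Trofimov style bound
\[
\sum_k\binom mk (k/m)^k(1-k/m)^{m-k}\gtrsim\sqrt m,
\]
handled via Stirling. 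Since the $x_i$ differ by positive scalings, I would reduce to the equal-magnitude case by noting that the supremum of the product only increases over the subset of admissible $\theta$; more simply, I would lower bound by the pattern counts at $\theta\to\pm\infty$ plus the central binomial contributions. Taking logs gives $\tfrac12\log m$ with $m=n/d$.

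\textbf{Item (iii): the extremal design.} Use $d$ blocks of $m=n/d$ points, each block a one-dimensional configuration $x_i=(t_i-c)$ times the basis vector $e_j$ augmented appropriately. Concretely, I would pick, for each block, distinct points on a line in a 2-dimensional-like affine embedding so that all $m+1$ threshold patterns are realizable with likelihood $1$. Product over blocks gives $(m+1)^d$, since each realizable pattern has supremum $1$.

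\textbf{Item (iv): general position.} Bound $\sup p_\theta(\eps)\ge \mathbf 1\{\eps\text{ realizable strictly}\}$ by scaling $\theta\to\infty$. By Cover's function-counting theorem, in general position the number of homogeneous dichotomies is exactly $2\sum_{\ell<d}\binom{n-1}{\ell}$. The stated consequence follows from $\binom{n-1}{d-1}\ge((n-1)/(d-1))^{d-1}$.
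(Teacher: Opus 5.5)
Your proof of item (i) depends on the per-pattern pairing inequality, and that inequality is false. Take $d=1$, $n=K+1$, $x_1=\dots=x_n=1$ and $\eps_{<n}=(-1,\dots,-1)$. Then $f(\theta)=\sigma(-\theta)^K$ has $\sup f=1$ and equals $2^{-K}$ on $\{\theta:\langle x_n,\theta\rangle=0\}=\{0\}$. However,
\[
\sup_{\theta}\sigma(-\theta)^{K+1}+\sup_{\theta}\sigma(-\theta)^{K}\sigma(\theta)=1+\frac{K^K}{(K+1)^{K+1}},
\]
and this exceeds $1+2^{-K}$ for every $K\ge4$ (for $K=4$: $1.0819>1.0625$). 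The intermediate value step only works if the maximizer for $\eps_n=+1$ lies in $\{\langle x_n,\theta\rangle\ge0\}$ and the one for $\eps_n=-1$ lies in $\{\langle x_n,\theta\rangle\le0\}$. In this example they sit at $\theta=-\log K$ and $\theta\to-\infty$, on the same side, and log-concavity of $f$ says nothing in that case. A recursion for the summed Shtarkov sums might still hold, but nothing in your proposal proves it, and the fallback to \Cref{lem:var-char} is not an argument.

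The missing idea is the paper's randomization. Since $\sigma$ is the distribution function of a logistic variable, with $\delta_1,\dots,\delta_n$ i.i.d.\ logistic you have $p_\theta(\eps)=\P\bigl(\sign(\langle x_i,\theta\rangle-\delta_i)=\eps_i \text{ for all } i\bigr)$. So $\sup_\theta p_\theta(\eps)$ is at most the probability that $\eps$ is the sign vector of some cell of the random affine arrangement $\{\langle x_i,\theta\rangle=\delta_i\}$. Summing over $\eps$ bounds $\mathcal S_d^{\operatorname{log}}$ by the expected number of cells, which is deterministically at most $\sum_{\ell\le d}\binom n\ell$.

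Item (iii) cannot be obtained by counting patterns of likelihood $1$. In a block $\{a_re_j\}_{r\le m}$ with $a_r\neq0$, the coordinate $\theta_j$ strictly realizes only two sign patterns, so threshold patterns never reach likelihood $1$. Giving each block its own intercept would need $2d$ dimensions, and sharing one destroys the factorization. Globally, by Cover's count no design in $\R^d$ strictly realizes more than $2\sum_{\ell<d}\binom{n-1}{\ell}$ patterns; for $d=1$ that is $2$, against the required $n+1$. The non-realizable patterns must therefore contribute. The paper takes $a_r=R^{r-1}$ with $R\to\infty$ and groups the nonconstant patterns by the position $j$ of their last sign change. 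It sums out $\eps_1,\dots,\eps_{j-1}$ using $\sigma(t)+\sigma(-t)=1$ and evaluates at $t=\pm R^{1/2}/a_{j+1}$. Each of the $2(m-1)$ groups then contributes $1/2$ in the limit, the total tends to $m+1$, and tensorizing gives $(m+1)^d$.

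In item (ii), your per-pattern comparison with the equal-magnitude value $(k/m)^k(1-k/m)^{m-k}$ fails when the $|x_i|$ differ. With $|x_1|=R\to\infty$, $|x_2|=\dots=|x_m|=1$ and the single error on coordinate $1$, the supremum tends to $2^{-(m-1)}$, far below $\frac1m(1-\frac1m)^{m-1}$ for large $m$. The paper compares only aggregates:
\begin{itemize}
\item it bounds the sum of suprema over $|A|=k$ below by $\sup_\theta\P(K_\theta=k)$, where $K_\theta$ is the Poisson-binomial count;
\item it chooses $\theta_k$ with $\E K_{\theta_k}=k$;
\item it applies Hoeffding's extremal theorem to get $\P(K_{\theta_k}=k)\ge\P(\mathrm{Bin}(m,k/m)=k)$.
\end{itemize}
Also, the statement needs exactly $\mathcal S\ge\sqrt m$ for all $m\ge2$, which the paper takes from Maurer's bound; a $\gtrsim\sqrt m$ Stirling estimate does not give it. Your block factorization in (ii) and your argument for (iv) agree with the paper.
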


Theorem~\ref{thm:shtarkov-logistic} has several consequences. Combining items~(i) and~(iii), we obtain that whenever \(n \geq d\) and \(d\) divides \(n\),
\[
d\log\Bigl(\frac{n}{d}\Bigr)
\le
\sup_{x_1,\dots,x_n \in \R^d}
\log\bigl(\mathcal{S}_d^{\operatorname{log}}(x_1,\dots,x_n)\bigr)
\le
d\log\Bigl(\frac{n}{d}\Bigr)+d.
\]
Thus, in the fixed-design setting, the worst-case Shtarkov sum grows on the scale \(d\log(en/d)\), and when $n/d\to\infty$ the displayed upper and lower bounds match at the level of the leading constant.

Related quantities for logistic models have recently been studied in \cite{ShamirSzpankowski2021,JacquetShamirSzpankowski2022,DrmotaJacquetWuSzpankowski2024,drmota2025precise,DrmotaJacquetWuSzpankowski2026,QianRakhlinZhivotovskiy2026}, typically with a different emphasis: asymptotics or particular design ensembles (e.g., points sampled uniformly on the sphere), rather than the sharp nonasymptotic worst-case bound considered here. The papers \cite{DrmotaJacquetWuSzpankowski2024,DrmotaJacquetWuSzpankowski2026} claim bounds of order $2d\log n$, which are suboptimal for our problem. The techniques in \cite[Appendix~D]{QianRakhlinZhivotovskiy2026} immediately give $Cd\log(\e n/d)$ for some absolute constant $C$. We do not reproduce that argument, since item~(i) above gives the explicit upper bound $d\log(\e n/d)$, and hence the asymptotically sharp coefficient $1$ in front of $d\log(n/d)$ when $n/d\to\infty$.

Item~(ii) is of a different nature: under the balanced block structure, the Shtarkov sum factorizes into \(d\) one-dimensional logistic Shtarkov sums, each of which is bounded below by the Bernoulli empirical Kullback--Leibler Shtarkov constant. When the absolute multipliers are constant within each block, these factors are exactly Bernoulli empirical Kullback--Leibler problems. This matches, after the identification \(d=m-1\), the large alphabet redundancy asymptotics of Orlitsky and Santhanam~\cite{orlitsky2004speaking}, and for \(d=1\) reduces to the classical Bernoulli phenomenon studied in \cite{maurer2004note, agrawal2020finite, guo2020chernoff}. Item~(ii) thus captures the familiar local quadratic behavior within fixed-design logistic regression.

Item~(iii) shows that \(\tfrac{d}{2}\log(n/d)\) is not the full worst-case picture: at \(\lambda=1\), the Shtarkov sum detects a genuinely global combinatorial complexity of the design, invisible in local quadratic or large alphabet redundancy regimes. Item~(iv) is included for completeness; it appears essentially in \cite{DrmotaJacquetWuSzpankowski2024} and, while weaker than item~(iii), provides a useful general reference point.

\begin{proof}
We begin with part~(i). For $\theta \in \R^d$ and $\eps \in \{-1,1\}^n$, let
\[
p_\theta(\eps)
\defn
\prod_{i=1}^n \sigma(\eps_i \langle x_i,\theta\rangle).
\]
Let $\delta_1,\dots,\delta_n$ be \iid\ real-valued random variables with cumulative distribution function $\sigma$. Then, for every $\theta \in \R^d$ and every $\eps \in \{-1,1\}^n$,
\[
p_\theta(\eps)
=
\P\bigl(\sign(\langle x_i,\theta\rangle-\delta_i)=\eps_i \ \mbox{for all } i\in[n]\bigr).
\]
Therefore,
\[
\mathcal{S}_d^{\operatorname{log}}(x_1,\dots,x_n)
=
\sum_{\eps \in \{-1,1\}^n}\sup_{\theta\in\R^d} p_\theta(\eps)
\le
\E N_X(\delta),
\]
where
\[
N_X(\delta)
\defn
\left|
\Bigl\{
S_{X,\delta}(\theta): \theta\in\R^d,\ \langle x_i,\theta\rangle\neq \delta_i \ \mbox{for all } i\in[n]
\Bigr\}
\right|,
\]
and
\[
S_{X,\delta}(\theta)
\defn
\bigl(
\sign(\langle x_1,\theta\rangle-\delta_1),\dots,\sign(\langle x_n,\theta\rangle-\delta_n)
\bigr).
\]
Indeed, for each $\eps$,
\[
\sup_{\theta\in\R^d} p_\theta(\eps)
\le
\P\Bigl(\eps \in \{S_{X,\delta}(\theta):\theta\in\R^d,\ \langle x_i,\theta\rangle\neq \delta_i \ \mbox{for all } i\in[n]\}\Bigr),
\]
and summing over $\eps$ gives the expected number of realized sign patterns. Fix now $\delta=(\delta_1,\dots,\delta_n)$. On the event that $\delta_i\neq 0$ whenever $x_i=0$, the $i$-th coordinate of $S_{X,\delta}(\theta)$ is independent of $\theta$, so removing such coordinates does not change $N_X(\delta)$. Since this event holds almost surely, for the purpose of the upper bound, it is enough to consider the nonzero covariates. For each such $x_i$, consider the affine hyperplane
\[
H_i(\delta)\defn \{\theta\in\R^d:\langle x_i,\theta\rangle=\delta_i\}.
\]
On each connected component of
$\R^d \setminus \bigcup_{i:\,x_i\neq 0} H_i(\delta),
$
the sign pattern $S_{X,\delta}(\theta)$ is constant. Hence $N_X(\delta)$ is bounded by the number of regions in the arrangement generated by the hyperplanes $H_i(\delta)$ with $x_i\neq 0$. It is standard that the number of regions in any arrangement of at most $n$ affine hyperplanes in $\R^d$ is at most
$
\sum_{\ell=0}^{d}\binom{n}{\ell}.
$
For completeness, $N_X$ is measurable without any general position assumption. Indeed, for each $\eps\in\{-1,1\}^n$, the set
\[
D_\eps=\{\delta\in\R^n:\text{there exists }\theta\in\R^d
\text{ such that }\eps_i(\langle x_i,\theta\rangle-\delta_i)>0
\text{ for all }i\in[n]\}
\]
is open, and $N_X(\delta)=\sum_\eps\1\{\delta\in D_\eps\}$. Thus we may take expectations, yielding
\[
\mathcal{S}_d^{\operatorname{log}}(x_1,\dots,x_n)
\le
\sum_{\ell=0}^{d}\binom{n}{\ell},
\]
which proves part~(i).

We now turn to part~(ii). Set $a_i \defn |x_i|$. Note that the Shtarkov sum only depends on $a_1,\dots,a_n$. Thus, it is enough to consider the case where $a_1,\dots,a_n>0$. For $A \subset [n]$, define
\[
q_A(\theta)
\defn
\prod_{i\in A}\sigma(a_i\theta)\prod_{i\notin A}\sigma(-a_i\theta).
\]
Then
\[
\mathcal{S}_1^{\operatorname{log}}(x_1,\dots,x_n)
=
\sum_{A\subset[n]} \sup_{\theta\in\R} q_A(\theta).
\]
For $k\in\{0,\dots,n\}$, let $
\mathcal A_k \defn \{A\subset[n]: |A|=k\}.
$
Since the sum of suprema dominates the supremum of the sum,
\[
\mathcal{S}_1^{\operatorname{log}}(x_1,\dots,x_n)
\ge
\sum_{k=0}^n
\sup_{\theta\in\R}
\sum_{A\in \mathcal A_k} q_A(\theta).
\]
Fix $\theta\in\R$, and let $B_1(\theta),\dots,B_n(\theta)$ be independent Bernoulli random variables with
\[
\P(B_i(\theta)=1)=\sigma(a_i\theta),
\qquad i\in[n].
\]
If
$
K_\theta \defn \sum_{i=1}^n B_i(\theta),
$
then
$
\sum_{A\in \mathcal A_k} q_A(\theta)=\P(K_\theta=k),
$
and therefore
\[
\mathcal{S}_1^{\operatorname{log}}(x_1,\dots,x_n)
\ge
\sum_{k=0}^n \sup_{\theta\in\R}\P(K_\theta=k).
\]
For $k=0$ and $k=n$, the corresponding suprema are equal to $1$. Now fix $k\in\{1,\dots,n-1\}$. The map
$
\theta \mapsto \E[K_\theta]
=
\sum_{i=1}^n \sigma(a_i\theta)
$
is continuous and strictly increasing from $0$ to $n$, so there exists $\theta_k\in\R$ such that
\[
\E[K_{\theta_k}] = k.
\]
The law of $K_{\theta_k}$ is Poisson-binomial with mean $k$. By Hoeffding's extremal theorem for Poisson-binomial laws~\cite[Theorem~4]{Hoeffding1956}, if
$
\overline K_k \sim \mathrm{Bin}(n,k/n),
$
then
\[
\P(K_{\theta_k}\leq k-1)\leq \P(\overline K_k\leq k-1),
\quad\textrm{and} \quad
\P(K_{\theta_k}\leq k)\geq \P(\overline K_k\leq k).
\]
Subtracting these inequalities gives
$
\P(K_{\theta_k}=k)
\ge
\P(\overline K_k = k)
=
\binom{n}{k}\Bigl(\frac{k}{n}\Bigr)^k\Bigl(1-\frac{k}{n}\Bigr)^{n-k}.
$
Hence
\[
\mathcal{S}_1^{\operatorname{log}}(x_1,\dots,x_n)
\ge
\sum_{k=0}^n
\binom{n}{k}\Bigl(\frac{k}{n}\Bigr)^k\Bigl(1-\frac{k}{n}\Bigr)^{n-k}.
\]
Here and below, the endpoint terms $k=0,n$ are interpreted as $1$.
By \cite[Proof of Theorem~1]{maurer2004note}, the quantity on the right is exactly the Bernoulli Shtarkov constant. That is, for every $p\in(0, 1)$,
\[
\sum_{k=0}^n
\binom{n}{k}\Bigl(\frac{k}{n}\Bigr)^k\Bigl(1-\frac{k}{n}\Bigr)^{n-k}
=
\E_p \exp\big(n\kl{\hat p}{p}\big),
\qquad
\hat p=\frac1n\sum_{i=1}^n X_i \, ,
\qquad
X_1,\dots,X_n \stackrel{\mathrm{iid}}{\sim}\Ber{p},
\]
and \cite[Theorem~1]{maurer2004note} proved that for $n\geq 2$ this quantity is at least $\sqrt n$. 

For the second statement in part~(ii), let \(m \defn n/d\), and after relabeling assume that for $j\in[d], r\in[m]$,
\[
x_{(j-1)m+r}=b_{j,r}u_j,
\]
for some nonzero scalars \(b_{j,r}\) and linearly independent vectors \(u_1,\dots,u_d \in \R^d\).
Since \(u_1,\dots,u_d\) form a basis of \(\R^d\), there exists a basis $(\phi_1, \dots, \phi_d)$ such that \(\langle u_j,\phi_k\rangle=\delta_{jk}\). Hence, for every \(t=(t_1,\dots,t_d)\in\R^d\), the choice \(\theta=\sum_{j=1}^d t_j \phi_j\) satisfies \(\langle u_j,\theta\rangle=t_j\) for all \(j\). Writing \(\eps=(\eps^{(1)},\dots,\eps^{(d)})\) with \(\eps^{(j)}\in\{-1,1\}^m\), we therefore obtain
\[
\sup_{\theta\in\R^d}\prod_{i=1}^n \sigma(\eps_i\langle x_i,\theta\rangle)
=
\sup_{t\in\R^d}\prod_{j=1}^d\prod_{r=1}^m \sigma(\eps_r^{(j)} b_{j,r} t_j)
=
\prod_{j=1}^d \sup_{s\in\R}\prod_{r=1}^m \sigma(\eps_r^{(j)} b_{j,r} s),
\]
since the factors are nonnegative and depend on disjoint coordinates. Summing over \(\eps\) now factorizes blockwise, and yields
\[
\mathcal S_d^{\operatorname{log}}(x_1,\dots,x_n)
=
\prod_{j=1}^d \mathcal S_1^{\operatorname{log}}(b_{j,1},\dots,b_{j,m}).
\]
Applying the one-dimensional lower bound already proved to each factor, with the case $m=1$ being immediate since $\mathcal S_1^{\operatorname{log}}(b)=2\geq\sqrt m$, we get
\[
\mathcal S_d^{\operatorname{log}}(x_1,\dots,x_n)
\ge
(\sqrt m)^d
=
\Bigl(\frac nd\Bigr)^{d/2}.
\]
This proves part~(ii).

We now focus on part~(iii). We first prove the one-dimensional extremal fact
\begin{equation}
\label{eq:d1-extremal-shtarkov}
\sup_{a_1,\dots,a_m>0}\mathcal S_1^{\operatorname{log}}(a_1,\dots,a_m)=m+1.
\end{equation}
The upper bound follows from part~(i) with $d=1$. We now prove the matching lower bound. Fix $m\geq 1$ and consider the  design
$
a_r=R^{r-1},
$
for all $r\in [m]$, 
where $R>1$ will tend to infinity. For $\eps\in\{-1,1\}^m$, define for $t \in \R$,
\[
q_\eps(t)\defn \prod_{r=1}^m \sigma(\eps_r a_r t).
\]
The two constant sign patterns $(1,\dots,1)$ and $(-1,\dots,-1)$ contribute exactly $1$, since
\[
\sup_{t\in\R} q_{(1,\dots,1)}(t)=1
\qquad\text{and}\qquad
\sup_{t\in\R} q_{(-1,\dots,-1)}(t)=1.
\]
We next partition the remaining sign patterns. For each $j\in[m-1]$, let
\[
\mathcal C_j^+
\defn
\{\eps\in\{-1,1\}^m:\ \eps_j=1,\ \eps_r=-1 \ \mbox{for all } r>j\},
\]
\[
\mathcal C_j^-
\defn
\{\eps\in\{-1,1\}^m:\ \eps_j=-1,\ \eps_r=1 \ \mbox{for all } r>j\},
\]
with the coordinates $\eps_1,\dots,\eps_{j-1}$ arbitrary. These families, together with the two constant patterns, form a partition of $\{-1,1\}^m$. Indeed, for a non-constant $\eps$, if $\eps_m=-1$, let $j$ be the largest index such that $\eps_j=1$. Then by construction, $\eps_r=-1$ for all $r>j$, so $\eps\in\mathcal C_j^+$ with $j\leq m-1$. The same argument applies by symmetry with $\mathcal C_j^-$ in the case where $\eps_m=1$. In each case $j$ is uniquely determined by $\eps$. Now summing over the free coordinates gives
\[
\sum_{\eps\in\mathcal C_j^+} q_\eps(t)
=
\sigma(a_j t)\prod_{r>j}\sigma(-a_r t),
\qquad
\sum_{\eps\in\mathcal C_j^-} q_\eps(t)
=
\sigma(-a_j t)\prod_{r>j}\sigma(a_r t),
\]
because for each $r<j$, it holds that 
$
\sum_{\eps_r\in\{-1,1\}}\sigma(\eps_r a_r t)=\sigma(a_r t)+\sigma(-a_r t)=1.
$
We now define two particular levels 
$
t_j^- \defn -\frac{R^{1/2}}{a_{j+1}}$, and $
t_j^+ \defn \frac{R^{1/2}}{a_{j+1}}.
$
Since $a_r=R^{r-1}$, we can immediately verify that
\[
\lim_{R \to \infty}\sum_{\eps\in\mathcal C_j^+} q_\eps(t_j^-)= \frac12,
\qquad
\lim_{R \to \infty}\sum_{\eps\in\mathcal C_j^-} q_\eps(t_j^+)= \frac12.
\]
Finally, we have
\[
\mathcal S_1^{\operatorname{log}}(1,R,\dots,R^{m-1})
=
\sum_{\eps\in\{-1,1\}^m} \sup_{t\in\R} q_\eps(t) 
\ge
1+1+\sum_{j=1}^{m-1}
\left(
\sup_{t\in\R}\sum_{\eps\in\mathcal C_j^+} q_\eps(t)
+
\sup_{t\in\R}\sum_{\eps\in\mathcal C_j^-} q_\eps(t)
\right).\]
Therefore, it holds that
\[
\liminf_{R\to\infty}\mathcal S_1^{\operatorname{log}}(1,R,\dots,R^{m-1})
\ge
1+1+\sum_{j=1}^{m-1}\left(\frac12+\frac12\right)
=
m+1.
\]
Combined with the upper bound, this proves~\eqref{eq:d1-extremal-shtarkov}.

We now tensorize this dimension one construction. Let $m\defn n/d$, which is an integer by assumption, and fix any positive scalars $a_1,\dots,a_m$. Define a design in $\R^d$ by placing one copy of the one-dimensional design on each coordinate axis for $j \in [d], r \in [m]$:
\[
x'_{(j-1)m+r}\defn a_r e_j,
\]
where $e_1,\dots,e_d$ are the canonical basis vectors in $\R^d$. We can write  $\eps\in\{-1,1\}^n$ as $d$ consecutive blocks
\[
\eps=(\eps^{(1)},\dots,\eps^{(d)}),
\quad \textrm{where} \quad
\eps^{(j)}\in\{-1,1\}^m,
\]
Since the factors depend on disjoint coordinates of $\theta$ and are nonnegative, the supremum factorizes as follows:
\[
\sup_{\theta\in\R^d}\prod_{i=1}^n \sigma(\eps_i \langle x'_i,\theta\rangle)
=
\prod_{j=1}^d
\sup_{t\in\R}\prod_{r=1}^m \sigma\bigl(\eps_r^{(j)} a_r t\bigr).
\]
Therefore, it holds that
\[
\sup_{x'_1,\dots,x'_n\in\R^d}\mathcal S_d^{\operatorname{log}}(x'_1,\dots,x'_n)
\ge
\sup_{a_1,\dots,a_m>0}
\Bigl(\mathcal S_1^{\operatorname{log}}(a_1,\dots,a_m)\Bigr)^d
=
(m+1)^d
=
\left(\frac nd+1\right)^d.
\]
This proves part~(iii).

Finally, we prove part~(iv). The argument appears in \cite{DrmotaJacquetWuSzpankowski2024} and we present it for the sake of completeness. Assume that $x_1,\dots,x_n$ are in general position in the sense that every subset of size at most $d$ is linearly independent. If a sign pattern $\eps\in\{-1,1\}^n$ is realizable by some $\theta\in\R^d$, that is,
$
\eps_i \langle x_i,\theta\rangle >0,$  for all $i \in [n]$,
then
$
\sup_{t>0}\prod_{i=1}^n \sigma\bigl(t\,\eps_i\langle x_i,\theta\rangle\bigr)=1.
$
Hence every realizable pattern contributes $1$ to the Shtarkov sum. Consequently,
$
\mathcal{S}_d^{\operatorname{log}}(x_1,\dots,x_n)
$
is bounded from below by the number of dichotomies of the points $x_1,\dots,x_n$ induced by homogeneous halfspaces in $\R^d$. By Cover's formula~\cite{Cover1965}, under this assumption this number is
$
2\sum_{\ell=0}^{d-1}\binom{n-1}{\ell}.
$
This proves part~(iv).
\end{proof}

\subsection{The bivariate case}
\label{sec:bivariatecase}

Our Vandermonde subspaces construction that exhibits a logarithmic overhead only works in dimension at least $3$. In contrast, for a scalar model, the LLR obeys uniform $\chi_1^2$-type mean and tail bounds. The following result shows that this chi-square-type behavior already collapses in dimension $2$. 

Recall the subspace reformulation~\eqref{eq:subspace-reformulation}, that defines for a subspace $W \subset \R^n$, a vector $v \in \R^n$ and $\eps \in \{-1,1\}^n$, the quantity
\begin{equation}
\Gamma_W(\eps; v)
	= \sup_{w \in W} \log \frac{p_w(\eps)}{p_v(\eps)},
		\quad \mbox{where} \quad
	p_v(\eps) = \prod_{i=1}^n \sigma(\eps_i v_i).
\end{equation}

We also define the shorthand
\begin{equation}
\label{eq:expect-short}
E(v, W) = \E_{\eps \sim p_v} \Gamma_W(\eps; v) \, .
\end{equation}

\begin{proposition}
\label{prop:d2-logloglog-lower}
Let $n \geq \ceil{\exp(\exp(\exp(20)))}$. There exists a two-dimensional subspace $W \subset \R^n$ and a vector $v \in W$ for which
\[
E(v,W) \geq \frac{1}{16 \e^2} \, \log \log \log n.
\]
\end{proposition}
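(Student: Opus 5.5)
The plan is to make the second dimension act as a \emph{scale selector}: the statistic then behaves like the maximum of $K$ nearly independent one-dimensional likelihood-ratio statistics. Each of these has an $\mathsf{Exp}(1)$-type tail, which is exactly the extremal behaviour in \Cref{prop:conjugate-scalar-case} and \Cref{rem:sharpness-of-1d-cgf-trick}. The maximum of $K$ such variables has mean of order $\log K$. We will be able to fit $K\asymp\log\log n$ well-separated scales into $n$ coordinates, which gives the target $\log\log\log n$.

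Concretely, take $W=\Span\{\1_n,u\}$ and $v=\lambda_0\1_n\in W$ for a fixed moderate $\lambda_0$, say $\sigma(\lambda_0)=1-1/\e$, so that every label has a constant flip probability. Split $[n]$ into consecutive blocks $B_1,\dots,B_K$ with sizes $m_j$ growing doubly exponentially, e.g.\ $\log m_{j+1}\geq (\log m_j)^2$. Then $K\asymp\log\log n$, and the assumption $n\geq\exp(\exp(\exp(20)))$ makes $K$ large enough that the constants are harmless. Let $u$ increase steeply from block to block, for instance $u_i=R_j$ on $B_j$ with $R_1\ll R_2\ll\cdots$ chosen so that $R_j$ is tuned to the fluctuation scale $m_j^{-1/2}$ of block $j$. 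For each $j$ we will exhibit an explicit competitor $w^{(j)}=a_j\1_n+b_ju\in W$ that:
\begin{itemize}
\item is essentially equal to $v$ on the blocks $B_1,\dots,B_{j-1}$, since $b_jR_{j'}$ is negligible for $j'<j$;
\item on $B_j$ realizes the one-dimensional MLE shift, i.e.\ a perturbation of size about $\hat p_j-p$ around $\lambda_0$;
\item on $B_{j+1},\dots,B_K$ pays only a bounded penalty. To ensure this, we let these later blocks receive a controlled shift, or we make their sizes relatively small in the mean-parameter scale. I would try the first option and check it by a direct Taylor bound on the Bernoulli KL divergence.
\end{itemize}
This yields $\Gamma_W(\eps;v)\geq\max_j\bigl(G_j-C\bigr)$, where $G_j$ is the block-$j$ one-dimensional statistic, i.e.\ $m_j\,\kl{\hat p_j}{p}$ restricted to one-sided deviations.

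The probabilistic step is then a lower bound on $\E\max_j G_j$. The $G_j$ depend on disjoint blocks, so they are independent. By a Gaussian or binomial lower-tail estimate (Okamoto, or a direct normal approximation since $m_j$ is huge), $\P(G_j\geq t)\geq c\,\e^{-t}$ for $t$ up to about $\log K$. Hence $\P(\max_jG_j\geq\log K-C')\geq 1-(1-c\e^{C'}/K)^K\geq c''$. Since $\Gamma_W\geq0$, this gives $E(v,W)\geq c''(\log K-C)\gtrsim\log\log\log n$. The explicit constant $1/(16\e^2)$ should come out of tracking $c$ and the loss on later blocks with the choice $\sigma(\lambda_0)=1-1/\e$.

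The main obstacle is the third property of the competitor: a single line $b\mapsto a+bu$ must help on block $j$ without incurring an $\Omega(1)$-per-scale (or worse, growing) loss on the much larger later blocks. With only two degrees of freedom, any nonzero slope moves every later block by more than it moves block $j$. The plan is therefore to order the blocks so that the \emph{larger} blocks carry the \emph{smaller} values of $u$. A perturbation tuned to block $j$ then moves the later large blocks only by amounts $\ll m_{j'}^{-1/2}$, costing $o(1)$ in total. The earlier, smaller blocks are moved by large amounts, but their total size is negligible relative to $m_j$, and one must check that the KL cost $\sum_{j'<j}m_{j'}\cdot O(1)$ stays bounded. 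Reconciling these two requirements is precisely what forces the doubly exponential growth. Verifying that the sequences $(m_j)$ and $(R_j)$ can be chosen consistently is where I expect the real work to be, and if it fails I would fall back to letting $v$ itself vary across blocks to absorb the earlier-block penalty.
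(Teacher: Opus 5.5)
The obstacle you flag at the end is not a detail to be tuned; it is fatal in the regime you chose. With $v=\lambda_0\1_n$ and $\sigma(\lambda_0)=1-1/\e$, every margin satisfies $|v_i|<1$, and there the likelihood is essentially quadratic. Write $p=\sigma(\lambda_0)$ and $B_i=(1+\eps_i)/2$. The statistic is then approximated by $\sup_{w\in W}\{\langle w-v,B-p\1_n\rangle-\tfrac{p(1-p)}{2}\|w-v\|_2^2\}=\frac{1}{2p(1-p)}\|P_W(B-p\1_n)\|_2^2$, whose mean is $\dim(W)/2=1$ for every $W$. A two-dimensional projection of $K$ independent block fluctuations is a $\chi^2_2$-type quantity, not a maximum of $K$ of them.

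Concretely, let your competitor for block $j$ have parameters $\theta_j=(a_j-\lambda_0,b_j)$. Its normalized shift on block $j'$ is $\langle P_{j'},\theta_j\rangle$, with $P_{j'}=\sqrt{m_{j'}}\,(1,R_{j'})\in\R^2$. Your plan (gain of order $\log K$ on $B_j$, bounded KL cost elsewhere) needs $|\langle P_j,\theta_j\rangle|\gtrsim\sqrt{\log K}$ and $\sum_{j'\neq j}\langle P_{j'},\theta_j\rangle^2\lesssim 1$. Normalize so that $M_{jj}=1$, where $M_{j'j}=\langle P_{j'},\theta_j\rangle/\langle P_{j},\theta_j\rangle$. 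Then $M$ is a $K\times K$ matrix of rank at most two, and $K^2=(\operatorname{tr}M)^2\le 2\|M\|_F^2\le 2K(1+C/\log K)$ forces $K=O(1)$. This is the same packing argument as in the central block of \Cref{thm:d2-logloglog-upper}.

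Nonlocal shifts do not rescue this. With flip probability $1/\e$, a shift $s$ on a block of size $m_{j'}$ costs of order $m_{j'}\min\{s^2,|s|\}$. So your ``$\sum_{j'<j}m_{j'}\cdot O(1)$'' is astronomically large, not bounded. In fact, since all $|v_i|\le 1$, the central-block estimate in the proof of \Cref{thm:d2-logloglog-upper} gives $E(v,W)=O(1)$ for your $v$ and every two-dimensional $W$. There are also two smaller slips. First, $\log m_{j+1}\ge(\log m_j)^2$ fits only $O(\log\log\log n)$ blocks, not $\log\log n$. Second, a one-sided Gaussian-type tail is $\asymp \e^{-t}/\sqrt t$ rather than $\gtrsim \e^{-t}$; this one is harmless.

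The missing idea is to work in the large-margin, rare-flip regime. There $\log\sigma(t)\ge -\e^{-t}$ makes pushing a margin further out almost free, so a competitor may move the other blocks by large amounts at negligible cost. The paper takes blocks $B_1,\dots,B_{m_k}$ with $m_k=\lfloor \e^k\rfloor$ and $|B_j|=N_j=\lfloor\exp(k(4^j-1))\rfloor$, padded with zero coordinates up to length $n$. It sets $W=\mathrm{span}\{x,y\}$ with $x_i=2^j$ and $y_i=4^j$ on $B_j$, and $v=ky$. Each block then has expected number of flipped labels $\asymp \e^{-k}$. The disjoint events $A_j$ (exactly one flip, located in $B_j$) satisfy $p_v(A_j)\ge \e^{-k}/(4\e^2)$, hence $p_v(\bigcup_j A_j)\ge 1/(8\e^2)$.

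On $A_j$ the competitor $w^{(j)}=v-\frac{2k}{2^j}x+\frac{k}{4^j}y$ equals $k(4^\ell-1)+k(2^{\ell-j}-1)^2$ on $B_\ell$. It lowers the margin of $B_j$ by exactly $k$, gaining essentially $k$ on the flipped label. It keeps every margin at least $\log N_\ell+k(2^{\ell-j}-1)^2$, so the total loss on correctly labeled coordinates is at most $8$. With $K=m_k\asymp\log\log n$ blocks this yields a gain $k\asymp\log K\asymp\log\log\log n$. Your fallback of letting $v$ vary across blocks points in this direction, but it needs margins of order $\log(\text{block size})$, not a moderate constant flip probability.
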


For a vector \(z \in \R^m\), a sign vector \(\eps \in \{-1,1\}^m\), and a subset $A \subset \{-1, 1\}^m$, we put
\[
p_z(\eps) \defn \prod_{i=1}^m \sigma(\eps_i z_i),
\qquad
p_z(A) \defn \sum_{\eps \in A} p_z(\eps).
\]
Recall that
\[
E(v,W) = \sum_{\eps \in \{-1,1\}^m} p_v(\eps) \log \sup_{w \in W} \frac{p_w(\eps)}{p_v(\eps)}.
\]
The crux of the proof is the following lemma. 
\begin{lemma}
\label{lem:a-single-block}
Fix an integer $k \geq 9$. Define 
\[
m_k \defn \lfloor \e^k \rfloor,
\qquad
N_{j} \defn \left\lfloor \exp\big(k(4^j-1)\big)\right\rfloor,
\quad j=1,\dots,m_k, 
\qquad \mbox{and} \quad 
M_k \defn \sum_{j=1}^{m_k} N_{j}.
\]
There is a two-dimensional subspace $U \subset \R^{M_k}$ and a vector $u \in U$ such that
\begin{equation}
\label{eq:Ek-lower-claim}
E(u,U) \geq \frac{1}{8\e^2}(k-8).
\end{equation}
\end{lemma}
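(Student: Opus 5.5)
The plan is to build $U$ from $m_k$ blocks of coordinates. Block $j$ has $N_j$ coordinates, and each block behaves like an almost independent one-dimensional Bernoulli experiment. The two-dimensional subspace should be flexible enough that, for every $j$, some $w\in U$ moves the log-odds on block $j$ by $O(1)$ while barely changing the likelihood on the other blocks. The supremum in $\Gamma_U$ then behaves like a maximum of $m_k\approx \e^k$ roughly independent one-dimensional likelihood ratios. Each such ratio has an exponential-type tail (compare \Cref{prop:conjugate-scalar-case} and \Cref{rem:sharpness-of-1d-cgf-trick}), so their maximum has expectation of order $\log m_k\approx k$. This is the source of the $(k-8)$ in \eqref{eq:Ek-lower-claim}. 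Since $M_k\approx \exp(k4^{m_k})$, we have $\log\log\log M_k\approx k$, which is how \Cref{prop:d2-logloglog-lower} will follow.

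\emph{Construction.} I would take design points $x_i=r_j(1,t_j)\in\R^2$ for all $i$ in block $j$, with increasing nodes $t_j$ and scales $r_j$ still to be chosen, and set $U=\ran(X)$. The vector $u=X\theta_0$ is chosen so that on block $j$ the probability of a minus label is $\sigma(-u_i)\approx \mu/N_j$ for a fixed $\mu$ of order $1$ (for instance $\mu=1$). The count $K_j$ of minus labels in block $j$ is then approximately $\mathrm{Poisson}(\mu)$, and the counts are independent across blocks. Fitting block $j$ alone to its empirical frequency gains $K_j\log(K_j/\mu)-K_j+\mu$. In particular, on the event $\{K_j\ge \e^2\mu k'\}$ the gain is at least of order $k'$, and this event has probability at least $\e^{-c k'}$ by a Poisson lower-tail-of-factorial estimate. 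The block sizes $N_j=\lfloor\exp(k(4^j-1))\rfloor$ make $\log(N_{j+1}/N_j)$ grow geometrically. This separation of scales is what should let a single line $\ell(t)=\alpha+\beta t$ (the difference $w-u$, divided by $r_j$) produce a log-odds shift of order $\log K_j$ on block $j$. On blocks $i<j$ the shift should be negligible compared with $1/N_i$-level effects, and on blocks $i>j$ it should be positive, so that it only costs $K_i\cdot(\text{shift})$, which should be small.

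\emph{Lower bound computation.} I would lower bound $\Gamma_U(\eps;u)\ge \max_j G_j(\eps)-R(\eps)$. Here $G_j$ is the block-$j$ gain from the tailored $w^{(j)}$, and $R$ collects the cross-block losses. Taking expectations, I would use independence of the $K_j$ to show that the event that some block $j\le m_k$ has $K_j$ at level $\approx k$ has probability at least $1-(1-\e^{-k})^{m_k}\ge 1-1/\e$. This gives $\E\max_j G_j\ge c(k-O(1))$, and the constant $1/(8\e^2)$ together with the shift $-8$ should come out of a crude threshold at $K_j\ge \e^2 \mu\cdot k/8$. I would also use that $\Gamma_U\ge 0$ (since $u\in U$), so bad events contribute nonnegatively and I never need an upper bound on $\Gamma_U$.

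\emph{Main obstacle.} The hard step is the decoupling, namely the choice of $(t_j,r_j)$ and of the line for $w^{(j)}$ such that $\E R\le O(1)$ uniformly in $j$. An affine function of $t$ changes sign at most once, so the shift is necessarily of one sign on blocks to the right of block $j$ and of the other sign on the left. I would first try placing the root of $\ell$ just to the left of $t_j$, with $t_j$ spaced so that $|\ell(t_i)|r_i$ is at most about $1/N_i$ for $i<j$ (cost $O(1/N_i)\cdot N_i$ per block, so the $4^j$ growth is needed to keep the sum bounded). For $i>j$ the shift makes the labels more confidently $+$, and its expected cost is $\E K_i\cdot \text{shift}_i$, which must be kept summable. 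Balancing these two sides is precisely where the exponent $4^j$ in $N_j$ should be dictated, and it is where I expect most of the technical work to lie.
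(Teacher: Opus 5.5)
\textbf{There is a genuine gap.} Your design family is the right one: the paper's design, with rows $(2^j,4^j)=2^j(1,2^j)$, is of your form $r_j(1,t_j)$. The problem is the calibration $\sigma(-u_i)\approx\mu/N_j$ with $\mu$ of order one. In that regime the decoupling step you flag as the main obstacle is impossible, not merely technical.

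\emph{Why your calibration fails.} Each of the $m_k\approx\e^k$ blocks carries $\Theta(1)$ expected errors. A margin shift $s$ on block $i$ then costs, in expectation over that block, about $\mu(\e^{-s}-1+s)\gtrsim\min\{s^2,|s|\}$, whatever the sign of $s$.
\begin{itemize}
\item A margin increase $s>0$ is paid as $K_i s$ whenever block $i$ contains an error, and that happens with constant probability.
\item A margin decrease is paid as roughly $\mu\e^{|s|}$.
\end{itemize}
Your requirement $\E R=O(1)$ uniformly in $j$ therefore forces the tailored shift vector $s^{(j)}$ to satisfy $\|s^{(j)}_{-j}\|_2=O(1)$ while $|s^{(j)}_j|\gtrsim1$. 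Here $s^{(j)}$ lives in the (at most) two-dimensional space of blockwise shift vectors in $\R^{m_k}$. So block $j$ would need leverage bounded below in that space. Leverages of a two-dimensional subspace sum to $2$, so this can hold for only $O(1)$ of the $m_k$ indices.

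Concretely, in the paper's design the tailored vector raises the margin by $k(4^{\ell-j}-2\cdot2^{\ell-j})\geq 8k$ on every block $\ell\geq j+2$. In your regime that costs order $k(m_k-j)$.

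A smaller point: for $K\sim\mathrm{Poisson}(\mu)$ with $\mu\asymp1$, $\P(K\geq \e^2\mu k')=\e^{-\Theta(k'\log k')}$, not $\e^{-\Theta(k')}$. Your threshold should therefore be of order $k/\log k$.

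\emph{The missing idea: make errors globally rare.} They should not be typical in every block. The paper takes $u=ky$ with $y_i=4^j$ on block $B_j$. Then block $j$ has expected error count $N_j\sigma(-k4^j)\in[\e^{-k}/4,\e^{-k}]$, and the whole sample has $O(1)$ expected errors. In effect $\mu\approx\e^{-k}$.
\begin{itemize}
\item Let $A_j$ be the event that there is exactly one error in total, located in block $j$. Then $p_u(A_j)\geq \e^{-k}/(4\e^2)$.
\item The events $A_j$ are disjoint, with total probability at least $1/(8\e^2)$.
\item On $A_j$ every other label is modal, so margin increases on other blocks cost nothing.
\item Take $w^{(j)}=u-\frac{2k}{2^j}x+\frac{k}{4^j}y$, where $x_i=2^j$ on $B_j$. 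On block $\ell$ its margin is $k(4^\ell-1)+k(2^{\ell-j}-1)^2\geq\log N_\ell+k(2^{\ell-j}-1)^2$.
\item The single error gains $k$, from a shift of size $k$ rather than $\log k$. The losses on the modal labels total at most $8$.
\end{itemize}
So the $k$ comes from one rare error whose location identifies the block. It does not come from a maximum of $\e^k$ order-one Poisson likelihood ratios. That decoupling is exactly what a two-dimensional subspace can afford.
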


\begin{proof}[Proof of~\Cref{prop:d2-logloglog-lower}]
We now ``pad'' to obtain the result for general $n$. Put $k = \floor{L} - 1$ where $L = \log \log \log n$. We claim 
\begin{equation}
\label{ineq:strict-ineq}
M_k < n \quad \mbox{for}~n \geq \ceil{\exp(\exp(\exp(20)))}.
\end{equation}
Define
\[
m = n- M_k, \quad W = U \times \{0_m\},
\quad \text{and} \quad
v = (u, 0_{m}).
\]
For every $(\eps,\delta)\in \{-1,1\}^{M_k}\times \{-1,1\}^{m}$ and any $w \in U$, it holds that
\[
p_{v}(\eps,\delta)
=
2^{-m}p_u(\eps)
\quad \mbox{and} \quad 
p_{(w,0_m)}(\eps,\delta)
=
2^{-m}p_w(\eps).
\]
Hence
\[
\sup_{ w\in W}
\frac{p_{ w}(\eps,\delta)}{p_{v}(\eps,\delta)}
=
\sup_{w\in U}
\frac{p_w(\eps)}{p_u(\eps)}.
\]
Summing over \(\delta\) and using~\cref{eq:Ek-lower-claim} yields
\[
E(v,W)=E(u,U)
\geq
\frac{1}{8\e^2} (k-8) \geq 
\frac{1}{8\e^2}(L - 10) 
\geq 
\frac{1}{16 \e^2} \cdot \log \log \log n,
\]
as required. We used that $k \geq L -2$ and $L - 10 \geq \tfrac{L}{2}$ as $L \geq 20$.

\paragraph{Proof of~\cref{ineq:strict-ineq}:}
Note that we have:
\[
M_k
\leq
\sum_{j=1}^{m_k} \exp\big(k(4^j-1)\big)
\leq
m_k \exp\big(k(4^{m_k}-1)\big)
\leq
\exp\big(k4^{\e^k}\big).
\]
Since $k \leq \e^k$ and $1+\log 4 < \e$,
\[
\log\log M_k \leq \log k + \e^k \log 4 \leq (1+\log 4)\e^k < \e^{k+1}
\]
whence,
\[
\log \log \log M_k < k + 1 \leq \log \log \log n,
\]
which implies in particular $M_k < n$.

\end{proof}

\begin{proof}[Proof of~\Cref{lem:a-single-block}]
Partition $[M_k]$ into consecutive blocks:
\[
B_{1},\dots,B_{m_k} \qquad\text{with}\qquad |B_j|=N_j.
\]
We define $x, y \in \R^{M_k}$ according to
\[
x_i = 2^j,
\quad \mbox{and} \quad 
y_i =4^j,
\quad \mbox{for}~i \in B_{j}.
\]
We set $U = \Span\{x, y\}$; clearly $\dim(U) = 2$. We set $u = k y$.
We consider the sets 
\[
A_j = \big\{\eps \in \{-1, 1\}^{M_k} \mid
\eps = \1_{M_k} - 2e_i \text{ for some } i \in B_j\big\}
\quad \mbox{and} \quad 
\cA = \bigcup_{j=1}^{m_k} A_j.
\]
Clearly, $\{A_j\}_{j \in [m_k]}$ is a pairwise disjoint collection of sets.
Suppose we can show that 
\begin{equation}
\label{ineq:required-lower-bounds}
p_u(A_j) \stackrel{{\rm (i)}}{\geq} \frac{1}{4\e^2} \e^{-k}, 
\quad \mbox{and} \quad 
\min_{\eps \in A_j} 
\sup_{v \in U}
\log \frac{p_v(\eps)}{p_u(\eps)} \stackrel{{\rm (ii)}}{\geq} k - 8, \quad \mbox{for all}~j\in[m_k].
\end{equation}
Then~\cref{ineq:required-lower-bounds}(i) and disjointness give $p_u(\cA) \geq \frac{1}{4 \e^2} \e^{-k} m_k
\geq \tfrac{1}{8\e^2}$, as $m_k = \floor{\e^k} \geq \tfrac{\e^k}{2}$.
Combining this with~\cref{ineq:required-lower-bounds}(ii), we obtain
\[
E(u,U)
	\geq \sum_{\eps \in \cA} p_u(\eps) \, \sup_{v \in U} \log \frac{p_v(\eps)}{p_u(\eps)} 
	\geq p_u(\cA) \, (k-8)
	\geq \frac{1}{8\e^2}(k-8),
\]
thereby completing the proof. 

\paragraph{Proof of~\cref{ineq:required-lower-bounds}(i):}

Set
\[
q_\ell \defn \sigma(-k 4^\ell),
\qquad
\rho_\ell \defn \sigma(k 4^\ell) \equiv 1-q_\ell, \quad \mbox{for}~\ell \in [m_k].
\]
Note that because $u$ is constant in the blocks, we have 
\begin{equation}
\label{eqn:expression-for-block-probs}
p_u(A_j) = N_j \cdot \frac{q_j}{\rho_j} \cdot \prod_{\ell = 1}^{m_k} \rho_\ell^{N_\ell}
	\quad \mbox{for any}~j \in [m_k].
\end{equation}
For $t \geq 0$, we have $\tfrac{\sigma(-t)}{\e^{-t}} \in [1/2, 1]$. 
Hence, for every \(\ell\),
\begin{equation}
\label{ineq:upper-on-product}
N_{\ell}q_\ell \leq \e^{k(4^\ell - 1) - k 4^\ell} = \e^{-k}
\end{equation}
Similarly, 
\begin{equation}
\label{ineq:lower-on-product}
N_{\ell}q_\ell \geq \half \floor{\e^{k(4^\ell - 1)}} \e^{-k 4^\ell} \geq \frac{1}{4} \e^{-k}.
\end{equation}
Here, we used $k(4^\ell-1)\geq 3k >\log 2$, so that $\floor{\e^{k(4^\ell - 1)}} \geq \tfrac{1}{2} \e^{k(4^\ell - 1)}$.
It follows that
\begin{equation}
\label{ineq:for-product-of-r}
\prod_{\ell=1}^{m_k} \rho_\ell^{N_{\ell}}
	= \exp\Big(\sum_{\ell=1}^{m_k} N_{\ell}\log(1-q_\ell)\Big)
	\stackrel{{\rm (i)}}{\geq} \exp\Big(-2\sum_{\ell=1}^{m_k} N_{\ell}q_\ell\Big)
	\stackrel{{\rm(ii)}}{\geq} \frac{1}{\e^2}.
\end{equation}
Above, inequality (i) used $q_\ell \leq \e^{-k 4^\ell} \leq \e^{-4k} < 1/2$, so that $\log(1-q_\ell) \geq -2q_\ell$. 
Additionally, inequality (ii) used $\sum_{\ell=1}^{m_k} N_\ell q_\ell \leq \e^{-k} m_k \leq 1$, by~\cref{ineq:upper-on-product}. 
As $\rho_j = \sigma(k 4^j) \leq 1$, the claim follows from~\cref{eqn:expression-for-block-probs,ineq:lower-on-product,ineq:for-product-of-r}. 

\paragraph{Proof of~\cref{ineq:required-lower-bounds}(ii):}
We consider the collection of vectors 
\[
w^{(j)}
\defn
u - \frac{2k}{2^j} x + \frac{k}{4^j} y, 
\quad \mbox{for}~j \in [m_k].
\]
Clearly, $w^{(j)} \in U$ for each $j$. 
For $\ell \in [m_k]$, we have for $i \in B_\ell$ that 
\[
w^{(j)}_i
=
k4^\ell - 2k\,2^{\ell-j} + k4^{\ell-j}
=
k(4^\ell - 1) + k(2^{\ell-j}-1)^2.
\]
In particular, if $i \in B_j$, then 
$w^{(j)}_i = k(4^j-1)$.
We have $\log N_{\ell} \leq k(4^\ell-1)$ by our choice of block sizes. It implies that: 
\begin{equation}
\label{eq:b-lower-bound}
w^{(j)}_i
\geq
\log N_{\ell} + k(2^{\ell-j}-1)^2,
\end{equation}
which holds for every $i \in B_\ell$ and for every $j, \ell \in [m_k]$.

For $j \in [m_k]$ and $\eps \in A_j$, we have 
\begin{align}
\sup_{v \in U} \log \frac{p_v(\eps)}{p_u(\eps)} &\geq \log \frac{p_{w^{(j)}}(\eps)}{p_u(\eps)}\\ 
&= 
\Big( 
\log \frac{\sigma(-k(4^j -1))}{\sigma(-k4^j)} - \log \frac{\sigma(k(4^j -1))}{\sigma(k4^j)}\Big) +  
\sum_{\ell = 1}^{m_k}
N_\ell \log \frac{\sigma(k(4^\ell -1) + k(2^{\ell - j} - 1)^2)}{\sigma(k 4^\ell)} \\ 
&\stackrel{{\rm(a)}}{=} k + N_j \log \frac{\sigma(k(4^j -1))}{\sigma(k 4^j)} + 
\sum_{\substack{\ell \neq j \\ \ell \in [m_k]}} 
N_\ell \log \frac{\sigma(k(4^\ell -1) + k(2^{\ell - j} - 1)^2)}{\sigma(k 4^\ell)} \\ 
&\stackrel{{\rm(b)}}{\geq}
k - 1
- \underbrace{\sum_{\substack{\ell \neq j \\ \ell \in [m_k]}} 
N_\ell \log \frac{\sigma(k 4^\ell)}{\sigma(k(4^\ell -1) + k(2^{\ell - j} - 1)^2)}}_{S_{j}} = k - 1 - S_{j}.
\label{ineq:in-terms-of-sum}
\end{align}
Above, relation (a) used $\log \tfrac{\sigma(t)}{\sigma(-t)} = t$ and inequality (b) used $\log(\sigma(t)) = -\log(1+\e^{-t}) \geq -\e^{-t}$ 
so that \[
N_j \log \frac{\sigma(k(4^j -1))}{\sigma(k 4^j)} \geq -N_j \e^{-k(4^j - 1)} \geq -1.
\]
We now control the sum $S_j$. 
First, note that by~\eqref{eq:b-lower-bound}, we have for $\ell \neq j$, 
\begin{equation}\label{ineq:lower-bound-term}
N_\ell \log \frac{\sigma(k(4^\ell -1) + k(2^{\ell - j} - 1)^2)}{\sigma(k 4^\ell)} \geq -N_\ell \e^{-k(4^\ell - 1) - k(2^{\ell - j} - 1)^2} 
\geq -\exp(-k(2^{\ell - j} - 1)^2). 
\end{equation}
We can write 
\begin{equation}
S_j = \sum_{\substack{\ell < j\\ \ell \in [m_k]}} N_\ell \log \frac{\sigma(k 4^\ell)}{\sigma(k(4^\ell -1) + k(2^{\ell - j} - 1)^2)} + 
\sum_{\substack{\ell > j\\ \ell \in [m_k]}} N_\ell \log \frac{\sigma(k 4^\ell)}{\sigma(k(4^\ell -1) + k(2^{\ell - j} - 1)^2)}
= S_{j}^{<} + S_{j}^{>}.
\end{equation}
From the inequality~\eqref{ineq:lower-bound-term}, we have 
\begin{equation}
\label{ineq:lower-bound-S_j_big}
S_{j}^{>} \leq \sum_{\substack{\ell > j\\ \ell \in [m_k]}}
\exp(-k(2^{\ell - j} - 1)^2) \leq \sum_{\substack{\ell > j\\ \ell \in [m_k]}} \e^{-k} \leq \e^{-k} m_k \leq 1. 
\end{equation}
On the other hand, for $m = \floor{\log_2(2k)}$, we have 
\begin{multline}
\label{ineq:S_j-small}
S_j^{<} \leq \sum_{\substack{\ell < j\\ \ell \in [m_k]}}
\exp(-k(2^{\ell - j} - 1)^2)
\leq \sum_{n = 1}^{m} 
\exp(-k(2^{-n} - 1)^2)
+ \sum_{n=m+1}^{m_k}\exp(-k(2^{-n} - 1)^2)
\\
\leq 2k \e^{-k/4} + \e^k\e^{1-k} \leq \frac{8}{\e} + \e \leq 6.
\end{multline}
Combining inequalities~\cref{ineq:in-terms-of-sum,ineq:lower-bound-S_j_big,ineq:S_j-small}, we obtain the desired result.

\end{proof}

The corresponding worst case quantile lower bound is recorded in
\Cref{cor:d2-logloglog-quantile-lower}. Together with
\Cref{thm:d2-logloglog-upper}, it gives the sharp worst case fixed design
quantile bound in dimension two.

\subsection{The \(d^{3/2}/n\) boundary for Wilks approximation}
\label{sec:wilksregime}

The classical Wilks phenomenon arises in the purely asymptotic regime where the dimension $d$ and the parameter $\thetastar \in \R^{d}$ are both fixed. Then, as the sample size $n$ goes to infinity, (i) the probability that the MLE exists (and is unique) goes to $1$, and (ii) the LLR statistic $2 \Lambda_n^{\log}(\thetastar)$ converges in distribution to a chi-square with $d$ degrees of freedom.
Over the last decade, the proportional high-dimensional asymptotic regime has attracted a lot of attention. There, one studies a sequence of well-specified logistic models with parameter $\thetastar = \thetastar_n \in \R^{d_n}$, and the dimension $d_n$ grows with the sample size $n$.
In the series of works~\cite{sur2019modern,sur2019likelihood}, the authors study the case where the dimension $d_n$ grows linearly with $n$, in a way that $d_n /n \to \kappa \in (0,1)$ as $n \to \infty$.
The limiting aspect ratio $\kappa$ thus emerged as a key parameter in the analysis of high-dimensional, random-design logistic regression. Notably, in the ``null case'' (pure noise, $\thetastar = 0$), the Wendel--Cover formula~\cite{Wendel1962,Cover1965} shows that the critical threshold sits exactly at $\kappa = 1/2$. Below this value, the MLE exists with probability going to 1, whereas if $\kappa >1/2$, the probability that the MLE exists goes to $0$. At $\kappa=1/2$, the aspect ratio alone does not determine the limiting probability; under the centered scaling $d_n-n/2=o(\sqrt n)$, the probability that the MLE exists converges to $1/2$.
However, it turns out that the limiting aspect ratio $\kappa$ does not determine the behavior of the LLR statistic. In particular, whether or not its distribution is well-approximated by the chi-square distribution depends on the ratio $d^{3/2}/n$ instead of $d/n$.

In the rest of this section, we consider the case of random design logistic regression, where the labels are conditionally independent given the design and
\begin{equation}
\label{eq:logistic-regression-model-gaussian}
X_i \simiid \Normal{0}{I_d}, \quad
\P(Y_i=1\mid X_i)=\sigma(X_i^\T \theta^{(d)}),
\quad \theta^{(d)} = 0 \in \R^d.
\end{equation}
Throughout this section, we view $d = d(n)$ as a positive integer-valued
function of $n$; however, we write $d$ for simplicity. 
Recall that the log-likelihood ratio statistic is then given by 
\[
\Lambda_{n,d} = \sup_{\eta \in \R^d} \sum_{i=1}^n \log\big(2 \sigma(Y_i X_i^\T \eta)\big).
\]
We recall the Kolmogorov-Smirnov (KS) distance between real-valued random variables $U, V$,
\[
\KSDist{U}{V} = \sup_{x \in \R} | \P(U \leq x) - \P(V \leq x) |.
\]
The next result characterizes when the log-likelihood ratio (LLR) statistic $\Lambda_{n, d}$ 
is asymptotically well-approximated by $\tfrac{1}{2} \chi^2_d$. 
\begin{proposition}[Approximation of the LLR statistic via $\chi^2_d$ under Gaussian random design]
\label{prop:chi-limit-behavior}
Under the observational model given by~\cref{eq:logistic-regression-model-gaussian}, the following holds: 
\begin{enumerate}[label=(\roman*)]
\item \label{item:chi-limit-still-works}
if $d^{3/2}/n \to 0$, then $\lim_{n \to \infty}\KSDist{2\Lambda_{n,d}}{\chi^2_d} =0$; and 
\item \label{item:chi-limit-fails}
if $d^{3/2}/n \nrightarrow 0$, then $\limsup_{n \to \infty}\KSDist{2\Lambda_{n,d}}{\chi^2_d} > 0$.
\end{enumerate} 
\end{proposition}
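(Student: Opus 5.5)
We would work at the null $\theta=0$ in the regime where the MLE exists with high probability, which holds when $d/n\to0$ (Wendel--Cover). On that event we expand the log-likelihood around the origin to fourth order. Write $\ell(\eta)=\sum_i\log(2\sigma(Y_iX_i^\T\eta))$ and $g=\sum_i Y_iX_i/2$. The Hessian at zero is $-\tfrac14 X^\T X$, and by symmetry the odd cumulant terms in $Y_iX_i^\T\eta$ beyond the gradient vanish in expectation. Since $\log\cosh(t/2)=t^2/8-t^4/192+\dots$, we get
\[
\ell(\eta)=\langle g,\eta\rangle-\tfrac18\sum_i (X_i^\T\eta)^2+\tfrac1{192}\sum_i (X_i^\T\eta)^4+O\Big(\sum_i|X_i^\T\eta|^6\Big).
\]
The maximizer is $\hat\eta\approx 4(X^\T X)^{-1}g$, of norm about $\sqrt{d/n}$. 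The quadratic part gives
\[
2\Lambda^{(2)}=Y^\T P_X Y,
\]
where $P_X$ is the projection onto $\ran(X)$. Because $X$ is Gaussian and independent of $Y$, this is exactly $\|P_W Y\|^2$ with $W$ Haar distributed. It equals $n\,\mathsf{Beta}(d/2,(n-d)/2)$, whose Kolmogorov distance to $\chi^2_d$ is $O(d/n\cdot\sqrt{d})$-ish, in fact $o(1)$ when $d^{3/2}/n\to0$. (Compare with \Cref{prop:lower-bound-rand-rot-invariant-logistic-llr}.)

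The quartic correction, evaluated at $\hat\eta$, is
\[
\tfrac{1}{96}\sum_i(X_i^\T\hat\eta)^4\approx \tfrac{3n}{96}\|\hat\eta\|^4\cdot 16\asymp d^2/n.
\]
This is a deterministic-to-leading-order shift with fluctuations of smaller order. Since $\chi^2_d$ has spread $\sqrt{d}$, the correction is negligible in Kolmogorov distance if and only if $(d^2/n)/\sqrt d=d^{3/2}/n\to0$. Part~\ref{item:chi-limit-still-works} therefore follows from three steps:
\begin{enumerate}[label=(\alph*)]
\item Control $\hat\eta$ uniformly in a ball of radius $C\sqrt{d/n}$, using self-concordance or the localization of \cite{chardon2024logistic} at $\thetastar=0$.
\item Bound the sixth-order remainder by $O(d^3/n^2)=o(\sqrt d)$.
\item Use the Berry--Esseen/Beta comparison for $\|P_WY\|^2$.
\end{enumerate}
Perturbations of size $o(\sqrt d)$ in probability preserve vanishing Kolmogorov distance, because $\chi^2_d/\sqrt d$ has density bounded uniformly in $d$.

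For part~\ref{item:chi-limit-fails}, pass along a subsequence where $d^{3/2}/n\ge c>0$. If $d/n\not\to0$, the MLE may fail to exist. In that case we can instead use that $2\Lambda\ge\|P_WY\|^2\cdot(\text{factor})$, or use known proportional-regime results, to show the statistic is stochastically larger than $\chi^2_d$ by order $d\gg\sqrt d$. The simplest route is the lower bound of \Cref{prop:fixed-design-lower} combined with the strict quartic gain. When $d/n\to0$ but $d^{3/2}/n\ge c$, the expansion above is valid. It then shows
\[
2\Lambda_{n,d}=\|P_WY\|^2+\kappa_n+o_P(\sqrt d),\qquad \kappa_n\asymp d^2/n\gtrsim\sqrt d,
\]
with $\kappa_n>0$, because the quartic term enters with a positive sign at the maximizer. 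A shift of order $\sqrt d$ of a variable with standard deviation $\sqrt{2d}$ changes the CDF at the median by a constant. This gives $\limsup\KSDist{2\Lambda_{n,d}}{\chi^2_d}>0$.

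The main obstacle is the precise fourth-order analysis in step (a). We must show that the quartic term concentrates, $\sum_i(X_i^\T\hat\eta)^4=\tfrac{3}{n}\cdot16\,d^2(1+o(1))$, with fluctuations $o(\sqrt d)$. Here $\hat\eta$ itself depends on $X$ and $Y$, and correcting $\hat\eta$ by the cubic term changes $\Lambda$ only at order $d^3/n^2$. We would try first to write $\hat\eta=4(X^\T X)^{-1}g+\Delta$ with $\|\Delta\|\lesssim (d/n)^{3/2}$ via a Newton step. We would then compute the quartic sum using $\hat\eta$ direction approximately uniform and independent of the norms of the $X_i$.
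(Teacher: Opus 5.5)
Your part (i) rests on a primal fourth-order expansion around the MLE, and it breaks at steps (a)--(b). Localizing $\hat\eta$ in an $\ell_2$ ball of radius $C\sqrt{d/n}$ is what \cite{chardon2024logistic} (or self-concordance) provides. That only gives $\max_i|X_i^\T\hat\eta|\le\max_i\|X_i\|_2\|\hat\eta\|_2\approx d/\sqrt n$, which diverges throughout the range $\sqrt n\ll d\ll n^{2/3}$ allowed by $d^{3/2}/n\to0$.
\begin{itemize}
\item In that range the $\log\cosh$ expansion is not justified.
\item From (a) alone, the best bound on the quartic term at $\hat\eta$ is its supremum over the ball. Taking $\eta$ along $X_1$ shows this supremum is $\gtrsim d^4/n^2$, which is not $o(\sqrt d)$ for, e.g., $d=n^{0.6}$. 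The sixth-order remainder is worse.
\item When $d\ll\sqrt n$ your argument does go through, but then even the quartic term is $o(1)$.
\end{itemize}
What is missing is sup-norm control of the fitted values at the maximizer, and your Newton-step heuristic presupposes it.

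The paper never touches the MLE. It uses the dual representation of \Cref{lem:var-char},
\[
\Lambda_{n,d}=\inf\Big\{\textstyle\sum_i h(\delta_i):\delta\in[-1,1]^n,\ \1_n+\delta\in\cV^\perp\Big\},
\]
with $h(\delta)=\kl{\Ber{\tfrac{1+\delta}{2}}}{\Ber{\tfrac12}}$ and $\cV=\ran(D_YX)$ Haar distributed. The explicit point $\delta=-P_\cV\1_n$ has coordinates equal, up to sign, to half the fitted values $X_i^\T\eta_0$ of your quadratic maximizer. It is feasible on $\cE_n=\{\|P_\cV\1_n\|_\infty\le1\}$, which has probability $1-O(d^3/n^2)$. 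Weak duality then gives $2\Lambda_{n,d}\le\|P_\cV\1_n\|_2^2+2C\|P_\cV\1_n\|_4^4$, with $\E\|P_\cV\1_n\|_4^4\lesssim d^2/n$. The matching lower bound $2\Lambda_{n,d}\ge\|P_\cV\1_n\|_2^2$ comes from $\|\delta\|_2\ge\|P_\cV\1_n\|_2$ for every feasible $\delta$. The exact coupling $\tfrac{Z_n}{n}\|P_\cV\1_n\|_2^2\sim\chi^2_d$ and the L\'evy concentration bound for $\chi^2_d$ finish the argument.

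Part (ii) has further holes.
\begin{itemize}
\item You do not treat $d\ge n$ along a subsequence. There your claim that the statistic is stochastically larger than $\chi^2_d$ by order $d$ is false once $d\ge2n\log2$, because $2\Lambda_{n,d}\le2n\log2$ always, with equality almost surely when $d\ge n$. The paper handles this case separately: a point mass is at Kolmogorov distance at least $1/2$ from $\chi^2_d$.
\item For $d<n$ with $d/n\not\to0$ you give only pointers, not an argument. The ``strict quartic gain'' cannot come from your Taylor expansion there, since the fitted values are of order one.
\item Your intermediate regime invokes ``the expansion above'', so it inherits the gap of part (i): there $d\gtrsim n^{2/3}\gg\sqrt n$.
\end{itemize}
The missing idea is a gain valid uniformly over the constraint set. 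Combining $2h(\delta)\ge\delta^2+\delta^4/6$, $\|\delta\|_4^4\ge\|\delta\|_2^4/n$ and $\|\delta\|_2\ge\|P_\cV\1_n\|_2$ gives, deterministically and for every $d<n$,
\[
2\Lambda_{n,d}\ge\|P_\cV\1_n\|_2^2+\frac{1}{6n}\|P_\cV\1_n\|_2^4 .
\]
The Kolmogorov lower bound then follows by Chebyshev when $d/n\to\rho>0$, and by the central limit theorem plus Slutsky when $d/n\to0$.

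Only this one-sided bound is needed. Your two-sided claim $2\Lambda_{n,d}=\|P_WY\|_2^2+\kappa_n+o_P(\sqrt d)$ is not justified when $d^{3/2}/n\to\infty$. There the sixth-order term is of order $d^3/n^2$, which exceeds $\sqrt d$ for, e.g., $d=n^{0.9}$.
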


To understand~\Cref{prop:chi-limit-behavior},
we first point out that it demonstrates that the 
aspect ratio $d/n$ does not suffice to precisely 
characterize the asymptotic behavior of the LLR statistic. 
More specifically, consider the case that $d = \floor{n^{2/3}}$. 
Although $d/n \to 0$ as $n \to \infty$, Proposition~\ref{prop:chi-limit-behavior}\ref{item:chi-limit-fails} implies that $\limsup_{n\to\infty}\KSDist{2\Lambda_{n,d}}{\chi^2_d} > 0$; stated otherwise, the LLR statistic is not asymptotically well-approximated by $\tfrac{1}{2}\chi^2_d$ in Kolmogorov-Smirnov distance.
On the other hand, consider the case that $d = \floor{n^{1/3}}$. 
In this case, it is true that $d/n \to 0$ as $n \to \infty$, but
Proposition~\ref{prop:chi-limit-behavior}\ref{item:chi-limit-still-works} implies that
$\KSDist{2\Lambda_{n,d}}{\chi^2_d} \to 0$ as $n \to \infty$; 
stated otherwise, the LLR statistic is asymptotically well-approximated by $\tfrac{1}{2}\chi^2_d$ in Kolmogorov-Smirnov distance.

\subsection{Proofs}
\label{sec:proofs-sharpness}
\subsubsection{Proof of~\Cref{prop:chi-limit-behavior}}
\label{sec:proof-prop-chi-limit-behavior}

Let $X\in \R^{n\times d}$ be the design matrix with rows $X_i^\T$, and set
\[
B_i \defn \frac{1+Y_i}{2}\in\{0,1\},
\qquad
Z_i \defn Y_i X_i,
\qquad
i\in[n].
\]
Let $B=(B_1,\ldots,B_n)$. Then $Z_i \simiid \Normal{0}{I_d}$, as $Y_i$ are independent Rademachers. Let $Z\in \R^{n\times d}$ be the matrix with rows $Z_i^\T$, and let $\cV$ denote the range 
of $Z$. 
We define for $|\delta| \leq 1$, 
\[
h(\delta) = \kl{\Ber{\tfrac{1 + \delta}{2}}}{\Ber{1/2}} = \half \Big( (1+\delta) \log(1+\delta) + (1-\delta) \log(1-\delta)\Big).
\]
Recall that by the variational characterization of the log-likelihood ratio statistic provided in Lemma~\ref{lem:var-char}, we have that 
\begin{equation}\label{eqn:var-char-llr}
\Lambda_{n,d}
=
\inf_{p \in [0, 1]^n,\ X^\T p = X^\T B}
\sum_{i=1}^n \kl{\Ber{p_i}}{\Ber{1/2}}
= 
\inf_{\delta \in [-1, 1]^n,\ \delta + \mathbf{1}_n \in \cV^\perp}
\sum_{i=1}^n h(\delta_i)
\end{equation}
where in the second infimum we let $\delta_i \defn Y_i(1-2p_i)$. To see why both formulations are equivalent, note first that, on the one hand, since $h$ is even, it holds that
$\kl{\Ber{p_i}}{\Ber{1/2}}=h(-Y_i\delta_i)=h(\delta_i)$.
On the other hand, regarding the constraints,
\[
p_i-B_i=\tfrac{1-Y_i\delta_i}{2}-\tfrac{1+Y_i}{2}=-\tfrac{Y_i}{2}(1+\delta_i)\, ,
\]
so $p-B=-\tfrac12 D_Y(\1_n+\delta)$ with $D_Y=\mathrm{diag}(Y_1,\dots,Y_n)$. Noting that $Z=D_Y X$, we obtain that $X^\T(p-B)=-\tfrac12 X^\T D_Y(\1_n+\delta)=-\tfrac12 Z^\T(\1_n+\delta)$. Hence the constraint $X^\T p=X^\T B$ is equivalent to $Z^\T(\1_n+\delta)=0$, i.e.\ to $\1_n+\delta\in\ker(Z^\T)=\cV^\perp$.

\paragraph{Claim~\ref{item:chi-limit-still-works}:}

Since $h$ is even, $h(0)=h'(0)=0$, and
$h''(\delta)=\frac{1}{1-\delta^2}$, a Taylor expansion yields 
\begin{equation}\label{eqn:taylor-expansion-h}
\frac{\delta^2}{2}\leq h(\delta) \leq \frac{\delta^2}{2} + C \delta^4, \quad \mbox{for any}~\delta \in [-1, 1],
\end{equation}
where above $C > 0$ is some universal constant. 
Let $P_{\cV}$ be the orthogonal projector onto $\cV$. Let $\cE_n = \{\|P_\cV \mathbf{1}_n\|_\infty \leq 1\}$. 
From~\cref{eqn:var-char-llr,eqn:taylor-expansion-h}, we obtain that on $\cE_n$, 
\begin{equation}
\label{eq:chi-limit-upper-det}
\|P_{\cV} \mathbf{1}_n\|_2^2 \leq 2 \Lambda_{n,d} \leq \|P_{\cV} \mathbf{1}_n\|_2^2  + 2C \|P_{\cV} \mathbf{1}_n\|_4^4.
\end{equation}
The next lemma allows us to control the probability of the complementary event.
\begin{lemma}
\label{lemma:the-bad-event-has-small-probability}
If $d^{3/2}/n \to 0$, then $\lim_{n \to \infty} \P(\cE_n^c)= 0$
\end{lemma}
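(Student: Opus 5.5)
The plan is to bound $\|P_{\cV}\1_n\|_\infty$ coordinate by coordinate and then take a union bound over the $n$ rows. Since $\cV=\ran(Z)$ with $Z$ an $n\times d$ standard Gaussian matrix, we have $P_{\cV}\1_n = Z(Z^\T Z)^{-1}Z^\T\1_n$. Its $i$-th coordinate is $Z_i^\T (Z^\T Z)^{-1} S$, where $S \defn Z^\T\1_n=\sum_j Z_j$.

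To handle the dependence between $Z_i$ and $S$, write $S = Z_i + S_{-i}$ with $S_{-i}=\sum_{j\neq i}Z_j$. Replace $(Z^\T Z)^{-1}$ by the leave-one-out inverse $(Z_{-i}^\T Z_{-i})^{-1}$ using Sherman--Morrison. With $G_{-i}\defn(Z_{-i}^\T Z_{-i})^{-1}$, this gives
\[
(P_{\cV}\1_n)_i = \frac{Z_i^\T G_{-i} S}{1+Z_i^\T G_{-i} Z_i}.
\]
The numerator splits into $Z_i^\T G_{-i}Z_i + Z_i^\T G_{-i}S_{-i}$. Call $a\defn Z_i^\T G_{-i}Z_i$ and $b\defn Z_i^\T G_{-i}S_{-i}$. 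Then $(P_{\cV}\1_n)_i=(a+b)/(1+a)$, and since $a\ge 0$ it suffices to show $|b|\le 1$ with high probability. Indeed, $|(a+b)/(1+a)|\le 1$ whenever $-1-2a\le b\le 1$, which certainly holds if $|b|\le 1$.

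The key step is controlling $b$. Conditionally on $Z_{-i}$, the vector $Z_i$ is independent of $G_{-i}S_{-i}$, so $b\sim\Normal{0}{\|G_{-i}S_{-i}\|_2^2}$. I would bound $\|G_{-i}S_{-i}\|_2 \le \|G_{-i}\|_{\rm op}\|S_{-i}\|_2$. By standard Gaussian matrix concentration (Davidson--Szarek), $\lambda_{\min}(Z_{-i}^\T Z_{-i})\ge n/2$ with probability $1-\e^{-cn}$ once $d\le n/10$, so $\|G_{-i}\|_{\rm op}\le 2/n$. Also $S_{-i}\sim\Normal{0}{(n-1)I_d}$, so $\|S_{-i}\|_2\lesssim\sqrt{n(d+\log n)}$ with probability $1-n^{-2}$. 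Combining these, the conditional standard deviation of $b$ is $\lesssim\sqrt{(d+\log n)/n}$, and hence
\[
\P(|b|>1)\le 2\exp\!\big(-c\,n/(d+\log n)\big)+O(n^{-2})+\e^{-cn}.
\]

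The union bound over $i\in[n]$ then needs $n\exp(-cn/(d+\log n))\to 0$. This holds as soon as $n/(d+\log n)\gg \log n$, that is, $d\log n = o(n)$ and $\log^2 n=o(n)$. Both follow from $d^{3/2}/n\to0$, because $d\log n\le d^{3/2}\cdot(\log n/\sqrt d)$. If $d$ is large, $d^{3/2}=o(n)$ gives the claim directly. If $d$ is small, $d\log n\le \sqrt{\log n}\cdot\log n\cdot$const, which is $o(n)$. So the condition is more than enough and the whole bad event has vanishing probability.

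The step I expect to be the main obstacle is the operator-norm control of $G_{-i}$ uniformly over $i$. The fix is to use one event: $\lambda_{\min}(Z^\T Z)\ge n/2$ implies the same lower bound for every leave-one-out matrix $Z_{-i}^\T Z_{-i}$ up to the removal of one row. That still leaves a gap, since removing a row only decreases the Gram matrix by $Z_iZ_i^\T$. So I would instead apply the concentration bound to each $Z_{-i}$ separately; each failure probability is $\e^{-cn}$, and $n\e^{-cn}\to0$ handles the union over $i$.
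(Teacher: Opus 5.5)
Your argument is correct, but it takes a genuinely different route from the paper. The paper never touches $(Z^\T Z)^{-1}$. It uses rotational invariance to write $P_\cV u_n=\alpha^2u_n+\alpha\sqrt{1-\alpha^2}\,\theta$, where $u_n=\1_n/\sqrt n$, $\alpha^2\sim\mathsf{Beta}(d/2,(n-d)/2)$, and $\theta$ is uniform on the unit sphere of $u_n^\perp$ and independent of $\alpha$. A union bound over coordinates then reduces everything to $\P(\alpha^2>1/2)$ and $\P(\sqrt n\,\alpha|\theta_1|>1/2)$. Each is bounded by a sixth-moment Markov inequality at level $\lesssim d^3/n^3$, which gives $\P(\cE_n^c)\lesssim d^3/n^2$. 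That proof needs only exact Beta and spherical moment formulas, gives an explicit polynomial rate, and is exactly where the hypothesis $d^{3/2}/n\to0$ enters.

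Your route combines the leave-one-out identity $(P_\cV\1_n)_i=(a+b)/(1+a)$, conditional Gaussianity of $b$ given $Z_{-i}$, Davidson--Szarek, and a $\chi^2$ tail bound. This gives sub-Gaussian tails for each coordinate. As a result, the union bound survives under the weaker condition $d\log n=o(n)$. That is essentially the natural threshold for $\|P_\cV\1_n\|_\infty\le 1$, since heuristically $(P_\cV\1_n)_i\approx d/n+\sqrt{d/n}\,g_i$ with $g_i$ roughly standard Gaussian. The price is importing a random-matrix singular-value bound.

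There are two small slips, neither of which affects the argument.
\begin{itemize}
\item $d\le n/10$ does not yield $\lambda_{\min}(Z_{-i}^\T Z_{-i})\ge n/2$ via Davidson--Szarek, because $(1-10^{-1/2})^2<1/2$. Since $d/n\to0$, you may instead take $d\le \epsilon n$ for a small $\epsilon$, or settle for $n/4$.
\item In your final check, the complementary case to $\sqrt d\ge\log n$ is $d<\log^2 n$. There $d\log n<\log^3 n=o(n)$, which is what you need.
\end{itemize}

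Your closing worry about uniform control of $G_{-i}$ is moot. Bounding each $\P(|b_i|>1)$ separately, with its own $e^{-cn}$ failure term, and then summing over $i$ is already what your union bound does.
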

\begin{proof}
Set $u_n = \tfrac{1}{\sqrt{n}} \1_n$. We can write $\cE_n^c = \{\|P_\cV \1_n\|_\infty > 1\} = \{\sqrt{n}\|P_\cV u_n\|_\infty > 1\}$.

Denote by $\alpha = \|P_\cV u_n\|_2$. We have 
\[
P_\cV u_n = \alpha^2 u_n + \alpha \sqrt{1 - \alpha^2} \theta, 
\]
where $\theta$ is distributed uniformly on the unit sphere in $u_n^\perp$ and is independent of $\alpha$. Hence for each $i$,
\[
(P_\cV u_n)_i=\frac{\alpha^2}{\sqrt{n}} + \alpha\sqrt{1-\alpha^2}\,\theta_i.
\]
By a union bound, it holds that 
\[
\P(\cE_n^c) \leq n \P( |\alpha^2 + \sqrt{n} \alpha\sqrt{1-\alpha^2}\,\theta_1| > 1) 
\leq  n\Big(\P(\alpha^2>1/2)
+\P(\sqrt n\,\alpha|\theta_1|>1/2)\Big).
\]
Hence, using that $\alpha^2 = \mathsf{Beta}(d/2, (n-d)/2)$ in distribution, we have 
\[
\P(\alpha^2>1/2)\leq 8\, \E[\alpha^6] = 8 \frac{d(d+2)(d+4)}{n(n+2)(n+4)} \lesssim \frac{d^3}{n^3}.
\]
Similarly, we have 
\[
\P(\sqrt n\,\alpha|\theta_1|>1/2) \leq 4096 d^3 \E \theta_1^6 \lesssim \frac{d^3}{n^3}. 
\]
Therefore, combining the previous three displays yields $\P(\cE_n^c) \lesssim d^3/n^2 \to 0$, as required.
\end{proof}

We also recall the Lévy concentration function of a real-valued random variable $Z$, defined 
\[
\cL_Z(\eps) \defn \sup_{x\in \R}\P(x-\eps<Z\leq x+\eps).
\]
We make use of the following basic coupling inequality for the Kolmogorov-Smirnov distance.
\begin{lemma}
\label{lem:ks-coupling}
For any coupling of real-valued random variables $U$ and $V$, and any $\eps>0$,
\begin{equation}
\label{eq:chi-limit-ks-basic}
\KSDist{U}{V}
\le
\P(|U-V|>\eps)
+
\cL_V(\eps/2).
\end{equation}
\end{lemma}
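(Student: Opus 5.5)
We prove the coupling inequality in \Cref{lem:ks-coupling} by a sandwich argument at each fixed threshold, taking the supremum over $x\in\R$ only at the end. Fix a coupling of $U$ and $V$ on a common probability space, fix $\eps>0$ and $x\in\R$, and write $E\defn\{|U-V|>\eps\}$. The goal is to bound $\P(U\le x)-\P(V\le x)$ from both sides by $\P(E)$ plus the probability that $V$ falls in a window of length $\eps$ next to $x$.

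For the upper direction, we use the inclusion
\[
\{U\le x\}\subseteq\{V\le x+\eps\}\cup E .
\]
This gives
\[
\P(U\le x)-\P(V\le x)\le \P(E)+\P(x<V\le x+\eps).
\]
The interval $(x,x+\eps]$ has the form $(y-\eps/2,\,y+\eps/2]$ with center $y=x+\eps/2$. Its probability is therefore at most $\cL_V(\eps/2)$, by the definition of the Lévy concentration function with half-width $\eps/2$.

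For the lower direction, we use the symmetric inclusion
\[
\{V\le x-\eps\}\subseteq\{U\le x\}\cup E .
\]
This gives
\[
\P(V\le x)-\P(U\le x)\le \P(E)+\P(x-\eps<V\le x).
\]
The window $(x-\eps,x]$ has center $x-\eps/2$ and half-width $\eps/2$, so its probability is again at most $\cL_V(\eps/2)$.

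Combining the two directions bounds $|\P(U\le x)-\P(V\le x)|$ by $\P(E)+\cL_V(\eps/2)$. Since this bound does not depend on $x$, taking the supremum over $x$ yields \eqref{eq:chi-limit-ks-basic}.

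The argument has no genuine obstacle. The only point needing care is matching the one-sided windows of length $\eps$ to the half-open intervals in the definition of $\cL_V$ with the correct half-width $\eps/2$ and the correct open and closed endpoints, which is what the two window computations above check.
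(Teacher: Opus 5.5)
Your proof is correct and follows the paper's argument: off the event $\{|U-V|>\eps\}$, the same two inclusions give the sandwich $F_V(x-\eps)-\P(|U-V|>\eps)\le F_U(x)\le F_V(x+\eps)+\P(|U-V|>\eps)$. Both length-$\eps$ windows are then bounded by $\cL_V(\eps/2)$; your explicit check of the window centers and half-open endpoints makes this step, which the paper states in one line, fully precise.
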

Lemma~\ref{lem:ks-coupling} is proved in Section~\ref{sec:asymp-remaining-proofs}.

\begin{lemma}[Lévy concentration function for $\chi^2_d$]
\label{lem:chi-levy-concentration-function}
	There exists a constant $C > 0$ such that the following two inequalities hold:
	\begin{enumerate}[label=(\roman*)]
	\item \label{ineq:chi-levy-bound-one} for $d \geq 2$ and $\eps > 0$, we have $\cL_{\chi^2_d}(\eps) \leq C \eps/\sqrt{d}$; and 
	\item \label{ineq:chi-levy-bound-two} for $d \geq 1$ and $\eps > 0$, we have $\cL_{\chi^2_d}(\eps) \leq C \sqrt{\eps}$. 
	\end{enumerate} 
\end{lemma}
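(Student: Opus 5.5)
We need to bound the Lévy concentration function of the $\chi^2_d$ distribution, i.e.\ the supremum over $x$ of $\P(x-\eps<\chi^2_d\le x+\eps)$. Since this probability is at most $2\eps\sup_t f_d(t)$, where $f_d$ is the density, bound \ref{ineq:chi-levy-bound-one} reduces to showing $\sup_t f_d(t)\lesssim 1/\sqrt d$ for $d\ge2$.

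The density is $f_d(t)=t^{d/2-1}\e^{-t/2}/(2^{d/2}\Gamma(d/2))$.
\begin{itemize}
\item For $d=2$ it is $\tfrac12\e^{-t/2}\le\tfrac12$, which is compatible with $C/\sqrt2$.
\item For $d\ge3$ the maximum is attained at the mode $t=d-2$, giving
\[
\sup_t f_d(t)=\frac{(d-2)^{d/2-1}\e^{-(d-2)/2}}{2^{d/2}\Gamma(d/2)}.
\]
Write $k=d/2-1\ge 1/2$. This equals $k^k\e^{-k}/(2\Gamma(k+1))$. By the Stirling lower bound $\Gamma(k+1)\ge\sqrt{2\pi k}\,k^k\e^{-k}$, valid for all $k>0$, it is at most $1/(2\sqrt{2\pi k})\lesssim 1/\sqrt d$, since $k\ge d/6$ when $d\ge3$.
\end{itemize}
Hence $\cL_{\chi^2_d}(\eps)\le 2\eps\sup_t f_d(t)\le C\eps/\sqrt d$.

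For \ref{ineq:chi-levy-bound-two}, the case $d\ge2$ is already covered by part \ref{ineq:chi-levy-bound-one} when $\eps\le1$: indeed $C\eps/\sqrt d\le C\eps\le C\sqrt\eps$. When $\eps>1$ the claim is trivial for every $d\ge1$, because $\cL\le1\le\sqrt\eps$. So only $d=1$ with $\eps\le1$ remains.

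For $d=1$, write $\chi^2_1=G^2$ with $G\sim\Normal{0}{1}$. Then
\[
\P(x-\eps<G^2\le x+\eps)=\P\bigl(|G|\in(\sqrt{(x-\eps)_+},\sqrt{x+\eps}]\bigr).
\]
Using that the density of $|G|$ is bounded by $2/\sqrt{2\pi}$, this is at most a constant times the interval length $\sqrt{x+\eps}-\sqrt{(x-\eps)_+}$. By subadditivity of the square root, $\sqrt{a}-\sqrt{b}\le\sqrt{a-b}$ for $a\ge b\ge0$, so the length is at most $\sqrt{2\eps}$. This gives $C\sqrt\eps$.

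The only mildly delicate step is the uniform Stirling bound used for the mode value when $d$ is small. It holds for all $k>0$ (equivalently, the ratio $\Gamma(k+1)/(\sqrt{k}\,k^k\e^{-k})$ is bounded below), and one can alternatively just check finitely many small $d$ directly.
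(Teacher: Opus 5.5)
Your proof is correct. Part (i) matches the paper's argument: you bound the density at its mode $d-2$ with a Stirling lower bound, applying it to $\Gamma(k+1)$ with $k=d/2-1$ where the paper applies it to $\Gamma(d/2)$. Your $d=1$ computation is the same as the paper's.

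The one genuine difference is part (ii) for $d\geq 2$. The paper writes $\chi^2_d = G^2 + Z$ with $Z\sim\chi^2_{d-1}$ independent of $G$ and conditions on $Z$. This gives $\cL_{\chi^2_d}(\eps)\le \cL_{\chi^2_1}(\eps)$, since convolution with an independent variable cannot increase the concentration function. You instead split into $\eps>1$, which is trivial because $\cL\le 1\le\sqrt{\eps}$, and $\eps\le 1$, where part (i) gives $C\eps/\sqrt d\le C\sqrt\eps$.

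Your route is shorter once (i) is in hand. The paper's route is self-contained, does not depend on (i), and yields the explicit constant $2/\sqrt{\pi}$ for every $d\geq 1$ without a case split.
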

\begin{proof}
	We write the density of $\chi^2_d$ as $f_d(x)=\tfrac{1}{2^{d/2}\Gamma(d/2)}x^{\frac d2-1}e^{-x/2}$, for $x > 0$. 
	For $d=2$, the density is decreasing and $\|f_2\|_\infty=1/2$. For $d\geq3$, differentiation shows that its unique critical point is at $x=d-2$, which is the maximizer. 
	Hence, for $d\geq3$, the Stirling lower bound on the Gamma function, namely, $\Gamma(z)\geq \sqrt{2\pi}\, z^{\,z-\frac12}e^{-z}$ for $z\geq 1$, gives
	\[
	\|f_d\|_\infty
	= \frac{(d-2)^{\frac d2-1}e^{-(d-2)/2}}{2^{d/2}\Gamma(d/2)}
	\leq
	\frac{(d-2)^{\frac d2-1}e^{-(d-2)/2}}{2^{d/2}\sqrt{2\pi}(d/2)^{\frac d2-\frac12}e^{-d/2}}
	=
	\frac{e}{2\sqrt{\pi d}}
	\Bigl(1-\frac{2}{d}\Bigr)^{\frac d2-1}
	\leq \frac{C_1}{2\sqrt{d}}.
	\]
	Above, we set $C_1 = \tfrac{e}{\sqrt{\pi}}$; together with the case $d=2$, claim~\ref{ineq:chi-levy-bound-one} immediately follows after increasing $C$ if needed. 
	First consider $d=1$. If $G\sim \Normal{0}{1}$, then $G^2\sim \chi^2_1$, and for $0 \leq a < b$, we have 
\[
\P(G^2\in (a, b])
=2\int_{\sqrt a}^{\sqrt b}\frac{e^{-t^2/2}}{\sqrt{2\pi}}\,dt
\leq \sqrt{\frac{2}{\pi}}(\sqrt b-\sqrt a)
\leq \sqrt{\frac{2}{\pi}}\sqrt{b-a}.
\]
This immediately implies $\cL_{\chi^2_1}(\varepsilon) \leq \frac{2}{\sqrt{\pi}}\sqrt{\varepsilon}$. 
For $d \geq 2$, write $\chi^2_d = G^2 + Z$, in distribution. Here, $Z\sim \chi^2_{d-1}$ is independent of $G$. Conditioning on $Z$,
we have 
\[
\P(x -\eps < \chi^2_d \leq x+\eps)
=\E \P(G^2\in (x-Z-\eps, x -Z +\eps]\mid Z ) 
\leq \cL_{\chi^2_1}(\eps). 
\]
Consequently, claim~\ref{ineq:chi-levy-bound-two} follows with $C \geq C_2 \defn \frac{2}{\sqrt{\pi}}$. 
We may take $C = \max\{C_1, C_2\}$. 
\end{proof}

As $d^{3/2}/n\to 0$, we may assume without loss of generality that $n$ is large enough such that $d<n$ holds.
Consequently, noting that $\cV$ is distributed uniformly among $d$-dimensional subspaces of $\R^n$, we 
have 
\[
\|P_{\cV} \mathbf{1}_n\|_2^2 = n \, \mathsf{Beta}\Bigl(\frac d2,\frac{n-d}{2}\Bigr),
\]
in distribution; indeed this is easily seen by writing $P_\cV = U U^\T$ for $U \in \R^{n\times d}$ a matrix with orthonormal columns, 
so that $\|P_{\cV} \mathbf{1}_n\|_2/\sqrt n$ is distributed as the norm of the first $d$ coordinates of a vector uniformly distributed on the unit sphere of $\R^n$. We recall the following standard result that will also be used in the second part of the proof.

\begin{fact}
\label{fact:beta-gamma-coupling}
Let $1\leq d<n$ be integers. Let $B \sim \mathsf{Beta}\big(\frac d2,\frac{n-d}{2}\big)$ and $Z\sim \chi^2_n$ be independent. Then, $Z B \sim \chi^2_d$.
\end{fact}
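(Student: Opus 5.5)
The plan is to use the standard Beta--Gamma algebra. I would realize both variables on one probability space rather than through moment generating functions. Let $G\sim\Normal{0}{I_n}$ and set $Z=\|G\|_2^2\sim\chi^2_n$. Write $G=\|G\|_2\,\Theta$ with $\Theta=G/\|G\|_2$. Rotational invariance of the Gaussian gives two facts: $\Theta$ is uniform on the unit sphere $S^{n-1}$, and $\Theta$ is independent of $\|G\|_2$. The latter holds because the density of $G$ depends only on $\|G\|_2$, so in polar coordinates it factorizes into a radial part times the uniform surface measure.

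Next I define $B\defn \sum_{i=1}^d\Theta_i^2$, the squared norm of the first $d$ coordinates of $\Theta$. Being a function of $\Theta$ alone, $B$ is independent of $Z$. The key identity is
\[
ZB=\|G\|_2^2\sum_{i=1}^d\Theta_i^2=\sum_{i=1}^d G_i^2\sim\chi^2_d .
\]
So in this construction $ZB$ has the required law, and $(Z,B)$ is an independent pair.

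It remains to show that $B\sim\mathsf{Beta}(\tfrac d2,\tfrac{n-d}2)$. Write $B=A/(A+A')$ with $A=\sum_{i\le d}G_i^2\sim\chi^2_d$ and $A'=\sum_{i>d}G_i^2\sim\chi^2_{n-d}$, which are independent because $d<n$. The ratio $A/(A+A')$ of independent Gamma variables with common scale $2$ and shapes $d/2$, $(n-d)/2$ is Beta with those parameters. To see this, change variables $(a,a')\mapsto(s,b)=(a+a',\,a/(a+a'))$, whose Jacobian is $s$. The joint density then factors as a $\mathrm{Gamma}(n/2)$ density in $s$ times the $\mathsf{Beta}(d/2,(n-d)/2)$ density in $b$. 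This also recovers the independence of $B$ and $Z=A+A'$ directly, which is a second route to the independence claim. The same fact was already used in \Cref{lem:beta-quantile}.

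Finally I transfer to an arbitrary independent pair. For any independent $B\sim\mathsf{Beta}$ and $Z\sim\chi^2_n$, the joint law of $(Z,B)$ is the product of the marginals, which is the joint law of the constructed pair. Hence $ZB$ has the same distribution as in the construction, namely $\chi^2_d$. The only genuinely nontrivial step is the independence of $B$ and $Z$, which the Jacobian computation settles.
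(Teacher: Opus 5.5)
Your proof is correct and is essentially the paper's argument. The paper writes $Z=A+A'$ with independent $A\sim\chi^2_d$ and $A'\sim\chi^2_{n-d}$, uses that $A/(A+A')\sim\mathsf{Beta}(\tfrac d2,\tfrac{n-d}2)$ is independent of $A+A'$, and notes that the product is $A$. Your Gaussian polar-coordinate layer is redundant once you do the Jacobian computation, but your explicit transfer from the constructed pair to an arbitrary independent pair, via equality of joint laws, spells out a step the paper leaves implicit.
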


\noindent Let $Z_n \sim \chi^2_n$ be independent of $\cV$. By~\cref{eq:chi-limit-upper-det}, it holds on $\cE_n$ that 
\[
\big|2\Lambda_{n,d} - \tfrac{Z_n}{n} \|P_\cV \1_n\|_2^2\big| 
\leq \Big|2 \Lambda_{n,d} - \|P_\cV \1_n\|_2^2\Big| +  \|P_\cV \1_n\|_2^2 \big|1- \tfrac{Z_n}{n}\big| 
\leq C' \|P_\cV \1_n\|_4^4 +  \|P_\cV \1_n\|_2^2 \big|1- \tfrac{Z_n}{n}\big| \, .
\]
For completeness, the required fourth moment estimate follows from the
decomposition used in the proof of~\Cref{lemma:the-bad-event-has-small-probability}.
Put $u_n=n^{-1/2}\1_n$ and $\rho=\|P_\cV u_n\|_2$. Then
\[
P_\cV u_n=\rho^2u_n+\rho\sqrt{1-\rho^2}\,\theta,
\]
where $\rho^2\sim\mathsf{Beta}(d/2,(n-d)/2)$ and $\theta$ is independent of
$\rho$ and uniform on the unit sphere of $u_n^\perp$. In particular,
$\E\theta_1^2=1/n$, $\E\theta_1^4\leq3/n^2$, and symmetry gives
\[
\begin{aligned}
\E\|P_\cV\1_n\|_4^4
&=n\E\left(\rho^2+\sqrt n\,\rho\sqrt{1-\rho^2}\,\theta_1\right)^4\\
&\leq n\left(\E\rho^8+6\E\rho^6+3\E\rho^4\right)\\
&\lesssim n\left(\frac{d^4}{n^4}+\frac{d^3}{n^3}+\frac{d^2}{n^2}\right)
\lesssim\frac{d^2}{n}.
\end{aligned}
\]
Now, let $\alpha, \beta > 0$. It holds that 
\begin{align}
\P(\big|2\Lambda_{n,d} &- \tfrac{Z_n}{n} \|P_\cV \1_n\|_2^2\big| > \alpha + \beta) 
\leq \P(\cE_n^c) + \P(\|P_\cV \1_n\|_4^4 > \alpha/C') + \P(\|P_\cV \1_n\|_2^2 \big|1- \tfrac{Z_n}{n}\big| > \beta) \\ 
&\leq \P(\cE_n^c) + \frac{C}{\alpha} \frac{d^2}{n} + \frac{C}{\beta^2} \frac{d^2}{n}.
\end{align} 
Above, $C > 0$ denotes a sufficiently large universal constant. We used Markov's inequality for the first term and, for the second term, $\Var(Z_n) = 2n$ and $\E \|P_\cV \1_n\|_2^4 = n d(d+2)/(n+2) \lesssim d^2$ 
and Chebyshev's inequality. Fix $\eps > 0$. Put 
\[
\alpha_n = \frac{2C}{\eps}\frac{d^2}{n}, 
\quad \beta_n^2 = \frac{2C}{\eps}\frac{d^2}{n}, 
\quad \mbox{and} \quad 
\delta_n = \alpha_n + \beta_n.
\]
Then, 
\begin{equation}\label{ineq:upper-bound-to-chi^2-term-1}
	\P\Big(\big|2\Lambda_{n,d} - \tfrac{Z_n}{n} \|P_\cV \1_n\|_2^2\big| > \delta_n\Big) 
	\leq 
	\P(\cE_n^c) + \eps. 
\end{equation}
Now, observe that by Fact~\ref{fact:beta-gamma-coupling}, $\tfrac{Z_n}{n} \|P_\cV \1_n\|_2^2$ is distributed as $\chi^2_d$. Thus, if $d\geq2$, then by~\Cref{lem:chi-levy-concentration-function}\ref{ineq:chi-levy-bound-one},
\[
\cL_{\tfrac{Z_n}{n} \|P_\cV \1_n\|_2^2}(\delta_n) = 
\cL_{\chi^2_d}(\delta_n) \lesssim \frac{\delta_n}{\sqrt{d}} 
= \frac{2C}{\eps}\frac{d^{3/2}}{n}
+\sqrt{\frac{2C}{\eps}}\sqrt{\frac{d}{n}} \longrightarrow 0.
\]
If $d=1$, then by~\Cref{lem:chi-levy-concentration-function}\ref{ineq:chi-levy-bound-two},
\[
	\cL_{\tfrac{Z_n}{n} \|P_\cV \1_n\|_2^2}(\delta_n) = 
	\cL_{\chi^2_1}(\delta_n) \lesssim \sqrt{\delta_n}\longrightarrow0.
\]
Combining this with~\cref{ineq:upper-bound-to-chi^2-term-1,lem:ks-coupling} and using~\Cref{lemma:the-bad-event-has-small-probability}, we have 
\[
\limsup_{n \to \infty} \KSDist{2\Lambda_{n,d}}{\chi^2_d} \leq \eps.
\]
Now recall that $\eps > 0$ was arbitrary, and hence we obtain the result.

\paragraph{Claim~\ref{item:chi-limit-fails}:}
Now assume that $d^{3/2}/n\nrightarrow 0$; equivalently 
$\limsup_{n \to \infty} d^{3/2}/n > 0$. 
First suppose that $d\geq n$ along an infinite subsequence, relabeled by $n_k$. Then $Z\in\R^{n_k\times d_{n_k}}$ has rank $n_k$ almost surely, and hence $\Lambda_{n_k,d_{n_k}}=n_k\log2$ almost surely. Since the chi-square distribution is continuous, its KS distance from any point mass is at least $1/2$. Consequently,
\[
\limsup_{n \to \infty} \KSDist{2\Lambda_{n, d}}{\chi^2_d} 
\geq \frac12.
\]
It remains to consider the case where $d<n$ eventually.
Choose a subsequence, relabeled by $n$, such that
\[
\tau_n \defn \frac{d^{3/2}}{n}\to \tau\in (0, \infty],
\qquad
\frac{d}{n}\to \rho\in [0, 1].
\]
Set $r(\delta)\defn h(\delta)-\tfrac{\delta^2}{2}$. 
Since $r(0)=r'(0)=0$ and $r''(\delta) = \tfrac{\delta^2}{1-\delta^2} \geq \delta^2$ for $|\delta|\leq 1$, 
an integration argument yields
\[
h(\delta)\geq \frac{\delta^2}{2}+\frac{\delta^4}{12},
\qquad \mbox{for}~|\delta|\leq 1.
\]
Let $\delta\in[-1,1]^n$ be a minimizer in~\cref{eqn:var-char-llr}, which exists by compactness,
and write $\delta=-P_\cV \1_n+s$ where $s\in \cV^\perp$.
Note that 
	\[
	\|\delta\|_2^2 = \|P_\cV \1_n\|_2^2+\|s\|_2^2\geq \|P_\cV \1_n\|_2^2.
	\]
Cauchy-Schwarz implies that $\|x\|_4^4 \geq \tfrac{\|x\|_2^4}{n}$ for any $x \in \R^n$.
Hence,
\begin{equation}\label{eq:chi-limit-lower-det}
	2 \Lambda_{n,d}
\geq
	\|\delta\|_2^2 + \frac{1}{6}\|\delta\|_4^4
	\geq \|\delta\|_2^2 + \frac{1}{6n}\|\delta\|_2^4
	\geq \|P_\cV \1_n\|_2^2 + \frac{1}{6n}\|P_\cV \1_n\|_2^4,
\end{equation}
where the first inequality uses $h(\delta_i) \geq \delta_i^2/2 + \delta_i^4/12$, the second uses $\|\delta\|_4^4 \geq \|\delta\|_2^4/n$ (Cauchy--Schwarz), and the third uses $\|\delta\|_2^2 \geq \|P_\cV \1_n\|_2^2$.
Let $\kappa = \tfrac{1}{20}$ and $a_n = \kappa\tfrac{d^2}{n}$. 
Observe that if $\|P_\cV \1_n\|_2^2\geq d-a_n$, then by~\cref{eq:chi-limit-lower-det},
	\[
	2\Lambda_{n,d}
	\geq
	d-a_n+\frac{(d-a_n)^2}{6n}
=
d+\Big(\frac{(1-\kappa d/n)^2}{6} - \kappa\Big)\frac{d^2}{n} 
\geq d + 2a_n. 
\]
The final inequality uses $d<n$ and $\kappa=1/20$. 
Hence, $\{\|P_\cV \1_n\|_2^2\geq d-a_n\} \subset \{
2\Lambda_{n,d}>d + a_n\}$. Thus, 
\begin{equation}\label{eq:chi-limit-lower-ks}
\KSDist{2\Lambda_{n,d}}{\chi^2_d}
\geq
\P(\chi^2_d\leq d + a_n)-\P(\|P_\cV \1_n\|_2^2<d-a_n).
	\end{equation}
	Now, suppose that $\rho \in (0, 1]$. 
	Then, for $n$ large enough, $d/n \geq \rho/2$. 
	Using the fact that ${\|P_\cV \1_n\|_2^2/n \sim \mathsf{Beta}(\tfrac{d}{2},\tfrac{(n-d)}{2})}$, we have
	$\E\|P_\cV \1_n\|_2^2=d$, while $\Var(\|P_\cV \1_n\|_2^2)=\frac{2d(n-d)}{n+2}\leq 2d$. 
	Hence Chebyshev's inequality yields
		\[
		\max\Big\{\P(\|P_\cV \1_n\|_2^2<d-a_n), \P(\chi^2_d > d + a_n)\Big\} 
	\leq
	\frac{\max\{\Var(\|P_\cV \1_n\|_2^2), \Var(\chi^2_d)\}}{a_n^2}
	\asymp 
	\frac{n^2}{d^3} 
	\lesssim \frac{1}{\rho^3 n}
\] 
Combined with display~\cref{eq:chi-limit-lower-ks}, we have 
$\KSDist{2\Lambda_{n,d}}{\chi^2_d}\to 1$, along this subsequence.

Finally, suppose $\rho = 0$ so that $d/n\to 0$ and necessarily $d \to \infty$. 
We again introduce $Z_n\sim\chi^2_n$, independent of $\cV$, and therefore, of $\|P_\cV\1_n\|_2^2$. Let us also define $T_n\defn\tfrac{Z_n}{n}\|P_\cV\1_n\|_2^2$, which by Fact~\ref{fact:beta-gamma-coupling}, has distribution $\chi^2_d$. The conclusion will follow from Slutsky's theorem by combining the convergence $\tfrac{1}{\sqrt{2d}}\bigl(\|P_\cV\1_n\|_2^2-T_n\bigr)\to0$ in probability with the convergence $\tfrac{T_n-d}{\sqrt{2d}}\distto\Normal{0}{1}$ in distribution, which we establish in the following lines. 
Fix $\eps>0$ and $M>0$. Then, observe that 
\begin{align}
\P\Big(\frac{\big|\|P_\cV \1_n\|_2^2-\tfrac{Z_n}{n}\|P_\cV \1_n\|_2^2\big|}{\sqrt{2d}}>\eps\Big)
&\leq
\P\big(\|P_\cV \1_n\|_2^2>Md\big)
+
\P\Big(\big|Z_n - n\big|>\frac{n\eps\sqrt{2}}{M\sqrt{d}}\Big) \\ 
&\leq \frac{1}{M} + \frac{M^2d}{n \eps^2} \to \frac{1}{M}, \quad \mbox{as $n \to \infty$.}
\end{align}
Since this holds for each $M$ and $\eps> 0$, we have 
$\tfrac{1}{\sqrt{2d}}(\|P_\cV \1_n\|_2^2-\tfrac{Z_n}{n}\|P_\cV \1_n\|_2^2)
\to 0$ in probability as $n \to \infty$. 
Additionally, we have by the central limit theorem, that 
\[
\frac{\tfrac{Z_n}{n}\|P_\cV \1_n\|_2^2-d}{\sqrt{2d}}
\overset{\rm{d}}{=}
\frac{\chi^2_d-d}{\sqrt{2d}}
\distto 
\Normal{0}{1},
\]
as $n \to \infty$. 
By Slutsky's theorem, we have $\tfrac{\|P_\cV \1_n\|_2^2-d}{\sqrt{2d}} \distto \Normal{0}{1}$ as $n \to \infty$. 
Set
\[
\alpha_n \defn \frac{a_n}{\sqrt{2d}}
=
\frac{\kappa}{\sqrt{2}}\frac{d^{3/2}}{n}
=
\frac{\kappa}{\sqrt{2}}\tau_n
\to \alpha\in (0, \infty].
\]
Hence, as $n \to \infty$, it holds that 
\[
\P(\|P_\cV \1_n\|_2^2<d-a_n)
=
\P\Bigl(\frac{\|P_\cV \1_n\|_2^2-d}{\sqrt{2d}}<-\alpha_n\Bigr)
\to
\begin{cases}
\Phi(-\alpha), & \alpha<\infty,\\
0, & \alpha=\infty,
\end{cases}
\]
On the other hand, by the central limit theorem, as $n \to \infty$, it holds that 
\[
\P(\chi^2_d\leq d+a_n)
=
\P\Bigl(\frac{\chi^2_d-d}{\sqrt{2d}}\leq \alpha_n\Bigr)
\to
\begin{cases}
\Phi(\alpha), & \alpha<\infty,\\
1, & \alpha=\infty.
\end{cases}
\]
Combining the previous two displays along with~\cref{eq:chi-limit-lower-ks}, we obtain
\[
\liminf_{n \to \infty}\KSDist{2\Lambda_{n,d}}{\chi^2_d}
\geq
\begin{cases}
2\Phi(\alpha)-1, & \alpha<\infty,\\
1, & \alpha=\infty,
\end{cases}
\]
as $n \to \infty$. This proves part~\ref{item:chi-limit-fails}.

\subsubsection{Remaining proofs}
\label{sec:asymp-remaining-proofs}

\begin{proof}(\emph{of Lemma~\ref{lem:ks-coupling}})
Let $F_U$ and $F_V$ denote the CDFs of $U$ and $V$. Put $A\defn \{|U-V|>\eps\}$. 
Then, the following inclusions hold on $A^c$, for any $x \in \R$: 
\[
\{U\leq x\}\subseteq \{V\leq x+\eps\}
\qquad\mbox{and}\qquad
\{V\leq x-\eps\}\subseteq \{U\leq x\}.
\]
Therefore, by a union bound on $A, A^c$, we have 
\[
F_V(x-\eps)-\P(A)\leq F_U(x)\leq F_V(x+\eps)+\P(A).
\]
Using $\max\{F_V(x) - F_V(x -\eps), F_V(x+\eps) - F_V(x)\} \leq \cL_V(\eps/2)$, we have
\[
\sup_{x \in \R}|F_U(x)-F_V(x)|\leq \P(A)+ \cL_V(\eps/2),
\]
as required.
\end{proof}

\begin{proof}(\emph{of Fact~\ref{fact:beta-gamma-coupling}})
Write $Z\sim \chi^2_n$ as $Z = A + A'$, with $A\sim\chi^2_d$ and $A'\sim\chi^2_{n-d}$, independent of $A$. The ratio $A/(A+A')\sim\mathsf{Beta}(\tfrac d2,\tfrac{n-d}{2})$ is independent of the total $A+A'\sim\chi^2_n$, and their product is $A\sim\chi^2_d$.
\end{proof}

\subsubsection*{Acknowledgements}

Reese Pathak gratefully acknowledges support from the National Science
Foundation, under grant DMS-2503579.

{\footnotesize
\bibliographystyle{abbrv}
\bibliography{references}
}

\appendix

\section{AI disclosure and the proof of the \(d=2\) upper bound}
\label{app:ai-disclosure-d2}

The results in the main body of the paper were developed by the authors from summer 2025 to March 2026. During the
development of the main body, the authors carried out experiments
with public OpenAI models up to GPT-5.4. These experiments provided no decisive mathematical AI contribution that would warrant disclosure.

As an informal benchmark for current AI capabilities, the authors used the \emph{lower bound} from
\Cref{prop:d2-logloglog-lower}. In tests conducted on June 11, 2026, GPT-5.5 Pro and Claude Fable 5 were the first among the tested models to
recover a similar construction from the problem statement alone in a single
response, without hints or access to previous conversations.

\Cref{thm:d2-logloglog-upper} below, which gives the \emph{upper bound} for $d=2$, is the
only significant result not contained in the March 2026 draft. Its proof was produced in July 2026 through a process involving GPT-5.6-Sol Ultra and multiple runs of GPT-5.6-Sol Pro. To keep this distinction clear, the result is placed in this appendix. The
authors verified the argument and present a simplified proof sketch below. 

The pipeline used by the authors for the proof of \Cref{thm:d2-logloglog-upper} was
subsequently refined into a single prompt containing a list of key
hints. In repeated tests in July 2026, GPT-5.6-Sol Pro
recovered versions of the proof of the upper bound in a single response from this prompt and could not do so when the last hint was removed. The final prompt is presented after the results below.

\begin{theorem}[Worst case upper bound in dimension two]
\label{thm:d2-logloglog-upper}
There exists a constant $C>0$ such that, for any $n\geq16$ and any
$\delta\in(0,1/\e]$, it holds that
\[
\sup_{x_1,\ldots,x_n\in\R^2}
\sup_{\thetastar\in\R^2}
\Quant{1-\delta}
\Bigl(
\Lambda_n^{\operatorname{log}}
(\thetastar;x_{1:n};Y_{1:n})
\Bigr)
\leq
C\left(
\log\log\log n+\log\frac{1}{\delta}
\right).
\]
\end{theorem}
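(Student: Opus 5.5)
The plan is to reduce the two-dimensional supremum to a family of one-dimensional problems, control each with \Cref{cor:dimone}, and pay only a union bound over a small number of effectively distinct directions. By \Cref{lem:reformulation-via-subspaces} it suffices to bound $\Gamma_W(\eps;v)$ for a two-dimensional $W=\ran(X)$ and $v=X\thetastar$. Every $\theta\in\R^2$ can be written as $\theta=\thetastar+s u$ with $u\in S^1$ and $s\ge0$, so
\[
\Lambda_n^{\operatorname{log}}(\thetastar)=\sup_{u\in S^1}\Lambda_u,\qquad \Lambda_u\defn\sup_{s\in\R}\log\frac{p_{\thetastar+su}(Y)}{p_{\thetastar}(Y)} .
\]
For a fixed $u$, $\Lambda_u$ is the LLR of a univariate canonical GLM with covariates $\langle x_i,u\rangle$ and known offsets $\langle x_i,\thetastar\rangle$. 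An offset only changes the base measure $h$, so \Cref{prop:variational-characterization-of-log-likelihood-stat} and \Cref{cor:dimone} give $\P(\Lambda_u>\log(2/\delta))\le\delta$ for each fixed $u$. The target is therefore a finite set of directions $u_1,\dots,u_K$ with $\log K\lesssim\log\log\log n$, together with a deterministic comparison $\sup_u\Lambda_u\le\max_{j}\Lambda_{u_j}+O(1)$, or a slightly weaker comparison with controlled error. A union bound at level $\delta/K$ then gives the claim.

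\textbf{Step 1: discard large-margin points.} As in the proof of \Cref{prop:large-margin-fixed-design}, I will work with the exponential moment at $\lambda=1/2$ to split off the points with $|\langle x_i,\thetastar\rangle|\ge 3\log n$. Their contribution to $\E\exp(\Lambda/2)$ is bounded by $(1+n^{-3/2})^n\le2$. This should let me condition on the event that all such points are correctly signed, losing only $O(\log(1/\delta))$, so that only points with margins in $[0,3\log n]$ matter.

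\textbf{Step 2: build a structured set of directions.} For the remaining points I will group observations by dyadic margin scale, which gives $O(\log\log n)$ scale classes. I will then argue that $\Lambda_u$ only depends, up to an additive $O(1)$, on which scale class the optimizer $s u$ ``activates'', meaning where $s|\langle x_i,u\rangle|$ is comparable to the margin. This is the mechanism behind the lower bound construction of \Cref{lem:a-single-block}: there the $m_k\approx\e^k$ blocks with geometric scales $2^j,4^j$ are exactly $\log\log n$ competing directions, each winning $\approx k$. The goal is to show that the directions $u$ can be partitioned into $K\lesssim(\log\log n)^{C}$ arcs, indexed by (scale class, sign pattern of the dominant class). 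On each arc, the one-dimensional likelihoods are monotonically comparable, so the supremum over the arc is within $O(1)$ of the value at an arc endpoint or at the arc's best representative. A union bound over these $K$ arcs then yields $C(\log\log\log n+\log(1/\delta))$.

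\textbf{Main obstacle.} The hard step is Step 2: proving that the arcwise supremum is controlled by a single representative with only an $O(1)$ loss, without any regularity of the design. A naive angular net has size polynomial in $n$ and would only reproduce the $\log n$ bound of \Cref{prop:upper-bound-on-LLR-fixed-design}. I would first try to prove a peeling inequality instead of a deterministic comparison. For each scale class $j$, apply \Cref{prop:conjugate-scalar-case} at level $\delta_j=\delta/j^2$ to a one-dimensional statistic that dominates $\Lambda_u$ for all $u$ in class $j$. This is plausible because within a class the sufficient statistic $X^\T Y$ projected onto the relevant direction is essentially monotone in $u$, so the two-sided event $\{Z>z_+\}\cup\{Z<-z_-\}$ from the proof of \Cref{prop:conjugate-scalar-case} can be taken uniformly over the arc. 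Summing $\delta_j$ over $j\le O(\log\log n)$ would then give the $\log\log\log n$ term.
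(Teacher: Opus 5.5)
\textbf{There is a genuine gap, and it is in Step 2, the step you flag as the main obstacle.} As stated, Step 2 is not merely unproven; it is false. Your directions $u_1,\dots,u_K$ may depend on the design but not on $Y$, since otherwise \Cref{cor:dimone} plus a union bound no longer applies. Under that constraint, a deterministic comparison $\sup_u\Lambda_u\le\max_{j\le K}\Lambda_{u_j}+O(1)$ forces $K\gtrsim n$.

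Here is a counterexample. Take $\thetastar=0$ and let $x_1,\dots,x_n$ be equally spaced unit vectors in $\R^2$. Given any $K$ lines $\R u_j$, choose $u^\star$ at angle at least $\pi/(2K)$ from all of them, and set $Y_i=\sign\langle x_i,u^\star\rangle$. Then $\Lambda_{u^\star}=n\log 2$ as an extended supremum. Along each $u_j$, roughly $n/(2K)$ points are misclassified for either sign of $s$. Each of them contributes at most $0$ instead of $\log 2$, so $\max_j\Lambda_{u_j}\le n\log 2-cn/K$.

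All margins vanish in this example, so it lives inside a single scale class. Indexing arcs by scale class therefore does not help. Indexing them by the sign pattern of the dominant class makes the directions label dependent, which breaks the union bound.

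The peeling fallback fails for a related reason. $\Lambda$ is a supremum over one common parameter of a \emph{sum} over classes, not a maximum over classes. A statistic attached to class $j$ cannot dominate $\Lambda_u$ for directions ``in class $j$'', because the other classes contribute at the same $u$. Even in the quadratic regime, their gains can add up pathwise to $H$ times the largest single-class gain. Bounding by the sum of per-class statistics gives order $H\asymp\log\log n$. Reaching $\log H$ requires exploiting independence across classes, and neither of your steps does this.

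\textbf{What the paper does instead.} It union-bounds over small sets of blocks rather than over directions, using two ingredients your proposal lacks.

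First, every retained block gets a uniform fractional exponential moment $\E\exp(\eta_0\Gamma_s)\le C$ for its full two-parameter LLR, with no net over directions.
\begin{itemize}
\item For a dyadic block, normalizing by the true margins puts the constant vector in the fitted space. Concavity and Bernoulli thinning then remove the factor-two spread of margins. Profiling the intercept and conditioning on the error count $N$, together with Poisson--binomial atom estimates, gives $G_s^{(0)}\le G_m+C+2C_m$. Here $G_m$ is an intercept-only LLR and $C_m$ is a slope LLR for the conditional exponential family given $N$, and \Cref{cor:dimone} covers both. The supremum over all directions inside a block is thus handled exactly, not by discretization.
\item The central block needs a separate rank-two packing bound on the exceptional rows.
\end{itemize}

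Second, the aggregation replaces maxima by integrals $I_{P,R}=\int_{\mathcal B_R}L_P$. Concavity shows that $e^{-\Gamma_{P,R}}I_{P,R}$ is comparable to the area of the near-maximizer set. The planar quantitative Helly theorem, applied pathwise, selects at most four blocks whose constraint sets already have comparable area. The omitted blocks are absorbed through the conditional mean-one identity
\[
\E\bigl[I_{\{1,\dots,H\},R}/I_{P,R}\mid (B_{I_s})_{s\in P}\bigr]=1,
\]
and the union bound runs over the $O(H^4)$ fixed subsets $P$, giving $\log H\lesssim\log\log\log n$.

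Your Step 1 and the dyadic grouping by margin scale match the paper. Without these two ingredients, however, the proposal does not yet contain a proof.
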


\begin{proofsketch}
Throughout the proof, $C,c>0$ denote universal constants whose values may
change from line to line.
For simplicity, in the derivation below we assume that the relevant
suprema are attained.
By \Cref{lem:reformulation-via-subspaces}, it is enough to fix a subspace
$W\subset\R^n$ with $\dim(W)\leq2$, a vector $v\in W$, and
$\eps\sim p_v$.  Recall that
\[
p_v(\eps)=\prod_{i=1}^n\sigma(\eps_i v_i),
\qquad
\Gamma_W(\eps;v)=\sup_{w\in W}\log\frac{p_w(\eps)}{p_v(\eps)}.
\]
We prove the moment bound
\begin{equation}
\label{eq:d2-sketch-mgf}
\E_{\eps\sim p_v}\exp\bigl(\eta\Gamma_W(\eps;v)\bigr)
\leq C(2+\log\log n)^C
\end{equation}
for universal $\eta>0$ and $n\geq16$.  Since the right hand side is uniform
in $W$ and $v$, Markov's inequality will give the desired quantile bound.
We treat the different margin ranges separately and combine the bounds at the
end.

\paragraph{The large margin coordinates.}
We first separate the coordinates according to the size of their true margins.
Let $a_i=|v_i|$ and $s_i=\sign(v_i)$, with $s_i=1$ when $v_i=0$, and define
\[
B_i=\1\{\eps_i=-s_i\},
\qquad q_i=\sigma(-a_i)\in(0,1/2].
\]
Thus $B_i=1$ means that the observed label goes against the more likely label,
and the $B_i$ are independent with $B_i\sim\Ber{q_i}$.  Let
\[
\mathcal E=\{i:a_i>2\log n\}
\]
be the set of large margin coordinates.  Define
\[
\begin{aligned}
p_{\mathcal E}(b)&=\prod_{i\in\mathcal E}q_i^{b_i}(1-q_i)^{1-b_i},
&b&\in\{0,1\}^{|\mathcal E|},\\
p_{w,\mathcal E}(b)&=\prod_{i\in\mathcal E}
\sigma\bigl(s_i(1-2b_i)w_i\bigr),\\
\Gamma_{\mathcal E}(b)&=\sup_{w\in W}
\log\frac{p_{w,\mathcal E}(b)}{p_{\mathcal E}(b)}.&&
\end{aligned}
\]
Since every fitted mass is at most one,
$\Gamma_{\mathcal E}(b)\leq-\log p_{\mathcal E}(b)$.  Thus, for any fixed
$0<\eta<1/2$,
\begin{equation}
\label{eq:d2-sketch-large-margin-moment}
\E\exp\bigl(\eta\Gamma_{\mathcal E}(B_{\mathcal E})\bigr)
\leq\sum_{b_{\mathcal E}}p_{\mathcal E}(b_{\mathcal E})^{1-\eta}
=\prod_{i\in\mathcal E}\bigl((1-q_i)^{1-\eta}+q_i^{1-\eta}\bigr)\leq C.
\end{equation}
Here the last inequality uses $q_i\leq n^{-2}$ on $\mathcal E$.

\paragraph{The retained coordinates and their block likelihood ratios.}
Write $\mathcal R=[n]\setminus\mathcal E$ for the retained coordinates.  Let
$I_{\mathrm{cen}}=\{i\in\mathcal R:a_i\leq1\}$ and split the remaining
coordinates into the nonempty dyadic blocks
$\{i\in\mathcal R:2^j<a_i\leq2^{j+1}\}$, $j\geq0$.  Enumerate all nonempty
retained blocks as $I_1,\ldots,I_H$.  Since $a_i\leq2\log n$ on $\mathcal R$,
\begin{equation}
\label{eq:d2-sketch-block-count}
H\leq C(1+\log\log n).
\end{equation}
Choose a linear map $T:\R^2\to W$ with range $W$; then $v+Tu$ runs through
$W$ as $u$ runs through $\R^2$.
For $b_{I_s}\in\{0,1\}^{|I_s|}$, define the likelihood ratio of block $I_s$ by
\[
L_s(u;b_{I_s})
=\prod_{i\in I_s}
\frac{\sigma\bigl(s_i(1-2b_i)(v_i+(Tu)_i)\bigr)}
{\sigma\bigl(s_i(1-2b_i)v_i\bigr)},
\qquad
\Gamma_s(b_{I_s})=\sup_{u\in\R^2}\log L_s(u;b_{I_s}).
\]
By subadditivity,
\begin{equation}
\label{eq:d2-sketch-margin-decomposition}
\Gamma_W(\eps;v)
\leq\Gamma_{\mathcal E}(B_{\mathcal E})
+\sup_{u\in\R^2}\sum_{s=1}^H\log L_s(u;B_{I_s}).
\end{equation}
We first prove, uniformly for every retained block,
\begin{equation}
\label{eq:d2-sketch-block-moment}
\E\exp\bigl(\eta_0\Gamma_s(B_{I_s})\bigr)\leq C,
\qquad 1\leq s\leq H,
\end{equation}
for universal constants $\eta_0,C>0$, where the expectation is under the true
block law.

\paragraph{The dyadic block bound.}
Consider a dyadic block with $A\leq a_i\leq2A$.  The normalized space
\[
\left\{\left(\frac{s_iw_i}{a_i}\right)_{i\in I_s}:w\in W\right\}
\]
has dimension at most two and contains the constant vector $\1$.  We may
therefore write its elements as $\alpha+\beta t_i$, with $t_i=0$ in the case
of rank one.  Set
\[
\omega_i=\frac{a_i}{A}-1\in[0,1],
\qquad y_i=A(1-\alpha-\beta t_i),
\qquad
\ell_i(x;b)=bx-\log\bigl(1-q_i+q_i\exp(x)\bigr).
\]
Fix for now $b\in\{0,1\}^{|I_s|}$; later we take $b=B_{I_s}$ under the true
block law.  The block log likelihood ratio at $(\alpha,\beta)$ is
$\sum_{i\in I_s}\ell_i((1+\omega_i)y_i;b_i)$.  Since
$\ell_i(\,\cdot\,;b)$ is concave and $\ell_i(0;b)=0$, we have
\[
\ell_i((1+\omega_i)y;b)
\leq \ell_i(y;b)+\omega_i\ell_i(y;b).
\]
Taking suprema gives
\[
\Gamma_s(b)\leq G_s^{(0)}(b)+G_s^{(1)}(b),\qquad
G_s^{(0)}(b)=\sup_{\alpha,\beta}\sum_{i\in I_s}\ell_i(y_i;b_i),\qquad
G_s^{(1)}(b)=\sup_{\alpha,\beta}\sum_{i\in I_s}
\omega_i\ell_i(y_i;b_i).
\]
We bound $G_s^{(0)}(b)$ using two likelihood ratios in dimension one, both
covered by \Cref{cor:dimone}.  Reparameterize $y_i=\gamma+\lambda t_i$ and set
\[
P_0=\bigotimes_{i\in I_s}\Ber{q_i},
\qquad
P_{\gamma,\lambda}=\bigotimes_{i\in I_s}\Ber{q_i(\gamma,\lambda)},
\]
where
\[
q_i(\gamma,\lambda)=\sigma\left(\log\frac{q_i}{1-q_i}
+\gamma+\lambda t_i\right).
\]
Also write
\[
N(b)=\sum_{i\in I_s}b_i,
\qquad
U(b)=\sum_{i\in I_s}t_i b_i.
\]
For the random vector $B$, write $N=N(B)$ and $U=U(B)$.
With this notation,
\[
\log\frac{P_{\gamma,\lambda}(b)}{P_0(b)}
=\sum_{i\in I_s}\ell_i(\gamma+\lambda t_i;b_i),
\qquad
G_s^{(0)}(b)=\sup_{\gamma,\lambda}
\log\frac{P_{\gamma,\lambda}(b)}{P_0(b)}.
\]
Write $m=N(b)$ and suppose first that $0<m<|I_s|$.  For each
$\lambda$, let $\gamma(\lambda)$ be the unique intercept for which
$\E_{P_{\gamma(\lambda),\lambda}}N=m$; it is unique because the expected count
increases continuously from $0$ to $|I_s|$ with $\gamma$.  Direct
differentiation shows that $\gamma(\lambda)$ is the maximizing intercept.  We compare this law
with the law obtained by varying only the intercept and having the same
expected count.  Let
$\widehat\lambda$ be a maximizer of $\lambda\mapsto
\log(P_{\gamma(\lambda),\lambda}(b)/P_0(b))$.  Define
\[
\begin{gathered}
Q=P_{\gamma(\widehat\lambda),\widehat\lambda},
\qquad
R=P_{\gamma(0),0},
\qquad
\E_QN=\E_RN=m.
\end{gathered}
\]
Set
\[
\begin{aligned}
G_m(b)&=\sup_{\gamma\in\R}\log\frac{P_{\gamma,0}(b)}{P_0(b)}
=\log\frac{R(b)}{P_0(b)},\\
C_m(b)&=\sup_{\lambda\in\R}
\log\frac{P_{\gamma(\lambda),\lambda}(b\mid N=m)}
{P_0(b\mid N=m)}\\
&=\sup_{\lambda\in\R}
\left(\lambda U(b)-
\log\E_{P_0}\left[\exp(\lambda U)\mid N=m\right]\right).
\end{aligned}
\]
The first quantity is the log likelihood ratio obtained by varying only the
intercept.  In the second, conditioning on $N=m$ cancels the intercept factor,
leaving the conditional likelihood ratio obtained by varying the slope with
the observed count fixed.  Differentiating the profiled likelihood at
$\widehat\lambda$ gives $\E_QU=U(b)$.  Since $\log(\ud Q/\ud R)$ is affine in $N$ and $U$, and
$\E_QN=N(b)=m$, its value at $b$ equals its expectation under $Q$.  Hence
\[
\log\frac{Q(b)}{R(b)}
=\kl{Q}{R}.
\]
Moreover,
\[
\log\frac{Q(b)}{R(b)}
=\log\frac{Q(N=m)}{R(N=m)}
+\log\frac{Q(b\mid N=m)}{R(b\mid N=m)}.
\]
Since $R(\,\cdot\mid N=m)=P_0(\,\cdot\mid N=m)$, the definition of $C_m(b)$
bounds the second term, and therefore
\[
\kl{Q}{R}
\leq\log\frac{Q(N=m)}{R(N=m)}+C_m(b).
\]

Since $N$ is a sum of independent Bernoulli variables with integer mean $m$
under both $Q$ and $R$, the standard estimate
\cite[Theorem~1]{BaillonCominettiVaisman2016}, together with the mode property
\cite[Theorem~4]{Darroch1964} and Chebyshev's inequality for the reverse bound,
gives
\[
Q(N=m)\asymp(1+\Var_Q(N))^{-1/2},
\qquad
R(N=m)\asymp(1+\Var_R(N))^{-1/2}.
\]
The atom bounds give
\[
\log\frac{Q(N=m)}{R(N=m)}
\leq C+\frac12\log\frac{1+\Var_R(N)}{1+\Var_Q(N)},
\qquad
\Var_R(N)\leq\frac32\Var_Q(N)+3\kl{Q}{R},
\]
where the second inequality follows by summing
$r(1-r)\leq\tfrac32p(1-p)+3\kl{\Ber{p}}{\Ber{r}}$. Moreover, with
$K=\kl{Q}{R}$, the variance bound gives
$(1+\Var_R(N))/(1+\Var_Q(N))\leq C(1+K)$.
Consequently,
\[
\log\frac{Q(N=m)}{R(N=m)}
\leq C+\frac12\kl{Q}{R}.
\]
Since $\log\frac{Q(b)}{P_0(b)}=G_m(b)+\kl{Q}{R}$, the preceding bounds give
\begin{equation}
\label{eq:d2-sketch-scalar-reduction}
G_s^{(0)}(b)\leq G_m(b)+C+2C_m(b).
\end{equation}
When $m=0$ or $m=|I_s|$, the family obtained by varying only the intercept has
the same supremum for the observed mass, and the conclusion holds with
$C_m(b)=0$.  We now take
$b=B=(B_i)_{i\in I_s}\sim P_0$ and set
\[
G_N=G_{N(B)}(B),
\qquad
C_N=C_{N(B)}(B).
\]
Thus both terms are covered by our existing result in dimension one: $G_N$
corresponds to varying the intercept, while, conditionally on $N$, $C_N$
corresponds to varying the slope.
Part~(i) of \Cref{cor:dimone} gives, for $t\geq\log2$,
\[
\P_{P_0}(G_N>t)\leq2\exp(-t),
\qquad
\P_{P_0}(C_N>t\mid N)\leq2\exp(-t).
\]
Integrating these tails and applying H\"older's inequality to the pointwise
bound above gives
\[
\E_{P_0}\exp\bigl(\eta G_s^{(0)}(B)\bigr)\leq C
\]
for a sufficiently small universal $\eta>0$.

For the weighted term, let $S_i\sim\Ber{\omega_i}$ independently of $B$.
Since $\sum_{i\in I_s}\omega_i\ell_i(y_i;B_i)
=\E_S\sum_{i\in I_s}S_i\ell_i(y_i;B_i)$, moving the supremum through the
expectation and applying Jensen's inequality gives
\[
\exp\bigl(\eta G_s^{(1)}(B)\bigr)
\leq\E_S\exp\left(\eta\sup_{\alpha,\beta}
\sum_{i\in I_s}S_i\ell_i(y_i;B_i)\right).
\]
For fixed values of the variables $S_i$, the supremum on the right is
$G_s^{(0)}$ on the selected coordinates and is zero if no coordinate is
selected.
Hence the preceding bound for $G_s^{(0)}$ gives
\[
\E_B\exp\bigl(\eta G_s^{(1)}(B)\bigr)
\leq\E_{B,S}\exp\left(\eta\sup_{\alpha,\beta}
\sum_{i\in I_s}S_i\ell_i(y_i;B_i)\right)\leq C.
\]
Thus H\"older's inequality, with a smaller $\eta$ if necessary,
proves~\eqref{eq:d2-sketch-block-moment} for every dyadic block.

\paragraph{The central block bound.}
It remains to consider the central block $I_{\mathrm{cen}}$; the empty case is
trivial, so assume it is nonempty.  Here $|v_i|\leq1$.  Let $r_i\in\R^2$ be
the rows of $T$, so that $(Th)_i=\langle r_i,h\rangle$.  Returning to the original label
encoding, set
\[
\xi_i=\1\{\eps_i=1\}-\sigma(v_i),
\qquad
k_i(z)=\log\frac{1+\exp(v_i+z)}{1+\exp(v_i)}-\sigma(v_i)z.
\]
Suppressing the observed labels from the notation, write
\[
\Gamma_{\mathrm{cen}}
=\sup_{h\in\R^2}\sum_{i\in I_{\mathrm{cen}}}
\left(\xi_i\langle r_i,h\rangle-k_i(\langle r_i,h\rangle)\right).
\]
It is standard to show that there is a universal $\kappa>0$ such that, uniformly for $|v_i|\leq1$,
\begin{equation}
\label{eq:d2-sketch-central-curvature}
k_i(z)\geq\kappa z^2\quad\text{when }|z|\leq1,
\qquad
\kl{\Ber{\sigma(v_i+z)}}{\Ber{\sigma(v_i)}}
\geq\kappa\min\{z^2,1\}.
\end{equation}
Fix $t\geq1$ and consider the event
$t\leq\Gamma_{\mathrm{cen}}<2t$.  Let $\widehat h$ be a maximizer.  The score
equation gives
\[
\Gamma_{\mathrm{cen}}
=\sum_{i\in I_{\mathrm{cen}}}
\kl{\Ber{\sigma(v_i+\langle r_i,\widehat h\rangle)}}
{\Ber{\sigma(v_i)}}<2t,
\]
and hence
\[
\sum_{i\in I_{\mathrm{cen}}}
\min\left\{\langle r_i,\widehat h\rangle^2,1\right\}\leq Ct.
\]
Consider the deterministic set
\[
\mathcal A_t=
\left\{i\in I_{\mathrm{cen}}:\exists h\in\R^2,
\ \sum_{j\in I_{\mathrm{cen}}}
\min\left\{\langle r_j,h\rangle^2,1\right\}\leq Ct,
\ |\langle r_i,h\rangle|\geq1\right\}.
\]
We claim that $|\mathcal A_t|\leq C(1+t)$.  For each
$i\in\mathcal A_t$, choose a corresponding $h_i$ and change its
sign and scale it toward zero so that $\langle r_i,h_i\rangle=1$, while
$\sum_{j\in I_{\mathrm{cen}}}
\min\left\{\langle r_j,h_i\rangle^2,1\right\}\leq Ct$.  Draw a directed edge
$i\to j$ when $i\ne j$ and $|\langle r_j,h_i\rangle|\geq1/2$.  Each
outgoing neighbor contributes at least $1/4$ to the preceding sum, so the total
number of directed edges is at most $Ct|\mathcal A_t|$.  The underlying graph
therefore has average degree at most $Ct$ and an independent set of size at
least $|\mathcal A_t|/(Ct+1)$.  On an independent set $S$, let
$M=(\langle r_j,h_i\rangle)_{j,i\in S}$.  The matrix $M$ has rank at most two,
has ones on its diagonal, and all its other entries have absolute value less
than $1/2$.  Since $\rank(M)\leq2$,
$|\operatorname{tr}(M)|^2\leq\rank(M)\|M\|_{\mathrm F}^2$, and hence
\[
|S|^2=|\operatorname{tr}(M)|^2
\leq2\sum_{i,j\in S}\langle r_j,h_i\rangle^2
<2|S|+\frac12|S|(|S|-1),
\]
so $|S|\leq3$, which proves the claimed bound.

For $i\in I_{\mathrm{cen}}\setminus\mathcal A_t$ we have
$|\langle r_i,\widehat h\rangle|<1$.  Thus, on this event,
$\Gamma_{\mathrm{cen}}$ is at most the sum of the unrestricted supremum over
$\mathcal A_t$ and the supremum over $I_{\mathrm{cen}}\setminus\mathcal A_t$
subject to $|\langle r_i,h\rangle|\leq1$ for all such $i$.
The unrestricted term is the likelihood ratio statistic on $\mathcal A_t$
for a subspace of dimension at most two.  If
this subspace has dimension zero, the supremum is zero; otherwise,
\Cref{lem:shtarkov-moment-bound-subspaces} and Markov's inequality give
\[
\P\left(
\sup_{h\in\R^2}\sum_{i\in\mathcal A_t}
\left(\xi_i\langle r_i,h\rangle-k_i(\langle r_i,h\rangle)\right)
\geq x\right)
\leq C(1+|\mathcal A_t|)^2\exp(-x).
\]
For the constrained supremum on the complement, let $\Pi_t$ be the orthogonal
projection onto the space
\[
\left\{(\langle r_i,h\rangle)_{i\in I_{\mathrm{cen}}\setminus\mathcal A_t}:
h\in\R^2\right\},
\]
whose dimension is at most two.  The first bound
in~\eqref{eq:d2-sketch-central-curvature} gives
\[
\sup_{\substack{h\in\R^2\\
|\langle r_i,h\rangle|\leq1,\ i\in I_{\mathrm{cen}}\setminus\mathcal A_t}}
\sum_{i\in I_{\mathrm{cen}}\setminus\mathcal A_t}
\left(\xi_i\langle r_i,h\rangle-k_i(\langle r_i,h\rangle)\right) \leq
\frac{1}{4\kappa}
\left\|\Pi_t(\xi_i)_{i\in I_{\mathrm{cen}}\setminus\mathcal A_t}\right\|_2^2.
\]
Indeed, applying Hoeffding's inequality to an orthonormal basis of the range
of $\Pi_t$ and taking a union bound shows that the right hand side has upper
tail $C\exp(-cx)$.  Together with
$|\mathcal A_t|\leq C(1+t)$, this proves
\[
\P(t\leq\Gamma_{\mathrm{cen}}<2t)
\leq C(1+t)^2\exp(-ct).
\]
Therefore, for sufficiently small universal $\eta_0>0$,
\[
\E\exp\bigl(\eta_0\Gamma_{\mathrm{cen}}\bigr)
\leq \exp(\eta_0)+\sum_{k\geq0}\exp\bigl(2\eta_0 2^k\bigr)
\P(2^k\leq\Gamma_{\mathrm{cen}}<2^{k+1})
\leq C.
\]
This proves~\eqref{eq:d2-sketch-block-moment} for the central block.

\paragraph{Aggregating the retained blocks.}
Below we write $L_s(u)=L_s(u;B_{I_s})$ and
$\Gamma_s=\Gamma_s(B_{I_s})$.  Thus
\[
\Gamma_{\mathrm{ret}}
=\sup_{u\in\R^2}\sum_{s=1}^H\log L_s(u).
\]
The direct bound $\Gamma_{\mathrm{ret}}\leq\sum_{s=1}^H\Gamma_s$ is of order
$H$, whereas we need order $\log(H+1)$.  The planar quantitative Helly theorem
reduces the relevant area comparison to at most four blocks.  There are only
polynomially many such choices, which gives the required logarithm.  To use
this reduction, we pass from maxima to integrals: concavity relates each
integral to the area of a set of near maximizers, while Fubini preserves the
mean one identity for likelihood ratios over the omitted blocks.

The case $H=0$ is trivial.  Assume that $H\geq1$, fix $R>0$, and define
$
\mathcal B_R=\left\{u\in\R^2:\|u\|_2\leq R\right\}.
$
For $P\subseteq\{1,\ldots,H\}$, set $L_\varnothing\equiv1$ and define
\[
L_P(u)=\prod_{s\in P}L_s(u),
\qquad
\Gamma_{P,R}=\max_{u\in\mathcal B_R}\log L_P(u),
\qquad
I_{P,R}=\int_{\mathcal B_R}L_P(u)\,du,
\]
and
\[
\mathcal K_{P,R}
=\left\{u\in\mathcal B_R:\log L_P(u)\geq\Gamma_{P,R}-1\right\}.
\]
Thus $\Gamma_{P,R}$ is the maximum block log likelihood ratio on the ball,
while $I_{P,R}$ is the corresponding integrated likelihood ratio.  We now
compare the integral with the area of $\mathcal K_{P,R}$.  The lower bound
follows by integrating over this set.  Let $u_{P,R}\in\mathcal B_R$ be a
maximizer in the definition of $\Gamma_{P,R}$.  Concavity of $\log L_P(\cdot)$
gives, for every $t\geq1$,
\[
\left\{u\in\mathcal B_R:
\Gamma_{P,R}-\log L_P(u)\leq t\right\}
\subseteq
u_{P,R}+t\left(\mathcal K_{P,R}-u_{P,R}\right).
\]
Consequently, we have
\[
\exp(-\Gamma_{P,R})I_{P,R}
=\int_0^\infty \exp(-t)
\operatorname{area}\left(\left\{u\in\mathcal B_R:
\Gamma_{P,R}-\log L_P(u)\leq t\right\}\right)\,dt \asymp\operatorname{area}(\mathcal K_{P,R}).
\]

We next measure the gap between maximizing the blocks separately over $\R^2$
and maximizing them jointly with one common parameter on $\mathcal B_R$.  Define this gap by
\[
\Delta_R=\sum_{s=1}^H\Gamma_s-\Gamma_{\{1,\ldots,H\},R}.
\]
The set on which no individual block
loses more than $\Delta_R+1$ is
\[
\mathcal C_s
=\left\{u\in\mathcal B_R:
\Gamma_s-\log L_s(u)\leq\Delta_R+1\right\},
\qquad
\mathcal C=\bigcap_{s=1}^H\mathcal C_s.
\]
Each term $\Gamma_s-\log L_s(u)$ is nonnegative.  Since
\[
\sum_{s=1}^H\left(\Gamma_s-\log L_s(u)\right)
=\Delta_R+\Gamma_{\{1,\ldots,H\},R}
-\sum_{s=1}^H\log L_s(u),
\]
we have
\[
\mathcal K_{\{1,\ldots,H\},R}\subseteq\mathcal C
\subseteq
\left\{u\in\mathcal B_R:
\sum_{s=1}^H\log L_s(u)
\geq\Gamma_{\{1,\ldots,H\},R}-H(\Delta_R+1)\right\}.
\]
The preceding area comparison therefore gives
\[
\operatorname{area}(\mathcal C)
\leq C(H+1)^2(\Delta_R+1)^2
\operatorname{area}(\mathcal K_{\{1,\ldots,H\},R}).
\]
Each $\mathcal C_s$ is compact and convex.  Moreover,
$\mathcal K_{\{1,\ldots,H\},R}\subseteq\mathcal C$ has positive area by
continuity.  Applying the planar quantitative Helly
theorem~\cite{BaranyKatchalskiPach1982} pathwise
gives a set
$\widehat P\subseteq\{1,\ldots,H\}$, which may depend on the observations,
such that $1\leq|\widehat P|\leq4$ and
\[
\operatorname{area}\left(\bigcap_{s\in\widehat P}\mathcal C_s\right)
\leq C\operatorname{area}(\mathcal C).
\]
Moreover, we have
\[
\sum_{s\in\widehat P}\Gamma_s-\Gamma_{\widehat P,R}
=\min_{u\in\mathcal B_R}\left\{
\sum_{s\in\widehat P}\left(\Gamma_s-\log L_s(u)\right)\right\}
\leq\min_{u\in\mathcal B_R}\left\{
\sum_{s=1}^H\left(\Gamma_s-\log L_s(u)\right)\right\}
=\Delta_R.
\]
It follows that
$\mathcal K_{\widehat P,R}\subseteq\bigcap_{s\in\widehat P}\mathcal C_s$.
Using the area comparison once again gives
\[
\frac{\exp(-\Gamma_{\widehat P,R})I_{\widehat P,R}}
{\exp(-\Gamma_{\{1,\ldots,H\},R})I_{\{1,\ldots,H\},R}}
\leq C(H+1)^2(\Delta_R+1)^2.
\]

To avoid conditioning on the set $\widehat P$, which depends on the data, let
$\mathcal P$ be the fixed family of all nonempty subsets of
$\{1,\ldots,H\}$ with at most four elements:
\[
\mathcal P
=\left\{P\subseteq\{1,\ldots,H\}:1\leq|P|\leq4\right\}.
\]
In particular, $|\mathcal P|\leq C(H+1)^4$.  For $P\in\mathcal P$, Fubini's
theorem, block independence, and conditional Jensen's inequality give, for
$0<r\leq1$,
\[
\begin{aligned}
&\E\left[\frac{I_{\{1,\ldots,H\},R}}{I_{P,R}}
\,\middle|\,(B_{I_s})_{s\in P}\right]
=\frac{1}{I_{P,R}}\int_{\mathcal B_R}L_P(u)
\E L_{\{1,\ldots,H\}\setminus P}(u)\,du =1,\\
&\E\left[\left(\frac{I_{\{1,\ldots,H\},R}}{I_{P,R}}\right)^r
\,\middle|\,(B_{I_s})_{s\in P}\right]\leq1.
\end{aligned}
\]
Here $I_{P,R}$ depends only on the blocks indexed by $P$, while the conditional
expectation is over the omitted blocks.  Thus the ratio of the two integrals
has conditional mean one for every fixed $P$.  The area comparison above connects this ratio back
to the maxima.  Rearranging the
preceding ratio gives the pointwise bound
\[
\exp(\Gamma_{\{1,\ldots,H\},R})
\leq C(H+1)^2(\Delta_R+1)^2
\max_{P\in\mathcal P}\exp(\Gamma_{P,R})
\frac{I_{\{1,\ldots,H\},R}}{I_{P,R}}.
\]

Choose a fixed $0<\eta\leq\min\{\eta_0/2,1/4\}$.
For every fixed $P\in\mathcal P$, the conditional bound above, block
independence, and~\eqref{eq:d2-sketch-block-moment} give
\[
\E\left[\exp(2\eta\Gamma_{P,R})
\left(\frac{I_{\{1,\ldots,H\},R}}{I_{P,R}}\right)^{2\eta}\right]
\leq\E\exp\left(2\eta\sum_{s\in P}\Gamma_s\right)
\leq C.
\]
Also, \eqref{eq:d2-sketch-block-moment} gives $\E\Gamma_s\leq C$, and
$0\leq\Delta_R\leq\sum_{s=1}^H\Gamma_s$.  Since $4\eta\leq1$, concavity gives
\[
\E(\Delta_R+1)^{4\eta}\leq C(H+1)^{4\eta}.
\]
Using the Cauchy--Schwarz inequality and
$\E\max_{P\in\mathcal P}Z_P\leq\sum_{P\in\mathcal P}\E Z_P$ for
nonnegative $Z_P$ gives
\[
\begin{aligned}
\E\exp(\eta\Gamma_{\{1,\ldots,H\},R})
&\leq C(H+1)^{2\eta}
\left(\E(\Delta_R+1)^{4\eta}\right)^{1/2}
\left(\sum_{P\in\mathcal P}
\E\left[\exp(2\eta\Gamma_{P,R})
\left(\frac{I_{\{1,\ldots,H\},R}}{I_{P,R}}\right)^{2\eta}\right]\right)^{1/2}\\
&\leq C(H+1)^{4\eta}|\mathcal P|^{1/2}
\leq C(H+1)^{4\eta+2}.
\end{aligned}
\]
The last bound does not depend on $R$.  Since
$\Gamma_{\{1,\ldots,H\},R}\to\Gamma_{\mathrm{ret}}$ as $R\to\infty$,
monotone convergence gives the same moment bound for
$\Gamma_{\mathrm{ret}}$.  Combining it with
\eqref{eq:d2-sketch-large-margin-moment} through
\eqref{eq:d2-sketch-margin-decomposition}, and applying H\"older's inequality
after decreasing $\eta$ if necessary, proves~\eqref{eq:d2-sketch-mgf}.
Markov's inequality then gives
\[
\Quant{1-\delta}\bigl(\Gamma_W(\eps;v)\bigr)
\leq C\left(\log(H+2)+\log\frac1\delta\right)
\leq C\left(\log\log\log n+\log\frac1\delta\right).
\]
Here the last inequality uses~\eqref{eq:d2-sketch-block-count}.
The claim follows.
\end{proofsketch}

\begin{remark}[Where $d = 2$ enters]
Arguably the essential use of $d=2$ in the proof occurs in the scalar reduction
\eqref{eq:d2-sketch-scalar-reduction}.  After profiling the common intercept
and conditioning on the number of errors, only one slope parameter remains,
so \Cref{cor:dimone} gives the block bound
\eqref{eq:d2-sketch-block-moment}.  For example, in dimension $d = 3$, two slope parameters
remain and the same reduction to the one-dimensional bound is unavailable.
\end{remark}

For completeness, we also state the corresponding quantile lower bound.
There is almost nothing new in its proof compared with the proof of
\Cref{prop:d2-logloglog-lower}.  We retain the event used in that proof
instead of averaging over it, and then use
\Cref{prop:coordinate-subspace-lower} for the term $\log(1/\delta)$.

\begin{corollary}[Worst case quantile lower bound in dimension two]
\label{cor:d2-logloglog-quantile-lower}
There exists a constant $c>0$ such that, for any $n\geq16$ and any
$\delta\in(0,1/(16\e^2)]$, it holds that
\[
\sup_{x_1,\ldots,x_n\in\R^2}
\sup_{\thetastar\in\R^2}
\Quant{1-\delta}
\Bigl(
\Lambda_n^{\operatorname{log}}
(\thetastar;x_{1:n};Y_{1:n})
\Bigr)
\geq
c\left(
\log\log\log n+\log\frac{1}{\delta}
\right).
\]
\end{corollary}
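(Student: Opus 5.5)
The plan is to prove the two terms separately and then combine them via $\max\{a,b\}\geq (a+b)/2$, using the subspace reformulation of \Cref{lem:reformulation-via-subspaces}\ref{item:worst-case-reduction}. Since two-dimensional designs realize every subspace of dimension at most two, it is enough to exhibit, for each term, a pair $(W,v)$ with $\dim W\leq 2$ and $v\in W$ whose $(1-\delta)$-quantile of $\Gamma_W(\eps;v)$ is large.

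\emph{The $\log(1/\delta)$ term.} Apply \Cref{prop:coordinate-subspace-lower} with $k=1$ (or $k=2$), with $\delta\leq 1/(16\e^2)\leq 1/2$. This gives a pair for which the quantile is at least $c(1+\log(1/\delta))$. Alternatively, we can use the constant vector $\1_n$, which lies in any subspace containing the constant direction, as in the argument for \eqref{eq:vander-q-logdelta}.

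\emph{The $\log\log\log n$ term.} For $n\geq \ceil{\exp(\exp(\exp(20)))}$, take the padded block construction from the proof of \Cref{prop:d2-logloglog-lower}. This uses $U=\Span\{x,y\}$ with $x_i=2^j$ and $y_i=4^j$ on $B_j$, and $u=ky$ with $k=\floor{\log\log\log n}-1$. The proof of \Cref{lem:a-single-block} already establishes two facts on the event $\cA=\bigcup_j A_j$ of single-error patterns:
\begin{itemize}
\item $p_u(\cA)\geq 1/(8\e^2)$;
\item every $\eps\in\cA$ satisfies $\Gamma_U(\eps;u)\geq k-8$.
\end{itemize}
Padding by zero coordinates leaves both statements unchanged. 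Since $\delta\leq 1/(16\e^2)<1/(8\e^2)\leq \P(\cA)$, we get $\P(\Gamma_W(\eps;v)\geq k-8)>\delta$. Hence $\Quant{1-\delta}(\Gamma_W)\geq k-8\geq \tfrac12\log\log\log n$, using $L\geq 20$ exactly as in that proof. This is why the hypothesis on $\delta$ matches the constant $1/(16\e^2)$.

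\emph{Small $n$.} For $16\leq n<\ceil{\exp(\exp(\exp(20)))}$, $\log\log\log n$ is bounded by a universal constant. The $\log(1/\delta)$ bound, which is at least a positive constant because $\delta\leq 1/(16\e^2)$, then dominates $\log\log\log n$ after shrinking $c$. The only genuine check is that $\log\log\log n$ is well defined and nonnegative for $n\geq 16$, which holds since $\log\log 16>1$.

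The main, though mild, obstacle is verifying that the event-level bounds in the proof of \Cref{lem:a-single-block} hold pointwise on $\cA$ rather than only in expectation. Inspecting \eqref{ineq:required-lower-bounds}(ii) confirms this: it is stated as a minimum over $\eps\in A_j$.
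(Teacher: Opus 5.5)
Your proposal is correct and matches the paper's proof sketch. You keep the single-error event $\cA$ from the proof of \Cref{lem:a-single-block}, which has probability at least $1/(8\e^2)>\delta$ and on which $\Gamma_W\geq k-8\geq\tfrac12\log\log\log n$, instead of averaging over it; you take the $\log(1/\delta)$ term from \Cref{prop:coordinate-subspace-lower}; and you absorb the bounded range $16\leq n<\ceil{\exp(\exp(\exp(20)))}$ into the constant, exactly as the paper does (your remark that $\delta\leq 1/(16\e^2)$ is ``forced'' is only cosmetic, since the argument needs just $\delta<1/(8\e^2)$).
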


\begin{proofsketch}
For all sufficiently large $n$, the proof of
\Cref{prop:d2-logloglog-lower} gives a two-dimensional subspace
$W\subset\R^n$ and a vector $v\in W$ such that
\[
\P_{\eps\sim p_v}\left(
\Gamma_W(\eps;v)\geq\frac12\log\log\log n
\right)
\geq\frac{1}{8\e^2}.
\]
Thus, by~\Cref{lem:reformulation-via-subspaces}, there are design vectors and
a target parameter for which the likelihood ratio is at least
$\tfrac12\log\log\log n$ with probability at least
$1/(8\e^2)>\delta$. Its $(1-\delta)$ quantile has the same lower bound. The
term $\log(1/\delta)$ follows from
\Cref{prop:coordinate-subspace-lower}, applied with $k=2$. Taking the better of the two
constructions and decreasing the universal constant gives the displayed sum. The remaining bounded range of $n$ is covered by the
coordinate subspace bound after a further adjustment of the constant.
\end{proofsketch}

\newpage
\begin{promptbox}
Let
\[
\sigma(t)=\frac1{1+e^{-t}}.
\]
For an integer \(n\ge1\), let \(W\subseteq\mathbb R^n\) be a deterministic linear
subspace with \(\dim W\le2\), and let \(v\in W\). For
\(y\in\{-1,1\}^n\), define
\[
p_v(y)=\prod_{i=1}^n\sigma(y_iv_i),\qquad
\Gamma_W(y;v)=\sup_{w\in W}\log\frac{p_w(y)}{p_v(y)}
=\sup_{w\in W}\sum_{i=1}^n
\log\frac{\sigma(y_iw_i)}{\sigma(y_iv_i)}.
\]
The supremum is extended: it need not be attained by a finite \(w\).

Prove completely that there are absolute constants \(C>0\) and \(N\in\mathbb N\)
such that, for every \(n\ge N\), every such \(W,v\), every \(0<\delta<1\), and
\(Y\sim p_v\),
\[
\Pr_v\!\left\{\Gamma_W(Y;v)>
C\left(\log\log\log n+\log\frac1\delta\right)\right\}\le\delta .
\]
All logarithms are natural, and \(N\) is large enough that
\(\log\log\log n\ge1\). Constants must be uniform in the design, multiplicities,
conditioning of the design, and existence of a finite MLE.

Give a clean, unconditional, publication-quality proof. Do not assume bounded fitted
predictors or true parameters, general position, distinct rows, full rank, good
conditioning, or attainment. Treat rank zero and rank one, empty blocks, repeated
and collinear rows, all-equal outcomes, endpoint suprema, separation,
quasi-separation, and lower-rank limits. Prove every nonstandard auxiliary assertion
and verify the hypotheses of every quoted standard theorem. Do not leave
placeholders such as “assuming the following lemma”; the submitted answer itself
must finish every step.

The hints below are a roadmap, not granted lemmas. Verify every assertion and supply
all arguments needed to make it valid.

Hints.

1. Extreme block: modal-recode and set \(a_i=|v_i|\). For
   \(E=\{i:a_i>2\log n\}\) and the true product law \(P_E\) of their modal-error
   vector \(B_E\), prove
   \[
   \Gamma_E(B_E)\le-\log P_E(B_E),\qquad
   \mathbb E e^{\eta[-\log P_E(B_E)]}\le C
   \]
   for universal \(\eta,C>0\).

2. Offset profile: for arbitrary fixed logistic offsets and a common intercept,
   profile the intercept, condition on total error count, regularize the scalar
   slope, and prove a universal fractional exponential moment for the extended
   gain. Treat all-equal fibers and endpoint suprema directly.

3. Factor-two block: normalize by its true margins; use concavity to compare the
   rescaled likelihoods; then reduce its gain, including fractional rescaling
   through Bernoulli thinning, to a bounded number of arbitrary-offset
   common-intercept profiles.

4. Central geometry: use uniform local logistic curvature and
   \[
   Q(h)=\sum_i\min\{z_i(h)^2,1\}.
   \]
   On a shell \(s\le\Gamma<2s\), the ridge/KL identity gives
   \(Q(h_\varepsilon)\le Cs\). For \(u\ge1\), prove by planar rank-two packing
   \[
   \#\{i:\exists h,\ Q(h)\le u,\ |z_i(h)|\ge1\}=O(u).
   \]

5. Central gain: control the exceptional rows by a finite planar ray profile
   and the rest by a rank-two quadratic profile, obtaining a universal exponential
   tail and fractional exponential moment for the central-block gain.

6. Aggregation: use convex deficits and the sublevel-set/partition-function
   comparison. With \(\Delta=\sum_{s=0}^H\Gamma_s-\Gamma_\varepsilon\), prove that
   planar quantitative Helly selects \(|P|\le4\) with
   \[
   \frac{Z_P}{Z}\le C(H+1)^2(\Delta+1)^2.
   \]
\end{promptbox}

\end{document}